\numberwithin{equation}{section}
\numberwithin{equation}{section}
\newcommand{\Z}{\mathbb{Z}}    % Ganze Zahlen
\newcommand{\R}{\mathbb{R}}    % Reelle Zahlen
\newcommand{\C}{\mathbb{C}}    % Komplexe Zahlen
\newcommand{\T}{\mathbb{T}}
\newcommand{\F}{\mathcal{F}}
\renewcommand{\H}{\mathcal{H}}
\renewcommand{\L}{\mathcal{L}}
\newcommand{\scat}{\mathcal{R}}
\newcommand{\class}{\mathcal{Q}}
\renewcommand{\S}{\mathscr{S}}
\newcommand{\Hz}{H^M_{\zeta}}
\newcommand{\Hzc}{H^{M}_{\zeta,\C}}
\newcommand{\B}{\mathcal{B}}
\newcommand{\mmod}[1]{\left| #1 \right|}
\newcommand{\norm}[1]{\left\|#1 \right\|}
\newcommand{\K}{\mathcal{K}}
\newcommand{\G}{\mathcal{G}}
\newcommand{\HT}{\mathcal{H}}
\newcommand{\intx}[1]{\int\limits_{#1}^{+\infty}}
\newcommand{\Ltt}{L^{2}_{x\geq a} L^2}
\newcommand{\Czt}{C^{0}_{x\geq a} L^2}
\newcommand{\Czp}{C^{0}_{x \geq a} L^\beta}
\newcommand{\Lxyc}{L^2_{x\geq c} L^2_{y\geq 0}}
\newcommand{\Lc}{L^2_{x \geq c}}
\newcommand{\Loc}{L^1_{x \geq c}}
\newcommand{\Lic}{L^{\infty}_{x \geq c}}
\newcommand{\intzi}{\int\limits_0^{+\infty}}
\newcommand{\intiz}{\int\limits_{-\infty}^0}
\newcommand{\intii}{\int\limits_{-\infty}^{+\infty}}
\newcommand{\derk}{\partial_k}
\newcommand{\derx}{\partial_x}
\newcommand{\dery}{\partial_y}
\newcommand{\derz}{\partial_z}
\newcommand{\bF}{\mathbf{F}_\sigma}
\newcommand{\bH}{\mathbf{H}_\sigma}
\newcommand{\bG}{\mathbf{G}_\sigma}
\newcommand{\kdv}{U_{KdV}}
\newcommand{\airy}{U_{Airy}}
\newcommand{\la}{\langle}
\newcommand{\ra}{\rangle}
\newcommand{\rot}{\Omega}
\newcommand{\x}{\langle x \rangle}
\renewcommand{\Im}{\operatorname{Im}}
\renewcommand{\Re}{\operatorname{Re}}
\newcommand{\Lyz}{L^2_{y \geq 0}}
\newcommand{\Cyz}{C^0_{y \geq 0}}
\newcommand{\Cxy}{C^0_{x \geq c, y \geq 0}}
\newcommand{\CxLy}{C^0_{x \geq c} L^2_{y \geq 0}}
\newcommand{\Hoc}{H^1_{x \geq c}}
\DeclareMathOperator*{\esssup}{ess\,sup}
\newtheorem{theorem}{Theorem}[section]
\newtheorem{lemma}[theorem]{Lemma}
\newtheorem{cor}[theorem]{Corollary}
\newtheorem{proposition}[theorem]{Proposition}
\newtheorem{remark}[theorem]{Remark}
\newtheorem{definition}[theorem]{Definition}
\renewcommand{\paragraph}{\@startsection{paragraph}{4}{0ex}%
   {-3.25ex plus -1ex minus -0.2ex}%
   {1.5ex plus 0.2ex}%
   {\normalfont\normalsize\bfseries}}
\begin{document}
\title{One smoothing property of the scattering map of the KdV on $\R$}
\author{A. Maspero\footnote{Institut f\"ur Mathematik, Universit\"at Z\"urich, Winterthurerstrasse 190, 
CH-8057 Z\"urich, \texttt{alberto.maspero@math.uzh.ch}}, B. Schaad \footnote{Department of Mathematics, 
University of Kansas, 
405 Snow Hall, 
1460 Jayhawk Blvd, 
Lawrence, Kansas 66045-7594, \texttt{schaadbeat@gmail.com} }} 
\maketitle
\begin{abstract}
In this paper we prove that in appropriate weighted Sobolev spaces, in the case of no bound states, the scattering map of the Korteweg-de Vries (KdV) on $\R$ is  a perturbation of the Fourier transform by a regularizing operator. As an application of this result, we show that the difference of the KdV flow and the corresponding Airy flow is 1-smoothing.
\end{abstract}
\section{Introduction}
In the last decades the problem of a rigorous  analysis of the theory of infinite dimensional integrable Hamiltonian systems in 1-space dimension has been widely studied. 
These systems come up in two setups:  (i) on compact intervals (finite volume) and (ii) on
infinite intervals (infinite volume). The dynamical behaviour of the systems in the two setups have many similar
features, but also distinct ones, mostly due to the different manifestation of dispersion.

The analysis of the finite volume case is now quite well understood. Indeed, Kappeler  with collaborators introduced a series of methods in order to  construct rigorously Birkhoff coordinates (a cartesian version of action-angle variables) for 1-dimensional integrable Hamiltonian PDE's on $\T$. The program succeeded in many cases,  like Korteweg-de Vries (KdV) \cite{kamkdv},  defocusing and focusing Nonlinear Schr\"odinger (NLS) \cite{kappeler_grebert,kappeler_lohrmann_topalov_zung}. In each case considered, it has been proved that  there exists a real analytic symplectic diffeomorphism, the {\em Birkhoff map}, between two scales of Hilbert spaces which conjugate the nonlinear dynamics to a linear one.\\
An   important property of  the Birkhoff map $\Phi$  of the KdV on $\T$ and its inverse $\Phi^{-1}$ is the semi-linearity, i.e., the nonlinear part  of $\Phi$ respectively $\Phi^{-1}$ is $1$-smoothing. A local version of this result was first proved by  Kuksin and Perelman  \cite{kuksinperelman} and later extended globally by Kappeler, Schaad and Topalov  \cite{beat2}. It plays an important role in the perturbation theory of KdV -- see \cite{kuksin_damped_kdv} for   randomly perturbed KdV equations and  \cite{Erdogan_Tzirakis_forced} for forced and weakly damped problems. The semi-linearity of $\Phi$ and $\Phi^{-1}$  can be used to prove  $1$-smoothing properties of the KdV flow in the
periodic setup  \cite{beat2}.

The analysis of the  infinite volume case was developed mostly during the '60-'70 of the last century, starting from the pioneering works of Gardner, Greene,  Kruskal and Miura  \cite{gardner1,gardner6} on the KdV on the line.
 In these works  the authors showed that the KdV can be integrated by a {\em scattering transform} which maps a function $q$, decaying sufficiently fast at infinity, into the spectral data of the operator $L(q) := -\derx^2 + q$. Later, similar  results were obtained by Zakharov and Shabat for the NLS on $\R$ \cite{Zakharov_Shabat}, by Ablowitz, Kaup, Newell and
Segur for the Sine-Gordon equation \cite{AKNS}, and by Flaschka for the Toda lattice  with infinitely many particles \cite{flaschka}.
Furthermore, using the spectral data of the corresponding Lax operators,   action-angle variables were (formally) constructed for each of the equations above  \cite{zakharov_faddeev,zakharov_manakov,mcLaughlin,mcLaughlin_erratum}.  See also \cite{Novikov_book,Faddeev_Takhtajan,ablowitz_book} for monographs about the subject. Despite so much work, the analytic properties of the scattering transform and of the action-angle variables in the infinite volume setup are not yet completely understood. In the present paper we discuss these properties, at least for a special class of potentials.
\vspace{.5em}\\

The aim of this paper is to show that for the KdV on the line, the scattering map is  an analytic perturbation of the Fourier transform by a 1-smoothing nonlinear  operator. 
With the applications we have in mind, we choose a setup for the scattering map so that the spaces considered are left invariant under the KdV flow.
Recall that the KdV equation on $\R$
\begin{equation}
\label{KdV}
\begin{cases}
\partial_t u(t,x) = -\partial_x^3u(t,x) - 6u(t,x) \partial_x u(t,x)  \ , \\
u(0,x) = q(x) \ ,
\end{cases} 
\end{equation}
 is globally in time well-posed in various function spaces such as the Sobolev spaces
$H^N\equiv H^N(\R,\R), N\in \mathbb{Z}_{\geq 2}$ ( e.g. \cite{bona,kato79,kenig_ponce_vega0}), as well as on the weighted spaces
$H^{2N}\cap L^2_M,$ with integers $ N\geq M \geq 1$ \cite{kato}, endowed with the norm 
$\|\cdot\|_{H^{2N}}+\|\cdot\|_{L^2_M}$. Here $L^2_M \equiv L^2_M(\R, \C)$ denotes the space of complex 
valued $L^2$-functions satisfying 
$ \|q\|_{L^2_M}:=\left(\int_{-\infty}^\infty( 1+|x|^2)^{ M} |q(x)|^2dx\right)^{\frac{1}{2}}< \infty.$ 

Introduce for $q \in L^2_M$ with $M \geq 4$ the Schr\"odinger operator $L(q):= -\partial_x^2 + q$ with domain $H^2_\C$, where, for any integer $N \in \Z_{\geq 0}$,  $H^N_\C:= H^N(\R, \C)$. 
For 
$k \in \R$ denote by $f_1(q,x,k)$ and $f_2(q,x,k)$ the Jost solutions, i.e. solutions of
$L(q)f=k^2 f $ with asymptotics
$f_1(q,x,k)\sim e^{ikx},\; x\to \infty, \; f_2(q,x,k)\sim e^{-ikx},\; x\to -\infty$. 
As $f_i(q,\cdot, k), \; f_i(q,\cdot, -k)$, $i= 1,2$, are linearly independent 
for $k \in \R \setminus \{0\}$, one can find coefficients $S(q,k)$, $W(q,k)$ such that  for $k \in \R \setminus \{0\}$ one has
\begin{equation}
\begin{aligned}
\label{refl_tras_rel2}
 f_2(q,x,k)=&  \frac{S(q,-k)}{2ik}f_1(q,x,k) + \frac{W(q,k)}{2ik}f_1(q,x,-k) \ , \\ 
 f_1(q,x,k)=& \frac{S(q,k)}{2ik}f_2(q,x,k) + \frac{W(q,k)}{2ik}f_2(q,x,-k) \ .
\end{aligned} 
 \end{equation}
 It's easy to verify that  the functions $W(q,\cdot)$ and $S(q, \cdot)$ are given by the wronskian identities
 \begin{equation}
\label{wronskian_W}
W(q,k):=\left[f_2, f_1\right](q,k):= f_2(q, x,k)\derx f_1(q,x,k) - \derx f_2(q, x,k) f_1(q, x,k) \ ,
\end{equation}
and
\begin{equation}
\begin{aligned}
\label{wronskian}
&S(q, k):=\left[f_1(q,x,k), f_2(q,x,-k)\right],
\end{aligned}
\end{equation}
which are independent of $x \in \R$.  
The functions $S(q,k)$ and $W(q,k)$ are related to the more often used reflection coefficients $r_\pm(q,k)$ and transmission coefficient $t(q,k)$  by the formulas
\begin{equation}
\label{r.S.rel} 
r_+(q,k) = \frac{S(q, - k)}{W(q,k)}, \quad r_-(q,k) = \frac{S(q,k)}{W(q,k)}, \quad t(q,k) =  \frac{2ik}{W(q,k)} \quad \forall \, k \in \R \setminus \{0\} \  .
\end{equation}

It is well known that for $q$ real valued  the spectrum of
$L(q)$ consists of an absolutely continuous part, given by $[0, \infty)$, and a finite number of eigenvalues referred to as bound states, $-\lambda_n < \cdots < -\lambda_1<0$ (possibly none). Introduce the set 
\begin{equation}
\class:= \left\{ q:\R \to \R \ ,\  q \in L^2_4: W(q,0)\neq 0, \, q \mbox{ without bound states} \right\}. 
\end{equation} 
We remark that the property $W(q,0)\neq 0$ is generic. In the sequel we refer to elements in $\class$ as generic 
potentials without bound states. 
Finally we define 
$$\class^{N,M}:=\class\cap H^{N}\cap L^2_M, \quad N\in \mathbb{Z}_{\geq 0},\quad M\in \mathbb{Z}_{\geq 4}.$$ 
We will see in Lemma \ref{class_open} that for any integers $N\geq 0$, $M\geq 4$, $\class^{N,M}$ is open in $H^N \cap L^2_M$.

Our main theorem analyzes the properties of the scattering map
$q \mapsto S(q,\cdot)$
 which is known to linearize the KdV flow \cite{gardner6}.
To formulate our result on the scattering map in more details let $\S$ denote the set of all functions 
$\sigma: \R \to \C$ satisfying
\begin{enumerate}
\item[(S1)] $\sigma(-k) = \overline{\sigma(k)}, \quad \forall k \in \R$;
\item[(S2)] $\sigma(0) >0$.
\end{enumerate}
%Define the  map
%\begin{equation}
%\label{Rdefinition}
%S: \class \rightarrow L^2, \qquad  q \mapsto S(q,\cdot) \ .
%\end{equation}
For $M \in \Z_{\geq 1}$ define the \textit{real} Banach  space
\begin{equation}
 \begin{aligned}
\label{H^N*}
&\Hz := \lbrace  f \in H^{M-1}_\C : \quad \overline{f(k)}= f(-k), \quad  \zeta \derk^M f \in L^2 \rbrace \ ,
\end{aligned}
\end{equation}
where $\zeta: \R \to \R$ is an odd monotone $C^\infty$ function with 
\begin{equation}
\label{zeta}
 \zeta(k)=k \ \mbox{ for }  \ |k|\leq 1/2  \quad \mbox{ and }  \quad \zeta(k)=1  \ \mbox{ for } \  k\geq 1 \ .
\end{equation}
 The norm on $\Hz $ is  given by
$$
 \norm{f}_{\Hz}^2 := \norm{f}_{H^{M-1}_\C}^2
+ \norm{\zeta \derk^M f}_{ L^2}^2.$$
%We will use also the complexification of the Banach spaces above, in which the reality condition $\overline{f(k)} = f(-k)$ is dropped:
%\begin{equation}
% \begin{aligned}
%\label{H^N*C}
%&\Hzc  := \lbrace  f \in H^{M-1}_\C:  \quad \zeta \derk^M f \in L^2 \rbrace.
%\end{aligned}
%\end{equation}
For any $N, M \in \Z_{\geq 0}$ let
\begin{align}
\S^{M, N} := \S \cap \Hz \cap L^2_{N} \ .
\label{reflspaceNM0}
\end{align}
Different choices of $\zeta$, with $\zeta$ satisfying \eqref{zeta}, lead to the same Hilbert space with equivalent norms.
We will see in Lemma \ref{lem:S.open} that for any integers $N\geq 0$, $M \geq 4$, $\S^{M,N}$ is an open subset of $\Hz \cap L^2_N $.
%Furthermore, it follows from \cite{deift} that $S(q, \cdot) \in \S$ for any $q \in \class$. 
Moreover let  $\F_{\pm}$ be the Fourier transformations 
defined by $\mathcal{F}_\pm(f) = \int_{-\infty}^{+\infty} e^{\mp 2ikx} f(x) \,dx.$
In this setup, the scattering map $S$ has the following properties -- see Appendix \ref{analytic_map} for a discussion of the notion 
of real analytic.
\begin{theorem}\label{reflthm} For any integers $N \geq 0$, $M \geq 4$, the following holds:
\begin{enumerate}[(i)]
\item The map $$S: \class^{N, M}\to \S^{M, N},\quad q \mapsto S(q, \cdot) $$ is a  real analytic diffeomorphism.
%\item the map $\class^{N, 3} \ni q \rightarrow \scat_{\pm}(q) \in \refl^{3, N}$ is a real analytic isomorphism;
\item The maps $A := S - \mathcal{F}_{-} $ and $B := S^{-1} - \mathcal{F}^{-1}_{-} $ are  1-smoothing, i.e. 
$$A: \class^{N, M}\rightarrow \Hz \cap L^2_{N+1}\quad\text{and} \quad B : \S^{M, N}\rightarrow H^{N+1} \cap L^2_{M-1}   \ . $$
Furthermore they are  real analytic maps.
% \item $\scat^{-1}_{\pm}$ have the structure $\scat^{-1}_{\pm} = \mathcal{F}^{-1}_{\pm} + \mathcal{B}_{\pm} $, where $\B_{\pm}$ are a smoothing maps, namely
% $$\mathcal{B}_{\pm}: \refl^{M, N}\rightarrow H^{M+1} \cap L^2_{N}.$$
%\item $\scat_{\pm} $ and $\mathcal{A}_{\pm}$ are bounded on bounded sets.
%and $\scat^{-1}_{\pm} $ are bounded on bounded sets. (NOT SURE: CHECK OR DELETE, MAYBE NOT NEEDED )
\end{enumerate}
\end{theorem}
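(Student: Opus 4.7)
My plan is to reduce both statements to an analysis of the Jost solution $f_1(q,\cdot,k)$ via its Volterra integral equation
\begin{equation*}
f_1(q,y,k) = e^{iky} + \int_y^{+\infty} \frac{\sin(k(z-y))}{k}\, q(z)\, f_1(q,z,k)\,dz.
\end{equation*}
Evaluating the Wronskian in \eqref{wronskian} in the limit $x \to -\infty$ and matching with the first line of \eqref{refl_tras_rel2} yields the representation
\begin{equation*}
S(q,k) = \int_{-\infty}^{+\infty} e^{iky}\, q(y)\, f_1(q,y,k)\,dy.
\end{equation*}
Introducing $m_1(q,y,k) := e^{-iky} f_1(q,y,k) - 1$ and substituting produces the clean decomposition
\begin{equation*}
S(q,k) = \F_-(q)(k) + A(q)(k), \qquad A(q)(k) = \int_{-\infty}^{+\infty} e^{2iky}\, q(y)\, m_1(q,y,k)\,dy,
\end{equation*}
where $m_1$ itself satisfies the Volterra equation
\begin{equation*}
m_1(q,y,k) = \int_y^{+\infty} \frac{e^{2ik(z-y)}-1}{2ik}\, q(z)\bigl(1 + m_1(q,z,k)\bigr)\,dz.
\end{equation*}

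For part (i), real analyticity of $q \mapsto m_1$ follows by a Neumann series argument, since the Volterra kernel has modulus bounded by $\min(|z-y|, 1/|k|)$ and is integrable against $q \in L^2_M$ with $M \geq 4$, giving contraction in a suitable weighted space of functions of $(y,k)$. This propagates to real analyticity of $q \mapsto S(q,\cdot)$ into $H^{M-1}_\C$, while the symmetry condition (S1) is inherited from the reality of $q$ and the positivity (S2) comes from the combination $W(q,0)\neq 0$ together with the absence of bound states, both of which are built into $\class^{N,M}$. Bijectivity is then established by invoking the Marchenko inverse scattering construction: given $\sigma \in \S^{M,N}$, one forms an appropriate $F_\sigma(x)$ by applying $\F_-^{-1}$ to the reflection coefficient built from $\sigma$, solves the Marchenko integral equation
\begin{equation*}
K(x,y) + F_\sigma(x+y) + \int_x^{+\infty} K(x,z)\, F_\sigma(y+z)\,dz = 0,
\end{equation*}
and recovers $q(x) = -2\, \frac{d}{dx}K(x,x)$. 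The analytic implicit function theorem applied to the Marchenko equation then provides the analytic inverse.

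For part (ii), I would extract one extra power of decay in $k$ from the Volterra equation for $m_1$: the factor $(e^{2ik(z-y)}-1)/(2ik)$ produces an effective gain of $1/\la k \ra$ at high frequency, and combined with integration by parts in $y$ inside the integral defining $A(q)$ (trading the oscillation $e^{2iky}$ for a derivative on $q \cdot m_1$) this yields $A(q) \in L^2_{N+1}$. The analogous Volterra analysis of the Marchenko equation for $K$, with $\sigma$ in place of $q$, provides the 1-smoothing bound for $B = S^{-1} - \F_-^{-1}$, where the roles of regularity and decay swap under Fourier duality. The main technical obstacle will be controlling the mixed regularity-and-decay norm $\norm{\cdot}_{\Hz}$: the $k$-derivatives $\derk^j$ of the oscillatory kernel produce factors of $(y-z)^j$ which must be absorbed by the weights $\la y\ra^M, \la z\ra^M$ coming from $q \in L^2_M$, and the weight $\zeta$ in the definition of $\Hz$ is designed precisely to offset the singular behavior of $\derk^M$ of the kernel near $k=0$. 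Matching these estimates uniformly on bounded subsets of $\class^{N,M}$ is where the bulk of the analytical work lies.
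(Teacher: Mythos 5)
Your overall architecture matches the paper's: the representation $S(q,k)=\int e^{ikt}q(t)f_1(q,t,k)\,dt$ giving $S=\F_-+A$, and a Gelfand--Levitan--Marchenko reconstruction for the inverse. But there are two genuine gaps. The more serious one is in the surjectivity step: you write that one ``forms $F_\sigma$ by applying $\F_-^{-1}$ to the reflection coefficient built from $\sigma$,'' but $\sigma$ does not directly determine a reflection coefficient. In the paper's normalization $r_\pm = S(\cdot,\mp k)/W$, so to build $\rho_\pm$ from $\sigma$ alone one must first reconstruct the Wronskian $w(\sigma,\cdot)$ from the modulus relation $w(k)w(-k)=4k^2+\sigma(k)\sigma(-k)$. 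This is a scalar Riemann--Hilbert (outer-function) problem, solved in the paper via $\omega(\sigma,k)=\exp\bigl(\tfrac12 l(\sigma,k)+\tfrac{i}{2}\H(l(\sigma,\cdot))(k)\bigr)$ with $l=\log\frac{4(k^2+1)}{4k^2+\sigma(k)\sigma(-k)}$; one must then verify that the resulting $\rho_\pm,\tau$ satisfy the full Faddeev characterization (unitarity, analyticity of $\tau$ in $\Im k>0$, asymptotics, and the precise vanishing rates at $k=0$ that encode ``generic with no bound states''), and that all of this depends real-analytically on $\sigma$, which in turn requires boundedness of the Hilbert transform on the weighted space $\Hzc$ (a commutator estimate, since $\zeta\derk^M$ does not commute with $\H$). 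None of this is present in your sketch, and conditions (S1)--(S2) enter exactly here. Relatedly, the Marchenko equation is Fredholm, not Volterra: its solvability rests on $\sup_x\|\K_{x,\sigma}\|_{\L(L^2_{y\geq0})}<1$, not on the automatic convergence of a Volterra iteration.

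The second gap concerns the quantitative $1$-smoothing of $A$. A single integration by parts plus the $1/\langle k\rangle$ gain from the kernel yields pointwise decay $O(1/k)$, but not $A(q)\in L^2_{N+1}$ with a bound by $\|q\|_{H^N_\C\cap L^2_4}$. The paper needs the full iterated expansion $\chi(k)A(q,k)=\sum_{n\geq1}\chi(k)s_n(q,k)/k^n$, an oscillatory-integral lemma giving $L^2$ bounds for integrals over simplices together with the combinatorial fact that the phase $\alpha_\sigma\cdot t$ never degenerates completely (an odd number of the $\alpha_j$ are nonzero), an $N$-fold integration by parts in the inductive estimate of $\tilde s_{n+1}$, and a factorial tail bound to sum the series uniformly on balls (which is also what delivers analyticity of $A$ into $L^2_{N+1}$). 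Finally, note that the paper obtains the smoothing of $B$ for free from the identity $B=-\F_-^{-1}\circ A\circ S^{-1}$ rather than by a direct smoothing analysis of the Marchenko kernel; the GLM analysis is only needed to show $S^{-1}:\S^{M,N}\to\class^{N,M}$ is well defined and real analytic. Your plan of extracting the smoothing of $B$ directly from the Marchenko equation is a harder route and, as sketched, does not explain where the extra derivative for $S^{-1}\sigma-\F_-^{-1}\sigma$ would come from.
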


As a first application of Theorem \ref{reflthm} we prove analytic properties of the action variable for the KdV on the line. For a potential $q \in \class$, the action-angle variable were formally defined for $k \neq 0$ by Zakharov and Faddeev  \cite{zakharov_faddeev}  as the densities
\begin{equation}
\label{action_angle} 
I(q,k) := \frac{k}{\pi} \log \left(1+\frac{|S(q,k)|^2}{4k^2} \right) \ , \quad \theta(q,k):= \arg \left(S(q,k) \right) , \quad k \in  \R\setminus\{0\}\ .
\end{equation}
We can write the action as
\begin{equation}
\label{action} 
I(q,k) := -\frac{k}{\pi} \log \left(\frac{4k^2}{4k^2 + S(q,k)S(q,-k)} \right) \ , \quad k \in \R\setminus\{0\} \ .
\end{equation}
By Theorem \ref{reflthm}, $S(q, \cdot) \in \S$, thus property (S2) implies that  $\lim_{k \to 0} I(q,k)$ exists and equals $0$. Furthermore, by (S1), the action $I(q, \cdot)$ is an odd function in $k$, and strictly positive for $k >0$. Thus  we will consider just the case $k \in [0, +\infty)$. The properties of $I(q,\cdot)$ for $k$ near $0$ and $k$ large are described separately.
\begin{cor}
\label{thm:actions}
For any integers $N \geq 0$, $M \geq 4$,
the maps
$$
\class^{N,M} \to L^1_{2N+1}([1,+\infty), \R)  \ , \quad q \mapsto \left.I(q, \cdot)\right|_{[1,\infty)}
$$
and
$$
\class^{N,M} \to H^M([0,1], \R) \ , \quad q \mapsto \left.I(q, \cdot)\right|_{[0,1]} + \frac{k}{\pi} \ln \left(\frac{4k^2}{4(k^2+1)} \right) 
$$
are  real analytic. Here $\left.I(q, \cdot)\right|_{[1,\infty)}$ (respectively $\left.I(q, \cdot)\right|_{[0,1]}$) denotes the restriction of the function $k\mapsto I(q,k)$ to the interval $[1, \infty)$ (respectively $[0,1]$).
\end{cor}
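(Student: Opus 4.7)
The strategy is to derive both analyticity statements from Theorem~\ref{reflthm} via algebraic rewriting, the Sobolev embedding $H^{M-1}\hookrightarrow L^\infty$ (available since $M-1\geq 3>1/2$), and composition with real analytic scalar functions on suitable Banach algebras. The regions $[1,+\infty)$ and $[0,1]$ are handled separately, and the two factors of $k$ that appear explicitly in $I(q,k)$ and $\varphi$ play decisive but different roles in each case.

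For $k\in[1,+\infty)$, factor
\[
I(q,k) \;=\; \frac{|S(q,k)|^2}{4\pi k}\; h\!\left(\frac{|S(q,k)|^2}{4k^2}\right),\qquad h(z):=\frac{\log(1+z)}{z},
\]
so that $h$ is real analytic on $(-1,+\infty)$ with $h(0)=1$. Theorem~\ref{reflthm} makes $q\mapsto S$ real analytic from $\class^{N,M}$ into $\Hz\cap L^2_N\hookrightarrow L^\infty\cap L^2_N$. The bounded bilinear map $(S_1,S_2)\mapsto S_1(k)S_2(-k)$ then lands in $L^1_{2N}\cap L^\infty$, the $L^1_{2N}$-bound being just the identity $\int(1+k^2)^N|S|^2\,dk=\|S\|_{L^2_N}^2$. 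Multiplication by the bounded weights $(4\pi k)^{-1}$ and $(4k^2)^{-1}$ on $[1,+\infty)$ yields $q\mapsto |S|^2/(4\pi k)$ analytic into $L^1_{2N+1}([1,\infty))$ and $q\mapsto |S|^2/(4k^2)$ analytic into a bounded ball of $L^\infty([1,\infty))$; composition with $h$ is analytic into $L^\infty([1,\infty))$, and the bounded bilinear product $L^1_{2N+1}\times L^\infty\to L^1_{2N+1}$ closes the first statement.

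For $k\in[0,1]$, a direct computation yields
\[
I(q,k)+\frac{k}{\pi}\ln\!\frac{4k^2}{4(k^2+1)} \;=\; \frac{k}{\pi}\log\varphi(q,k),\qquad \varphi(q,k):=\frac{4k^2+|S(q,k)|^2}{4(k^2+1)},
\]
and property (S2) of $\S$ gives $\varphi(q_0,0)=|S(q_0,0)|^2/4>0$, so $\varphi(q,\cdot)$ is uniformly bounded below by a positive constant on $[0,1]$ for $q$ in a neighborhood of any fixed $q_0\in\class^{N,M}$. The plan is to work in the local Banach space $\Hz([0,1]):=\{f\in H^{M-1}([0,1]):\zeta\,\derk^M f\in L^2([0,1])\}$ and to establish:
\begin{enumerate}[(a)]
\item $\Hz([0,1])$ is a Banach algebra: in Leibniz's expansion of $\derk^M(fg)$, all interior terms pair an $L^\infty$ factor (by Sobolev) with an $L^2$ factor, while the two extreme terms $f\,\derk^M g$ and $(\derk^M f)\,g$ become $L^2$ after one multiplication by $\zeta$.
\item The bilinear map $(S_1,S_2)\mapsto S_1(k)S_2(-k)$ is bounded into $\Hz([0,1])$, since $\zeta$ is odd and the substitution $k\mapsto -k$ preserves the $\zeta$-weighted $L^2$-norm.
\item For $\varphi$ uniformly bounded below, $1/\varphi\in \Hz([0,1])$ (Fa\`a di Bruno plus the algebra property), and $\varphi\mapsto \log\varphi$ is real analytic via the power series $\log(\varphi_0+h)=\log\varphi_0+\sum_{n\geq 1}(-1)^{n-1}n^{-1}(h/\varphi_0)^n$, which converges in $\Hz([0,1])$ for $\|h\|_{\Hz([0,1])}$ small.
\item Multiplication by $k$ is a bounded linear map $\Hz([0,1])\to H^M([0,1])$: indeed $\derk^M(k\psi)=k\,\derk^M\psi+M\,\derk^{M-1}\psi$, the second term being $L^2$ since $\psi\in H^{M-1}$, and the first since $\zeta(k)\asymp k$ on $[0,1]$.
\end{enumerate}
Chaining $q\mapsto S\mapsto \varphi\mapsto \log\varphi\mapsto (k/\pi)\log\varphi$ through these real analytic maps yields the second statement.

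The main technical obstacle is the Banach-algebra bookkeeping behind (a)--(c); step (d) is the conceptual heart of the corollary, since the prefactor $k/\pi$ in the definition of $I(q,k)$ is precisely the weight $\zeta$ near the origin appearing in the definition of $\Hz$, and it exactly absorbs the mild singularity of $\derk^M S(q,\cdot)$ at $k=0$ permitted by $\Hz$, turning a $\zeta$-weighted $L^2$ bound into a genuine $L^2$ bound in $H^M([0,1])$.
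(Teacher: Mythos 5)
Your argument is correct and follows essentially the same route as the paper: split at $k=1$; on $[1,\infty)$ exploit the quadratic bound $|I(q,k)|\le C|S(q,k)|^2/|k|$ together with the real analyticity of $q\mapsto S(q,\cdot)\in \Hz\cap L^2_N$; on $[0,1]$ rewrite the regularized action as $\frac{k}{\pi}$ times the logarithm of $\bigl(4k^2+S(q,k)S(q,-k)\bigr)/\bigl(4(k^2+1)\bigr)$, with the prefactor $k$ (comparable to $\zeta$ on $[0,1]$) converting the $\zeta$-weighted $M$-th derivative into a genuine $H^M([0,1])$ bound. The only real divergence is in how the analyticity of the logarithmic term is justified: the paper simply cites Proposition \ref{prop:inv.l} (already proved for the inverse problem, giving $\sigma\mapsto l(\sigma,\cdot)\in\Hz$ on all of $\R$) and composes with $q\mapsto S(q,\cdot)$, whereas you re-derive a localized version on $[0,1]$ from scratch via the Banach-algebra property of the restricted space and a power-series expansion of the logarithm around the base point, and on $[1,\infty)$ you spell out the superposition and product structure (the function $h(z)=\log(1+z)/z$, the $L^1_{2N+1}\times L^\infty\to L^1_{2N+1}$ pairing) that the paper compresses into one sentence. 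Both routes are sound: yours is self-contained and makes the analyticity mechanisms explicit, at the cost of duplicating work the paper already has available through Proposition \ref{prop:inv.l}.
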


Finally we compare 
solutions of \eqref{KdV} to solutions of the Cauchy problem for the Airy equation on $\R$,
\begin{equation}
\label{Airy}
\begin{cases}
\partial_t v(t,x) = -\partial_x^3v(t,x) \\
v(0,x) = p(x)
\end{cases}
\end{equation}
Being a linear equation with constant coefficients, one sees  that the Airy equation is globally in time well-posed on $H^N$ and $H^{2N}\cap L^2_M,$ with integers $N \geq M \geq 1$ (see  Remark \ref{rem.airy.flow} below). Denote the  flows of \eqref{Airy} and \eqref{KdV} by $U_{Airy}^t(p):=v(t,\cdot)$ respectively $U_{KdV}^t(q):= u(t,\cdot)$. Our third result is to show that for $q\in H^{2N}\cap L^2_M$ with no bound states and $W(q,0)\neq 0$, the difference 
$U_{KdV}^t(q)-U_{Airy}^t(q)$ is 1-smoothing, i.e. it takes values in $H^{2N+1}$. 
More precisely we prove the following theorem.

\begin{theorem}\label{firstapprox}
Let $N$, $M$ be integers with $N \geq 2M \geq 8$. Then the following holds true:
\begin{enumerate}[(i)]
\item $\class^{N, M}$ is invariant under the KdV flow.
\item For any $q\in \class^{N, M}$ the difference $U_{KdV}^t(q)-U_{Airy}^t(q)$ takes values in $H^{N+1}\cap L^2_M$. Moreover the map 
\begin{align*}
\class^{N,M} \times \R_{\geq 0}\to& H^{N+1}\cap L_M^2, \qquad (q,t)\mapsto U_{KdV}^t(q)-U_{Airy}^t(q)
\end{align*} is continuous and for any fixed $t$ real analytic in $q$.
\end{enumerate}
\end{theorem}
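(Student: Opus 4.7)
The plan is to work on the scattering side, where both flows reduce to multiplication by $e^{8ik^3t}$. Recall the classical fact that for KdV on $\R$ the scattering map linearizes the flow, so that $S(U_{KdV}^t(q),k)=e^{8ik^3t}S(q,k)$, while the Fourier transform diagonalizes the Airy flow: $\F_-(U_{Airy}^t(q))(k)=e^{8ik^3t}\F_-(q)(k)$. Combined with Theorem \ref{reflthm}(i), the first identity provides the representation $U_{KdV}^t(q)=S^{-1}\bigl(e^{8ik^3t}S(q,\cdot)\bigr)$, which I will take as the working definition of the KdV flow on $\class^{N,M}$.

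For (i) the task is to show that the linear map $g\mapsto e^{8ik^3t}g$ sends $\S^{M,N}$ into itself under the hypothesis $N\geq 2M$. Conditions (S1) and (S2) are immediate from $\overline{e^{8ik^3t}}=e^{8i(-k)^3t}$ and $e^{0}=1$, and the $L^2_N$ norm is preserved since the multiplier is unimodular. The only nontrivial check is that $e^{8ik^3t}g$ lies in $\Hz$: Leibniz produces terms of the form $k^{2j}\derk^{M-j}g$ with $0\leq j\leq M$. The extreme case $j=M$ requires $g\in L^2_{2M}$, which holds since $g\in L^2_N$ with $N\geq 2M$; the case $j=0$ is controlled by $g\in\Hz$ (with the $\zeta$ weight); intermediate indices follow by interpolation between $L^2_{2M}$ and $H^{M-1}$. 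Applying Theorem \ref{reflthm}(i) then yields $U_{KdV}^t(q)\in\class^{N,M}$.

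For (ii), decompose $S=\F_-+A$ and $S^{-1}=\F_-^{-1}+B$ as in Theorem \ref{reflthm}(ii). Substituting into the two representations and cancelling the common Fourier term gives
\begin{equation*}
U_{KdV}^t(q)-U_{Airy}^t(q)=B\bigl(e^{8ik^3t}S(q,\cdot)\bigr)+\F_-^{-1}\bigl(e^{8ik^3t}A(q)\bigr).
\end{equation*}
The first summand lies in $H^{N+1}$ by Theorem \ref{reflthm}(ii); the second lies in $H^{N+1}$ because $A(q)\in L^2_{N+1}$ and $|e^{8ik^3t}|=1$. The $L^2_M$ decay follows from (i) together with the preservation of $H^N\cap L^2_M$ under the Airy flow (valid for $N\geq 2M$ by Remark \ref{rem.airy.flow}): both $U_{KdV}^t(q)$ and $U_{Airy}^t(q)$ lie in $L^2_M$, hence so does their difference.

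Continuity of $(q,t)\mapsto U_{KdV}^t(q)-U_{Airy}^t(q)$ and real analyticity in $q$ then follow by tracking the chain of operations: $S$, $S^{-1}$, $A$ and $B$ are real analytic by Theorem \ref{reflthm}, $\F_-^{\pm 1}$ are linear isomorphisms between the relevant spaces, and multiplication by $e^{8ik^3t}$ is linear in the argument and continuous in $t$ in the appropriate operator norm. The principal technical obstacle is precisely this last point: the multiplier $e^{8ik^3t}$ is \emph{not} bounded on $\S^{M,N}$ on its own, since differentiating $M$ times in $k$ creates factors of order $k^{2M}$ that must be absorbed by additional decay of the scattering data. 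Making this bound quantitative, uniformly on compact $t$-intervals, is exactly what forces the assumption $N\geq 2M$ and constitutes the main work of the proof.
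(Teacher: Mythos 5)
Your overall route is exactly the paper's: represent $\kdv^t=S^{-1}\circ\rot^t\circ S$, prove that the multiplier $\rot^t$ preserves $\S^{M,N}$ when $N\ge 2M$ (this is the paper's Lemma \ref{lem:airy.flow.invariant}), and then use $S=\F_-+A$, $S^{-1}=\F_-^{-1}+B$ to arrive at the same identity $\kdv^t(q)-\airy^t(q)=\F_-^{-1}\circ\rot^t\circ A(q)+B\circ\rot^t\circ S(q)$, after which Theorem \ref{reflthm}(ii) gives the $1$-smoothing and the real analyticity in $q$, and item (i) plus Remark \ref{rem.airy.flow} give the $L^2_M$ decay.

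However, the step you yourself single out as the main work is where your argument, as written, fails. For the intermediate Leibniz terms $k^{2j}\derk^{M-j}g$ with $1\le j\le M-1$ you propose interpolation between $L^2_{2M}$ and $H^{M-1}$. This cannot work: already for $j=1$ you need $\langle k\rangle^{2}\derk^{M-1}g\in L^2$, and no interpolation between a weight-only endpoint (zero derivatives) and a derivative endpoint of order $M-1$ yields a positive weight on the $(M-1)$-st derivative; quantitatively, with the estimate \eqref{inter.est} one gets weight $\theta b$ on the derivative of order $(1-\theta)a$, and with $a=M-1$ the requirement $(1-\theta)(M-1)=M-j$ forces $\theta=(j-1)/(M-1)$, so $\theta b\ge 2j$ fails for every $1\le j\le M-1$ (even with $b=N$). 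One genuinely needs the order-$M$ endpoint, and that information is only available with the $\zeta$-weight: the paper applies \eqref{inter.est} to $f=\zeta\sigma\in H^M_\C\cap L^2_N$ with $a=M$, $b=N$, $\theta=j/M$ (here $N\ge 2M$ gives weight at least $2j$), and then removes the $\zeta$ by Leibniz, using that $\derk^l\zeta$ has compact support for $l\ge 1$. Your sketch also glosses over this $\zeta$-issue by interpolating $g$ itself, although only $\zeta\derk^M g$, not $\derk^M g$, is known to lie in $L^2$. A secondary inaccuracy: multiplication by $e^{\pm 8ik^3t}$ is not continuous in $t$ in operator norm on these $L^2$-based spaces (the norm distance between two different times is of order one); the continuity of $(q,t)\mapsto \kdv^t(q)-\airy^t(q)$ should instead be deduced from strong continuity in $t$ combined with the locally uniform bounds for the analytic maps $A$, $B$, $S$, $S^{-1}$.
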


\vspace{1em}
\noindent{\em Outline of the proof: } In Section 2 we study analytic properties of the Jost functions $f_j(q,x,k)$, $j=1,2$, in appropriate Banach spaces. We use these results in Section \ref{sec:dir.scat} to prove  the direct scattering part of Theorem \ref{reflthm}.  The inverse scattering part of Theorem \ref{reflthm} is proved in Section \ref{sec:inv.scat}. Finally in Section 5 we prove Corollary \ref{thm:actions} and Theorem \ref{firstapprox}.

\vspace{1em}
\noindent{\em Related works: } As we mentioned above, this paper is motivated in part from the study of the  $1$-smoothing property of  the KdV flow in the
periodic setup,   established recently in \cite{babin_ilyin_titi,erdogan_tzirakis2,beat2}. 
In \cite{beat2} the one smoothing property of the Birkhoff map has been exploited to prove that for $q \in H^N(\T, \R)$,
$N\geq 1$, the difference $\kdv^t(q) - \airy^t(q)$ is bounded in $H^{N+1}(\T, \R)$ with a bound which grows 
linearly in time.

Kappeler and Trubowitz  \cite{kapptrub, kapptrub2} studied analytic  properties of the scattering map $S$ between weighted Sobolev spaces. More precisely, define the spaces
\begin{align*}
& H^{n,\alpha} := \left\{ f \in L^2 : x^\beta \derx^j f \in L^2 , 0 \leq j \leq n, 0 \leq \beta \leq \alpha \right\} \ , \\
& H^{n,\alpha}_{\sharp}:= \left\{ f \in H^{n,\alpha} : x^\beta \derx^{n+1} f \in L^2 , 1 \leq \beta \leq \alpha \right\} \ .
\end{align*}
In \cite{kapptrub}, Kappeler and Trubowitz showed that the map $q \mapsto S(q, \cdot)$ is a real analytic diffeomorphism from $\class \cap H^{N,N}$ to $\S \cap H^{N-1,N}_{\sharp}$, $N \in \Z_{\geq 3}$.
They extend their results to potentials with finitely many bound states in \cite{kapptrub2}. Unfortunately, $\class \cap H^{N,N}$ is not left invariant under the KdV flow.

Results concerning the 1-smoothing property of the inverse scattering map  were obtained previously in  \cite{novikovR}, where it is shown that  for a potential $q$ in the space $ W^{n,1}(\R, \R)$ of real-valued functions with weak derivatives up to order $n$ in $L^1$
$$
q(x) - \frac{1}{\pi} \int_\R e^{-2ikx} \chi_c(k) 2ik r_+(q,k) dk \in W^{n+1,1}(\R, \R) \ . 
$$
Here  $c$ is an arbitrary number with $c > \norm{q}_{L^1}$ and $\chi_c(k)=0$ for $|k|\leq c\,$, $\chi_c(k)=|k|-c$ for $c\leq |k|\leq c+1$, and 1 otherwise. The main difference between the result in \cite{novikovR} and ours concerns the function spaces considered. For the application to the KdV we need to choose function spaces such as $H^N \cap L^2_M$ for which KdV is well posed. To the best of our knowledge it is not known if KdV is well posed in $W^{n,1}(\R, \R)$.
Furthermore in \cite{novikovR}  the question of  analyticity  of the map $q \mapsto r_+(q)$ and its inverse  is not addressed.

We remark that Theorem \ref{reflthm} treats just the case of  regular potentials. In \cite{perry1,perry2}   a special class of distributions  is considered. In particular the authors study  Miura potentials $q \in H^{-1}_{loc}(\R, \R)$ such that $q = u' + u^2$ for some $u \in L^1(\R, \R) \cap L^2(\R, \R)$, and prove that the map $q \mapsto r_+$ is bijective and locally bi-Lipschitz continuous  between appropriate  spaces.
Finally we point out the work of Zhou \cite{zhou}, in which  $L^2$-Sobolev space bijectivity for the scattering and inverse scattering transforms associated with the ZS-AKNS system are proved.

%%%%%%%%%%%%%%%%%%%%%%%%%%%%%%%%%%%%%%%%%%%%%%%%%%%%%%%%%%%%%%%%%%%%%%%%%%%%%%%%%%%%%%%%%%%%%%%%%%%%%%%%%%%
%%%%%%%%%%%%%%%%%%%%%%%%%%%%%%%%%%%%%%%%%%%%%%%%%%%%%%%%%%%%%%%%%%%%%%%%%%%%%%%%%%%%%%%%%%%%%%%%%%%%%%%%%%%
%%%%%%%%%%%%%%%%%%%%%%%%%%%%%%%%%%%%%%%%%%%%%%%%%%%%%%%%%%%%%%%%%%%%%%%%%%%%%%%%%%%%%%%%%%%%%%%%%%%%%%%%%%%
\section{Jost solutions}

In this section we assume that the potential $q$ is complex-valued. Often we will assume that   $q \in L^2_{M}$ with $M \in \Z_{\geq 4}$. 
Consider the normalized Jost functions  $m_1(q,x,k):= e^{-ikx}f_1(q,x,k)$ and $m_2(q,x,k):= e^{ikx}f_2(q,x,k)$ which satisfy the following integral equations
\begin{align}
\label{defm}
 &m_1(q, x,k)=1+\int_x^{+\infty} D_k(t-x)\, q(t) \, m_1(q,t,k) dt \\
 &m_2(q, x,k)=1+\int_{-\infty}^{x} D_k(x-t)\, q(t) \, m_2(q,t,k) dt 
 \label{defm2}
 \end{align}
where $D_k(y):= \int_0^y e^{2iks} ds$.

The purpose of this section is to  analyze the solutions of the integral equations \eqref{defm} and \eqref{defm2} in spaces needed for our application to KdV. We adapt the corresponding results of \cite{kapptrub} to these spaces. As \eqref{defm} and \eqref{defm2} are analyzed in a similar way we concentrate  on \eqref{defm} only. For simplicity we write $m(q,x,k)$ for $m_1(q,x,k)$. 

For $1 \leq p \leq \infty$,  $M \geq 1$ and  $ a \in \R, \ 1\leq \alpha<\infty$, $1 \leq \beta \leq \infty$ we introduce the spaces 
$$L^p_{M} := \left\{f: \R \to \C: \; \x^M f \in L^p \right\} \ , \quad  L^\alpha_{x\geq a} L^\beta:=\left\{ f: [a, +\infty) \times \R \rightarrow \C:  \norm{f}_{L^\alpha_{x\geq a} L^\beta} < +\infty \right\}$$ 
where  $\x:= (1+x^2)^{1/2}$, $L^p$ is the standard $L^p$ space, and 
$$\norm{f}_{L^\alpha_{x\geq a} L^\beta} := \Big(\int_{a}^{+ \infty} \norm{f(x, \cdot)}^\alpha_{L^\beta} \,dx \Big)^{1/\alpha} $$
whereas for $\alpha=\infty$, 
$\norm{f}_{L^{\infty}_{x\geq a} L^\beta}:= \sup_{ x \geq a } \norm{f(x, \cdot)}_{L^\beta}.$ 
We consider also the space $C^0_{x\geq a} L^\beta := C^0\left( [a, +\infty), L^\beta \right)$ with 
$\norm{f}_{C^0_{x\geq a} L^\beta} := \sup_{ x\geq a} \norm{f(x, \cdot)}_{L^\beta}< \infty$.
We will use also the space $ L^\alpha_{x\leq a} L^\beta$ of functions $ f: (-\infty, a]\times \R \to \C$ with finite norm 
$\norm{f}_{L^\alpha_{x\leq a} L^\beta} := \Big(\int_{-\infty}^{a} \norm{f(x, \cdot)}^\alpha_{L^\beta} \,dx \Big)^{1/\alpha} $.
Moreover given any  Banach spaces $X$ and $Y$ we denote by $\L(X,Y)$  the Banach space of linear bounded operators from $X$ to $Y$ endowed with the operator norm. If $X=Y$, we simply write  $\L(X)$. \\
For the notion of an analytic map between complex  Banach spaces we refer to Appendix \ref{analytic_map}.\\
 We begin by stating a well known result about the properties of $m$. 
\begin{theorem}[ \cite{deift}] \label{deift_jost} 
Let $q \in L^1_1$. For each $k, \, \Im k \geq 0$, the integral equation
$$
m(x,k) = 1 + \intx{x} D_k(t-x) q(t) m(t,k) dt \ , \qquad x \in \R 
$$
has a unique solution $m \in C^2(\R, \C)$ which solves the equation
$m'' + 2ik m' = q(x) m $ with $m(x,k) \to 1$ as $x \to +\infty$. 
If in addition $q$ is real valued   the function $m$ satisfies the reality condition $\overline{m(q,k)}= m(q,-k)$. 
Moreover, there exists a constant $K>0$  which can be chosen uniformly on  bounded subsets of $L^1_{1}$ such that
the following estimates hold for any $x \in \R$
\begin{enumerate}[(i)]
\item $|m(x,k) - 1| \leq e^{\eta(x) / |k|} \eta(x)/|k|, \quad k \neq 0  $;
\item $|m(x,k) - 1| \leq K \Big((1+ \max(-x,0))\intx{x}(1+|t|) |q(t)| dt \Big)/ (1+ |k|)$;
\item $|m'(x,k)| \leq K_1 \Big(\intx{x}(1+|t|) |q(t)| dt\Big)/(1+ |k|) $
\end{enumerate}
where $\eta(x) = \intx{x} |q(t)| dt$. For each $x$, $m(x,k)$ is analytic in $\Im k >0$ and continuous in $\Im k \geq 0$. In particular, for every $x$ fixed, $k \mapsto m(x,k) -1 \in H^{2+},$ where $H^{2+}$ is the Hardy space of functions analytic in the upper half plane such that $\sup_{y >0}\intii |h(k+iy)|^2 \,dk < \infty$.
\end{theorem}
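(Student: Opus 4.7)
The plan is to solve the Volterra integral equation by Picard iteration and read off all the stated properties from the resulting Neumann series. Write $m = \sum_{n\geq 0} h_n$ where $h_0(x,k) \equiv 1$ and
\begin{equation*}
h_{n+1}(x,k) := \int_x^{+\infty} D_k(t-x)\,q(t)\,h_n(t,k)\,dt.
\end{equation*}
Because the upper limit is $+\infty$ and the kernel has the Volterra structure in $t \geq x$, convergence will come from a factorial gain, and uniqueness will follow from the same bound applied to the difference of two solutions. The second-order ODE and the boundary condition $m(x,k)\to 1$ as $x\to+\infty$ are recovered by differentiating the integral equation twice, using $D_k(0)=0$ and $D_k'(y)=e^{2iky}$, together with the dominated convergence statement $\int_x^\infty |q(t)|\,dt \to 0$ as $x \to +\infty$.

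The two elementary bounds on the kernel are the crux. For $\Im k \geq 0$ and $y \geq 0$ one has the explicit formula $D_k(y) = (e^{2iky}-1)/(2ik)$, from which
\begin{equation*}
|D_k(y)| \leq \min\bigl(y, 1/|k|\bigr) \leq C\,\frac{1+y}{1+|k|},
\end{equation*}
where the last inequality handles $k=0$ and interpolates between the two regimes. Iterating with the cruder bound $|D_k(y)|\leq 1/|k|$ gives $|h_n(x,k)|\leq \eta(x)^n/(|k|^n n!)$, and summing the series yields estimate (i). Iterating instead with the bound $|D_k(t-x)|\leq C(1+t-x)/(1+|k|) \leq C(1+\max(-x,0))(1+|t|)/(1+|k|)$ for the first step, and any of the two bounds in subsequent steps, produces estimate (ii) after summing a geometric series once $|q|$ is small (which one achieves uniformly on bounded subsets of $L^1_1$ by splitting the integral at a large threshold and using $\eta(x)\to 0$). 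Analyticity of each $h_n(x,\cdot)$ in $\Im k > 0$ is clear since $D_k(y)$ is entire in $k$ and the integrand is dominated by an integrable majorant uniform on compact sets of the upper half plane; continuity up to $\Im k = 0$ and the $H^{2+}$ statement come from the additional decay factor $|e^{2ik(t-x)}|=e^{-2(\Im k)(t-x)}$ for $t\geq x$, which via Plancherel controls $\sup_{y>0}\int |m(x,k+iy)-1|^2\,dk$.

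For the derivative estimate (iii) I would differentiate the integral equation directly: the boundary term vanishes because $D_k(0)=0$, so
\begin{equation*}
\partial_x m(x,k) = -\int_x^{+\infty} e^{2ik(t-x)}\,q(t)\,m(t,k)\,dt.
\end{equation*}
A trivial bound would only give $\|q\|_{L^1}$ on the right, losing the $1/(1+|k|)$ factor. To recover it, I would integrate by parts once in $t$ (writing $e^{2ik(t-x)} = \partial_t e^{2ik(t-x)}/(2ik)$) for $|k|\geq 1$, which produces a $1/|k|$ together with a boundary term at $t=x$ equal to $-q(x)m(x,k)/(2ik)$ and a remainder involving $q'$ — but since we only assume $q\in L^1_1$ we must instead bound the integrand by hand, using $|e^{2ik(t-x)}|\leq 1$ on the real axis and then interpolating with the trivial bound via the same weight $(1+|t|)/(1+|k|)$ that appeared in the estimate for $D_k$. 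The reality condition is immediate from the observation that if $q$ is real then $\overline{m(q,x,k)}$ satisfies the same integral equation as $m(q,x,-\overline{k})$, so uniqueness forces $\overline{m(q,x,k)} = m(q,x,-\overline{k})$, which on the real axis reduces to the claimed identity.

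The main obstacle I expect is neither the iteration nor the $k$-analyticity — both are standard — but tracking the constants so that $K$ depends only on a bound for $\|q\|_{L^1_1}$ and not on the particular $q$. This amounts to carrying out the Picard iteration not in a single global norm but in an $x$-dependent way, exploiting that $\eta(x) = \int_x^\infty |q|\,dt$ is small for $x$ large while the factor $(1+\max(-x,0))$ in estimate (ii) absorbs the loss on the left, so that the Neumann series converges with a bound uniform on bounded sets of $L^1_1$. Handling the delicate $|k|\to 0$ limit in (ii)–(iii), where the naive bound $1/|k|$ blows up, requires invoking the $|D_k(y)|\leq y$ side of the kernel estimate at the appropriate step, and this interpolation is the only point where care is really needed.
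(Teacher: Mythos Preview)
The paper does not prove this theorem: it is quoted from Deift--Trubowitz \cite{deift} and used as a black box, so there is no proof in the paper to compare your proposal against. Your plan is precisely the standard Deift--Trubowitz argument --- build $m$ as the Neumann series $\sum_{n\ge 0} h_n$, exploit the Volterra structure for the factorial gain, and use the two-sided kernel bound $|D_k(y)|\le \min(y,1/|k|)$ for $\Im k\ge 0$, $y\ge 0$ to obtain (i) and (ii). Uniqueness, the ODE, the boundary condition, analyticity in $\Im k>0$, the $H^{2+}$ statement, and the reality condition are all handled correctly in your sketch.

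The one genuine soft spot is your treatment of (iii). You correctly derive $m'(x,k)=-\int_x^\infty e^{2ik(t-x)}q(t)m(t,k)\,dt$ and correctly flag that the trivial bound loses the $1/(1+|k|)$ factor. But your proposed fix --- ``interpolating with the trivial bound via the same weight $(1+|t|)/(1+|k|)$'' --- does not actually supply the missing factor: unlike $D_k(y)$, the kernel $e^{2ik(t-x)}$ has modulus $1$ on the real axis for every $k$, so there is no $|k|$-decay to interpolate with. If you split $m=1+(m-1)$, the contribution of $m-1$ inherits $1/(1+|k|)$ from (ii), but the remaining piece $\int_x^\infty e^{2ik(t-x)}q(t)\,dt$ is a bare Fourier integral of an $L^1$ function and need not decay like $1/|k|$ uniformly on bounded sets of $L^1_1$ (take $q_n=n\chi_{[0,1/n]}$: the $L^1_1$-norms stay bounded while the integral at $x=0$ stays close to $1$ for $|k|\ll n$). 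So either $K_1$ in (iii) is not meant to enjoy the same uniformity as $K$, or the bound should be read with an additional hypothesis; in any case, this step does not close as you have written it, and you should consult \cite{deift} directly for the exact formulation and its proof.
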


\vspace{1em}
\noindent {\em  Estimates on the Jost functions.}
 \begin{proposition} 
\label{prop_minLit}
For any $q \in L^2_M$ with $M \geq 2$, $a \in \R$ and $2 \leq \beta \leq +\infty$, the solution   $m(q)$ of \eqref{defm} satisfies  $m(q)-1 \in \Czp \cap \Ltt$. The map $L^2_M \ni q  \mapsto m(q) -1 \in \Czp \cap \Ltt$ is analytic. Moreover there exist constants $C_1, C_2 >0$, only dependent on $a, \beta$,  such that
\begin{equation}
\norm{m(q)-1}_{\Czp}\leq C_1 e^{\norm{q}_{L^1_1}} \norm{q}_{L^2_1},  \quad 
   \norm{m(q)-1}_{\Ltt}\leq C_2 \norm{q}_{L^2_2}\left( 1+ \norm{q}_{L^2_{3/2}} e^{\norm{q}_{L^1_1}}\right).
\end{equation}
\end{proposition}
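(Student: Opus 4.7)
Plan. The proposition combines three claims: existence of the Jost solution in the mixed-norm spaces, quantitative estimates, and analyticity of $q\mapsto m(q)-1$. Since Theorem~\ref{deift_jost} already supplies existence together with the uniform a priori bound $|m(q,x,k)|\le C_a\,e^{\|q\|_{L^1_1}}$ for $x\ge a$ (via estimate (ii) there), the task reduces to upgrading those pointwise bounds to the stated mixed-norm inequalities and to verifying analyticity. The starting point is to rewrite \eqref{defm} as
\[
g=h+T_q g,\qquad g:=m(q)-1,\qquad T_q f(x,k):=\int_x^{+\infty}D_k(t-x)\,q(t)\,f(t,k)\,dt,\qquad h:=T_q 1,
\]
so that the Volterra structure of $T_q$ gives the Neumann expansion $g=\sum_{n\ge 1}T_q^n 1$.

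The key technical step is to estimate $h$ via its Fourier representation. Substituting $s=t-x$ and applying Fubini to $D_k(s)=\int_0^s e^{2iks'}\,ds'$ gives
\[
h(x,k)=\int_0^{+\infty}e^{2iks'}\,Q(x+s')\,ds',\qquad Q(v):=\int_v^{+\infty}q(u)\,du,
\]
so $h(x,\cdot)$ is (a rescaling of) the Fourier transform of $Q(x+\cdot)\mathbf{1}_{[0,+\infty)}$. Plancherel yields $\|h(x,\cdot)\|_{L^2_k}^2=\pi\int_x^{+\infty}|Q(v)|^2\,dv$, and a Hardy-type integration by parts (using $Q'=-q$ and $Q(+\infty)=0$) bounds the right-hand side by $C_a\,\|q\|_{L^2_1}^2$ uniformly for $x\ge a$, giving the $\beta=2$ part of the $C^0_{x\ge a}L^\beta$ estimate. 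Inserting this in Fubini, $\|h\|_{L^2_{x\ge a}L^2}^2=\pi\int_a^{+\infty}|Q(v)|^2(v-a)\,dv$, and the Cauchy--Schwarz bound $|Q(v)|^2\lesssim\|q\|_{L^2_2}^2(1+|v|)^{-3}$ produces $\|h\|_{L^2_{x\ge a}L^2}\le C_a\|q\|_{L^2_2}$. For $\beta\in(2,+\infty]$ the same Fourier representation combined with Hausdorff--Young gives $\|h(x,\cdot)\|_{L^\beta_k}\lesssim\|Q(x+\cdot)\|_{L^{\beta'}([0,+\infty))}$, and pointwise bounds on $Q$ via Cauchy--Schwarz against a sufficiently high weight turn the right-hand side into a weighted $L^2$ norm of $q$; at the endpoint $\beta=+\infty$ one splits $|k|\le 1$ from $|k|\ge 1$ to use the two competing bounds $|D_k(y)|\le y$ and $|D_k(y)|\le 1/|k|$ separately.

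For the full $g$, the a priori bound on $m$ is exploited by writing $g=T_q m$ rather than $h+T_q g$, which produces $|g(x,k)|\le C_a\,e^{\|q\|_{L^1_1}}\int_x^{+\infty}|D_k(t-x)|\,|q(t)|\,dt$. Minkowski in $L^2_k$ together with the identity $\|D_k(y)\|_{L^2_k}^2=\pi y$ reduces the $L^2_k$ norm of this object to $\sqrt{\pi}\int_x^{+\infty}\sqrt{t-x}\,|q(t)|\,dt$, and a Cauchy--Schwarz step against the weight $(1+t^2)^{3/2}$ generates precisely the $\|q\|_{L^2_{3/2}}$ that modulates the exponential in the stated $L^2_{x\ge a}L^2$ bound; a simpler variant of the same argument produces the $C^0_{x\ge a}L^\beta$ estimate.

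Analyticity is then essentially formal. For complex $q\in L^2_M$ each iterate $T_q^n 1$ is a continuous symmetric $n$-linear form in $q$, and the symmetric Volterra estimate $|T_q^n 1(x,k)|\le\phi(x,k)^n/n!$ with $\phi(x,k):=\int_x^{+\infty}|D_k(t-x)|\,|q(t)|\,dt$, combined with the bounds on $\phi$ supplied by the first two steps, gives absolute convergence of $\sum_n T_q^n 1$ in the target mixed-norm space, uniformly on bounded subsets of $L^2_M$. The sum is therefore a locally bounded sum of continuous homogeneous polynomials, hence analytic by the criterion recalled in Appendix~\ref{analytic_map}. The main technical obstacle is arranging the Plancherel/Hausdorff--Young and Cauchy--Schwarz steps so that exactly $\|q\|_{L^2_1}$ emerges in the $C^0$ estimate and $\|q\|_{L^2_2}$ together with $\|q\|_{L^2_{3/2}}$ in the $L^2_xL^2_k$ estimate; the $\beta=+\infty$ endpoint in particular requires a careful split in $k$ and a delicate balancing of the two available pointwise bounds on $D_k$.
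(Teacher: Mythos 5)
Your strategy is workable and differs from the paper's mainly in organization: the paper factors $m(q)-1=(Id-\K(q))^{-1}\K(q)[1]$, proves operator-norm bounds for $\K(q)$ on $\Czp$ and $\Ltt$ (Lemmas \ref{KinLit}, \ref{KinLtt}) and Plancherel-based bounds for $\K(q)[1]$ (Lemma \ref{K1}), and gets analyticity as a composition of analytic maps, whereas you estimate $m-1=\K(q)[m]$ through a uniform sup bound on $m$ and prove analyticity by summing the iterated Volterra series of homogeneous polynomials. The tools (Plancherel in $k$, factorial Volterra gain, weighted Cauchy--Schwarz) are the same; the difference is only in where the resolvent is hidden.

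However, two steps do not go through as written. The serious one is the $\Ltt$ estimate: after Minkowski you must bound $\norm{\int_x^{+\infty}\sqrt{t-x}\,|q(t)|\,dt}_{L^2_{x\geq a}}$ by $C\norm{q}_{L^2_{3/2}}$, and the pointwise Cauchy--Schwarz ``against the weight $(1+t^2)^{3/2}$'' gives $\norm{q}_{L^2_{3/2}}\bigl(\int_x^{+\infty}(t-x)\langle t\rangle^{-3}\,dt\bigr)^{1/2}\sim \norm{q}_{L^2_{3/2}}\langle x\rangle^{-1/2}$, which is \emph{not} square integrable in $x\geq a$: the weight $3/2$ is exactly the endpoint and your split loses a logarithm. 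The estimate itself is true, but you must argue as in Lemma \ref{KinLtt}: pair the weight with the other factor and integrate in $x$ first, so that $\int_a^{+\infty}\int_x^{+\infty}(t-x)^2|q(t)|^2\,dt\,dx=\int_a^{+\infty}|q(t)|^2\,\tfrac{(t-a)^3}{3}\,dt\leq C_a\norm{q}_{L^2_{3/2}}^2$ (or use a Schur test / an $\epsilon$-shifted weight). Second, the simplex bound $|T_q^n1(x,k)|\leq \phi(x,k)^n/n!$ with $\phi(x,k)=\int_x^{+\infty}|D_k(t-x)|\,|q(t)|\,dt$ is not justified: $y\mapsto|D_k(y)|=|\sin(ky)|/|k|$ is not monotone, so $|D_k(t_j-t_{j-1})|\not\leq|D_k(t_j-x)|$; you need the monotone majorant $\min(t_j-x,1/|k|)$ (or simply $t_j-x$, as the paper uses), which still yields the $1/n!$ gain and the locally uniform convergence needed for analyticity. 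Relatedly, the uniform bound $\sup_{x\geq a,\,k}|m(q,x,k)|\leq C_a e^{c_a\norm{q}_{L^1_1}}$ follows from this same Volterra iteration with $|D_k(y)|\leq y$, not from Theorem \ref{deift_jost}\,(ii), whose constant is only asserted to be uniform on bounded subsets of $L^1_1$. All of this is repairable, but the endpoint Cauchy--Schwarz step is a genuine gap in the argument as stated.
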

\begin{remark}
In comparison with \cite{kapptrub}, the novelty of Proposition \ref{prop_minLit} consists in the choice of spaces.
\end{remark}
To prove Proposition \ref{prop_minLit} we first need to establish some auxiliary results.
\begin{lemma} \begin{enumerate}
 \item[(i)] For any $q \in L^1_1$, $a \in \R$ and $1 \leq \beta \leq +\infty$, the linear operator
  \begin{equation}
\K(q): \Czp \to \Czp, \quad f \mapsto \K(q)[f](x,k) := \intx{x} D_k(t-x) q(t) f(t,k) dt
\label{operK}
\end{equation}
 is bounded. Moreover for any $n \geq 1$, the $n^{th}$ composition  $K(q)^n$ satisfies  $\norm{\K(q)^n}_{\L(\Czp)} \leq C^n \norm{q}^n_{L^1_1}/n!\,$ where $C >0$ is a constant depending only on $a$.
\item[(ii)] The map $\K: L^1_1 \to \L\left(\Czp\right), \; q \mapsto \K(q),$ is linear and bounded, and $Id - \K$ is invertible. More precisely,
\begin{align*}
 \left(Id - \K \right)^{-1}: \, L^1_1 & \to \L\left(\Czp\right), \quad  q   \mapsto  \left(Id- \K(q)\right)^{-1}
\end{align*}
is analytic and
$\norm{\left(Id- \K\right)^{-1}}_{\L\left(L^1_1, \Czp\right)}\leq e^{C \norm{q}_{L^1_1}}.$
\end{enumerate}
\label{KinLit}
\end{lemma}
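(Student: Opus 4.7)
The plan is to reduce both (i) and (ii) to a single pointwise kernel estimate for $D_k$ followed by a Volterra-type iteration and the standard Neumann series argument.

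\emph{Step 1 (Kernel bound and the case $n=1$).} Since $D_k(y)=\int_0^y e^{2iks}ds$ with $k\in\R$, one has trivially $|D_k(y)|\leq|y|$ uniformly in $k$. For $x\geq a$ and $t\geq x$ an elementary check shows $|t-x|\leq C_a\langle t\rangle$ for a constant $C_a$ depending only on $a$. Inserting this bound in the definition of $\K(q)$ and applying Minkowski's integral inequality in the $L^\beta_k$-norm would give
$$\|\K(q)[f](x,\cdot)\|_{L^\beta}\leq C_a\int_x^{+\infty}\langle t\rangle|q(t)|\,\|f(t,\cdot)\|_{L^\beta}\,dt\leq C_a\|q\|_{L^1_1}\|f\|_{\Czp},$$
so $\K(q)\in\L(\Czp)$ with $\|\K(q)\|_{\L(\Czp)}\leq C_a\|q\|_{L^1_1}$, and in addition, reading off the defining integral, the map $q\mapsto\K(q)$ is linear and bounded from $L^1_1$ to $\L(\Czp)$.

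\emph{Step 2 (Iteration and the factorial bound).} Setting $F_n(x):=\|\K(q)^n[f](x,\cdot)\|_{L^\beta}$ and iterating the bound of Step 1 would yield the recursion $F_n(x)\leq\int_x^{+\infty}C_a\langle t\rangle|q(t)|\,F_{n-1}(t)\,dt$. Defining $g(t):=C_a\langle t\rangle|q(t)|$ and $G(x):=\int_x^{+\infty}g(t)\,dt$, so that $G'=-g$, an immediate induction on $n$ gives $F_n(x)\leq \|f\|_{\Czp}G(x)^n/n!$. Taking the supremum over $x\geq a$ and using $G(a)\leq C_a\|q\|_{L^1_1}$ yields $\|\K(q)^n\|_{\L(\Czp)}\leq C^n\|q\|_{L^1_1}^n/n!$ with $C:=C_a$, completing part (i).

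\emph{Step 3 (Neumann series and analyticity).} The factorial estimate of Step 2 guarantees that the Neumann series $\sum_{n\geq 0}\K(q)^n$ converges absolutely in $\L(\Czp)$ for every $q\in L^1_1$, so $Id-\K(q)$ is invertible with
$$\|(Id-\K(q))^{-1}\|_{\L(\Czp)}\leq \sum_{n\geq 0}\frac{(C\|q\|_{L^1_1})^n}{n!}=e^{C\|q\|_{L^1_1}}.$$
Since $q\mapsto\K(q)$ is linear and bounded, hence real analytic, composition with the inversion map $T\mapsto(Id-T)^{-1}$ (analytic on the open subset of $\L(\Czp)$ where $Id-T$ is invertible, in the sense recalled in Appendix \ref{analytic_map}) gives real analyticity of $q\mapsto(Id-\K(q))^{-1}$ on all of $L^1_1$, proving (ii).

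The only substantive point is the kernel bound $|t-x|\leq C_a\langle t\rangle$ of Step 1: it is what converts the linear-in-$y$ growth of $D_k(y)$ into the $\langle t\rangle$-weight needed to absorb everything into $\|q\|_{L^1_1}$ uniformly for $x\geq a$. Once this is in place the rest is a textbook Volterra-Neumann argument.
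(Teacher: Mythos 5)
Your proof is correct and follows essentially the same route as the paper's: the kernel bound $|D_k(y)|\leq |y|$, a first-order estimate absorbing the weight into $\norm{q}_{L^1_1}$ (you use Minkowski's integral inequality where the paper pairs against an $L^\alpha$ test function), the factorial bound for the iterates (your Volterra induction via $\int_x^{+\infty} g\,G^{n-1}\,dt = G(x)^n/n$ is the paper's simplex observation in disguise), and then the Neumann series together with the linearity of $q\mapsto \K(q)$ to get invertibility, the exponential bound, and analyticity. Like the paper, you leave implicit the routine dominated-convergence check that $\K(q)[f]$ is continuous in $x$ with values in $L^\beta$, so that it indeed lands in $\Czp$.
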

\begin{proof} Let $h \in L^\alpha$ with $\frac{1}{\alpha}+ \frac{1}{\beta}=1$.  Using  $\mmod{D_k(t-x)}\leq |t-x|$, one has 
\begin{align*} 
 \mmod{\intii h(k) \K(q)[f](x,k) dk}& \leq \intx{x} dt \, |t-x| |q(t)| \norm{f(t, \cdot)}_{L^\beta} \norm{h}_{L^\alpha} \\
 &  \leq 
 \left(\intx{a} |t-a| |q(t)| dt \right)
\norm{f}_{\Czp} \norm{h}_{L^\alpha},
\end{align*}
and hence $\norm{\K(q)}_{\L(\Czp)}\leq \intx{a} |t-a| |q(t)| dt\leq C \norm{q}_{L^1_1}$, where $C>0$ is a constant depending just on $a$. To compute the norm of the iteration of the map $\K(q)$ it's enough to proceed as above and exploit the fact 
that the integration in $t$ is over a simplex, yielding 
 $\norm{\K(q)^n}_{\Czp}\leq C^n \norm{q}_{L^1_1}^n/n!$ for any $n \geq 1$. Therefore the Neumann series of the operator $ \Big(Id - \K(q) \Big)^{-1}=\sum_{n \geq 0} \K(q)^n$
converges absolutely in $\L\left(\Czp \right)$. Since $\K(q)$ is linear and bounded in $q$, the analyticity and, by item $(i)$, the claimed estimate for $(Id - \K)^{-1}$ follow.
\end{proof}

\begin{lemma} Let $a \in \R$. 
\begin{enumerate}[(i)]
 \item  For any $q \in L^2_{3/2}$, $\K(q)$ defines a bounded linear operator $\Ltt \to \Ltt$.  Moreover the $n^{th}$ composition $K(q)^n$ satisfies 
$$ \norm{\K(q)^n}_{\L(\Ltt)} \leq C^n \norm{q}_{L^2_{3/2}}\norm{q}^{n-1}_{L^1_1}/(n-1)!$$ 
where $C>0$ depends only  on $a$.
\item The map $\K: L^2_{3/2} \to \L\left(\Ltt\right), \quad q \mapsto \K(q)$ is linear and bounded; the map
\begin{align*}
  \left(Id - \K \right)^{-1}: \,L^2_{3/2} & \to \L\left(\Ltt\right) \quad   q   \mapsto \left(Id- \K(q)\right)^{-1}
\end{align*}
is analytic and $\norm{\left(Id- \K\right)^{-1}}_{\L(L^2_{3/2}, \Ltt)}\leq C \left( 1+ \norm{q}_{L^2_{3/2}}e^{\norm{q}_{L^1_1}}\right).$
\end{enumerate}
\label{KinLtt}
\end{lemma}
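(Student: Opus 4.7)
The approach is to derive both items from a single sharp operator-norm bound on $\K(q)^n$, obtained by pairing with a dual element and exploiting the simplex structure of the iterated integral. Fix $f, g \in \Ltt$, set $t_0 := x$, and expand $\la g, \K(q)^n f \ra$ as an $(n+1)$-fold integral over the simplex $\{a \leq t_0 \leq t_1 \leq \cdots \leq t_n\}$ with integrand $g(t_0, k)\prod_{i=1}^n D_k(t_i - t_{i-1}) q(t_i) f(t_n, k)$ (times $dk$). Using $\mmod{D_k(s)} \leq \mmod{s}$ together with Cauchy--Schwarz in $k$ one bounds this by
\begin{equation*}
\int_{a \leq t_0 \leq \cdots \leq t_n} \prod_{i=1}^n (t_i - t_{i-1}) \mmod{q(t_i)} \, G(t_0) F(t_n)\, dt_0\cdots dt_n,
\end{equation*}
where $G(y) := \norm{g(y, \cdot)}_{L^2}$ and $F(y) := \norm{f(y, \cdot)}_{L^2}$, so that $\norm{G}_{L^2_{y \geq a}} = \norm{g}_{\Ltt}$ and $\norm{F}_{L^2_{y \geq a}} = \norm{f}_{\Ltt}$.

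Next I would bound $(t_i - t_{i-1}) \leq C_a \la t_i \ra$ for every $i$ (the constant depending only on $a$) and integrate out the $n - 1$ middle variables $t_1, \ldots, t_{n-1}$ on the simplex $t_0 \leq t_1 \leq \cdots \leq t_{n-1} \leq t_n$ by the Dirichlet formula, which produces the factor $\norm{q}_{L^1_1}^{n-1}/(n-1)!$. The residual two-variable piece $\int_{a \leq t_0 \leq t_n} \la t_n \ra \mmod{q(t_n)} G(t_0) F(t_n)\, dt_0 dt_n$ is controlled by applying Cauchy--Schwarz in $t_0$, giving $\int_a^{t_n} G(t_0)\, dt_0 \leq (t_n - a)^{1/2} \norm{G}_{L^2_{y \geq a}}$, and then Cauchy--Schwarz in $t_n$:
\begin{equation*}
\int_a^\infty \la t_n\ra \mmod{q(t_n)}(t_n - a)^{1/2} F(t_n)\, dt_n \leq \norm{F}_{L^2_{y \geq a}} \Big(\int \la t_n \ra^2 (t_n - a) \mmod{q(t_n)}^2 dt_n\Big)^{1/2},
\end{equation*}
and the last integral is at most $C_a \norm{q}_{L^2_{3/2}}^2$ since $\la t\ra^2 (t - a) \lesssim \la t\ra^3$ for $t \geq a$. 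Taking the supremum over $\norm{g}_{\Ltt}, \norm{f}_{\Ltt} \leq 1$ produces item (i).

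Item (ii) is then a formal consequence. Linearity of $q \mapsto \K(q)$ from $L^2_{3/2}$ to $\L(\Ltt)$ is immediate, and boundedness is the $n = 1$ case of (i). Because $\K(q)^n$ is $n$-homogeneous in $q$, the Neumann series $(Id - \K(q))^{-1} = \sum_{n \geq 0}\K(q)^n$ converges absolutely in $\L(\Ltt)$ with total operator norm at most
\begin{equation*}
1 + C\norm{q}_{L^2_{3/2}}\sum_{n \geq 1}\frac{(C \norm{q}_{L^1_1})^{n-1}}{(n-1)!} = 1 + C\norm{q}_{L^2_{3/2}} e^{C\norm{q}_{L^1_1}};
\end{equation*}
the convergence is uniform on bounded subsets, giving real analyticity of $q \mapsto (Id - \K(q))^{-1}$ in the manner of Lemma~\ref{KinLit}(ii).

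The main obstacle is the precise apportionment in part (i): extracting $\norm{q}_{L^2_{3/2}}$ exactly once while converting each of the remaining $n - 1$ steps into $\norm{q}_{L^1_1}$ with the factorial $1/(n-1)!$. A naive iteration of the $n = 1$ bound would only yield $\norm{q}_{L^2_{3/2}}^n$ with no factorial, preventing convergence of the Neumann series outside a neighborhood of the origin. The argument above succeeds because the crude bound $(t_i - t_{i-1}) \leq C_a \la t_i\ra$ is placed on \emph{every} factor (freeing the $n - 1$ middle variables for the Dirichlet simplex reduction), while the extra weight $(t_n - a)^{1/2}$ produced by the $t_0$-integration is precisely what upgrades the $L^2_1$-norm of $q$ to the desired $L^2_{3/2}$-norm at the variable $t_n$.
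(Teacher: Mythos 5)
Your proof is correct and rests on the same ingredients as the paper's: the kernel bound $\mmod{D_k(y)}\leq \mmod{y}$, a single Cauchy--Schwarz extraction of one factor of $q$ measured in $L^2_{3/2}$, the simplex volume producing $\norm{q}_{L^1_1}^{n-1}/(n-1)!$ from the remaining $n-1$ factors, and the Neumann series argument for item (ii). The only difference is organizational: the paper bounds $\norm{\K(q)^n f(x,\cdot)}_{L^2}$ pointwise in $x$ (Cauchy--Schwarz in the last simplex variable, then integration in $x$ of $\intx{x}(t-x)^2\mmod{q(t)}^2\,dt$), whereas you estimate the bilinear form $\la g, \K(q)^n f\ra$ and let the $t_0$-integration supply the extra half-power of weight at $t_n$ --- the two computations are equivalent.
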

\begin{proof}
Proceeding as in the proof of the previous lemma, one gets for $x \geq a$  the estimate
$$\norm{\K(q)[f] (x, \cdot)}_{L^2} \leq \intx{x}|t-x| |q(t)| \norm{f(t, \cdot)}_{L^2} \,dt \leq 
\Big( \intx{x} (t-x)^2 |q(t)|^2 \,dt \Big)^{1/2} \norm{f}_{\Ltt},$$
from which it follows that
\begin{align*}
 \norm{\K(q)[f]}_{\Ltt}^2 & \leq \norm{\intx{x} (t-x)^2 |q(t)|^2 \,dt}_{L^1_{x\geq a}}^{1/2} \norm{f}_{\Ltt} \leq C
 \norm{ q}_{L^2_{3/2}} \norm{f}_{\Ltt}
 \end{align*}
proving item $(i)$. To estimate the composition  $\K(q)^n$ viewed as an operator on  $\Ltt$, remark that
\begin{align*}
& \norm{\K(q)^n [f](x, \cdot)}_{L^2}  \leq  \int\limits_{x\leq t_1 \leq \ldots \leq t_n} |t_1-x| |q(t_1)| \cdots |t_n - t_{n-1}| |q(t_n)| \norm{f(t_n, \cdot)}_{L^2} dt \\
& \quad \leq \int\limits_{x\leq t_1 \leq \ldots \leq t_n}  |t_1-x| |q(t_1)| \cdots |t_{n-1}-t_{n-2}| 
|q(t_{n-1})| \Big( \intx{t_{n-1}}dt_n\, (t_n-t_{n-1})^2\, |q(t_n)|^2\Big)^{1/2}  \norm{f}_{\Ltt} dt \\
& \quad \leq \Big( \intx{x} (t-x)^2 |q(t)|^2 \,dt \Big)^{1/2} \norm{f}_{\Ltt} \Big( \intx{x} |t-x| |q(t)| \,dt \Big)^{n-1}/(n-1)! \ .
\end{align*}
Therefore
\begin{align*}
 \norm{\K(q)^n [f]}_{\Ltt} \leq & \norm{\intx{x} (t-x)^2 |q(t)|^2 \,dt}_{L^1_{x\geq a}}^{1/2}
 \norm{f}_{\Ltt} \frac{C^{n-1} \norm{q}^{n-1}_{L^1_1}}{(n-1)!}\\
  \leq & \norm{q}_{L^2_{3/2}} \norm{f}_{\Ltt} C^n \frac{\norm{q}^{n-1}_{L^1_1}}{(n-1)!} 
 \end{align*}
from which item $(i)$ follows. Item $(ii)$ is then proved as in the previous Lemma.
\end{proof}
Note that for $f \equiv 1$, the expression in \eqref{operK} of $\K(q)[f]$, $\K(q)[1](x,k) = \intx{x} D_k(t-x)\, q(t)\,  dt$  is well defined.
\begin{lemma}
For any $2 \leq \beta \leq +\infty$ and $a \in \R$, the map $L^2_2 \ni q \mapsto \K(q)[1] \in \Czp \cap \Ltt$ is analytic. Furthermore
$$\norm{\K(q)[1]}_{\Czp} \leq C_1 \norm{q}_{L^2_2}  
,\quad \norm{\K(q)[1]}_{\Ltt} \leq C_2 \norm{q}_{L^2_2},$$
where $C_1, C_2 >0$ are constants depending  on $a$ and $\beta$.
\label{K1}
\end{lemma}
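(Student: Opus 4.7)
The plan is to exploit linearity of the map $q\mapsto\K(q)[1]$, which reduces the analyticity claim to a boundedness statement: any bounded linear map between complex Banach spaces is entire, and hence real analytic on the real subspace. The main work is therefore establishing the two quantitative bounds, after which continuity in $x$ (the $C^0$ part of $\Czp$) follows by a routine dominated-convergence argument using the envelopes produced along the way.

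The cornerstone is a single integration by parts. Set $P(t):=-\intx{t}q(s)\,ds$, so that $P'=q$ and $P(t)\to 0$ as $t\to+\infty$. Since $D_k(0)=0$ and $|D_k(y)|\leq 1/|k|$ for $k\neq 0$, both boundary terms vanish and
$$\K(q)[1](x,k)\;=\;-\intx{x}e^{2ik(t-x)}P(t)\,dt\;=\;-e^{-2ikx}\intx{x}e^{2ikt}P(t)\,dt.$$
The singular $1/(2ik)$ factor inside $D_k$ has been absorbed into $P$, and what remains is a genuine Fourier transform of the truncation $t\mapsto P(t)\one_{t\geq x}$.

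With this identity the $\Ltt$ bound is essentially free: Plancherel in $k$ gives $\norm{\K(q)[1](x,\cdot)}_{L^2}^2=\pi\intx{x}|P(t)|^2\,dt$, and then Fubini yields $\norm{\K(q)[1]}_{\Ltt}^2=\pi\intx{a}(t-a)|P(t)|^2\,dt$. To bound the right-hand side by $\norm{q}_{L^2_2}^2$ I would use the Cauchy--Schwarz estimate $|P(t)|\leq\norm{q}_{L^2_2}\bigl(\intx{t}\langle s\rangle^{-4}\,ds\bigr)^{1/2}\leq C\norm{q}_{L^2_2}\langle t\rangle^{-3/2}$ valid for $t\geq 0$, together with the cruder $|P(t)|\leq\norm{q}_{L^1}\leq C\norm{q}_{L^2_2}$ valid for all $t$; splitting the integral at $0$ (needed only when $a<0$) closes the estimate with a constant $C(a)$. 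The $\Czp$ bound is handled the same way at the endpoints: for $\beta=\infty$ the identity gives $|\K(q)[1](x,k)|\leq\intx{x}|P(t)|\,dt$, bounded uniformly for $x\geq a$ by the same envelopes, while $\beta=2$ is the pointwise-in-$x$ version of the Plancherel computation; the general case $\beta\in[2,\infty]$ follows from the Lyapunov interpolation $\norm{f}_{L^\beta}\leq\norm{f}_{L^2}^{2/\beta}\norm{f}_{L^\infty}^{1-2/\beta}$.

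The main obstacle---more conceptual than technical---is spotting the integration-by-parts identity. A direct attack based only on $|D_k(y)|\leq|y|$ would force one to estimate $\intx{x}(t-x)|q(t)|\,dt$ in $L^2_{x\geq a}$, which requires a messier Fubini-type double integral argument with an arithmetic--geometric mean step to decouple the $t$ and $s$ variables; the IBP trick instead reduces the $\Ltt$ bound to a one-dimensional weighted $L^2$ estimate on $P$ that is essentially immediate.
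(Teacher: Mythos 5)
Your proof is correct, but it takes a different route from the paper's for the key $L^2$-in-$k$ estimates. The paper works with $D_k$ directly: it computes, via Plancherel applied to $k\mapsto D_k(y)$ (whose Fourier transform is $\mathbbm{1}_{[0,y]}$), the kernel $\int_{-\infty}^{\infty} D_k(t-x)\overline{D_k(s-x)}\,dk=\tfrac{1}{\pi}\min(t-x,s-x)$, and then bounds the resulting double integral in $t,s$ (and its $L^1_{x\geq a}$ norm for the $\Ltt$ part) using a Fubini argument and the Hardy--Littlewood inequality; the $\beta=\infty$ case is handled trivially from $|D_k(y)|\leq|y|$, and the general $\beta$ by interpolation, exactly as you do. Your integration by parts, absorbing the singular factor into the antiderivative $P(t)=-\intx{t}q(s)\,ds$ so that $\K(q)[1](x,k)=-\intx{x}e^{2ik(t-x)}P(t)\,dt$, replaces the double-integral/min-kernel computation by a one-dimensional Plancherel identity $\norm{\K(q)[1](x,\cdot)}_{L^2}^2=\pi\intx{x}|P(t)|^2\,dt$ plus the pointwise envelope $|P(t)|\lesssim\norm{q}_{L^2_2}\langle t\rangle^{-3/2}$ ($t\geq0$); this is cleaner and avoids Hardy--Littlewood, at the modest cost of having to justify the vanishing of the boundary term at $+\infty$ (your argument covers $k\neq0$, which suffices since $k=0$ is a null set, and the identity extends to $k=0$ anyway because $P\in L^1([x,\infty))$). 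Two small remarks: your parenthetical claim that a ``direct attack'' would rest only on $|D_k(y)|\leq|y|$ misrepresents the alternative --- that bound alone gives no $L^2_k$ decay, and the paper's direct method is itself Plancherel-based; and for the $C^0_{x\geq a}$ continuity in the $\beta=\infty$ case, plain dominated convergence gives only pointwise-in-$k$ convergence, so you should invoke the uniform estimate $|D_k(t-x')-D_k(t-x)|\leq|x'-x|$ (or the analogous translation estimate on $P$ for $\beta=2$) to get convergence in the $L^\beta_k$ norm; this is routine and the paper is equally terse on it.
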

\begin{proof} Since the map $q \mapsto \K(q)[1]$ is linear in $q$, it suffices to prove its  continuity in $q$. 
Moreover, it is enough to prove the result for $\beta = 2$ and $\beta = +\infty$ as  the general case then follows by interpolation. For any $k \in \R$, the bound $|D_k(y)|\leq |y|$ shows that the map $k \mapsto D_k(y)$ is in $L^\infty$. Thus
$$
\norm{\K(q)[1](x, \cdot)}_{L^\infty} \leq \intx{x}(t-x) |q(t)| dt \leq \intx{a} |t-a| |q(t)| \, dt \leq C \norm{q}_{L^1_1},
$$
where $C>0$ is a constant depending only on  $a \in \R$. The claimed estimate follows by noting that $\norm{q}_{L^1_1}\leq C \norm{q}_{L^2_2}$.
\newline
Using that for $|k| \geq 1$, $|D_k(y)| \leq \frac{1}{|k|}$, one sees that $k \mapsto D_k(y)$ is $L^2$-integrable. Hence $k \mapsto D_k(t-x) D_{-k}(s-x)$ is integrable.
Actually, since the Fourier transform $\F_+ (D_k (y))$ in the $k$-variable of the function $k \mapsto D_k(y)$ is the function 
$\eta \mapsto \mathbbm{1}_{[0, y]} (\eta) $,               
by Plancherel's Theorem 
$$\int^{\infty}_{-\infty} D_k(t-x) \overline{D_{k}(s-x)} \;dk = 
\frac{1}{\pi}\int^{\infty}_{-\infty} \mathbbm{1}_{[0, t-x]} (\eta) \mathbbm{1}_{[0, s-x]} (\eta) \; d\eta = \frac{1}{\pi}\min(t-x, s-x).$$
For any $x \geq a$ one thus has 
\begin{align*} 
 \norm{\K(q)[1](x, \cdot)}^2_{L^2} & = \int^{\infty}_{-\infty} \K(q)[1](x, \cdot) \cdot\overline{\K(q)[1]}(x, \cdot)\, dk \\
 & =
 \iint\limits_{[x, \infty)\times [x, \infty)} dt\,ds\; q(t)\, \overline{q(s)} \intii D_k(t-x) D_{-k}(s-x)\, dk \ .
 \end{align*}
and hence
\begin{equation}
 \norm{\K(q)[1](x, \cdot)}^2_{L^2} \leq  \frac{2}{\pi} \intx{x} (t-x) |q(t)| \intx{t} |q(s)| ds \leq \frac{2}{\pi} \intx{a} ds\, |q(s)| \int_a^s  |t-a|\, |q(t)| \, dt \leq C \norm{q}_{L^2_1}^2 \ ,
\label{eq:l1}
\end{equation}
where the last inequality follows from the Hardy-Littlewood inequality. The continuity in $x$ follows from Lebesgue convergence Theorem.
\newline
To prove the second inequality, start from the second term in \eqref{eq:l1} and change the order of integration to obtain
$$ \norm{\K(q)[1]}_{\Ltt}^2 \leq \norm{\intx{x} |t-a| |q(t)| \intx{t} |q(s)| ds}_{L^1_{x \geq a}} \leq  \intx{a} |q(s)| \int\limits_a^s (s-a)^2 |q(s)| ds \leq C \norm{q}_{L^2_1} \norm{q}_{L^2_2}.$$
\end{proof}
%\begin{lemma} If $x^{m+1} q(x) \in L^2(\R^+)$, then $x^m \intx{x}q(s)ds \in L^2(\R^+)$ and
% $$\norm{x^m \intx{x}q(s) ds}_{L^2(\R^+)} \leq C \norm{x^{m+1} q(x)}_{L^2(\R^+)}.$$
%\label{hardyinf}
%\end{lemma}

{\em Proof of Proposition \ref{prop_minLit}.} 
 Formally, the solution of equation \eqref{defm} is given by 
\begin{equation} 
 m(q)-1 = \Big(Id - \K(q) \Big)^{-1}\K(q)[1].
\label{defmK}
\end{equation}
By Lemma \ref{KinLit}, \ref{KinLtt}, \ref{K1} 
it follows that the r.h.s. of \eqref{defmK} is  an element of $\Czp \cap \Ltt$, $2 \leq \beta \leq \infty$, and analytic as a function of $q$, since it is the composition of analytic maps.
\qed
\vspace{1em}\\
{\em Properties of $\derk^n m(q,x,k)$ for $1\leq n \leq M-1$.} In order to study $\derk^n m(q,x,k)$, we deduce from \eqref{defm} an integral equation for $\derk^n m(q, x, \cdot)$   and solve it. Recall that for any $M \in \Z_{\geq 0}$, $H^M_\C \equiv H^M(\R, \C)$ denotes the Sobolev space of functions $\{ f \in L^2 \vert \ \hat{f} \in L^2_M \} $. The result is summarized in the following
\begin{proposition} Fix $M \in \Z_{\geq 4}$ and $a \in \R$. For any integer $1\leq n \leq M-1$  the following holds: 
\begin{enumerate}[(i)]
\item for $q \in L^2_{M}$ and $x \geq a$ fixed, the function $k \mapsto m(q, x, k)-1$ is in $H^{M-1}_\C$;
\item the map
$L^2_{M} \ni q  \mapsto \derk^n m(q) \in \Czt$
is analytic. Moreover  $\norm{\derk^n m(q)}_{\Czt}\leq K \norm{q}_{L^2_{M}},$ where  $K$ can be chosen  uniformly on bounded subsets of $ L^2_{M}$.
\end{enumerate}
\label{prop_derminLit}
\end{proposition}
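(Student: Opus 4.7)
The plan is to prove (ii) by induction on $n$ and to deduce (i) at a fixed $x$ by collecting the estimates for $n = 0, 1, \ldots, M-1$ (choosing any $a \leq x$). The base case $n = 0$ is Proposition \ref{prop_minLit} applied with $\beta = 2$. For the inductive step, I differentiate the integral equation \eqref{defm} $n$ times in $k$, apply the Leibniz rule, and isolate the term containing $\derk^n m$, obtaining the Volterra equation
\begin{equation*}
(Id - \K(q))\,\derk^n m(q) = g_n(q), \qquad g_n(q)(x,k) := \sum_{j=0}^{n-1}\binom{n}{j} \intx{x} \derk^{n-j} D_k(t-x)\, q(t)\, \derk^j m(q,t,k)\, dt.
\end{equation*}
By Lemma \ref{KinLit} applied with $\beta = 2$, the operator $Id - \K(q)$ is analytically invertible on $\Czt$, so the problem reduces to showing that $q \mapsto g_n(q)$ is an analytic map from $L^2_M$ into $\Czt$ whose norm is bounded by a constant multiple of $\|q\|_{L^2_M}$ on bounded subsets of $L^2_M$.

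Two size estimates on the kernel, valid for any integer $p \geq 0$, drive the analysis:
\begin{equation*}
|\derk^p D_k(y)| \leq C_p\, |y|^{p+1}, \qquad \|\derk^p D_k(y)\|_{L^2_k} \leq C_p\, |y|^{p+1/2}.
\end{equation*}
The pointwise bound follows from $\derk^p D_k(y) = (2i)^p \int_0^y s^p e^{2iks}\, ds$; the $L^2_k$ bound uses Plancherel in $k$ exactly as in the proof of Lemma \ref{K1}, the $k$-Fourier transform of $\derk^p D_k(y)$ being a constant multiple of $\eta^p \mathbbm{1}_{[0,y]}(\eta)$. Applying Minkowski in $L^2_k$, I split $g_n(q)$ into the $j = 0$ contribution and the $j \geq 1$ contributions. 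For $j = 0$, the $L^\infty_k$ bound on $m(q,t,\cdot)$ supplied by Theorem \ref{deift_jost} combines with the $L^2_k$ bound on $\derk^n D_k(t-x)$ to give control by $\int_x^{+\infty} |q(t)|\,(t-x)^{n+1/2}\, dt$; for $j \geq 1$, the pointwise bound on $\derk^{n-j} D_k(t-x)$ combines with the inductive hypothesis $\derk^j m(q) \in \Czt$ to give control by $\|\derk^j m(q)\|_{\Czt} \int_x^{+\infty} |q(t)|\,(t-x)^{n-j+1}\, dt$. Continuity of $x \mapsto g_n(q)(x,\cdot)$ in $L^2_k$ follows by dominated convergence (as in Lemma \ref{K1}), and analyticity of $q \mapsto g_n(q)$ follows from the multilinear polynomial dependence on $q$ and on the $\derk^j m(q)$ for $j < n$, each of which is analytic by induction.

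The main obstacle is the tight weight bookkeeping required to bound the above $t$-integrals uniformly in $x \geq a$ by $\|q\|_{L^2_M}$ all the way to the endpoint $n = M - 1$; crucially, it is the $L^\infty_k$ control on $m$ provided by Theorem \ref{deift_jost} that allows the $j = 0$ summand to exploit the Plancherel-sharp half-power $(t-x)^{n+1/2}$ rather than the cruder $(t-x)^{n+1}$, which is exactly what prevents the loss of one derivative and permits $n$ to reach $M - 1$ rather than only $M - 2$.
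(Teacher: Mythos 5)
Your overall architecture --- induction on $n$, inverting $Id-\K(q)$ on $\Czt$ via Lemma \ref{KinLit}, and reducing everything to estimates on the inhomogeneity of the Volterra equation --- is the same as the paper's. The genuine gap is in your estimate of the top-order summand, and it occurs exactly at the endpoint $n=M-1$, which is the point of the proposition. For $j=0$ you bound $\bigl\| \intx{x} \derk^{n} D_k(t-x)\,q(t)\,m(q,t,k)\,dt \bigr\|_{L^2_k}$ by Minkowski, the $L^\infty_k$ bound on $m$ from Theorem \ref{deift_jost}, and $\|\derk^{n}D_k(t-x)\|_{L^2_k}\lesssim (t-x)^{n+1/2}$, arriving at $\intx{x} (t-x)^{n+1/2}|q(t)|\,dt$. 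For $n=M-1$ this quantity is \emph{not} controlled by $\norm{q}_{L^2_M}$ uniformly in $x\geq a$: the natural Cauchy--Schwarz bound requires $\int_x^{+\infty}(t-x)^{2M-1}\langle t\rangle^{-2M}\,dt<\infty$, which diverges logarithmically, and in fact for $q(t)=t^{-1/2}\langle t\rangle^{-M}(\log t)^{-\alpha}$ for $t\geq 2$ (and $0$ otherwise), with $1/2<\alpha\leq 1$, one checks that $q\in L^2_M$ while $\int^{+\infty}(t-x)^{M-1/2}|q(t)|\,dt=+\infty$. So your method of estimation does not even show finiteness of the $j=0$ term at the top order; contrary to your closing remark, the half-power gained from Plancherel combined with an $L^1_t$-pairing against $q$ only reaches $n\leq M-2$.

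The paper gets the endpoint by splitting $m=1+(m-1)$ in the top-order term and treating the two pieces with different tools (Lemma \ref{Kn1}): for $\K_n(q)[1]$ one does not pull the $L^2_k$-norm of the kernel inside the $t$-integral, but computes $\|\cdot\|_{L^2_k}^2$ as a double $(t,s)$-integral via Plancherel and uses the Hardy inequality (Lemma \ref{lem:techLemma} $(A3)$) to absorb the factor $\int_t^{+\infty}|q|$, which yields a bound by $\norm{q}_{L^2_{n+1}}^2$ valid up to $n=M-1$; for $\K_n(q)[m(q)-1]$ one uses the pointwise kernel bound $|\derk^n D_k(t-x)|\lesssim |t-x|^{n+1}$ and Cauchy--Schwarz in $t$ against the $\Ltt$-norm of $m(q)-1$ furnished by Proposition \ref{prop_minLit} --- this is precisely where the $x$-square-integrability of $m-1$ is essential, and it is absent from your scheme, which only uses the $L^\infty_k$ control of $m$. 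You need this splitting (or an equivalent device) for the $j=0$ term. A secondary gap: you differentiate \eqref{defm} $n$ times "by Leibniz" although the existence of $\derk^n m(q,x,\cdot)$ in $L^2$ is part of what assertion $(i)$ claims; the paper justifies the differentiated equation by the translation characterization \eqref{H1_char} of $H^1$ applied to $\derk^{n-1}m$ through the integral identity \eqref{2.10bis}. You could instead solve your Volterra equation for a candidate and identify it with the limit of difference quotients, but some such argument must be supplied.
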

\begin{remark}
In \cite{KaCo2} it is proved that if $q \in L^1_{M-1}$ then for every $x \geq a$ fixed the map $k \mapsto m(q,x,k)$ is in $C^{M-2}$; note that since $L^2_M \subset L^1_{M-1}$, we obtain the same regularity result by Sobolev embedding theorem.
\end{remark}
To prove Proposition \ref{prop_derminLit} we first need to derive some auxiliary results. 
Assuming that $m(q, x, \cdot)-1$ has appropriate regularity and decay properties, the $n^{th}$ derivative $\derk^n m(q, x, k)$ satisfies the following integral equation
\begin{equation}
 \derk^n m(q, x,k) = \sum_{j=0}^n \binom{n}{j} \intx{x} \derk^j D_k(t-x)\, q(t)\, \derk^{n-j} m(q, t,k)\; dt \ .
\label{defderkm}
\end{equation}
To write \eqref{defderkm} in a more convenient form introduce for $1 \leq j \leq n$ and $q \in L^2_{n+1}$  the operators
\begin{equation}
\K_j(q): \Czt \to \Czt, \quad f \mapsto \K_j(q)[f](x,k):= \intx{x} \derk^j D_k(t-x)\, q(t)\, f(t,k)\; dt
\label{defkj}
\end{equation}
leading to
\begin{equation}
\Big(Id - \K(q) \Big) \derk^n m(q) = \left( \sum_{j=1}^{n-1}\binom{n}{j} \K_j(q)[\derk^{n-j} m(q)] + \K_n(q)[m(q)-1] + \K_n(q)[1] \right).
\label{defmKj}
\end{equation}
In order to prove the claimed properties for $\derk^n m(q)$ we must show in particular that the r.h.s. of \eqref{defmKj} is  in $\Czt$. This is accomplished by the following
\begin{lemma}  Fix $M \in \Z_{\geq 4}$ and $a \in \R$. Then  there exists a constant $C>0$, depending only on  $a, M$,  such that the following holds: 
\begin{enumerate}[(i)]
\item for any integers $1 \leq n \leq M-1$ 
\begin{enumerate}
\item[(i1)]  the map
$L^2_{M}\ni q \mapsto \K_n(q)[1] \in \Czt$
is analytic, and $ \norm{\K_n(q)[1]}_{\Czt} \leq C \norm{q}_{L^2_{M}}$.
\item[(i2)]   the map
$ L^2_{M}\ni q \mapsto \K_n(q)\in \L\left(\Ltt, \Czt\right)$
is analytic. Moreover  
$$\norm{\K_n(q)[f]}_{\Czt} \leq \norm{q}_{L^2_{M}}\norm{f}_{\Ltt} \ .$$
\end{enumerate}
\item  For any $1\leq n \leq M-2$, the map
$ L^2_M \ni q \mapsto \K_n(q) \in \L\left(\Czt\right)$ is analytic. Moreover one has $ \norm{\K_n(q)[f]}_{\Czt}\leq C \norm{q}_{L^2_{M}}\norm{f}_{\Czt}.$
\item  As an application of item $(i)$ and $(ii)$, for any integers $1 \leq n \leq M-1$ the map $ L^2_{M}\ni q  \mapsto \K_n(q)[m(q)-1] \in \Czt$
is analytic, and
$$\norm{\K_n(q)[m(q)-1]}_{\Czt} \leq K_0' \norm{q}_{L^2_{M}}^2 \ ,$$ where $K_0'>0$ can be chosen uniformly on bounded subsets of $L^2_{M}$.
\end{enumerate} 
\label{Kn1}
\end{lemma}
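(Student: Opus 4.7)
All three parts follow a common template: bound $\|\K_n(q)[f](x,\cdot)\|_{L^2_k}$ uniformly in $x\geq a$, deduce continuity in $x$ from dominated convergence, and observe that linearity of $\K_n(q)$ in each of $q$ and $f$ turns any such bound into (real) analyticity. The key estimates to keep at hand on the kernel $\derk^n D_k(y) = (2i)^n\int_0^y s^n e^{2iks}\,ds$ are the pointwise bound $|\derk^n D_k(y)|\leq \frac{2^n|y|^{n+1}}{n+1}$, the high-frequency improvement $|\derk^n D_k(y)|\leq \frac{C_n|y|^n}{|k|}$ obtained by one integration by parts in $s$, and the Plancherel identity
\begin{equation*}
\int_\R \derk^n D_k(y)\,\overline{\derk^n D_k(z)}\,dk \;=\; \frac{4^n\pi}{2n+1}\,\min(y,z)^{2n+1} \qquad (y,z\geq 0),
\end{equation*}
which follows on writing $\derk^n D_k(y) = (2i)^n\,\F(s^n\mathbbm{1}_{[0,y]})(k)$ and applying Parseval, exactly in the spirit of Lemma \ref{K1}.

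For (i1), I would compute
\begin{equation*}
\|\K_n(q)[1](x,\cdot)\|_{L^2_k}^2 \;=\; \frac{4^n\pi}{2n+1}\iint_{[x,\infty)^2} q(t)\overline{q(s)}\,\min(t-x,s-x)^{2n+1}\,dt\,ds
\end{equation*}
via Plancherel and then rewrite the right-hand side in the cleaner form $(2n+1)\int_0^\infty r^{2n}|Q(x+r)|^2\,dr$, where $Q(u):=\int_u^\infty q(v)\,dv$, by using $\min(y,z)^{2n+1} = (2n+1)\int_0^{\min(y,z)} r^{2n}\,dr$ and exchanging orders of integration. A weighted Cauchy--Schwarz/Hardy--Littlewood estimate of the type already used for $n=0$ in Lemma \ref{K1} then delivers the bound $C\|q\|_{L^2_M}^2$. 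Continuity in $x$ follows by dominated convergence.

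For (i2), I would apply Minkowski's integral inequality in $t$ and then H\"older in $k$,
\begin{equation*}
\|\K_n(q)[f](x,\cdot)\|_{L^2_k} \;\leq\; \int_x^\infty |q(t)|\,\|\derk^n D_\cdot(t-x)\|_{L^\infty_k}\,\|f(t,\cdot)\|_{L^2_k}\,dt,
\end{equation*}
substitute the pointwise kernel bound $\|\derk^n D_\cdot(y)\|_{L^\infty_k}\leq C y^{n+1}$, and close with a Cauchy--Schwarz in $t$ separating the factors depending on $q$ and on $f$; the condition $n\leq M-1$ is exactly what makes $(t-x)^{n+1}\langle t\rangle^{-M}$ lie in $L^2_t([x,\infty))$ uniformly in $x\geq a$. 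Part (ii) is handled in exactly the same way, except that $\|f(t,\cdot)\|_{L^2_k}$ is now only controlled by $\|f\|_{\Czt}$ in sup-norm, so all of the $t$-decay must sit on $|q(t)|(t-x)^{n+1}$, and that factor is uniformly in $L^1_t([x,\infty))$ only under the strictly stronger assumption $n\leq M-2$.

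Finally, (iii) follows by a direct substitution. Proposition \ref{prop_minLit} supplies $m(q)-1\in\Czt\cap\Ltt$, with both norms controlled linearly by $\|q\|_{L^2_M}$ on bounded subsets. For $1\leq n\leq M-2$, plug $f=m(q)-1\in\Czt$ into (ii); for the endpoint $n=M-1$, plug $f=m(q)-1\in\Ltt$ into (i2). Either pairing produces the quadratic estimate $\|\K_n(q)[m(q)-1]\|_{\Czt}\leq K'_0\|q\|_{L^2_M}^2$, and analyticity is inherited from the composition of analytic maps. The most delicate step, in my estimation, is the endpoint $n=M-1$ of (i1): the polynomial factor $(t-x)^{2n+1}$ is on the verge of overwhelming the weight $\langle t\rangle^{2M}$, and the estimate really only closes through the representation $\int_0^\infty r^{2n}|Q(x+r)|^2\,dr$ together with a Hardy-type integration by parts whose boundary contributions must be disposed of by a limiting argument on compactly supported approximations of $q$.
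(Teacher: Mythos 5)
Your argument is in substance the paper's: (i1) via Parseval/Plancherel in the $k$-variable plus a Hardy-type inequality, (i2) and (ii) via the pointwise kernel bound $|\derk^n D_k(y)|\leq 2^n|y|^{n+1}$ followed by Cauchy--Schwarz in $t$, and (iii) by composing with Proposition \ref{prop_minLit} and the analyticity of the bilinear map $(q,f)\mapsto \K_n(q)[f]$. Your rewriting of the (i1) integral through $Q(u)=\int_u^{\infty}q$ is a Fubini-reshuffled version of the paper's estimate, which is closed by the Hardy inequality $(A3)$ of Lemma \ref{lem:techLemma}; the endpoint $n=M-1$ needs no limiting argument on compactly supported approximations, since $(A3)$ applies directly with $m=M-1$ and $\norm{(t-a)^{M}q}_{L^2_{t\geq a}}\leq C\norm{q}_{L^2_M}$. (For (iii) the paper simply feeds $m(q)-1\in\Ltt$ into (i2) for every $1\leq n\leq M-1$, whereas you split cases between (ii) and (i2); both routes are fine.)

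One clause in your (i2) is wrong as stated, although the estimate itself survives: for $n=M-1$ the function $(t-x)^{n+1}\langle t\rangle^{-M}$ tends to a nonzero constant as $t\to\infty$ and is \emph{not} in $L^2_t([x,\infty))$; membership in $L^2_t$ is exactly the condition $n\leq M-2$ which you correctly identify as the extra restriction in part (ii). What (i2) actually requires, since the $f$-dependence enters through $\norm{f(t,\cdot)}_{L^2}\in L^2_t$, is only that $(t-x)^{n+1}\langle t\rangle^{-M}$ be bounded uniformly in $x\geq a$, i.e.\ $n+1\leq M$, so that $\norm{(t-x)^{n+1}q(t)}_{L^2_{t\geq x}}\leq C\norm{q}_{L^2_M}$. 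This is precisely how the paper closes the Cauchy--Schwarz, pairing $|q(t)|(t-x)^{n+1}$ against $\norm{f(t,\cdot)}_{L^2}$, and it is how your own splitting works once the justification is corrected; no other change to your argument is needed.
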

\begin{proof}First, remark that  all the  operators $q \mapsto \K_n(q)$ are linear in $q$, therefore the continuity in $q$ implies the analyticity in $q$. We begin proving item $(i)$.
\begin{enumerate}
\item[$(i1)$]  Let $ \varphi (x,k):= \intx{x} \derk^n D_k(t-x)\, q(t) \; dt$ 
and compute the Fourier transform  $\F_+(\varphi(x, \cdot))$ with respect to the $k$ variable for $x\geq a$ fixed, which we denote by $ \hat{\varphi}(x,\xi) \equiv \int_{-\infty}^{\infty} dk \, e^{ik\xi}\varphi(x,k)$. Explicitly 
$$ \hat{\varphi} (x,\xi)= \intx{x} dt\,  q(t)  \int\limits_{-\infty}^{+\infty} dk \, e^{2ik\xi}\, \derk^n D_k(t-x) = 
\intx{x}  q(t) \,\xi^n \, \mathbbm{1}_{[0, t-x]}(\xi)\, dt.$$
By Parseval's Theorem $\norm{\varphi(x,\cdot)}_{L^2}=\frac{1}{\sqrt{\pi}}\norm{\hat{\varphi}(x,\cdot)}_{L^2}$. By changing the order of integration one has  
\begin{align*}
 \norm{\hat{\varphi}(x,\cdot)}^2_{L^2}& = 
\intii \hat{\varphi}(x,\xi)\, \overline{\hat{\varphi}(x,\xi)} \; d\xi
 = \iint\limits_{[x,\infty) \times [x,\infty)} dt\, ds\; q(t)\, \overline{q(s)} \intii |\xi|^{2n}\, \mathbbm{1}_{[0, t-x]}(\xi)\, \mathbbm{1}_{[0, s-x]}(\xi) d\xi\leq \\
& \leq 2 \intx{x} dt\; |q(t)|\, |t-x|^{2n+1} \intx{t} |q(s)| \; ds \leq 
\norm{(t-a)^{n+1} q}_{L^2_{t\geq a}}\norm{(t-a)^n \intx{t} |q(s)| ds}_{L^2_{t\geq a}} \\ 
& \leq C \norm{q}_{L^2_{n+1}}^2,
\end{align*}
where we used that by $(A3)$ in Appendix \ref{techLemma}, $\norm{(t-a)^n \intx{t} |q(s)| \, ds}_{L^2_{t \geq a}} \leq C \norm{q}_{L^2_{n+1}}.$
\item[$(i2)$]  Let $f\in \Ltt$, and using
 $\mmod{\derk^n D_k(t-x)} \leq 2^{n}|t-x|^{n+1}$ it follows that
\begin{align*}
 \norm{\K_n(q)[f](x,\cdot)}_{L^2} \leq C \intx{x} |q(t)| \,|t-x|^{n+1}\, \norm{f(t, \cdot)}_{L^2}\; dt \leq C \norm{q}_{L^2_{n+1}} \norm{f}_{\Ltt};
\end{align*}
by taking the supremum in the $x$ variable one has  $\K_n(q) \in \L\left(\Ltt, \Czt\right)$, where the continuity in $x$ follows by Lebesgue's convergence theorem.
The map $q \mapsto \K_n(q)$ is  linear and continuous, therefore also analytic.
\end{enumerate}
We prove now item $(ii)$.
 Let $g \in \Czt$. From $\norm{\K_n(q)[g](x,\cdot)}_{L^2}  \leq \intx{x} |q(t)|\, |t-x|^{n+1}\, \norm{g(t, \cdot)}_{L^2}\; dt $ it follows that  
\begin{align*}
\sup_{x \geq a} \norm{\K_n(q)[g](x, \cdot)}_{L^2} & 
\leq \norm{g}_{\Czt} \intx{a} |q(t)|\, |t-a|^{n+1}\; dt \leq C \norm{g}_{\Czt} \norm{q}_{L^2_{n+2}} \ ,
\end{align*}
which implies the claimed estimate. The analyticity follows from the linearity and continuity of the map $q \mapsto \K_n(q)$.

Finally we prove item $(iii)$. By Proposition \ref{prop_minLit}, the map $L^2_{n+1} \ni q\mapsto m(q)-1 \in \Ltt$ is analytic. 
By item $(i2)$ above the bilinear map $L^2_{n+1}\times \Ltt \ni (q, f)\mapsto K_n(q)[f] \in \Czt$ is analytic;  since the composition of analytic 
maps is analytic, the map $L^2_{n+1} \ni q \mapsto K_n(q)[m(q)-1] \in \Czt$ is analytic.  By $(i2)$ and Proposition \ref{prop_minLit} one has
$$
\norm{\K_n(q)[m(q)-1]}_{\Czt} \leq C \norm{q}_{L^2_{n+1}} \norm{m(q) - 1}_{\Ltt} \leq K_0' \norm{q}_{L^2_{n+1}}^2
$$
where $K_0'$ can be chosen uniformly on bounded subsets of $L^2_{M}$.
\end{proof}

{\em Proof of Proposition \eqref{prop_derminLit}.} The proof is carried out by a recursive argument in $n$.
We assume that $q \mapsto \derk^r m(q)$ is analytic as a map from $L^2_M$ to $\Czt$ for $0 \leq r \leq n-1$, and  prove that  $L^2_M \to \Czt : $ $q \mapsto \derk^{n} m(q)$ is analytic, provided that $n \leq M-1$. The case $n=0$ is proved in Proposition \ref{prop_minLit}. 
\newline
We begin by showing that for every $x \geq a$ fixed  $k \mapsto \derk^{n-1} m(q, x,k)$ is a function in $H^1$, therefore it has one more (weak) derivative in the $k$-variable.
We use the following characterization of $H^1$ function \cite{brezis}:  
\begin{equation}
\label{H1_char}
f \in H^1 \mbox{ iff
there exists a constant } C >0 \mbox{ such that } \norm{\tau_h f - f }_{L^2} \leq C |h|, \quad  \forall h \in \R,
\end{equation}
where $(\tau_h f)(k) := f(k+h)$ is the translation operator. Moreover the constant $C$ above can be chosen to be $C = \norm{\derk u}_{L^2}$. 
Starting from \eqref{defmKj} (with $n-1$ instead of $n$), an easy computation  shows that for every $x\geq a$ fixed  $(\tau_h) \derk^{n-1} m(q) \equiv \derk^{n-1} m(q, x, k+h)$ satisfies  the integral equation
\begin{equation}
\label{2.10bis} 
\begin{aligned}
\left( Id - \K(q) \right)&(\tau_h \derk^{n-1} m(q) - \derk^{n-1} m(q) ) \\
 = &  \int_x^{+\infty}
(\tau_h \derk^{n-1} D_k(t-x) - \derk^{n-1} D_k(t-x)) q(t) ( m(q, t, k+h)-1) \, dt  \\
& + \int_x^{+\infty}
(\tau_h \derk^{n-1} D_k(t-x) - \derk^{n-1} D_k(t-x)) q(t)  \, dt \\
& + \int_x^{+\infty} (\derk^{n-1} D_k(t-x)) \, q(t) \, \left(m(q,t, k+h) - m(q,t,k) \right) \, dt \\
& +\sum_{j=1}^{n-2} \binom{n-1}{j} \Big(  \int_x^{+\infty}
(\tau_h \derk^j D_k(t-x) - \derk^j D_k(t-x)) q(t) \derk^{n-1-j} m(q, t, k+h) \, dt \\
&  + \int_x^{+\infty} \derk^j D_k(t-x) \,q(t)\, (\tau_h \derk^{n-1-j} m(q,t,k) - \derk^{n-1-j} m(q,t,k)) \, dt \Big) \\
& + \int_x^{+\infty} (\tau_h D_k(t-x)- D_k(t-x)) \, q(t) \, \derk^{n-1} m(q,t, k+h) \, dt.
\end{aligned}
\end{equation}
In order to estimate the term in the fourth line on the right hand side of the latter identity, use item $(i1)$ of Lemma \ref{Kn1} and the characterization  \eqref{H1_char} of $H^1$. To estimate all the remaining lines, use the induction  hypothesis,  the estimates of Lemma \ref{Kn1},  the fact that the operator norm  of  $(Id - \K(q))^{-1}$ is bounded uniformly in $k$ and the estimate 
$$
\mmod{\tau_h \derk^j D_k(t-x) - \derk^j D_k(t-x)} \leq C |t-x|^{j+2} \, |h|, \quad \forall h \in \R,
$$
 to deduce that for every $n \leq M-1$
$$
\norm{\tau_h \derk^{n-1} m(q) - \derk^{n-1} m(q)}_{L^2} \leq C |h|, \quad \forall h \in \R,
$$ which is
exactly condition \eqref{H1_char}.
This shows that $k \mapsto \derk^{n-1} m(q,x,k)$ admits a weak derivative in $L^2$. Formula \eqref{defderkm} is therefore justified.
We  prove now that the map $ L^2_{M}\ni q \mapsto \derk^{n} m(q)\in \Czt $ is analytic for $1 \leq n \leq M-1$. Indeed equation \eqref{defmKj} and Lemma \ref{Kn1} imply that
$$ \norm{\derk^{n} m(q)}_{\Czt} \leq K' \Big( \norm{q}_{L^2_{M}}+ \norm{q}_{L^2_{M}}^2 +  \sum_{j=1}^{n-1}\norm{q}_{L^2_{M}}
\norm{\derk^{n-j}m(q)}_{\Czt} \Big)$$
where $K'$ can be chosen uniformly on bounded subsets of $q$ in $L^2_M$. Therefore  $\derk^n m(q) \in \Czt$ and one gets recursively $\norm{\derk^n m(q)}_{\Czt}\leq K \norm{q}_{L^2_{M}} $, where $K$ can be chosen uniformly on bounded subsets of $q$ in $L^2_M$.   The analyticity of the map $q \mapsto \derk^{n} m(q)$ follows by formula \eqref{defmKj} and the fact that composition of analytic maps is analytic.
\qed
\newline \vspace{1em} \newline
{\em Properties of  $k\derk^n m(q,x,k)$ for $1\leq n \leq M$.} The analysis of the $M^{th}$ $k$-derivative of $m(q,x,k)$ requires a separate attention. It turns out that the distributional derivative $\derk^M m(q,x,\cdot)$ is not necessarily $L^2$-integrable near $k=0$ but the product  $k\derk^M m(q,x,\cdot)$ is. This is  due to the fact that $\derk^M D_k(x)q(x) \sim x^{M+1}q(x)$ which might not be $L^2$-integrable. However, by integration by parts, it's easy to see that $k \derk^M D_k(x)q(x) \sim x^{M}q(x) \in L^2$. The main result of this section is the following
\begin{proposition} Fix $M \in  \Z_{\geq 4}$ and $a \in \R$. Then for every integer $1 \leq n \leq M$ the following holds:
\begin{enumerate}[(i)]
\item for every $q \in L^2_M$ and $x \geq a$ fixed, the function $k \mapsto k \derk^n m(q,x,k)$ is in $L^2$;
\item the map
$ L^2_{M}\ni q \mapsto k\derk^n m(q) \in \Czt$
is analytic. 
Moreover 
  $\norm{k \derk^n m}_{\Czt}\leq K_1 \norm{q}_{L^2_{M}}$ where   $K_1$ can be chosen  uniformly on bounded subsets of $L^2_M$.
  \end{enumerate}
\label{prop_dermiNLit}
\end{proposition}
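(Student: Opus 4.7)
\noindent\textbf{Proposal for the proof of Proposition \ref{prop_dermiNLit}.}

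The plan is to proceed by induction on $n$, working with an integral equation for $k\derk^n m(q)$ rather than for $\derk^n m(q)$. The key technical ingredient is the identity, obtained by one integration by parts in $s$ inside $D_k(y) = \int_0^y e^{2iks}\,ds$,
\begin{equation}
\label{ibp_key}
k\,\derk^j D_k(y) \;=\; (2i)^{j-1}\, y^j\, e^{2iky} \;-\; j\, \derk^{j-1}D_k(y), \qquad j\geq 1,
\end{equation}
together with $k\,D_k(y)=(e^{2iky}-1)/(2i)$ for $j=0$. The point of \eqref{ibp_key} is that multiplication by $k$ trades one $k$-derivative on $D_k$ for a pure oscillatory integral whose weight in $t$ drops from $|t-x|^{j+1}$ to $|t-x|^j$. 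Since the operator $\K(q)$ and the operators $\K_j(q)$ act only by integration in $t$ through the kernels $\derk^j D_k(t-x)$, they commute with multiplication by $k$.

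For $1\leq n\leq M-1$, I would multiply equation \eqref{defmKj} by $k$, producing
$$(Id-\K(q))\bigl(k\,\derk^n m(q)\bigr)=\sum_{j=1}^{n-1}\binom{n}{j}\bigl(k\,\K_j(q)\bigr)[\derk^{n-j}m(q)] + \bigl(k\,\K_n(q)\bigr)[m(q)-1] + \bigl(k\,\K_n(q)\bigr)[1].$$
Substituting \eqref{ibp_key} expresses each $k\,\K_j(q)[g]$ as a sum of $-j\,\K_{j-1}(q)[g]$ (already controlled by Lemma \ref{Kn1} together with Proposition \ref{prop_derminLit}) plus an oscillatory integral $(2i)^{j-1}\int_x^{+\infty}(t-x)^j e^{2ik(t-x)}q(t) g(t,k)\,dt$. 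The $\Czt$-norm of this oscillatory term is estimated by Plancherel in $k$ combined with the Hardy--Littlewood inequality, exactly in the spirit of the proof of Lemma \ref{K1}, giving a bound of the form $C\,\norm{q}_{L^2_M}$ times the appropriate norm of $g$. Invertibility of $Id-\K(q)$ from Lemma \ref{KinLit} and the fact that the norm of $(Id-\K(q))^{-1}$ is independent of $k$ then produce $k\,\derk^n m(q)\in\Czt$ with the announced bound, and analyticity follows as composition of analytic maps.

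The real issue is the critical case $n=M$, which is the reason the proposition is separated from Proposition \ref{prop_derminLit}. Here one cannot even start from \eqref{defmKj} at order $M$, because the term $\K_M(q)[1]$ has integrand of growth $(t-x)^{M+1}|q(t)|$, which fails to be $L^2$ for $q\in L^2_M$; correspondingly $\derk^M m(q,x,\cdot)$ need not be in $L^2$ near $k=0$. The strategy is to instead derive directly an integral equation for $k\,\derk^M m(q)$ by differentiating once more (in the sense of weak $k$-derivative) the equation already available for $k\,\derk^{M-1} m(q)$ from the previous step, and then to use \eqref{ibp_key} inside the candidate expression so that the offending $(t-x)^{M+1}$ is immediately replaced by $(t-x)^M$ plus a harmless lower-order piece. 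Existence of the weak derivative is justified by the translation/finite difference argument of Proposition \ref{prop_derminLit} based on the $H^1$-characterization \eqref{H1_char}, applied to $k\,\derk^{M-1} m(q)$ with the integration by parts \eqref{ibp_key} carried out before letting $h\to 0$; this replaces the would-be divergent bound by an estimate against $\norm{q}_{L^2_M}$.

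The hard part is precisely this $n=M$ step: one must carefully order the operations (weak differentiation vs.\ integration by parts) so that every integral one writes down is actually $L^2$-integrable in $k$ and $L^2$-integrable against $q\in L^2_M$ in $t$, and so that the resulting object coincides with the distributional derivative $k\,\derk^M m(q)$. Once this is in place, the $\Czt$-estimate, the bound $\norm{k\,\derk^M m(q)}_{\Czt}\leq K_1\norm{q}_{L^2_M}$, and analyticity in $q$ all follow from the same inductive scheme used for $n\leq M-1$, invoking Lemmas \ref{KinLit}, \ref{KinLtt}, \ref{Kn1} and Propositions \ref{prop_minLit}, \ref{prop_derminLit}.
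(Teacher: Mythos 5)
Your proposal follows essentially the same route as the paper: the equation \eqref{kderk^Nm_formula} obtained by multiplying \eqref{defmKj} by $k$, kernel bounds for $k\derk^j D_k$ obtained by integration by parts (your identity for $k\,\derk^j D_k(y)$ is exactly what underlies the paper's operators $\tilde{\K}_j$ in \eqref{Ktilde.def} and the estimates of Lemma \ref{lem:derk^Nm(q)}), the finite-difference/$H^1$ characterization \eqref{H1_char} to justify the weak derivative, and inversion of $Id-\K(q)$ plus composition of analytic maps for the $\Czt$ bound and the analyticity. The one place where the paper is more careful than your plan is that it runs the finite-difference argument at \emph{every} order $1\le n\le M$, showing $k\,\derk^{n-1}m(q,x,\cdot)\in H^1$ and hence $k\,\derk^{n}m(q,x,\cdot)=\derk\bigl(k\,\derk^{n-1}m\bigr)-\derk^{n-1}m\in L^2$; this furnishes item $(i)$ a priori, so that when $Id-\K(q)$ is inverted the function $k\,\derk^{n}m$ is already known to lie in a space in which the solution of the integral equation is unique. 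In your shortcut for $n\le M-1$ (multiply \eqref{defmKj} by $k$ and invert) you only know that $(Id-\K(q))[k\,\derk^{n}m]$ lies in $\Czt$: to identify $k\,\derk^{n}m$ with $(Id-\K(q))^{-1}$ of the right-hand side you still need either this a priori $L^2$-in-$k$ membership or a separate uniqueness statement for the Volterra equation in a class containing $k\,\derk^{n}m$, since multiplication by $k$ is unbounded and $\derk^{n}m\in\Czt$ alone does not give it. The fix is precisely the argument you reserve for $n=M$, applied at each order, which is what the paper does; with that adjustment your outline matches the paper's proof.
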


Formally, multiplying equation \eqref{defderkm} by $k$, the function  $k\derk^n m(q)$ solves \begin{equation}
\left(Id - \K(q) \right)( k\derk^n m(q))=  \left( \sum_{j=1}^{n-1}
\binom{n}{j} \tilde{\K}_j(q)[\derk^{n-j} m(q)] + \tilde{\K}_n(q)[m(q)-1] + 
\tilde{\K}_n(q)[1] \right)
\label{kderk^Nm_formula}
\end{equation}
where we have introduced for  $0 \leq j \leq M$  and $q \in L^2_M$ the operators
\begin{equation}
\label{Ktilde.def} 
\tilde{\K}_j(q): \Czt \to \Czt, \quad f \mapsto \tilde{\K}_j(q)[f](x,k):=  \intx{x} k \derk^j D_k(t-x)\, q(t)\, f(t,k)\; dt.
\end{equation}
We begin by proving  that each term of the r.h.s. of \eqref{Ktilde.def} is well defined and analytic as a function of $q$. The following lemma is analogous to Lemma \ref{Kn1}:
\begin{lemma}
\label{lem:derk^Nm(q)}
 Fix $M \in \Z_{\geq 4}$ and $a \in \R$.  There exists a constant $C>0$ such that  the following holds: 
\begin{enumerate}[(i)]
\item for any integers $1 \leq n \leq M$
\begin{enumerate}
\item[(i1)] the map $ L^2_{M}\ni q \mapsto \tilde{\K}_n(q)[1]\in \Czt$
is analytic, and $\norm{\tilde{\K}_n(q)[1] }_{\Czt} \leq C \norm{q}_{L^2_M}$;
 \item[(i2)] the map
 $L^2_{M} \ni q  \mapsto \tilde{\K}_n(q)\in  \L\left(\Ltt, \, \Czt\right)$  is  analytic. Moreover 
 $$\norm{\tilde{\K}_n(q)[f]}_{\Czt} \leq C \norm{q}_{L^2_{M}}\norm{f}_{\Ltt} \ ;$$
 \end{enumerate}
 \item for any $ 1 \leq j \leq M-1$ the map  $L^2_{M}\ni q  \mapsto \tilde{\K}_j(q)\in \L\left(\Czt \right)$ is analytic, and 
$$\norm{\tilde{\K}_j(q)[f]}_{\Czt} \leq C \norm{q}_{L^2_{M}} \norm{f}_{\Czt} \ .$$
\item As an application of item $(i)$ and $(ii)$ we get
\begin{enumerate}
\item[$(iii1)$] for any $1 \leq n \leq M$, the map $L^2_M \ni q \mapsto \tilde{\K}_n(q)[m(q)-1] \in \Czt$ is analytic with \begin{equation}
\norm{\tilde{\K}_n(q)[m(q)-1]}_{\Czt}  \leq K_1' \norm{q}_{L^2_M}^2, 
\label{eq:derk^Nm(q)}
\end{equation}
 where $K_1'$ can be chosen uniformly on bounded subsets of $L^2_M$;
 \item[$(iii2)$] for any $1 \leq j \leq n-1$, the map $L^2_M \ni q \mapsto \tilde{\K}_j(q)[\derk^{n-j}m(q)] \in \Czt $ is analytic with 
\begin{equation}
 \norm{\tilde{\K}_j(q)[\derk^{n-j}m(q)]}_{\Czt} \leq K_2' \norm{q}_{L^2_M}^2, 
\label{eq:derk^Nm(q).2}
\end{equation}
 where $K_2'$ can be chosen uniformly on bounded subsets of $L^2_M$.
\end{enumerate}
\end{enumerate} 
\end{lemma}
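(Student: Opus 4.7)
My plan is to view this lemma as a refinement of Lemma \ref{Kn1}: the extra factor of $k$ in the definition \eqref{Ktilde.def} of $\tilde{\K}_j$ is traded, via integration by parts in $s$ inside $D_k(y)=\int_0^y e^{2iks}\,ds$, for one less power of $y$ in the pointwise bound. Concretely, the identity $k(2is)^j e^{2iks} = \frac{(2is)^j}{2i}\,\partial_s e^{2iks}$, integrated over $[0,y]$, yields, for $j\geq 1$,
$$k\,\derk^j D_k(y) \;=\; \frac{(2iy)^j}{2i}\,e^{2iky}\;-\;j\,\derk^{j-1} D_k(y), \qquad (\ast)$$
and hence the pointwise bound $\mmod{k\,\derk^j D_k(y)}\leq 2^j |y|^j$, one power of $y$ less than the bound $\mmod{\derk^j D_k(y)}\leq 2^j|y|^{j+1}$ used in Lemma \ref{Kn1}. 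This is precisely the gain that permits $n\leq M$ (respectively $j\leq M-1$) while keeping $q\in L^2_M$.

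\emph{Proof of (i).} I use $(\ast)$ to decompose $\tilde{\K}_n(q) = \tilde{\K}_n^{(1)}(q)-n\,\K_{n-1}(q)$, where
$$\tilde{\K}_n^{(1)}(q)[f](x,k) \;:=\; \intx{x}\frac{(2i(t-x))^n}{2i}\,e^{2ik(t-x)}\,q(t)\,f(t,k)\,dt.$$
The second summand is analytic and satisfies the required bounds by Lemma \ref{Kn1} (or Lemma \ref{K1} when $n=1$, since $\K_0=\K$), because the relevant index shifts down by one into the range already covered. For (i1), with $f\equiv 1$, the oscillation $e^{2ik(t-x)}$ rules out any pointwise-in-$k$ estimate, so I argue as in Lemma \ref{K1}: $\tilde{\K}_n^{(1)}(q)[1](x,\cdot)$ is, up to normalization, the Fourier transform in $k$ of $s\mapsto \frac{(2is)^n}{2i}q(x+s)\mathbbm{1}_{s\geq 0}$, so by Parseval
$$\big\|\tilde{\K}_n^{(1)}(q)[1](x,\cdot)\big\|_{L^2_k} \;\leq\; C\Big(\intx{x}(t-x)^{2n}|q(t)|^2\,dt\Big)^{1/2}\;\leq\; C\|q\|_{L^2_M}$$
uniformly for $x\geq a$ as long as $n\leq M$. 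For (i2) the factor $f(t,k)$ blocks Parseval, but since $|e^{2ik(t-x)}|\equiv 1$, Minkowski in $k$ and Cauchy--Schwarz in $t$ give directly
$$\big\|\tilde{\K}_n^{(1)}(q)[f](x,\cdot)\big\|_{L^2_k}\;\leq\; C\intx{x}(t-x)^n|q(t)|\,\|f(t,\cdot)\|_{L^2_k}\,dt\;\leq\; C\|q\|_{L^2_M}\|f\|_{\Ltt}.$$

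\emph{Proof of (ii).} Here no splitting is needed: the pointwise estimate $|k\,\derk^j D_k(y)|\leq 2^j|y|^j$ from $(\ast)$ combined with Minkowski in $k$ yields, for $f\in\Czt$,
$$\|\tilde{\K}_j(q)[f](x,\cdot)\|_{L^2_k}\;\leq\; C\|f\|_{\Czt}\intx{a}(t-a)^j|q(t)|\,dt\;\leq\; C\|f\|_{\Czt}\|q\|_{L^2_M},$$
the last inequality being Cauchy--Schwarz against the weight $\langle t-a\rangle^{-(j+1)}$, which requires $j+1\leq M$, matching the hypothesis. Linearity of $\tilde{\K}_j$ in $q$ then upgrades continuity to analyticity.

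\emph{Proof of (iii) and main obstacle.} Both parts are pure compositions of bilinear continuous maps with the Jost-function maps already established: (iii1) follows by feeding $m(q)-1\in\Ltt$ from Proposition \ref{prop_minLit} into the bilinear map of (i2); (iii2) follows by feeding $\derk^{n-j}m(q)\in\Czt$ from Proposition \ref{prop_derminLit} into the bilinear map of (ii), the constraints $1\leq n-j\leq M-1$ and $1\leq j\leq M-1$ being guaranteed by $1\leq j\leq n-1$ and $n\leq M$. The quadratic bound $\leq C\|q\|_{L^2_M}^2$ and the analyticity both follow automatically. The genuine analytic content sits entirely in $(\ast)$; the only subtlety I foresee is that (i1) and (i2) require two different $L^2_k$ techniques---Parseval exploits the absence of a $k$-dependent $f$, whereas Minkowski needs $f$ itself to supply the $L^2_k$ integrability---so neither argument covers both cases.
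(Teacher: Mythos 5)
Your proof is correct, and for items (i2), (ii) and (iii) it is essentially the paper's argument: the pointwise bound $\mmod{k\derk^n D_k(y)}\leq 2^n|y|^n$ obtained by integration by parts, Minkowski plus Cauchy--Schwarz against the weight (equivalently $\norm{q}_{L^1_j}\leq C\norm{q}_{L^2_M}$ for $j\leq M-1$), and item (iii) as a composition of the bilinear maps from (i2)/(ii) with the analytic maps $q\mapsto m(q)-1$ and $q\mapsto \derk^{n-j}m(q)$ of Propositions \ref{prop_minLit} and \ref{prop_derminLit}. The one genuine divergence is (i1). The paper computes the Fourier transform of $k\mapsto k\derk^n D_k(t-x)$ in the distributional sense, getting $\partial_\xi\bigl(\xi^n\mathbbm{1}_{[0,t-x]}(\xi)\bigr)$ up to constants, and then bounds $\norm{\hat{\varphi}(x,\cdot)}_{L^2}$ by duality against test functions $\chi$, using Cauchy--Schwarz and the Hardy inequality. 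You perform the same integration by parts on the physical side via your identity $(\ast)$, which splits $\tilde{\K}_n(q)[1]$ into a boundary term (an honest Fourier transform of $s\mapsto \frac{(2is)^n}{2i}q(x+s)\mathbbm{1}_{s\geq 0}$, controlled by Plancherel exactly as in Lemma \ref{K1}, with $n\leq M$ giving the uniform-in-$x\geq a$ bound by $\norm{q}_{L^2_M}$) plus $-n\,\K_{n-1}(q)[1]$, which you dispose of by citing Lemma \ref{Kn1} (i1) (or Lemma \ref{K1} when $n=1$). The two computations are two sides of the same identity --- the paper's $\delta$-contribution at $\xi=t-x$ is precisely your boundary exponential --- but your organization recycles the earlier lemmas and avoids redoing a Hardy-type estimate, a small economy. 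A further small merit: in (iii1) you correctly route the endpoint case $n=M$ through (i2) with $m(q)-1\in\Ltt$ (Proposition \ref{prop_minLit}) rather than through (ii), whose range stops at $M-1$; the paper's wording for item (iii) is slightly loose on exactly this point. The only detail left implicit on both sides is continuity in $x$, which follows by dominated convergence as in the paper.
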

\begin{proof} 
\begin{enumerate}[$(i)$]
\item Since the maps $q \mapsto \tilde{\K}_n(q)$, $0\leq n \leq M$, are linear, it is enough to prove that these maps  are continuous.
\begin{enumerate}
\item[$(i1)$] Introduce 
$ \varphi (x,k):= \intx{x} k\derk^n D_k(t-x)\, q(t)  \; dt$.  The Fourier transform $\F_+(\varphi(x, \cdot))$ of $\varphi$ with respect to the $k$-variable is given by $\F_+(\varphi(x, \cdot))\equiv \hat{\varphi} (x,\xi)$, where 
$$\hat{\varphi} (x,\xi)= \intx{x} dt\,  q(t)  \int\limits_{-\infty}^{+\infty} dk\; e^{-2ik\xi}\, k\derk^n D_k(t-x) = -(2i)^{n-1}
\intx{x} dt\;  q(t)\,\partial_\xi( \xi^n\, \mathbbm{1}_{[0, t-x]}(\xi)),$$
where $\partial_\xi (\xi^n \mathbbm{1}_{[0, t-x]}(\xi))$ is to be understood  in the distributional sense. 
By Parseval's Theorem 
$\norm{\varphi(x,\cdot)}_{L^2}=\frac{1}{\sqrt{\pi}} \norm{\hat{\varphi}(x,\cdot)}_{L^2}.$
Let $C^\infty_0$ be the space of  smooth, compactly supported functions. Since
$$\norm{\hat{\varphi}(x,\cdot)}_{L^2_\xi}=\sup_{\substack{\chi \in  C^{\infty}_0 \\ \norm{\chi}_{L^2}\leq 1}}\mmod{\int\limits_{-\infty}^{\infty} \chi(\xi)\,\hat{\varphi}(x,\xi)  \;d\xi}, $$
one computes
\begin{align*}
\mmod{\int\limits_{-\infty}^{\infty} \chi(\xi)\, \hat{\varphi}(x,\xi)\; d\xi} &=  \mmod{\intx{x} dt \;  q(t) \int\limits_{-\infty}^{\infty} \chi(\xi)\,\partial_\xi \left( \xi^n \mathbbm{1}_{[0, t-x]}(\xi)\right)\; d\xi} =  \mmod{\intx{x} dt\;  q(t) \int\limits_0^{t-x}d\xi\; \xi^n  \partial_\xi \chi(\xi) } \\
& \leq \mmod{\intx{x} dt \;  q(t) \chi(t-x) (t-x)^n} + n \mmod{\intx{x} dt\;  q(t) \int\limits_0^{t-x}d\xi \, \chi(\xi) \xi^{n-1}} \\
&\leq \norm{q}_{L^2_M} \norm{\chi}_{L^2} + n \mmod{\intx{x} dt\;  |q(t)| |t-x|^{n-1} \int\limits_0^{t-x}d\xi \, |\chi(\xi)|} \\
& \leq \norm{q}_{L^2_M} \norm{\chi}_{L^2} + n \mmod{\intx{x} dt\;  |q(t)| |t-x|^{n} \frac{\int\limits_0^{t-x}d\xi \, |\chi(\xi)|}{|t-x|}} \leq C \norm{q}_{L^2_M} \norm{\chi}_{L^2}
\end{align*}
where the last inequality follows from Cauchy-Schwartz and  Hardy inequality, and $C >0$ is a constant depending  on $a$ and $M$.
\item[$(i2)$] As $\mmod{k\derk^n D_k(t-x)} \leq 2^n |t-x|^{n}$ by integration by parts, it follows that for some constant $C>0$ depending only on $a$ and $M$, 
\begin{align*}
 \norm{\tilde{\K}_n(q)[f](x,\cdot)}_{L^2} & \leq C \intx{x} |t-x|^n \,|q(t)|\, \norm{f(t,\cdot)}_{L^2}\; dt 
\leq C \norm{q}_{L^2_{M}} \norm{f}_{\Ltt} \ .
\end{align*}
Now take the supremum over $x\geq a$ in  the expression above and use Lebesgue's dominated convergence theorem to prove item $(i2)$.
\end{enumerate}
\item The claim follows by the estimate  
\begin{align*}
 \norm{\tilde{\K}_j(q)[f](x,\cdot)}_{L^2} & \leq C \intx{x} |t-x|^j \,|q(t)|\, \norm{f(t,\cdot)}_{L^2}\; dt 
\leq C \norm{q}_{L^1_{j}} \norm{f}_{\Czt}
\end{align*}
 and the remark  that $\norm{q}_{L^1_{j}} \leq C \norm{q}_{L^2_M}$ for $0 \leq j\leq M-1$.
\item  By Propositions \ref{prop_minLit} and \ref{prop_derminLit} the maps $L^2_M \ni q \mapsto m(q)-1 \in \Czt \cap \Ltt$ and $L^2_M \ni q \mapsto \derk^{n-j}m(q) \in \Czt$ are analytic;  by item $(ii)$ for any $1 \leq n \leq M-1$, the bilinear map $(q, f) \mapsto \tilde{\K}_n(q)[f] $ is analytic from $L^2_M \times \Czt$ to $\Czt$. Since the composition 
of two analytic maps is again analytic, item $(iii)$ follows. Moreover $\tilde{\K}_n(q)[m(q)-1]$,  $\tilde{\K}_j(q)[\derk^{n-j} m(q)] \in \Czt$ since  $m(q,x,k)$ and $\derk^n m(q,x,k)$ are continuous in the $x$-variable. 
The estimate \eqref{eq:derk^Nm(q)} follows from  item $(ii)$ and Proposition \ref{prop_minLit}, \ref{prop_derminLit}.
\end{enumerate}
\end{proof}

{\em Proof of Proposition \ref{prop_dermiNLit}.} One proceeds in the same way as in the proof of Proposition \ref{prop_derminLit}. Given any $1 \leq n \leq M$, we assume that $q \mapsto k \derk^r m(q)$ is analytic as a map from $L^2_M$ to $\Czt$ for $1 \leq r \leq n-1$, and deduce that   $q \mapsto k\derk^{n} m(q)$ is analytic as a map from $L^2_M$ to $\Czt$ and satisfies equation \eqref{kderk^Nm_formula} (with $r$ instead of $n$). 
\newline
We begin by showing that for every $x \geq a$ fixed,  $k \mapsto k\derk^{n-1} m(q, x,k)$ is a function in $H^1$. Our argument uses again the characterization \eqref{H1_char} of $H^1$. Arguing as for the derivation of \eqref{2.10bis} one gets  the integral equation
\begin{align*}
\left( Id - \K(q) \right)&(\tau_h (k\derk^{n-1} m(q)) - k\derk^{n-1} m(q) ) = \\ 
& = \intx{x}
\left(\tau_h (k \derk^{n-1} D_k(t-x)) - k \derk^{n-1} D_k(t-x)\right) q(t) ( m(q, t, k+h)-1) \, dt  \\
&+\intx{x}
\left(\tau_h (k\derk^{n-1} D_k(t-x)) - k \derk^{n-1} D_k(t-x)\right) q(t)  \, dt \\
& + \intx{x} (k \derk^{n-1} D_k(t-x) ) q(t) \left( m(q,t,k+h) - m(q, t, k) \right) \, dt \\
&+\sum_{j=1}^{n-2} \binom{n-1}{j} \Big( \intx{x}
\left(\tau_h (k\derk^j D_k(t-x)) - k\derk^j D_k(t-x)\right) q(t)\, \derk^{n-1-j} m(q, t, k+h) \, dt \\
& + \intx{x} k \derk^j D_k(t-x) \,q(t)\, \left(\tau_h \derk^{n-1-j} m(q,t,k) - \derk^{n-1-j} m(q,t,k)\right) \, dt \Big) \\
& + \intx{x} \left(\tau_h D_k(t-x)- D_k(t-x)\right) \, q(t) \, (k+h)\derk^{n-1} m(q,t, k+h) \, dt \ .
\end{align*}
Using the estimates 
$$
\mmod{\tau_h D_k(t-x) - D_k(t-x)} \leq C |t-x|^2 |h| 
$$
 and
 $$ \mmod{\tau_h (k\derk^j D_k(t-x)) - k\derk^j D_k(t-x)} \leq C |t-x|^{j+1} \, |h|, \quad \forall h \in \R \ ,
$$
obtained by integration by parts, the characterization  \eqref{H1_char} of $H^1$, the inductive hypothesis, estimates of Lemma \ref{Kn1} and Lemma \ref{KinLtt} one deduces that for every $n \leq M$
$$
\norm{\tau_h (k\derk^{n-1} m(q)) - k\derk^{n-1} m(q)}_{L^2} \leq C |h|, \quad \forall h \in \R.
$$
This shows that $k \mapsto k\derk^{n-1} m(q,x,k)$ admits a weak derivative in $L^2$. Since 
$$k\derk^n m(q,x,k) = \derk (k\derk^{n-1}m(q,x,k)) - \derk^{n-1} m(q,x,k) \ , $$
 the estimate above and Proposition \ref{prop_derminLit} show that $k \mapsto k\derk^n m(q,x,k)$ is an $L^2$ function.
Formula \eqref{defderkm} is therefore justified.
\newline
The proof of the analyticity of the map $q \mapsto  k\derk^n m(q)$ is analogous to the one of Proposition \ref{prop_derminLit} and it is omitted.
\qed
\vspace{1em}

{\em Analysis of $\derx m(q,x,k)$.}
Introduce a odd smooth monotone function $\zeta: \R \to \R$  with $\zeta(k)=k$ for $|k|\leq 1/2$ and $\zeta(k)=1$ for $k\geq 1$. We prove the  following
\begin{proposition}
Fix $M \in \Z_{\geq 4}$ and $a \in \R$. Then  the following holds:
\begin{enumerate}[(i)]
\item for any integer $0 \leq n \leq M-1$, the map $ L^2_M \ni q \mapsto \derk^n \derx  m(q) \in \Czt$ is analytic, and 
$\norm{ \derk^n \derx m(q)}_{\Czt} \leq K_2 \norm{q}_{L^2_{M}}$ where $K_2$ can be chosen uniformly on bounded subsets of $L^2_M$.
\item the map $L^2_M \ni q \mapsto \zeta \derk^M \derx  m(q) \in \Czt$ is analytic, and
$\norm{ \zeta \derk^M \derx m(q)}_{\Czt} \leq K_3 \norm{q}_{L^2_{M}}$ where $K_3$ can be chosen uniformly on bounded subsets of $L^2_M$.
\end{enumerate}
 \label{prop:derxderkm}
\end{proposition}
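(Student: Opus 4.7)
The plan is to derive from \eqref{defm} an explicit integral representation of $\derx m(q,x,k)$, apply $\derk^n$ via Leibniz, and then reduce each resulting term to estimates of the kind already established in Lemmas~\ref{Kn1}, \ref{lem:derk^Nm(q)} and Propositions~\ref{prop_minLit}--\ref{prop_dermiNLit}. Differentiating \eqref{defm} in $x$, using $D_k(0)=0$ and $\derx D_k(t-x)=-e^{2ik(t-x)}$, yields
\begin{equation*}
\derx m(q,x,k) = -\intx{x} e^{2ik(t-x)}\, q(t)\, m(q,t,k)\, dt.
\end{equation*}
Justifying the commutation of $\derk^n$ with $\intx{x}$ by the same translation-operator argument used in the proof of Proposition~\ref{prop_derminLit} (based on \eqref{H1_char}), we then get
\begin{equation*}
\derk^n \derx m(q,x,k) = -\sum_{j=0}^{n} \binom{n}{j}(2i)^j \intx{x} (t-x)^j\, e^{2ik(t-x)}\, q(t)\, \derk^{n-j} m(q,t,k)\, dt.
\end{equation*}

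Introduce, in analogy with \eqref{defkj}, auxiliary operators
\begin{equation*}
\mathcal{N}_j(q)[f](x,k):=\intx{x} (2i(t-x))^j\, e^{2ik(t-x)}\, q(t)\, f(t,k)\, dt, \qquad 0\le j\le M,
\end{equation*}
obtained from $\K_j$ by replacing $\derk^j D_k(t-x)$ with $\derk^j e^{2ik(t-x)}$. The key observation is that this replacement lowers the polynomial growth in $t$ by one power, producing $(t-x)^j$ instead of $(t-x)^{j+1}$. Parseval's identity applied to the $k$-Fourier transform (exactly as in the proof of Lemma~\ref{Kn1}~$(i1)$) gives $\norm{\mathcal{N}_j(q)[1]}_{\Czt}\le C\norm{q}_{L^2_j}$, while Minkowski's inequality yields the two operator bounds $\norm{\mathcal{N}_j(q)[f]}_{\Czt}\le C\norm{q}_{L^2_{j}}\norm{f}_{\Ltt}$ and $\norm{\mathcal{N}_j(q)[f]}_{\Czt}\le C\norm{q}_{L^1_{j}}\norm{f}_{\Czt}$. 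Since each $\mathcal{N}_j$ is bilinear in $(q,f)$, continuity implies real analyticity.

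For part~(i), fix $0\le n\le M-1$ and bound the Leibniz sum term by term. For $0\le j<n$ the factor $\derk^{n-j}m$ belongs to $\Czt$ by Proposition~\ref{prop_derminLit} (as $1\le n-j\le M-1$), so the second $\mathcal{N}_j$-bound applies. The endpoint $j=n$ is handled by splitting $m=1+(m-1)$: the first piece gives $\mathcal{N}_n(q)[1]$ controlled as above, the second is handled using $m-1\in\Ltt$ (Proposition~\ref{prop_minLit}) together with the first $\mathcal{N}_j$-bound. Summing produces $\norm{\derk^n\derx m(q)}_{\Czt}\le K_2\norm{q}_{L^2_M}$ uniformly on bounded subsets of $L^2_M$; real analyticity of $q\mapsto\derk^n\derx m(q)$ follows because every summand is a composition of analytic maps.

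For part~(ii), the only non-routine term in the Leibniz expansion with $n=M$ is $j=0$, which involves $\derk^M m$: by Proposition~\ref{prop_dermiNLit} we only control $k\derk^M m$ in $\Czt$, not $\derk^M m$ itself. This is precisely the role of the weight $\zeta$. The ratio $\zeta(k)/k$ is globally bounded (equal to $1$ on $|k|\le 1/2$ and to $1/k$ on $|k|\ge 1$), hence $\zeta\,\derk^M m=(\zeta/k)\cdot(k\derk^M m)\in\Czt$ with norm controlled by $\norm{q}_{L^2_M}$. Since $\zeta(k)$ does not depend on $t$ it commutes with the integral, giving
\begin{equation*}
\zeta(k)\,\mathcal{N}_0(q)[\derk^M m]=\mathcal{N}_0(q)[\zeta\,\derk^M m],
\end{equation*}
which is controlled in $\Czt$ by the Minkowski-type bound of Step~2. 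All remaining terms with $1\le j\le M$ involve $\derk^{M-j}m$ with $M-j\le M-1$ and are handled exactly as in part~(i); multiplication by the bounded function $\zeta$ preserves the $\Czt$ norm. Summing yields $\norm{\zeta\derk^M\derx m(q)}_{\Czt}\le K_3\norm{q}_{L^2_M}$ together with analyticity. The main obstacle is thus entirely concentrated in the $j=0$ term, resolved by the factorization $\zeta=(\zeta/k)\cdot k$ combined with Proposition~\ref{prop_dermiNLit}.
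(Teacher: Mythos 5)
Your proposal is correct and follows essentially the same route as the paper: the same integral representation for $\derx m$, the same Leibniz expansion with operators $\mathcal{N}_j$ (the paper's $\G_j$ up to sign) estimated via Plancherel and Minkowski/Cauchy--Schwarz, the same endpoint splitting $m=1+(m-1)$, and the same treatment of the $j=0$ term in part (ii) by commuting $\zeta$ into the integral and using $\zeta\derk^M m=(\zeta/k)\,k\derk^M m$ together with Proposition \ref{prop_dermiNLit}. The only (harmless) addition is that you re-justify interchanging $\derk^n$ with the integral via the translation characterization of $H^1$, which the paper takes as already settled.
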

The integral equation for $\derx m(q,x,k)$ is obtained by taking the derivative in the $x$-variable of \eqref{defm}:
\begin{equation}
 \derx m(q, x,k) = -\intx{x} e^{2ik(t-x)}\, q(t)\, m(q, t,k)\; dt.
 \label{derxm.equation}
\end{equation}
Taking the derivative with respect to the $k$-variable one obtains, for $0 \leq n \leq M-1$, 
\begin{equation}
 \derk^n \derx m(q, x,k) = -\sum_{j=0}^n \binom{n}{j} \intx{x}  e^{2ik(t-x)} \,(2i(t-x))^j \, q(t)\, \derk^{n-j} m(q, t,k) \;dt.
 \label{eq:derkderxm}
\end{equation}
For $0 \leq j  \leq M$ introduce the integral operators
\begin{equation}
\G_j(q):\Czt \to \Czt, \quad q \mapsto \G_j(q)[f](x,k) := - \intx{x} e^{2ik(t-x)}\, (2i(t-x))^j\, q(t)\, f(t,k)\; dt 
\end{equation}
and rewrite \eqref{eq:derkderxm}  in the more compact form
\begin{equation}
\label{2.17bis} 
\derk^n \derx m(q) = \sum_{j=0}^{n-1} \binom{n}{j} \G_j(q) [\derk^{n-j} m(q)] + \G_n(q)[m(q)-1] + \G_n(q)[1].
\end{equation}
Proposition \ref{prop:derxderkm}  $(i)$ follows from  Lemma \ref{lem:derxderkm} below.

The  $M^{th}$ derivative requires a separate treatment, as $\derk^M m$ might not be well defined at $k=0$. Indeed for $n=M$  the  integral   $ \intx{x}e^{2ik(t-x)}\, q(t)\,  \derk^M m(q,t,k)\; dt$ in \eqref{eq:derkderxm} might not be well defined near $k=0$ since we only know that  $k\derk^M m(q, x, \cdot)\in L^2$. To deal with this issue we use the function $\zeta$ described above. Multiplying \eqref{2.17bis} with $n=M$ by $\zeta$ we formally obtain
\begin{align*}
\zeta \derk^M \derx m(q)& =  \sum_{j=1}^{M-1}\binom{M}{j} \zeta \, \G_j(q)[\derk^{M-j} m(q)] + \zeta\, \G_M(q)[m(q)-1]+ \zeta\, \G_M(q)[1] + \G_0(q)[\zeta \derk^{M} m(q)].
\end{align*}
Proposition \ref{prop:derxderkm}  $(ii)$ follows from item $(iii)$ of  Lemma \ref{lem:derxderkm} and the fact that $\zeta \in L^\infty$: 
\begin{lemma}
\label{lem:derxderkm}
Fix $M \in \Z_{\geq 4}$ and $a \in \R$. There exists a constant $C>0$ such that 
\begin{enumerate}[(i)]
\item for any integer $0\leq n \leq M$ the following holds: 
\begin{enumerate}
\item[(i1)] the map $L^2_M \ni q \mapsto \G_n(q)[1] \in \Czt $ is analytic. Moreover
 $\norm{\G_n(q)[1]}_{\Czt}\leq C \norm{q}_{L^2_{M}}.$
 \item[(i2)] The map
$L^2_{M}\ni q \mapsto  \G_n(q)\in \L\left(\Ltt, \Czt\right)$ is analytic and $$\norm{\G_n(q)[f]}_{\Czt} \leq C \norm{q}_{L^2_{M}}\norm{f}_{\Ltt} \ .$$
\end{enumerate} 
\item For any  $0 \leq j \leq M-1$, the map $L^2_{M}\ni q \mapsto \G_j(q)\in \L\left(\Czt \right)$  is  analytic, and 
$$\norm{\G_j(q)[f]}_{\Czt} \leq C \norm{q}_{L^2_{M}} \norm{f}_{\Czt} \ .$$
\item For any $1 \leq n  \leq M-1$, $0 \leq j \leq n-1$ and $\zeta: \R \to \R$ odd smooth monotone function   with $\zeta(k)=k$ for $|k|\leq 1/2$ and  $\zeta(k)=1$ for $k\geq 1$, the following holds:
\begin{enumerate}
\item[(iii1)]  the maps $L^2_M \ni q \rightarrow \G_j(q)[\derk^{n-j}m(q)] \in \Czt $
and $L^2_M \ni q \rightarrow \G_n(q)[m(q)-1] \in \Czt$ are analytic.  Moreover 
$$
\norm{\G_j(q)[\derk^{n-j}m(q)]}_{\Czt}, \quad \norm{\G_n(q)[m(q)-1]}_{\Czt}   \leq K_2' \norm{q}_{L^2_M}^2,
$$
where $K_2'$ can be chosen  uniformly on bounded subsets of $L^2_M$.
\item[(iii2)] The map $L^2_M \ni q \rightarrow \G_0(q)[\zeta \derk^M m(q)] \in \Czt$ is analytic and $\norm{\G_0(q)[\zeta \derk^M m(q)]}_{\Czt} \leq K_3' \norm{q}_{L^2_M}^2$
where $K_3'$ can be chosen  uniformly on bounded subsets of $L^2_M$.
\end{enumerate}
\end{enumerate}
\end{lemma}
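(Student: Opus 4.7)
The plan is to mirror the proof of Lemma \ref{Kn1} for the operators $\tilde{\K}_n$, exploiting the fact that the kernel $e^{2ik(t-x)}(2i(t-x))^j$ of $\G_j$ has modulus $2^j|t-x|^j$, just like $\derk^j D_k(t-x)$ does, and that its Fourier transform in $k$ is a clean delta function rather than a distributional derivative of an indicator, which actually makes the estimates simpler than in Lemma \ref{Kn1}. Throughout, the maps $q\mapsto \G_n(q)[1]$ and $q\mapsto \G_n(q)$ are linear in $q$, so it suffices to prove the asserted boundedness estimates; analyticity follows automatically.

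For (i1) I would change variables $u=t-x$ to write $\G_n(q)[1](x,k) = -\int_0^\infty e^{2iku}(2iu)^n q(x+u)\,du$, recognize this as (up to rescaling in $k$) the Fourier transform in $u$ of the $L^2$-function $g_x(u):=(2iu)^n q(x+u)\mathbbm{1}_{u\geq 0}$, and apply Plancherel:
\[
\|\G_n(q)[1](x,\cdot)\|_{L^2_k}^2 \leq C \int_x^\infty (t-x)^{2n}|q(t)|^2\,dt \leq C\|q\|_{L^2_M}^2,
\]
uniformly in $x\geq a$, since $0\leq t-x\leq t-a$ when $t\geq x\geq a$ and $n\leq M$. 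For (i2) I would use Minkowski's integral inequality in $k$ followed by Cauchy--Schwarz in $t$ on the pointwise bound $|\G_n(q)[f](x,k)|\leq 2^n\int_x^\infty (t-x)^n|q(t)|\,|f(t,k)|\,dt$ to obtain $\|\G_n(q)[f]\|_{\Czt}\leq C\|q\|_{L^2_M}\|f\|_{\Ltt}$. Continuity in $x$ in both cases follows from Lebesgue's dominated convergence theorem.

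For (ii), the direct pointwise bound gives
\[
\|\G_j(q)[f](x,\cdot)\|_{L^2_k} \leq 2^j \|f\|_{\Czt} \int_a^\infty (t-a)^j|q(t)|\,dt,
\]
and the last integral is $\leq C\|\langle t\rangle^{j-M}\|_{L^2}\|q\|_{L^2_M}$ by Cauchy--Schwarz. The weight $\langle t\rangle^{j-M}$ lies in $L^2$ precisely when $j\leq M-1$, which is the sharp hypothesis stated in the lemma; this is the one place where the bound $j\leq M-1$ (rather than $j\leq M$) must enter.

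Finally, (iii) reduces to composition of analytic maps. By (i2) and (ii) the bilinear maps $(q,f)\mapsto \G_j(q)[f]$ are bounded, hence analytic, from $L^2_M\times\Ltt$ and from $L^2_M\times\Czt$ into $\Czt$ respectively. Composing with the analyticity of $q\mapsto m(q)-1\in\Ltt$ (Proposition \ref{prop_minLit}), of $q\mapsto \derk^{n-j}m(q)\in\Czt$ (Proposition \ref{prop_derminLit}, applicable because $n-j\leq M-1$), and of $q\mapsto k\derk^M m(q)\in\Czt$ (Proposition \ref{prop_dermiNLit}) yields the desired analyticity together with the quadratic bounds in $\|q\|_{L^2_M}$. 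For (iii2) one needs in addition that $\zeta\derk^M m(q)\in\Czt$; this follows from the pointwise bound $|\zeta(k)|\leq 2|k|$ on all of $\R$ (valid since $\zeta(k)=k$ for $|k|\leq 1/2$ and $|\zeta(k)|\leq 1\leq 2|k|$ for $|k|\geq 1/2$), giving $\|\zeta\derk^M m(q)\|_{\Czt}\leq 2\|k\derk^M m(q)\|_{\Czt}$. No step poses a genuine obstacle; the only subtle point is the sharp $j\leq M-1$ constraint in (ii), which dictates why the single term $\G_0(q)[\zeta\derk^M m(q)]$ in (iii2) must be split off from the other terms of equation \eqref{2.17bis} when $n=M$.
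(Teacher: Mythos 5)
Your proof is correct and follows essentially the same route as the paper: the Plancherel bound for $\G_n(q)[1]$, Minkowski plus Cauchy--Schwarz for the $\L(\Ltt,\Czt)$ bound, the embedding $L^2_M\subset L^1_{M-1}$ (your weighted Cauchy--Schwarz) for item $(ii)$ with the sharp constraint $j\leq M-1$, and composition with Propositions \ref{prop_minLit}, \ref{prop_derminLit}, \ref{prop_dermiNLit} for item $(iii)$. Your explicit remark that $|\zeta(k)|\leq 2|k|$, so that multiplication by $\zeta/k$ is bounded on $\Czt$, only makes precise a step the paper leaves implicit in $(iii2)$.
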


\begin{proof} As before it's enough to prove the continuity in $q$ of the  maps considered to conclude that they are analytic.
\begin{enumerate}
\item[$(i1)$] For $x \geq a$ and any $0 \leq n \leq M$ one has  
$\norm{\G_n(q)[1](x, \cdot)}_{L^2}^2 \leq C \intx{x} |t-x|^{2n} |q(t)|^2 dt  \leq C \norm{q}_{L^2_M}^2.$ The claim follows by taking the supremum over $x \geq a$ in the  inequality above.
\item[$(i2)$] For $x \geq a $ and $0 \leq n \leq M$ one has the bound $\norm{\G_n(q)[f]}_{\Czt} \leq C \norm{q}_{L^2_{n}} \norm{f}_{\Ltt}$, which implies the claimed estimate.
\item[$(ii)$] For $x \geq a$ and $0 \leq j \leq M-1$ one has the bound 
$$\norm{\G_j(q)[f]}_{\Czt} \leq C \norm{q}_{L^1_{M-1}} \norm{f}_{\Czt} \leq C \norm{q}_{L^2_{M}} \norm{f}_{\Czt} \ .$$
\item[$(iii1)$] By Proposition \ref{prop_derminLit} one has that for any $1 \leq n \leq M-1$ and $0 \leq j \leq n-1$ the map  $L^2_M \ni q \mapsto \derk^{n-j} m(q) \in \Czt$ is analytic. Since composition of analytic maps is again an analytic map, the claim regarding the analyticity follows.  The first estimate follows from item $(ii)$. A similar  argument can be used to prove the second estimate.
\item[$(iii2)$] By Proposition \ref{prop_dermiNLit},  the map $L^2_M \ni q \mapsto \zeta \derk^M m(q) \in \Czt$ is analytic, implying the claim regarding the analyticity. The estimate follows from 
$ \norm{\G_0[\zeta \derk^{M} m(q)]}_{\Czt} \leq \norm{q}_{L^2_M} \norm{\zeta\derk^M m(q)}_{\Czt}.$
\end{enumerate}
\end{proof}

The following corollary follows from the  results obtained so far:
\begin{cor}
\label{m(q,0,k)} Fix $M \in \Z_{\geq 4}$. Then the normalized Jost functions $m_j(q,x,k)$, $j=1,2$,  satisfy:
\begin{enumerate}[(i)]
\item the maps  $L^2_{M} \ni q \mapsto m_j(q,0, \cdot) -1 \in L^2$ and $L^2_{M} \ni q \mapsto k^\alpha \derk^n m_j(q,0,\cdot) \in L^2 $ are  analytic 
for $1 \leq n \leq M-1 $  $[1 \leq n \leq M]$ if $\alpha=0$ $[\alpha = 1]$. Moreover
$$
\norm{m_j(q,0, \cdot) -1}_{L^2}, \, 
\norm{k^\alpha \derk^n m_j(q,0,\cdot)}_{L^2}  \leq K_1 \norm{q}_{L^2_{M}},
$$
where $K_1 >0$ can be chosen uniformly on bounded subsets of $L^2_{M}$.
\item For $0 \leq n \leq M-1$, the maps $L^2_{M} \ni q \mapsto \derk^n \derx m_j(q,0, \cdot) \in L^2$ and $L^2_{M} \ni q \mapsto \zeta \derk^M \derx m_j(q,0, \cdot) \in L^2$ are analytic. Moreover
$$
\norm{\derk^n \derx m_j(q,0, \cdot)}_{L^2}, \, 
\norm{\zeta \derk^M \derx m_j(q,0, \cdot)}_{L^2}  \leq K_2 \norm{q}_{L^2_{M}},
$$
where $K_2 >0$ can be chosen uniformly on bounded subsets of $L^2_{M}$.
\end{enumerate}
\end{cor}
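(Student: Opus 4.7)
The plan is to obtain this corollary essentially for free by specializing the preceding propositions to $a=0$ and then composing with evaluation at $x=0$. Concretely, I would set $a=0$ throughout Propositions \ref{prop_minLit}, \ref{prop_derminLit}, \ref{prop_dermiNLit} and \ref{prop:derxderkm}, which gives that the maps
\[
q \mapsto m_1(q)-1,\ \partial_k^n m_1(q),\ k\partial_k^n m_1(q),\ \partial_k^n \partial_x m_1(q),\ \zeta \partial_k^M \partial_x m_1(q)
\]
are analytic from $L^2_M$ into $C^0_{x\geq 0} L^2$ (with the stated ranges of $n$), together with the corresponding norm bounds uniform on bounded subsets of $L^2_M$.

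Next, I would introduce the evaluation map
\[
\mathrm{ev}_0:\ C^0_{x\geq 0} L^2 \longrightarrow L^2,\qquad f \longmapsto f(0,\cdot),
\]
and observe that it is linear with $\|\mathrm{ev}_0(f)\|_{L^2}\le \|f\|_{C^0_{x\geq 0} L^2}$, so it is a bounded (hence real analytic) operator. The composition of real analytic maps is real analytic, therefore each of the maps $q\mapsto m_1(q,0,\cdot)-1$, $q\mapsto k^\alpha \partial_k^n m_1(q,0,\cdot)$, $q\mapsto \partial_k^n \partial_x m_1(q,0,\cdot)$ and $q\mapsto \zeta \partial_k^M \partial_x m_1(q,0,\cdot)$ is analytic into $L^2$, and the claimed estimates are inherited directly from the $C^0_{x\geq 0} L^2$ bounds of Propositions \ref{prop_minLit}--\ref{prop:derxderkm}. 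The admissible range of $n$ in each case matches the hypothesis of the corresponding proposition: $1\le n\le M-1$ for the plain $k$-derivatives (Proposition \ref{prop_derminLit}), $1\le n\le M$ when multiplied by $k$ (Proposition \ref{prop_dermiNLit}), $0\le n\le M-1$ for the mixed derivatives (Proposition \ref{prop:derxderkm}(i)) and $n=M$ weighted by $\zeta$ (Proposition \ref{prop:derxderkm}(ii)).

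For $m_2$, the same argument applies to the analogous family of propositions obtained from the integral equation \eqref{defm2}: by symmetry $t\mapsto -t$, every estimate derived for $m_1$ on $[a,+\infty)$ has an exact counterpart for $m_2$ on $(-\infty,a]$, in particular over the half-line $x\le 0$. Specializing again to $a=0$ and composing with the evaluation $\mathrm{ev}_0: C^0_{x\leq 0} L^2 \to L^2$ yields the statement for $m_2$. There is no real obstacle here; the only point requiring any care is to ensure that the $n=M$ case in (i) is treated with the weighted derivative $k\partial_k^M$ (since $\partial_k^M m_j(q,0,\cdot)$ is not known to lie in $L^2$ near $k=0$), and this is precisely how Proposition \ref{prop_dermiNLit} was designed. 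With these remarks the corollary follows immediately, so no further computation is required.
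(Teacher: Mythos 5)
Your proposal is correct and follows essentially the same route as the paper: the paper's proof is precisely to evaluate the integral formulas \eqref{defm}, \eqref{defderkm}, \eqref{eq:derkderxm} at $x=0$ and invoke Propositions \ref{prop_minLit}, \ref{prop_derminLit}, \ref{prop_dermiNLit} and \ref{prop:derxderkm}, which is your "take $a=0$ and compose with the bounded evaluation map" argument, including the same handling of the $n=M$ case via $k\derk^M$ and $\zeta\derk^M\derx$, and the same symmetry reduction for $m_2$.
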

\begin{proof} The Corollary follows by evaluating  formulas \eqref{defm}, \eqref{defderkm}, \eqref{eq:derkderxm} at $x=0$ and using the results of Proposition \ref{prop_minLit}, \ref{prop_derminLit}, \ref{prop_dermiNLit} and \ref{prop:derxderkm}.
\end{proof}

%%%%%%%%%%%%%%%%%%%%%%%%%%%%%%%%%%%%%%%%%%%%%%%%%%%%%%%%%%%%%%%%%%%%%%%%%%%%%%%%%%%%%%%%%%%%%%%%%%%%%%%%%%%
%%%%%%%%%%%%%%%%%%%%%%%%%%%%%%%%%%%%%%%%%%%%%%%%%%%%%%%%%%%%%%%%%%%%%%%%%%%%%%%%%%%%%%%%%%%%%%%%%%%%%%%%%%%
%%%%%%%%%%%%%%%%%%%%%%%%%%%%%%%%%%%%%%%%%%%%%%%%%%%%%%%%%%%%%%%%%%%%%%%%%%%%%%%%%%%%%%%%%%%%%%%%%%%%%%%%%%%

\section{One smoothing properties of the scattering map.}
\label{sec:dir.scat}
The aim of this section is to prove the part of Theorem \ref{reflthm} related to the direct problem. To begin, note that  by Theorem \ref{deift_jost}, for $q \in L^2_4$ real valued one has $\overline{m_1(q,x,k)}= m_1(q,x,-k)$ and $\overline{m_2(q,x,k)}= m_2(q,x,-k)$; hence
\begin{equation}
\label{S.conj}
\overline{S(q,k)} = S(q,-k) \ , \qquad \overline{W(q,k)}= W(q, -k) \ .
\end{equation}
Moreover one has for any $q \in L^2_{4}$
\begin{equation}
\label{W&S}
W(q,k) W(q,-k) = 4k^2 + S(q,k) S(q,-k) \qquad \forall \, k \in \R\setminus \{0 \}
\end{equation}
 which by continuity holds for $k=0$ as well. In the case where $q \in \class$, the latter identity  implies that $S(q,0) \neq 0$. \\ 
Recall that for $q \in L^2_{4}$ the Jost solutions $f_1(q,x,k)$ and $f_2(q,x,k)$ satisfy the following integral equations
\begin{align}
\label{duhamelformula}
& f_1(x,k)= e^{i k x} + \intx{x}
\frac{\sin k(t-x)}{k}q(t)f_1(t,k)dt \ , \\
\label{duhamelformula2}
& f_2(x,k) = e^{-ikx} + \int\limits_{-\infty}^x
\frac{\sin k(x-t)}{k}q(t)f_2(t,k)dt \ .
\end{align}
Substituting \eqref{duhamelformula} and \eqref{duhamelformula2} into \eqref{wronskian}, \eqref{wronskian_W}, one verifies that  $S(q,k), \, W(q,k)$ satisfy for $k \in \R$ and $q \in L^2_{4}$
\begin{align}
& S(q, k)=  \intii e^{ikt} q(t) f_1(q, t,k) dt \ , \label{S.1}\\
& W(q, k) = 2ik - \intii e^{-ikt} q(t) f_1(q, t,k) dt \ .
\label{W.1}
\end{align}
Note that the integrals above are well defined thanks to the estimate in item $(ii)$ of Theorem  \ref{deift_jost}.
\vspace{0.5em}\newline
Inserting  formula \eqref{duhamelformula} into \eqref{S.1}, one gets that
$$
S(q,k) = \F_-(q,k) +O\left( \tfrac{1}{k}\right) \ .
$$
The main result of this section is an  estimate  of 
\begin{equation}
\label{map.A}
A(q,k) := S(q,k) - \F_-(q,k) \ ,
 \end{equation}
saying   that $A$ is  $1$-smoothing. 
To formulate the result in a precise way, we need to introduce the  following  Banach  spaces. For $M \in \Z_{\geq 1}$ define
\begin{equation*}
 \begin{aligned}
&H^M_* := \lbrace  f \in H^{M-1}_\C : \quad \overline{f(k)}= f(-k), \quad k \derk^M f \in L^2 \rbrace \ ,
\end{aligned}
\end{equation*}
endowed with the norm
$$
\norm{f}_{H^M_*}^2 := \norm{f}_{H^{M-1}_\C}^2
+ \norm{k\derk^M f}_{ L^2}^2 \ .$$
Note that $H^M_{*}$ is a  \textit{real} Banach space.
We will use also the complexification of the Banach spaces $H^M_*$ and $\Hz$ (this last defined in \eqref{H^N*}), in which the reality condition $\overline{f(k)} = f(-k)$ is dropped:
\begin{equation*}
 \begin{aligned}
&H^M_{*,\C} := \lbrace  f \in H^{M-1}_\C :  \quad k \derk^M f \in L^2 \rbrace, 
\qquad \Hzc := \lbrace  f \in H^{M-1}_\C:  \quad \zeta \derk^M f \in L^2 \rbrace.
\end{aligned}
\end{equation*}
 Note that for any $M\geq 2$
\begin{equation}
\label{lemHN*}
(i)\, H^M_\C  \subset \Hzc \mbox{ and }  H^M_{*,\C} \subset \Hzc, \quad (ii)\, fg \in \Hzc \qquad \forall \, f \in  H^M_{*, \C}, \, g \in \Hzc.
\end{equation}
We can now state the main theorem of this section.
 Let $L^2_{M, \R} := \left\{ f \in L^2_M \ \vert \ f \mbox{ real valued } \right\} $.

\begin{theorem}
\label{A.prop} Let $N \in \Z_{\geq 0}$ and $M \in \Z_{\geq 4}$. Then one has:
\begin{enumerate}[(i)]
\item The map $q \mapsto A(q, \cdot)$ is  analytic as a map from $L^2_{M}$ to $\Hzc$.
\item  The map $q \mapsto A(q, \cdot)$ is  analytic as a map from $H^N_\C \cap L^2_4$ to $L^2_{N+1}$. Moreover 
 $$\norm{A(q, \cdot)}_{L^2_{N+1}} \leq C_A \norm{q}^2_{H^N_\C \cap L^2_4}$$
where the constant $C_A>0$ can be chosen uniformly on bounded subsets of $H^N_\C \cap L^2_4$. 

Furthermore for $q \in L^2_{4, \R}$  the map  $A(q, \cdot)$ satisfies  $\overline{A(q,k)}= A(q,-k)$ for every $k \in \R$. Thus its restrictions $A: L^2_{M,\R} \to \Hz$ and $A: H^N \cap L^2_4 \to L^2_{N+1}$ are real analytic.
\end{enumerate}
\end{theorem}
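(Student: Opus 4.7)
The starting point is the identity, which follows from \eqref{S.1} together with $f_1 = e^{ikx} m_1$,
\begin{equation*}
A(q,k) = \int_{\R} e^{2ikx}\, q(x)\, (m_1(q,x,k) - 1)\, dx,
\end{equation*}
expressing $A$ as an oscillatory integral in which all the $k$-regularity and $k$-decay needed for the theorem is already packaged into the factor $m_1 - 1$, whose quantitative properties in $q$ were worked out in Section 2. The conjugation $\overline{m_1(q,x,k)} = m_1(q,x,-k)$ for real $q$, from Theorem \ref{deift_jost}, gives $\overline{A(q,k)} = A(q,-k)$, so the real-analytic restriction statements will follow automatically once the complex-valued versions of (i) and (ii) are in place. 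I would therefore treat $q \in L^2_M$ complex throughout.

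For (i), differentiating under the integral sign, Leibniz yields for $0 \le n \le M-1$
\begin{equation*}
\derk^n A(q,k) = \sum_{j=0}^n \binom{n}{j}\int_{\R} e^{2ikx}\,(2ix)^j\,q(x)\,\derk^{n-j}(m_1(q,x,k)-1)\,dx,
\end{equation*}
and multiplying the analogous formula with $n = M$ by $\zeta(k)$ puts the critical $j=0$ summand into the form $\int e^{2ikx} q(x)\,\zeta(k)\derk^M m_1(q,x,k)\,dx$. The $L^2_k$-norm of each summand I would bound by Minkowski's integral inequality in $x$ combined with the uniform-in-$x$ estimates on $\derk^{n-j}(m_1-1)$ and $\zeta\derk^M m_1$ supplied by Propositions \ref{prop_minLit}, \ref{prop_derminLit} and \ref{prop_dermiNLit}; the $x$-weights $(2ix)^j$ are absorbed by $q\in L^2_M$ through $x^j q \in L^1$ for $j \le M-1$. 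The borderline term $j = M$ in the $\zeta\derk^M$ row needs an extra factor of $1/k$ to match $\zeta$, which I would extract by substituting the integral representation \eqref{defm} for $m_1 - 1$ and integrating by parts once in $x$. The contribution from $x\le 0$ is treated by the symmetric argument applied to $m_2$ via \eqref{defm2}. Analyticity in $q$ is inherited term by term from Section 2.

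For (ii), by Plancherel it suffices to produce $N+1$ powers of $k$ in $L^2_k$. To gain the first $N$, I would integrate by parts $N$ times in $x$ via $e^{2ikx} = (2ik)^{-1}\derx e^{2ikx}$: each step transfers one $x$-derivative off the exponential onto $q(x)(m_1(q,x,k)-1)$ at a cost of $(2ik)^{-1}$; since $q\in H^N$ all $\derx^j q$, $0\le j\le N$, lie in $L^2$, and $\derx m_1$ is handled by \eqref{derxm.equation} and Proposition \ref{prop:derxderkm}. The final $(N{+}1)$-st factor of $k$ comes from the inherent smoothing $m_1 - 1 = O(1/k)$ of Theorem \ref{deift_jost}(i), realized rigorously by writing $m_1(q,x,k) - 1 = (2ik)^{-1}\int_x^\infty (e^{2ik(t-x)}-1)q(t)m_1(q,t,k)\,dt$ from \eqref{defm} and extracting the $(2ik)^{-1}$, or by one additional integration by parts. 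The resulting expression for $k^{N+1}A(q,k)$ is a finite sum of bilinear oscillatory integrals in derivatives of $q$ and factors $m_1$, $\derx m_1$, each of which is bounded in $L^2_k$ by $\|q\|_{H^N\cap L^2_4}^2$ via Plancherel together with the uniform-in-$x$ bounds of Section 2; analyticity then follows from the bilinear structure.

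The main obstacle I anticipate is the low-frequency region $|k|\lesssim 1$: the denominators $(2ik)^{-1}$ introduced by iterated integration by parts are singular there, whereas the gain $m_1 - 1 = O(1/k)$ only helps at high frequency. The natural remedy is a cut-off splitting $A = \chi_{|k|\le 1}A + \chi_{|k|\ge 1}A$: on the low-frequency piece the $L^2_{N+1}$-norm reduces to an estimate already covered by part (i)—which is precisely why $\zeta$ has been built into the definition of $\Hzc$—while only the high-frequency piece has to be handled by the iterated integration by parts just described.
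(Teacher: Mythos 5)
Your starting identity $A(q,k)=\int_\R e^{2ikx}q(x)\,(m_1(q,x,k)-1)\,dx$ is correct, but the plan for item (i) founders on the region $x\le 0$. All the quantitative input you invoke from Section 2 (Propositions \ref{prop_minLit}, \ref{prop_derminLit}, \ref{prop_dermiNLit}, \ref{prop:derxderkm}) is formulated in the spaces $\Czt=C^0_{x\ge a}L^2$ with constants depending on $a$, and these bounds genuinely degrade as $x\to-\infty$: by \eqref{refl_tras_rel2} one has $m_1(q,x,k)\approx \frac{S(q,k)}{2ik}e^{-2ikx}+\frac{W(q,k)}{2ik}$ as $x\to-\infty$, so $\derk^{n}m_1(q,x,k)$ grows like $|x|^{n}$ there, and the Leibniz term carrying the weight $(2ix)^j$ would require roughly $\langle x\rangle^{M+1}q\in L^1$ on the left half-line, which $q\in L^2_M$ does not give. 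Your proposed remedy, ``treat $x\le 0$ by the symmetric argument applied to $m_2$'', does not apply as stated: the integrand contains $m_1$, not $m_2$, and converting one into the other goes through \eqref{refl_tras_rel2}, whose coefficients are $S$ and $W$ -- the very objects being estimated. This is exactly why the paper abandons the global integral representation for item (i) and instead exploits the $x$-independence of the Wronskian, writing $S(q,k)=[m_1(q,0,k),m_2(q,0,-k)]$ at $x=0$ (Proposition \ref{prop:scatt} together with Corollary \ref{m(q,0,k)}), so that only $m_1$ on $[0,\infty)$ and $m_2$ on $(-\infty,0]$ ever enter; the same difficulty also infects your item (ii), since $m_1-1$ does not decay (indeed grows) as $x\to-\infty$ while $\derx^N q$ carries no weight.

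For item (ii) the bookkeeping ``$N$ integrations by parts gain $k^{-N}$, then one more order from $m_1-1=O(1/k)$'' only works for the Leibniz terms in which every derivative lands on $q$. When derivatives land on $m_1$ the count breaks down: from $m_1''+2ik\,\derx m_1=qm_1$ each $x$-derivative of $m_1$ beyond the first produces a factor $2ik$, so e.g. $\derx^2 m_1=qm_1-2ik\,\derx m_1$ is $O(1)$ in $k$ and $\derx^3m_1$ is $O(k)$, and Proposition \ref{prop:derxderkm} supplies no $k$-decay for $\derx m_1$ at all -- only bounds uniform in $k$. Hence the cross terms lose precisely the powers of $k$ you intend to gain, and untangling them forces a full re-expansion of $m_1$ by iterated Duhamel; controlling the resulting multilinear oscillatory integrals in $L^2_N$ -- always integrating by parts against an oscillation that has not been cancelled -- is exactly the content of the paper's Lemma \ref{tilde_s_n} (resting on Lemma \ref{hyp_red} and the parity property \eqref{set.ind.prop}), combined with the $\norm{s_n}_{L^\infty}\le \norm{q}_{L^1}^{n+1}/(n+1)!$ tail estimate that makes the series \eqref{b1series} converge and yields analyticity. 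None of this machinery is present in your sketch, so as written the argument for the $L^2_{N+1}$ bound does not close. Your low-frequency cut-off remark and the reduction of the real-analyticity statements to the complex ones via $\overline{m_1(q,x,k)}=m_1(q,x,-k)$ are fine and agree with the paper.
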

The following corollary follows immediately from identity \eqref{map.A}, item $(ii)$ of Theorem \ref{A.prop} and   the properties of the Fourier transform:
\begin{cor} 
\label{S.decay}
Let $N \in \Z_{\geq 0}$. Then the map $q \mapsto S(q, \cdot)$ is  analytic as a map from $H^N_\C \cap L^2_4$ to $L^2_{N}$. Moreover 
 $$\norm{S(q, \cdot)}_{L^2_{N}} \leq C_S \norm{q}_{H^N_\C \cap L^2_4}$$
where the constant $C_S>0$ can be chosen uniformly on bounded subsets of $H^N_\C \cap L^2_4$.
\end{cor}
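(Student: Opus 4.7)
The plan is to read off this corollary as a direct consequence of Theorem \ref{A.prop} together with the Plancherel identity applied to $\mathcal{F}_-$. The starting point is the decomposition $S(q,\cdot) = A(q,\cdot) + \mathcal{F}_-(q,\cdot)$ coming from the definition \eqref{map.A}, so it suffices to check that each summand is analytic from $H^N_\C \cap L^2_4$ to $L^2_N$ with a suitable estimate.

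First I would treat the Fourier piece. Since $\mathcal{F}_-(q,k) = \int e^{-2ikx} q(x)\,dx$, Plancherel's theorem and the identity $k^j \mathcal{F}_-(q,k) = (2i)^{-j}\mathcal{F}_-(\derx^j q, k)$ (valid for $q \in H^N_\C$) give
$$\norm{\mathcal{F}_-(q,\cdot)}_{L^2_N}^2 = \intii (1+k^2)^N \mmod{\mathcal{F}_-(q,k)}^2 dk \leq C_N \norm{q}_{H^N_\C}^2.$$
Therefore $\mathcal{F}_- : H^N_\C \to L^2_N$ is bounded and linear, hence in particular real analytic, and restricting to the subspace $H^N_\C \cap L^2_4$ is harmless.

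Next I would invoke Theorem \ref{A.prop}(ii), which asserts that $A : H^N_\C \cap L^2_4 \to L^2_{N+1}$ is analytic with the quadratic bound $\norm{A(q,\cdot)}_{L^2_{N+1}} \leq C_A \norm{q}^2_{H^N_\C \cap L^2_4}$, where $C_A$ is uniform on bounded subsets. Since the inclusion $L^2_{N+1} \hookrightarrow L^2_N$ is continuous, $A$ is analytic as a map into $L^2_N$ as well. Because $S$ is the sum of two analytic maps into $L^2_N$, it is itself analytic.

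For the estimate, on a ball $\{\norm{q}_{H^N_\C \cap L^2_4} \leq R\}$ the quadratic bound on $A$ becomes linear up to the constant $R$:
$$
\norm{S(q,\cdot)}_{L^2_N} \leq \norm{A(q,\cdot)}_{L^2_{N+1}} + \norm{\mathcal{F}_-(q,\cdot)}_{L^2_N} \leq \bigl(C_A R + C_N^{1/2}\bigr) \norm{q}_{H^N_\C \cap L^2_4},
$$
so $C_S := C_A R + C_N^{1/2}$ works uniformly on that ball, which is exactly the claimed bound. There is no real obstacle here; the only thing to watch is the bookkeeping that converts the quadratic bound from Theorem \ref{A.prop}(ii) into a linear bound that is uniform only on bounded subsets, which is why $C_S$ is allowed to depend on the size of $q$.
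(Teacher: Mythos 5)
Your proposal is correct and is exactly the paper's argument: the paper derives Corollary \ref{S.decay} directly from the identity $S = \F_- + A$ in \eqref{map.A}, item $(ii)$ of Theorem \ref{A.prop}, and boundedness of $\F_-: H^N_\C \to L^2_N$, which is precisely your decomposition. You merely spell out the Plancherel computation and the conversion of the quadratic bound on $A$ into a bound of the stated form on bounded sets, both of which are fine.
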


In \cite{beat2}, it is shown that in the periodic setup, the Birkhoff map of KdV is 1-smoothing. As the map $q \mapsto S(q, \cdot)$ on the spaces considered can be viewed as a version of the Birkhoff map in the scattering setup of KdV, Theorem \ref{A.prop} confirms that a result analogous to the one on the circle holds also on the line.

The proof of  Theorem \ref{A.prop} consists of several steps. We begin by proving item $(i)$. Since $\F_-: L^2_{M} \to H^M_\C$ is bounded, item $(i)$ will follow  from the following proposition:
\begin{proposition}
\label{prop:scatt}
Let $M \in \Z_{\geq 4}$, then  the map $L^2_{M} \ni q  \mapsto S(q, \cdot) \in  \Hzc$ is analytic and 
$$\norm{ S(q, \cdot)}_{ \Hzc}\leq K_S \norm{q}_{L^2_{M}},$$
 where $K_S>0$ can be chosen uniformly  on bounded subsets of $ L^2_{M}$.
\end{proposition}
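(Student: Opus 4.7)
The strategy is to avoid estimating $S(q,k) = \int_\R e^{2ikt} q(t)\, m_1(q,t,k)\,dt$ directly (where the behavior of $m_1$ as $t \to -\infty$ would complicate weighted $L^2_k$-estimates), and instead to use the fact that the Wronskian $[f_1(q,\cdot,k), f_2(q,\cdot,-k)]$ defining $S(q,k)$ in \eqref{wronskian} is independent of $x$. I would evaluate it at the fixed base point $x=0$. This reduces the problem to a pointwise-in-$x$ analysis of the normalized Jost functions and their $x$-derivatives, which is precisely what Corollary \ref{m(q,0,k)} provides.

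\textbf{Step 1 (Wronskian identity at $x=0$).} Writing $f_1(q,x,k)=e^{ikx}m_1(q,x,k)$ and $f_2(q,x,-k)=e^{ikx}m_2(q,x,-k)$, the contributions proportional to $ik$ cancel when computing the Wronskian at $x=0$, leaving
\begin{equation*}
S(q,k) = m_1(q,0,k)\,\derx m_2(q,0,-k) - \derx m_1(q,0,k)\,m_2(q,0,-k).
\end{equation*}
Splitting $m_j(q,0,\pm k) = 1 + (m_j(q,0,\pm k) - 1)$ yields the reorganization
\begin{equation*}
S(q,k) = \bigl(\derx m_2(q,0,-k) - \derx m_1(q,0,k)\bigr) + (m_1(q,0,k)-1)\,\derx m_2(q,0,-k) - \derx m_1(q,0,k)\,(m_2(q,0,-k)-1).
\end{equation*}

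\textbf{Step 2 (Spaces and bounds).} By Corollary \ref{m(q,0,k)}, for $M \in \Z_{\geq 4}$ the $x$-derivatives $\derx m_j(q,0,\cdot)$ lie in $\Hzc$ with norm bounded by $K_2\,\norm{q}_{L^2_M}$, while the corrections $m_j(q,0,\cdot)-1$ lie in $H^M_{*,\C}$ with norm bounded by $K_1\,\norm{q}_{L^2_M}$; the constants $K_1, K_2$ are uniform on bounded subsets of $L^2_M$. Since $\Hzc$ and $H^M_{*,\C}$ are manifestly invariant under $k\mapsto -k$, the same estimates hold for the reflected factors $m_j(q,0,-k)-1$ and $\derx m_j(q,0,-k)$. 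The multiplication property \eqref{lemHN*} then gives $(m_j(q,0,\pm k)-1)\,\derx m_{j'}(q,0,\pm k) \in \Hzc$ with norm controlled by $C K_1 K_2 \norm{q}_{L^2_M}^2$. Adding the linear and quadratic contributions, and using that on any bounded subset of $L^2_M$ one has $\norm{q}_{L^2_M}^2 \leq R\,\norm{q}_{L^2_M}$, yields the claimed bound
$\norm{S(q,\cdot)}_{\Hzc} \leq K_S\,\norm{q}_{L^2_M}$ uniformly on bounded subsets.

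\textbf{Step 3 (Analyticity and obstacles).} Each scalar factor $m_j(q,0,\pm k)-1$ and $\derx m_j(q,0,\pm k)$ depends analytically on $q \in L^2_M$ by Corollary \ref{m(q,0,k)}. Since the bilinear multiplication $H^M_{*,\C} \times \Hzc \to \Hzc$ furnished by \eqref{lemHN*} is continuous, it is analytic; composition of analytic maps being analytic, $q \mapsto S(q,\cdot) \in \Hzc$ is analytic. The main technical point lurking in the background is the algebra-type property \eqref{lemHN*} itself: for $M\geq 4$ it follows from the Leibniz rule together with the one-dimensional Sobolev embedding $H^{M-1}_\C \hookrightarrow L^\infty$ and the observation that $\zeta(k)/k$ is bounded on $\R$ (so that $\zeta\derk^M f = (\zeta/k)\,(k\derk^M f)$ is controlled by the $H^M_{*,\C}$-norm of $f$, times an $L^\infty$ factor supplied by the other argument). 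The conceptual gain of reducing to the Wronskian at $x=0$ is that it bypasses the delicate integration $\int_\R e^{2ikt} q(t)(m_1(q,t,k)-1)\,dt$, which would otherwise require handling the non-decay of $m_1(q,t,\cdot)$ as $t\to -\infty$ by switching to $m_2$ via the scattering relations \eqref{refl_tras_rel2}.
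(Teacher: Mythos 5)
Your proposal is correct and follows essentially the same route as the paper: it exploits the $x$-independence of the Wronskian to write $S(q,k)=m_1(q,0,k)\derx m_2(q,0,-k)-\derx m_1(q,0,k)\,m_2(q,0,-k)$, splits off the terms $m_j-1$, and concludes via Corollary \ref{m(q,0,k)} together with the multiplication property \eqref{lemHN*}. The only additions (the reflection $k\mapsto-k$ remark and the sketch of why \eqref{lemHN*} holds) are harmless elaborations of steps the paper leaves implicit.
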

\begin{proof} Recall that $f_1(q,x,k) = e^{ikx}\,m_1(q,x,k)$ and $f_2(q,x,k) = e^{-ikx} \, m_2(q,x,k)$. The $x$-independence of $S(q, k)$ implies that
\begin{equation}
\label{SWwronskian}
 S(q, k) =  [m_1(q, 0,k),\, m_2(q, 0,-k)] \ . 
 \end{equation}
As by Corollary \ref{m(q,0,k)},   $m_j(q,0, \cdot)-1 \in H^M_{*, \C}$ and $\derx m_j(q,0, \cdot) \in \Hzc$, $j=1,2$, the identity \eqref{SWwronskian} yields
\begin{align*}
 S(q,k) = & (m_1(q,0,k)-1)\, \derx m_2(q,0,-k) - (m_2(q,0,-k)-1)\, \derx m_1(q,0,k) \\
 & + \derx m_2(q,0,-k) - \derx m_1(q,0,k) \ ,
 \end{align*}
thus $S(q, \cdot) \in \Hzc $  by \eqref{lemHN*}. The estimate on the norm $\norm{ S(q, \cdot)}_{ \Hzc}$ follows by Corollary \ref{m(q,0,k)}.
\end{proof}
\vspace{0.5em}
{\em Proof of Theorem \ref{A.prop} $(i)$.}
The claim is a direct consequence of Proposition \ref{prop:scatt} and the fact that  for any real valued potential $q$, $\overline{S(q,k)} = S(q, -k)$, $\overline{\F_-(q,k)}= \F_-(q,-k)$ and hence $\overline{A(q,k)}= A(q,-k)$ for any $k \in \R$.
\qed
\vspace{1em}\newline
In order to prove the second item of Theorem \ref{A.prop}, we  expand the map $q \mapsto A(q)$ as a power series of $q$. More precisely, iterate formula \eqref{duhamelformula}  and insert the formal expansion  obtained in this way in the integral term of \eqref{S.1}, to get
\begin{equation}
\label{b0series}
S(q,k)= \F_-(q,k) + \sum_{n \geq 1} \frac{s_n(q,k)}{k^n}
\end{equation}
where, with $dt = dt_0 \cdots dt_n$,
\begin{equation}
\label{expansionSn}
s_n(q,k):=\int_{\Delta_{n+1}} e^{ikt_0} q(t_0) \prod_{j=1}^n \Big(q(t_j) \, \sin k(t_j-t_{j-1})\Big)e^{ikt_n} \, dt
\end{equation}
 is a polynomial of degree  $n+1$ in $q$ (cf Appendix \ref{analytic_map}) and 
$\Delta_{n+1}$ is given by $$
\Delta_{n+1} := \left\{(t_0, \cdots, t_n) \in \R^{n+1}: \quad t_0\leq \cdots \leq 
t_n \right\} . 
$$
Since by Proposition \ref{prop:scatt}  $S(q, \cdot)$ is in $L^2$,  it remains to control the decay  of $A(q, \cdot) $ in $k$ at infinity. Introduce a cut off function $\chi$ with $\chi(k)=0$ for $|k|\leq 1$ and $\chi(k)=1$ for $|k| > 2$ and consider the series 
\begin{equation}
\label{b1series}
\chi(k) S(q,k)= \chi(k) \F_-(q,k) + \sum_{n \geq 1} \frac{\chi(k) s_n(q,k)}{k^n}.
\end{equation}
Item $(ii)$ of Theorem \ref{A.prop} follows once  we show that  each  term $\tfrac{\chi(k) s_n(q,k)}{k^n}$ of the series is bounded  as a map from $H^N_\C \cap L^2_{4}$ into $L^2_{N+1}$ and the series  has an infinite radius of convergence in  $L^2_{N+1}$. Indeed the analyticity of the map then  follows from general properties of analytic maps in complex Banach spaces, see  Remark \ref{entire.func}.\\
In order to estimate the terms of the series, we need estimates on the maps $k \mapsto s_n(q,k)$. A first trivial bound   is given by 
\begin{equation}
\label{sn.l.inf}
\norm{s_n(q, \cdot)}_{L^\infty} \leq \tfrac{1}{(n+1)!} \norm{q}_{L^1}^{n+1}.
\end{equation}
However, in order to prove convergence of \eqref{b1series}, one needs more refined estimates of the norm of $k\mapsto s_n(q,k)$ in $L^2_{N}$. In order to derive such estimates, we begin with  a preliminary lemma about oscillatory integrals:
\begin{lemma}
\label{hyp_red}
Let $f \in L^1(\R^n, \C)\cap L^2(\R^n, \C)$. Let $\alpha \in \R^n$, $\alpha \neq 0$ and
$$
g: \R \rightarrow \C, \quad
g(k):= \int_{\R^n}e^{ik \alpha \cdot t} f(t)\; dt.
$$
Then $g \in L^2$ and for any component $\alpha_i \neq 0$ one has
\begin{equation}
\norm{g}_{L^2} \leq \int\limits_{\R^{n-1}} 
\Big(\intii |f(t)|^2 \, d t_i \Big)^{1/2} dt_1 \ldots \widehat{d t_i} \ldots dt_n.
\end{equation}
\end{lemma}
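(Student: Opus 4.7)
The plan is to reduce the multivariate oscillatory integral defining $g$ to a one-dimensional Fourier transform in the distinguished variable $t_i$, then invoke Plancherel. Split the integration as $t = (t', t_i)$ with $t' := (t_1, \dots, \widehat{t_i}, \dots, t_n) \in \R^{n-1}$, and write $\alpha \cdot t = \alpha_i t_i + \beta \cdot t'$ with $\beta := (\alpha_1, \dots, \widehat{\alpha_i}, \dots, \alpha_n)$. By Fubini,
$$
g(k) = \int_{\R^{n-1}} e^{ik\beta \cdot t'} \Big( \int_\R e^{ik\alpha_i t_i} f(t',t_i)\, dt_i \Big)\, dt',
$$
which is justified by the $L^1$ hypothesis on $f$.

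Next I would apply Minkowski's integral inequality for the $L^2(dk)$ norm to move the $L^2(dk)$ norm past the $dt'$ integral. The factor $e^{ik\beta \cdot t'}$ has modulus one and thus drops out of the $L^2(dk)$ norm, leaving
$$
\|g\|_{L^2(dk)} \leq \int_{\R^{n-1}} \Big\| \int_\R e^{ik\alpha_i t_i} f(t',t_i)\, dt_i \Big\|_{L^2(dk)}\, dt'.
$$
For each fixed $t' \in \R^{n-1}$, define $F_{t'}(t_i) := f(t',t_i)$, which lies in $L^2(\R)$ for a.e.~$t'$ by Fubini. The inner integral equals $\widehat{F_{t'}}(-k\alpha_i)$ in the convention $\widehat{h}(\xi) = \int e^{-i\xi s} h(s)\, ds$. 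Applying Plancherel and changing variables $\xi = -k\alpha_i$ (using $\alpha_i \neq 0$) yields
$$
\Big\| \widehat{F_{t'}}(-k\alpha_i) \Big\|_{L^2(dk)} = \frac{C}{|\alpha_i|^{1/2}} \Big( \int_\R |f(t',t_i)|^2\, dt_i \Big)^{1/2},
$$
where $C$ is the Plancherel constant for the chosen Fourier convention. Absorbing $C/|\alpha_i|^{1/2}$ into the implicit constant gives the stated bound, and in particular $g \in L^2(\R)$ whenever the right-hand side is finite.

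There is no serious obstacle here; the only care point is the justification of Fubini (ensured by the $L^1$ assumption on $f$) and of Minkowski's inequality on the measurable family of functions $k \mapsto \int_\R e^{ik\alpha_i t_i} f(t',t_i)\, dt_i$, which is a standard application once one notes that the inner integrand is jointly measurable. The factor $|\alpha_i|^{-1/2}$ produced by the rescaling in Plancherel is harmless for the intended use, where $\alpha$ is a fixed vector arising from the oscillatory phases in \eqref{expansionSn}.
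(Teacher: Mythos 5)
Your proof is correct and reaches the right estimate, but by a somewhat different (and cleaner) route than the paper. The paper expands $\norm{g}_{L^2}^2$ as a triple integral over $k,t,s$, integrates out $k$ via the distributional identity $\int_\R e^{ikx}\,dk = c\,\delta_0(x)$, resolves the resulting $\delta(\alpha\cdot(t-s))$ in the variable $s_i$ (this is where $\alpha_i\neq 0$ enters, through $s_i = t_i + c_i/\alpha_i$), and finishes with Cauchy--Schwarz in $t_i$ together with translation invariance of $\int |f(s_1,\ldots,t_i+c_i/\alpha_i,\ldots,s_n)|^2\,dt_i$; that chain is precisely the squared form of your Minkowski step, and the delta identity is Plancherel in disguise. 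You instead split $t=(t',t_i)$ by Fubini, apply Minkowski's integral inequality in $L^2(dk)$ so that the unimodular factor $e^{ik\beta\cdot t'}$ drops out, and use one-dimensional Plancherel on $F_{t'}$ after the change of variables $\xi=-k\alpha_i$. Your version avoids the formal manipulation with $\delta_0$ (which in the paper is written with an incorrect normalization and whose pairing with an $L^1\cap L^2$ integrand would itself need justification), so it is the more rigorous of the two presentations. The one caveat concerns the constant: your computation produces the factor $\sqrt{2\pi/|\alpha_i|}$ (in the paper's conventions), and this cannot simply be ``absorbed'' into the inequality as literally stated, since the statement carries constant $1$; already for $n=1$ and $|\alpha|<2\pi$ the constant-$1$ version fails. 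This, however, is a defect of the statement and of the paper's own bookkeeping rather than of your argument: in the only application, $\alpha_i=\pm 2$ is fixed and any absolute constant suffices, as you correctly note.
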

\begin{proof} The lemma is a  variant of   Parseval's theorem for the Fourier transform; indeed
\begin{equation}
\label{hr1}
\norm{g}_{L^2}^2 = \int_{\R} g(k)\,  \overline{g(k)}\, dk  = \int\limits_{\R \times \R^n \times \R^n}  e^{ik \alpha \cdot (t-s)} f(t) \overline{f(s)} \, dt \, ds \, dk.
\end{equation}
Integrating first in the $k$ variable and using the distributional identity $\int_{\R}e^{ikx} \, dk= \frac{1}{2\pi} \delta_{0}$, where $\delta_0$ denotes the Dirac delta function, one gets 
\begin{equation}
\norm{g}_{L^2}^2  =\frac{1}{2\pi} \int\limits_{\R^{n}\times \R^{n}} f(t)\, \overline{f(s)}\, \delta(\alpha \cdot (t-s))  \,dt \, ds
\end{equation}
Choose an index $i$ such that $\alpha_i \neq 0$; then $\alpha \cdot (t-s) = 0 $ implies  that
$s_i = t_i + c_i / \alpha_i$, where $c_i = \sum_{j \neq i} \alpha_j(t_j - s_j)$. Denoting $d\sigma_i=  dt_1 \cdots \widehat{dt_i} \cdots dt_n$ and  $d\tilde{\sigma_i}=  ds_1 \cdots \widehat{ds_i} \cdots ds_n$, one has,  integrating first in the  variables $s_i$ and $t_i$,
\begin{equation}
\begin{aligned}
\norm{g}_{L^2}^2 & =\frac{1}{2\pi} \int\limits_{\R^{n-1}\times \R^{n-1} } d\sigma_i \, d\tilde{\sigma_i} \, \int_\R f(t_1, \ldots, t_i, \ldots, t_n) \overline{f(s_1, \ldots, t_i + c_i/\alpha_i, \ldots, s_n)} dt_i  \\
& \quad \leq \int\limits_{\R^{n-1}\times \R^{n-1} } d\sigma_i \, d\tilde{\sigma_i} \Big(\intii |f(t)|^2 \, dt_i \Big)^{1/2} \cdot \Big(\intii |f(s)|^2 \, ds_i \Big)^{1/2} \\
& \quad \leq \Big(\int\limits_{\R^{n-1} } d\tilde\sigma_i  \, \Big(\intii |f(s)|^2 \, ds_i \Big)^{1/2} \Big)^2
\end{aligned}
\end{equation}
where in the second line we have used the Cauchy-Schwarz inequality and the invariance of the integral $\intii |f(s_1, \ldots, t_i + c_i/\alpha_i, \ldots, s_n)|^2 $ by translation. 
\end{proof}

To get bounds on the norm of the polynomials $k \mapsto s_n(q,k)$ in $L^2_{N}$ it is convenient to study the  multilinear maps associated with  them:
\begin{align*}
\label{multisn}
\tilde s_n \ : \ & \left(H^N_\C \cap L^1 \right)^{n+1} \to L^2_N \ , \\
& (f_0, \cdots, f_n) \mapsto \tilde{s}_n(f_0, \cdots, f_n):= \int_{\Delta_{n+1}} e^{ikt_0}f_0(t_0) 
\prod_{j=1}^{n} \Big(f_j(t_j) \, \sin (k(t_j-t_{j-1})) \Big)\,e^{ikt_n} \; dt \ .
\end{align*}

The boundedness of these multilinear maps is given by the following 
\begin{lemma}
\label{tilde_s_n}
 For each $n \geq 1$ and $N \in \Z_{\geq 0}$,  $\tilde{s}_n: (H^N_\C \cap L^1)^{n+1} \to L^2_{N}$ is bounded. In particular  there exist  constants $C_{n,N}>0$ such that
\begin{equation}
\label{s_n_tilde_estim}
 \norm{\tilde{s}_n(f_0, \ldots, f_n)}_{L^2_{N}} \leq C_{n,N} \norm{f_0}_{H^N_\C \cap L^1} \cdots \norm{f_n}_{H^N_\C \cap L^1}.
 \end{equation}
\end{lemma}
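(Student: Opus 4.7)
The plan is to expand the $n$-fold product of sines into a finite sum of pure oscillatory integrals, and then combine Lemma~\ref{hyp_red} with integration by parts on the simplex, closing the argument by induction on $n$.

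Writing $\sin(k(t_j-t_{j-1})) = (e^{ik(t_j-t_{j-1})} - e^{-ik(t_j-t_{j-1})})/(2i)$ and expanding the product gives
\[
\tilde{s}_n(f_0,\ldots,f_n)(k) = (2i)^{-n} \sum_{\epsilon \in \{-1,+1\}^n} \Big(\prod_{j=1}^n \epsilon_j\Big) G_\epsilon(k), \qquad G_\epsilon(k) := \int_{\Delta_{n+1}} e^{ik\,\alpha^\epsilon \cdot t}\prod_{j=0}^n f_j(t_j)\,dt,
\]
with $\alpha^\epsilon = (1-\epsilon_1,\ \epsilon_1-\epsilon_2,\ \ldots,\ \epsilon_{n-1}-\epsilon_n,\ 1+\epsilon_n) \in \{-2,0,2\}^{n+1}$. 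A brief case check shows $\alpha^\epsilon \neq 0$ for every $\epsilon$: if $\epsilon_1 = -1$ then $\alpha_0^\epsilon = 2$; if $\epsilon_n = +1$ then $\alpha_n^\epsilon = 2$; otherwise $\epsilon_1 = +1$ and $\epsilon_n = -1$, so some consecutive pair of entries must differ. Hence Lemma~\ref{hyp_red} applies to $G_\epsilon$ at any coordinate $i = i(\epsilon)$ with $\alpha_i^\epsilon \neq 0$.

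For the unweighted $L^2$-bound, apply Lemma~\ref{hyp_red} with $F(t) = \mathbbm{1}_{\Delta_{n+1}}(t)\prod_j f_j(t_j)$. Since $F$ factorizes in the $t_j$, the inner $L^2$-integral in $t_i$ yields at most $\|f_i\|_{L^2}$ and the outer integral over the reduced ordered simplex is bounded by $\prod_{j \neq i}\|f_j\|_{L^1}$, giving $\|G_\epsilon\|_{L^2} \le C\|f_i\|_{L^2}\prod_{j \neq i}\|f_j\|_{L^1}$. To capture the $k^N$-weight, integrate by parts $N$ times in $t_i$ using the identity $k\,e^{ik\alpha^\epsilon\cdot t} = -i(\alpha_i^\epsilon)^{-1}\partial_{t_i} e^{ik\alpha^\epsilon\cdot t}$. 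Only $f_i(t_i)$ among the factors depends on $t_i$, so the bulk (no-boundary) contribution after all $N$ integrations is
\[
C_N(\alpha_i^\epsilon)^{-N}\int_{\Delta_{n+1}} e^{ik\alpha^\epsilon \cdot t}\,f_i^{(N)}(t_i)\prod_{j \neq i} f_j(t_j)\,dt,
\]
whose $L^2_k$-norm is again estimated by Lemma~\ref{hyp_red} as $C\|f_i^{(N)}\|_{L^2}\prod_{j\neq i}\|f_j\|_{L^1} \le C\prod_j \|f_j\|_{H^N \cap L^1}$.

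The remaining contributions come from the faces $t_i = t_{i\pm 1}$ of $\Delta_{n+1}$ produced by successive integrations by parts: after $\ell$ such steps ($1 \le \ell \le N$) a typical boundary term has the form
\[
C_\ell\,k^{N-\ell}(\alpha_i^\epsilon)^{-\ell}\int_{\Delta_n} e^{ik\tilde\alpha \cdot \tilde t}\,f_i^{(\ell-1)}(t_\ast)\prod_{j \neq i} f_j(t_j)\,d\tilde t,
\]
with $t_\ast \in \{t_{i-1},t_{i+1}\}$ and $\tilde\alpha$ obtained from $\alpha^\epsilon$ by absorbing $\alpha_i^\epsilon$ into the adjacent entry. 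Evaluating $f_i^{(\ell-1)}$ at $t_\ast$ multiplies the neighbouring factor $f_{i\pm 1}$ pointwise, yielding an integral of the same form as $G$ but in one fewer variable, with one factor $f_j$ replaced by the pointwise product $f_i^{(\ell-1)} \cdot f_{i\pm 1}$. Using Sobolev embedding $H^N \hookrightarrow L^\infty$ (valid for $N \ge 1$) together with $\|gh\|_{L^1} \le \|g\|_{L^\infty}\|h\|_{L^1}$ and the algebra property of $H^{N-\ell}(\R)$, the merged factor lies in $H^{N-\ell}\cap L^1$ with norm controlled by $\|f_i\|_{H^N\cap L^1}\|f_{i\pm 1}\|_{H^N\cap L^1}$; the boundary contributions then close under the inductive hypothesis on $n$, whose base case $n=1$ is verified by direct one-dimensional integration by parts and Plancherel. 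The main obstacle is precisely this bookkeeping on the simplex: one must verify that the reduced phase vector $\tilde\alpha$ retains at least one nonzero coordinate so that Lemma~\ref{hyp_red} continues to apply, and that merging adjacent factors by pointwise products preserves the $H^N \cap L^1$ control with constants uniform across the induction.
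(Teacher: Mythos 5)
Your overall toolbox is the same as the paper's (sine-to-exponential expansion, Lemma \ref{hyp_red}, integration by parts to trade powers of $k$ for derivatives, induction on $n$ with merged factors), but your decomposition is genuinely different: you expand \emph{all} the sines first and integrate by parts inside each pure-exponential term $G_\epsilon$ at some coordinate $t_i$ with $\alpha_i^\epsilon\neq 0$, whereas the paper keeps the first $n$ sines intact and only manipulates the last variable $t_{n+1}$. That choice is exactly where your argument has a gap. After merging $t_i$ with $t_{i\pm1}$, the boundary terms are single-phase simplex integrals $\int_{\Delta_n}e^{ik\tilde\alpha\cdot\tilde t}\,(f_i^{(\ell-1)}f_{i\pm1})(t_\ast)\prod_{j\neq i,i\pm1}f_j\,d\tilde t$, and these are \emph{not} instances of $\tilde s_{n-1}$ (which is a specific signed sum over all sine-expansion phases). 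So the inductive hypothesis "the Lemma for $n-1$" does not apply to them as you invoke it; you would need to run the induction on a strengthened per-phase statement, namely an $L^2_{N}$ bound for each individual $G_{\tilde\alpha}$ over an admissible class of phase vectors closed under merging. Moreover, you explicitly flag but do not verify the key admissibility fact that $\tilde\alpha$ retains a nonzero entry; this is not automatic for arbitrary vectors in $\{0,\pm2\}^{n+1}$ (e.g.\ a phase proportional to $(2,-2)$ gives only $O(1/k)$ decay, which is fatal for $N\geq1$), and it is precisely here that the structure of the sine-expansion vectors must be used: the entries of $\alpha^\epsilon$ telescope to $\sum_j\alpha_j^\epsilon=2$, the partial sums lie in $\{0,2\}$, and merging adjacent entries preserves both, so the reduced vectors are never zero and are in fact again of sine-expansion type. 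Without this invariant and without reformulating the induction accordingly, the central step of your proof does not close. (A minor additional slip: $H^{N-\ell}(\R)$ is not an algebra when $\ell=N$; the needed product bound for $f_i^{(N-1)}f_{i\pm1}$ should instead come from $H^1\hookrightarrow L^\infty$ times $L^2\cap L^1$.)

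For comparison, the paper sidesteps all of this bookkeeping by peeling off only the last sine: writing $\sin k(t_{n+1}-t_n)e^{ik(t_{n+1}-t_n)}$ and integrating by parts $N$ times in $t_{n+1}$ alone, the boundary terms are literally $\tilde s_n(f_0,\ldots,f_n f_{n+1}^{(j)})$ and $\tilde s_n(f_0,\ldots,f_n I_1(f_{n+1}))$, so the inductive hypothesis (the Lemma itself, with smoothness index $N-j$) applies verbatim, and the one leftover bulk term carries both the factor $(2ik)^{-N}$ and a surviving oscillation $e^{2ikt_{n+1}}$, which Lemma \ref{hyp_red} handles directly. Your route can be repaired along the lines indicated (induction on the per-phase bound over vectors with partial sums in $\{0,2\}$), but as written the invocation of the inductive hypothesis is for objects it does not cover, and the admissibility of the reduced phases is asserted rather than proved.
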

For the proof, introduce the operators $I_j : L^1 \to L^\infty$, $j= 1,2$, defined by
\begin{equation}
I_1(f)(t):= \intx{t} f(s)\, ds \qquad I_2(f)(t):= \int\limits^t_{-\infty} f(s)\, ds.
\end{equation}
It is easy to prove that if $u, v \in H^N_\C\cap L^1$, then $u\,I_j(v) \in H^N_\C\cap L^1$ and the estimate $\norm{u\, I_j(v)}_{H^N_\C}\leq \norm{u}_{H^N_\C \cap L^1} \norm{v}_{H^N_\C \cap L^1}$ holds for $j=1,2$.
\vspace{0.5em}\\
{\em Proof of Lemma \ref{tilde_s_n}.} 
As $\sin x = (e^{ix}- e^{-ix})/2i$ we can write 
$e^{ikt_0} \Big( \prod_{j=1}^n \sin k(t_j-t_{j-1})\Big) e^{ik t_n} $ as a sum of complex exponentials. 
Note that the arguments of the exponentials are obtained by taking all the possible combinations of $\pm$ in the expression $t_0 \pm (t_1-t_0) \pm \ldots \pm (t_n-t_{n-1}) + t_n$.
To handle this combinations, define  the set
\begin{equation}
\begin{aligned}
\Lambda_n := \Big\{ \sigma=(\sigma_j)_{1 \leq j \leq n}: \, \sigma_j \in \{\pm 1 \} \Big\}
\label{index_set}
\end{aligned}
\end{equation}
and introduce 
$$
\delta_\sigma := \# \{ 1 \leq j \leq n : \, \sigma_j = -1 \}.
$$
For any $\sigma \in \Lambda_n$, define $\alpha_\sigma = (\alpha_j)_{0 \leq j \leq n}$ as
$$
\alpha_0 = (1- \sigma_1), \quad \alpha_j= \sigma_j - \sigma_{j+1} \mbox{ for } 1 \leq j \leq n-1, \quad \alpha_n = 1 + \sigma_n.
$$
Note that for any $t=(t_0, \ldots, t_n)$, one has $\alpha_\sigma \cdot t = t_0 + \sum_{j=1}^n \sigma_j (t_j - t_{j-1}) + t_n$.\\
 For every $\sigma \in \Lambda_n$, $\alpha_\sigma$ satisfies the following properties: 
\begin{equation}
\label{set.ind.prop}
(i) \; \alpha_0,\, \alpha_n \in \left\{2, 0 \right\}, \;  \alpha_j \in \left\{ 0, \pm 2 \right\}\, \forall 1 \leq j \leq n-1; \quad (ii)\;\#\left\{j \middle| \alpha_j \neq 0 \right\}  \mbox{ is odd.}
\end{equation}
Property $(i)$ is obviously true;  we prove now $(ii)$ by induction. For $n=1$, property $(ii)$ is trivial. To prove  the induction step $n \leadsto n+1$, let $\alpha_0= 1-\sigma_1, \ldots, \alpha_n=\sigma_n - \sigma_{n+1},\, \alpha_{n+1}=1+\sigma_{n+1}$, and define $\tilde \alpha_n:= 1+ \sigma_n \in \{0, 2 \}$.  By the induction hypothesis the vector $\tilde \alpha_\sigma = (\alpha_0, \ldots, \alpha_{n-1}, \tilde{\alpha}_n)$ has an odd number of elements non zero. Case $\tilde{\alpha}_n =0 $: in this case the vector $(\alpha_0, \ldots, \alpha_{n-1})$ has an odd number of non zero elements. Then, since $\alpha_n = \sigma_n - \sigma_{n+1} = \tilde{\alpha}_n- \alpha_{n+1}= - \alpha_{n+1}$, one has that $(\alpha_n, \alpha_{n+1}) \in \{ (0,0), \, (-2, 2)\}$. Therefore the vector $\alpha_\sigma$ has an odd number of non zero elements.
Case  $\tilde{\alpha}_n =2$: in this case the vector $(\alpha_0, \ldots, \alpha_{n-1})$ has an even number of non zero elements. As  $\alpha_n = 2 - \alpha_{n+1}$, it follows that
$(\alpha_n, \alpha_{n+1}) \in \{ (2,0), \, (0, 2)\}$. Therefore the vector $\alpha_\sigma$ has an odd number of non zero elements. This proves \eqref{set.ind.prop}.\\
As
$$e^{ikt_0} \Big( \prod_{j=1}^n \sin k(t_j-t_{j-1})\Big) e^{ik t_n} = \sum_{\sigma \in \Lambda_n} \frac{(-1)^{\delta_\sigma}}{(2i)^n}e^{ik \alpha \cdot t}$$
  $\tilde{s}_n$ can be written as a sum of complex exponentials, $\tilde{s}_n(f_0, \ldots, f_n)(k)= \sum_{\sigma \in \Lambda_n } \frac{(-1)^{\delta_\sigma}}{(2i)^n} \tilde{s}_{n,\sigma}(f_0, \ldots, f_n)(k)$ where 
\begin{equation}
\tilde{s}_{n,\sigma}(f_0, \ldots, f_n)(k)= \int_{\Delta_{n+1}}e^{ik \alpha \cdot t} f_0(t_0) \cdots f_n(t_n) dt.
\end{equation}
 The case $N=0$ follows directly from Lemma \ref{hyp_red}, since for each $\sigma \in \Lambda_n$ one has by \eqref{set.ind.prop} that there exists $m$ with $\alpha_m \neq 0$ implying 
$\norm{\tilde{s}_{n,\sigma}(f_0, \ldots, f_n)}_{L^2} \leq C \norm{f_m}_{L^2} \prod_{j \neq m}  \norm{f_j}_{L^1}$, which leads to \eqref{s_n_tilde_estim}.

We now prove by induction that $\tilde{s}_n: (H^N_\C \cap L^1)^{n+1} \to L^2_{N} $ for any $N \geq 1$.  We start with $n=1$.
Since we have already proved that $\tilde{s}_1$ is a bounded map from $(L^2 \cap L^1)^2$ to $L^2$,   it is enough to establish the stated  decay at $\infty$. One verifies that 
\begin{align*}
\tilde{s}_1(f_0, f_1) & = \frac{1}{2i}\intii e^{2ikt}\, f_0(t) \, I_1(f_1)(t) \,dt - \frac{1}{2i} 
\intii e^{2ikt}\, f_1(t)\, I_2(f_0)(t) \;dt \\
&  = \frac{1}{2i}\F_-(f_0\, I_1(f_1)) - \frac{1}{2i}\F_-(f_1\, I_2(f_0)).
\end{align*}
Hence, for each $N \in \Z_{\geq 0}$, $(f_0, f_1) \mapsto \tilde{s}_1(f_0,f_1)$ is bounded as a map from $(H^N_\C \cap L^1)^2$ to $L^2_{N}$. Moreover 
$$\norm{\tilde{s}_1(f_0, f_1)}_{L^2_{N}}\leq C_1 \left(\norm{f_0\, I_1(f_1)}_{H^N_\C} + \norm{f_1\, I_2(f_0)}_{H^N_\C}\right) \leq C_{1,N} \norm{f_0}_{H^N_\C \cap L^1 }\norm{f_1}_{H^N_\C\cap L^1}.$$
We prove the induction step $n \leadsto n+1$ with $n \geq 1$ for any $N \geq 1$ (the case $N=0$ has been already treated). The term $\tilde{s}_{n+1}(f_0, \ldots, f_{n+1})$ equals
\begin{align*}
\int_{\Delta_{n+2}} e^{ikt_0}f_0(t_0) 
\prod_{j=1}^{n} \Big(\sin k(t_j-t_{j-1}) f_j(t_j) \Big)e^{ikt_n} \sin k(t_{n+1}-t_n)e^{ik(t_{n+1}-t_n)} f_{n+1}(t_{n+1})\, dt
\end{align*}
where we multiplied and divided by the factor $e^{ikt_n}$. 
Writing $$\sin k(t_{n+1}-t_n) = (e^{ik(t_{n+1}- t_n)} - e^{-ik(t_{n+1}- t_n)})/2i \ , $$ the integral term  $\intx{t_{n}} \, e^{ik(t_{n+1}-t_n)} \sin k(t_{n+1}-t_{n})\, f_{n+1}(t_{n+1})\; dt_{n+1}$ equals 
\begin{align*}
 \frac{1}{2i}\intx{t_n}e^{2ik(t_{n+1}-t_n)}f_{n+1}(t_{n+1}) \,dt_{n+1} - \frac{1}{2i} I_1(f_{n+1})(t_{n}).
\end{align*}
Since $f_{n+1}\in H^N_\C$, for $0 \leq j \leq N-1$ one gets $f_{n+1}^{(j)}\rightarrow 0$ when $x\to \infty$, where  we wrote $f_{n+1}^{(j)} \equiv \derk^j f_{n+1}$. Integrating  by parts $N$-times in the integral expression displayed above one has
\begin{align*}
\frac{1}{2i}\sum_{j=0}^{N-1} \frac{(-1)^{j+1} }{(2ik)^{j+1}}\, f_{n+1}^{(j)}(t_n) + 
\frac{(-1)^N}{2i (2ik)^N}\intx{t_n}e^{2ik(t_{n+1}-t_n)}  f_{n+1}^{(N)}(t_{n+1}) \, dt_{n+1} - \frac{1}{2i} I_1(f_{n+1})(t_{n}).
 \end{align*}
Inserting the formula above in the expression for $\tilde{s}_{n+1}$, and using the multilinearity of $\tilde{s}_{n+1}$ one gets
\begin{align}
& \tilde{s}_{n+1}(f_0, \ldots, f_{n+1})=\frac{1}{2i}\sum_{j=0}^{N-1}\frac{(-1)^{j+1}}{(2ik)^{j+1}} \tilde{s}_n(f_0, \ldots, f_n \cdot f_{n+1}^{(j)}) - \frac{1}{2i}\tilde{s}_n(f_0, \ldots, f_n\, I_1(f_{n+1})) \label{line_s_n_1} \\
& \qquad +  \frac{(-1)^N}{2i(2ik)^N} 
\int_{\Delta_{n+2}} e^{ikt_0} f_0(t_0) \prod_{j=1}^n \Big(\sin k(t_j-t_{j-1}) \, f_j(t_j)\Big)  e^{2ik t_{n+1}} f_{n+1}^{(N)}(t_{n+1}) \; dt_{n+1}.
\label{line_s_n_2}
\end{align}
We analyze the first term in the r.h.s. of  \eqref{line_s_n_1}. For $0 \leq j \leq N-1$, the function $f_{n+1}^{(j)} \in H^{N-j}_\C $ is in $ L^\infty$ by the Sobolev embedding theorem. Therefore 
$f_n \cdot f_{n+1}^{(j)} 	\in H^{N-j}_\C \cap L^1$. By the inductive assumption applied to $N-j$, 
$\tilde{s}_n(f_0, \ldots,f_n \cdot f_{n+1}^{(j)}) \in L^2_{N-j} $. Therefore 
$\frac{\chi}{(2ik)^{j+1}} \tilde{s}_n(f_0, \ldots, f_n \cdot f_{n+1}^{(j)})\in L^2_{N}$ , where $\chi$ is chosen as in \eqref{b1series}.
For the second term in \eqref{line_s_n_1} it is enough to note that  $f_n \, I_1(f_{n+1}) \in H^N_\C \cap L^1$ and  by the  inductive assumption it follows that $\tilde{s}_n(f_0, \ldots, f_n \, I_1(f_{n+1})) \in L^2_{N}$.
\newline
We are left with \eqref{line_s_n_2}. Due to the factor $(2ik)^N$ in the denominator, we need just to prove that the integral term is $L^2$ integrable in the $k$-variable. Since the oscillatory factor  $e^{2ik t_{n+1}}$ doesn't get canceled when we express the sine functions with exponentials,  we can apply  Lemma \ref{hyp_red},  integrating first in $L^2$ w.r. to the variable $t_{n+1}$, getting 
$$\norm{\chi \cdot \eqref{line_s_n_2}}_{L^2_{N}} \leq C_{n+1,N} \norm{f_{n+1}^{(N)}}_{L^2} \prod_{j=0}^n \norm{f_j}_{L^1}.$$

Putting all together, it follows that $\tilde{s}_{n+1}$ is bounded as a map from $(H^N_\C \cap L^1)^{n+2}$ to $L^2_{N}$ for each $N \in \Z_{\geq 0}$ and the estimate \eqref{s_n_tilde_estim} holds.
\qed
\vspace{0.5em}\\
By evaluating the multilinear map $\tilde s_n$ on the diagonal, Lemma \ref{tilde_s_n} says that for any $ N \geq 0$,
\begin{equation}
\label{sn.est.2}
\norm{s_n(q, \cdot)}_{L^2_{N}} \leq C_{n,N} \norm{q}_{H^N_\C \cap L^1}^{n+1}, \qquad \forall n \geq 1.
\end{equation}
Combining the $L^\infty$ estimate \eqref{sn.l.inf} with \eqref{sn.est.2} we can now  prove
 item $(ii)$ of Theorem \ref{A.prop}:
 \vspace{1em}\\
{\em Proof of Theorem \ref{A.prop} $(ii)$.} Let $\chi$ be the cut off function introduced in \eqref{b1series} and set
\begin{equation}
\label{A_n.tilde}
\tilde A(q,k) := \sum_{n = 1}^{\infty} \frac{\chi(k)  s_n(q,k)}{k^n}.
\end{equation}
We now show that for any $\rho >0 $,  $\tilde A(q, \cdot)$  is an absolutely and uniformly convergent series in $L^2_{N+1}$ for $q$ in $ B_\rho(0)$, where $B_\rho(0)$ is the ball in $H^N_\C \cap L^1$ with center $0$ and radius $\rho$. By \eqref{sn.est.2} the map $q \mapsto \sum_{n = 1}^{N+1} \frac{\chi(k)  s_n(q,k)}{k^n} $ is analytic as a map from $H^N_\C \cap L^1$ to $ L^2_{N+1}$, being a finite sum of polynomials - cf. Remark \ref{entire.func}.
It remains to estimate the sum
$$\tilde{A}_{N+2}(q,k) := \tilde A(q,k) - \sum_{n = 1}^{N+1} \frac{\chi(k)  s_n(q,k)}{k^n} \ .$$ 
It   is absolutely convergent since by the $L^\infty$ estimate \eqref{sn.l.inf} 
\begin{equation}
\label{A_N+1_norm}
\norm{ \sum_{n\geq N+2}  \frac{\chi s_n(q,\cdot)}{k^n} }_{L^2_{N+1}} \leq  \sum_{n \geq N+2} \norm{\frac{\chi(k) }{k^n}}_{L^2_{N+1}} \norm{s_n(q,\cdot)}_{L^\infty} \leq C \sum_{n \geq N+2} \frac{\norm{q}_{L^1}^{n+1}}{(n+1)!} 
\end{equation}
for an absolute constant $C>0$. 
 Therefore the series in \eqref{A_n.tilde} converges absolutely and uniformly in $B_\rho(0)$ for every $\rho >0$. The absolute and uniform convergence  implies that for any $N \geq 0$,   $q \mapsto \tilde A(q, \cdot)$ is  analytic as a map from $H^N_\C \cap L^1$ to $L^2_{N+1}$.\\
 It remains to show that identity \eqref{b1series} holds, i.e.,  for every $q \in H^N_\C \cap L^1$ one has  $\chi A(q,\cdot) =  \tilde A(q,\cdot)$ in $L^2_{N+1}$. Indeed, fix $q \in H^N_\C \cap L^1$ and choose $\rho$ such that $\norm{q}_{H^N_\C \cap L^1} \leq \rho$.  Iterate formula \eqref{duhamelformula} $N'\geq 1$ times and insert the result in \eqref{S.1} to get for any $k \in \R \setminus \{0 \}$, 
 $$
 S(q,k) = \F_-(q,k) + \sum_{n = 1}^{N'} \frac{  s_n(q,k)}{k^n} + S_{N'+1}(q,k) \ ,
 $$
 where 
 $$
 S_{N'+1}(q,k) := \frac{1}{k^{N'+1}}\int_{\Delta_{N'+2}} e^{ikt_0} q(t_0) \prod_{j=1}^{N'+1} \Big(q(t_j) \, \sin k(t_j-t_{j-1})\Big)f_1(q, t_{N'+1}, k)  \, dt \ .
 $$
By the definition \eqref{map.A} of $A(q,k)$  and the expression of $S_{N'+1}$ displayed above 
 $$
 \chi(k) A(q,k) - \sum_{n = 1}^{N'} \frac{\chi(k)  s_n(q,k)}{k^n} = \chi(k)  S_{N'+1}(q,k), \qquad \forall N' \geq 1 \ .
 $$
Let now $N' \geq N$, then by Theorem \ref{deift_jost} $(ii)$ there exists a constant $K_\rho$, which can be chosen uniformly on $B_\rho(0)$ such that 
$$
\norm{\chi  S_{N'+1}(q,\cdot)}_{L^2_{N+1}} \leq K_\rho \frac{\norm{q}_{L^1_1}^{N'+2}}{(N'+2)!}\leq K_\rho \frac{\rho^{N'+2}}{(N'+2)!} \to 0, \qquad \mbox{ when } N'\to \infty \ , $$
where for the last inequality we used that $\norm{q}_{L^1_1} \leq C \norm{q}_{L^2_2}$ for some absolute constant $C>0$.
Since $\lim_{N' \to 0} \sum_{n = 1}^{N'} \frac{\chi(k)  s_n(q,k)}{k^n} = \tilde{A}(q,k)$ in $L^2_{N+1}$, it follows that $\chi(k) A(q,k) = \tilde A(q,k)$ in $L^2_{N+1}$.
 \qed

\vspace{1em}
For later use we study regularity and decay properties  of the map $k \mapsto W(q,k)$. For $q\in L^2_{4}$ real valued with no bound states it follows that $ W(q,k)\neq 0, \, \forall \, \Im k \geq 0$ 
by classical results in scattering theory. We define 
\begin{equation}
\label{class.C.def} 
\class_\C:= \left\{ q \in L^2_{4}: W(q,k)\neq 0, \,  \forall \, \Im k \geq 0 \right\}, \quad \class^{N,M}_\C:= \class_\C \cap H^N_\C \cap L^2_{M} \ . 
\end{equation} 
We will prove in Lemma \ref{class_open} below that $\class^{N,M}_\C$ is open in $ H^N_\C \cap L^2_{M}$. 
Finally consider  the Banach space $W^M_\C$ defined for $M \geq 1$ by
\begin{equation}
W^M_\C := \lbrace  f \in L^\infty :  \quad \derk f \in H^{M-1}_\C \rbrace \ ,
\end{equation}
endowed with the norm $\norm{f}_{W^M_\C}^2 = \norm{f}_{L^\infty}^2 + \norm{\derk f}_{H^{M-1}_\C}^2 $.\\
Note that $H^M_\C \subseteq W^M_\C$  for any $M\geq 1$ and   
\begin{equation}
\label{lemHN*2} 
gh \in \Hzc \quad \forall \,  g \in \Hzc, \ \forall \, h \in W^M_\C \ .
\end{equation}
 The properties of the map $W$ are summarized in the following Proposition:
\begin{proposition}
\label{Wlem} 
For  $M \in \Z_{\geq 4}$ the following  holds:
\begin{enumerate}[(i)]
		\item The map $L^2_{M} \ni q \mapsto W(q,\cdot)-2ik+ \F_-(q,0)  \in \Hzc$ is  analytic and 
	$$\norm{W(q, \cdot)-2ik + \F_-(q,0)}_{\Hzc} \leq C_W \norm{q}_{L^2_M},$$ where the constant $C_W>0$ can be chosen 	uniformly on bounded subsets 
	of $L^2_M$.
	\item The map $\class^{0,M}_\C \ni q \mapsto 1 / W(q, \cdot) \in L^\infty $ is  analytic. 
	\item The maps 
	$$ \class^{0,M}_\C \ni q \mapsto \frac{\derk^j W(q,\cdot)}{W(q,\cdot)} \in L^2 \ \mbox{ for } 0 \leq j \leq M-1 \quad \mbox{and} \quad \class^{0,M}_\C \ni q \mapsto \frac{\zeta \derk^M W(q,\cdot)}{W(q,\cdot)} \in L^2$$
	 are analytic. Here  $\zeta$ is a function as in \eqref{zeta}.
\end{enumerate}
\end{proposition}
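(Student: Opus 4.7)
The plan is to adapt the strategy of Proposition \ref{prop:scatt}: exploit that $W(q,k)$ is independent of $x$ and evaluate it at $x=0$. Using $f_j = e^{\mp ikx}m_j$, a direct computation gives
\begin{equation*}
W(q,k) = 2ik\,m_1(q,0,k)\,m_2(q,0,k) + m_2(q,0,k)\,\derx m_1(q,0,k) - m_1(q,0,k)\,\derx m_2(q,0,k).
\end{equation*}
By Corollary \ref{m(q,0,k)}, $m_j(q,0,\cdot)-1 \in H^M_{*,\C}$ and $\derx m_j(q,0,\cdot) \in \Hzc$ analytically in $q\in L^2_M$, so every summand except the divergent $2ik\,m_1 m_2$ is immediately manageable. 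To handle $2ik\,m_1 m_2$, observe the elementary identity $2ik\, D_k(t) = e^{2ikt}-1$, which plugged into the integral equations for $m_j$ and $\derx m_j$ produces
\begin{equation*}
2ik\,(m_1(q,0,k)-1) + \derx m_1(q,0,k) = -\int_0^{+\infty}\! q(t)\,m_1(q,t,k)\,dt,\qquad 2ik\,(m_2(q,0,k)-1) - \derx m_2(q,0,k) = -\int_{-\infty}^0 q(t)\,m_2(q,t,k)\,dt.
\end{equation*}

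Substituting these identities back and noting $\F_-(q,0) = \int_0^{+\infty}\! q + \int_{-\infty}^0 q$, after routine bookkeeping one obtains a representation of the schematic form
\begin{equation*}
W(q,k) - 2ik + \F_-(q,0) = c_+(q)(m_2(q,0,k)-1) + (m_2(q,0,k)-1)U_+(q,k) + (m_1(q,0,k)-1)\derx m_2(q,0,k) + U_+(q,k) + U_-(q,k)
\end{equation*}
(up to signs), where $c_+(q):= \int_0^{+\infty} q(t)\,dt$ and $U_\pm(q,k):= \int_{\R_\pm} q(t)\,(m_j(q,t,k)-1)\,dt$ with $j=1$ for $U_+$ and $j=2$ for $U_-$. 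The first three summands lie in $\Hzc$ and are analytic in $q$ by the inclusion $H^M_{*,\C}\subset \Hzc$ in \eqref{lemHN*}(i) and the product rule \eqref{lemHN*}(ii). For $U_\pm\in \Hzc$ one applies Minkowski's inequality in $L^2_k$ inside the defining integral and invokes Propositions \ref{prop_minLit}, \ref{prop_derminLit}, \ref{prop_dermiNLit} (for $m_1$ on $[0,+\infty)$ and the parallel results for $m_2$ on $(-\infty,0]$) to control $\|m_j(q,t,\cdot)-1\|_{L^2_k}$, $\|\derk^n m_j(q,t,\cdot)\|_{L^2_k}$ and $\|k\derk^M m_j(q,t,\cdot)\|_{L^2_k}$ uniformly in $t$ by $\|q\|_{L^2_M}$; since $|\zeta(k)|\leq C|k|$ on $\R$, the top derivative with the $\zeta$-weight is dominated by the $k\derk^M$ control. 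All resulting bounds are quadratic in $\|q\|_{L^2_M}$, proving $(i)$ together with the linear-in-$\|q\|_{L^2_M}$ estimate uniformly on bounded sets.

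For item $(ii)$, item $(i)$ combined with the embedding $\Hzc \hookrightarrow H^{M-1}_\C \hookrightarrow L^\infty$ shows $W(q,\cdot)-2ik$ is bounded and decays to $0$, whence $|W(q,k)|\geq c(1+|k|)$ for $|k|$ large; together with the non-vanishing of $W(q,\cdot)$ on $\R$ built into $\class^{0,M}_\C$ and the openness of this set (Lemma \ref{class_open}), a geometric-series argument yields analyticity of $q\mapsto 1/W(q,\cdot)$ into $L^\infty$. Item $(iii)$ is then immediate: for $j\geq 1$, $\derk^j W$ coincides with $\derk^j(W-2ik+\F_-(q,0))$ up to the additive constant $2i\,\delta_{j,1}$, and thus lies in $L^2$ by $(i)$; multiplication by $1/W$, which is in $L^\infty$ (and moreover in $L^2$ via the $1/|k|$ decay at infinity), produces the desired analytic $L^2$-valued maps, and the same argument handles $\zeta\derk^M W/W$. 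The main obstacle throughout the proof is uncovering the cancellation that extracts the divergent $2ik$ contribution at the price of the constant $\F_-(q,0)$; once the representation above is in hand, every remaining step is a direct application of the analytic framework of Section 2 and the multiplicative properties \eqref{lemHN*}--\eqref{lemHN*2}.
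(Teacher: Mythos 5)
Your argument is correct and follows the paper's strategy in its essentials: evaluate the Wronskian at $x=0$ in terms of $m_j(q,0,k)$ and $\derx m_j(q,0,k)$, use the identity $2ik\,D_k(t)=e^{2ikt}-1$ (i.e.\ the integral equations) to cancel the divergent $2ik$ against $\F_-(q,0)$, and then control every remaining term with Corollary \ref{m(q,0,k)}, Propositions \ref{prop_minLit}--\ref{prop_dermiNLit} and the product properties of $\Hzc$; items $(ii)$ and $(iii)$ are then handled exactly as in the paper (Neumann series for $1/W$, decay $|W|\sim 2|k|$ at infinity and nonvanishing on $\Im k\geq 0$, then multiplication $L^2\times L^\infty\to L^2$). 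Where you differ is in the decomposition of $W-2ik+\F_-(q,0)$: the paper writes it as $I+II+III$ with $II=\grave{m}_1(q,k)\,(m_2(q,0,k)-1)$, $\grave{m}_j:=2ik(m_j(q,0,\cdot)-1)$, and handles $II$ by showing $\grave m_1\in W^M_\C$ and invoking \eqref{lemHN*2}, while keeping $\derx m_1(q,0,\cdot)$ and $\derx m_2(q,0,\cdot)$ explicitly in term $I$. You instead apply the cancellation identity a second time, to $2ik(m_1-1)$ itself, which eliminates $\derx m_1$ altogether and yields
\begin{equation*}
W-2ik+\F_-(q,0)=-U_+-U_--c_+(q)\,(m_2(q,0,k)-1)-U_+(q,k)\,(m_2(q,0,k)-1)-(m_1(q,0,k)-1)\,\derx m_2(q,0,k)\ ,
\end{equation*}
which I checked is algebraically exact; this version needs only the inclusions and product rule \eqref{lemHN*} for $H^M_{*,\C}$ and $\Hzc$, and avoids the auxiliary space $W^M_\C$ and property \eqref{lemHN*2} entirely. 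The treatment of $U_\pm$ (Minkowski in $k$, $\sup_{x\geq 0}$ respectively $\sup_{x\leq 0}$ bounds from Propositions \ref{prop_minLit}, \ref{prop_derminLit}, \ref{prop_dermiNLit}, and $|\zeta(k)|\leq 2|k|$ for the top derivative) is the same as the paper's treatment of the corresponding integrals in its term $I$, and you gloss differentiation under the integral sign at the same level of detail as the paper does. One cosmetic point: in item $(iii)$ the case $j=0$ gives the constant function $1$, which is not in $L^2$; neither your argument nor the paper's addresses it, and it is best read as starting from $j=1$ (your argument covers $1\leq j\leq M-1$ and the $\zeta\derk^M$ term, which is all that is ever used).
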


\begin{proof}
 The $x$-independence of the Wronskian function \eqref{wronskian_W}  implies that
\begin{equation}
\label{Wwronskian}
\begin{aligned}
 W(q, k) =  2ik \, m_2(q, 0,k)\,m_1(q, 0,k) + [m_2(q, 0,k),\, m_1(q, 0,k)].
\end{aligned}
\end{equation}
Introduce for $j=1,2$ the functions  
$\grave{m}_j(q,k) := 2ik \, (m_j(q,0,k) -1)$. 
By the integral formula \eqref{defm} one verifies that
\begin{equation}
\begin{aligned}
\label{grave.m}
\grave{m}_1(q,k) & = \intzi \left( e^{2ikt}-1 \right)\, q(t)\, (m_1(q,t,k) - 1) \, dt + \intzi  e^{2ikt}\, q(t) \, dt - \intzi q(t)\, dt;\\
\grave{m}_2(q,k) & = \int\limits_{-\infty}^0 \left( e^{-2ikt}-1 \right)\, q(t)\, (m_2(q,t,k) - 1) \, dt + \int\limits_{-\infty}^0  e^{-2ikt} \, q(t) \, dt - \int\limits_{-\infty}^0 q(t)\, dt.
\end{aligned}
\end{equation}
 A simple computation using \eqref{Wwronskian} shows that $W(q,k)-2ik + \F_-(q,0) = I + II + III$ where
\begin{equation}
\label{W_wronskian}
\begin{aligned}
 & I  :=  \grave{m}_1(q,k) + 
 \grave{m}_2(q,k) +\F_-(q,0), \\
& II  := \grave{m}_1(q,k) (m_2(q,0,k)-1) \quad \mbox{ and } \quad III :=  [m_2(q,0,k), m_1(q,0,k)].
 \end{aligned}
 \end{equation}
 We prove now that each of the terms $I, II$ and $III$ displayed above is an element of $\Hzc$. We begin by discussing the smoothness of the functions $k \mapsto \grave{m}_j(q,k)$, $j=1,2$. For any $1 \leq n \leq M,$ 
$$
\derk^n \grave{m}_j(q,k) = 2in \, \derk^{n-1} (m_j(q,0,k)-1) + 2ik \,\derk^n m_j(q,0,k) \ .
$$
Thus by Corollary  \ref{m(q,0,k)} $(i)$, $\grave{m}_j(q, \cdot) \in W^M_\C$ and  $ q \mapsto \grave{m}_j(q, \cdot) $, $j=1,2$,  are analytic as maps from $L^2_{M}$ to $W^M_\C$. Consider first the term $III $ in \eqref{W_wronskian}. By Corollary \ref{m(q,0,k)},
$\norm{III(q,\cdot)}_{\Hzc} \leq K_{III} \norm{q}_{L^2_{M}} $, where $K_{III}>0$ can be chosen uniformly on bounded subsets of $L^2_{M}$. Arguing as in the proof of Proposition \ref{prop:scatt}, one shows that it is an element of $\Hzc$ and it is analytic as a map $L^2_M \to \Hzc$. Next consider the term $II$. Since  $ \grave{m}_1(q,\cdot)$ is in $W^M_\C$ and $m_2(q,0,\cdot)-1$ is in $\Hzc$, it follows by \eqref{lemHN*2} that their product is in $\Hzc$. It is left to the reader to show that  $L^2_M \to \Hzc\,$, $q \mapsto II(q)$ is analytic and furthermore $ \norm{II(q,\cdot)}_{\Hzc} \leq K_{II} \norm{q}_{L^2_M}$, where $K_{II}>0$ can be chosen uniformly on bounded subsets of $L^2_{M}$.\\
Finally let us consider term $I$. By summing the identities for $\grave{m}_1$ and $\grave{m}_2$ in  equation \eqref{grave.m}, one gets that 
\begin{equation}
\begin{aligned}
\grave{m}_1(q,k) + \grave{m}_2(q,k)+ \F_-(q,0) &=  \intzi e^{2ikt}\, q(t)\, m_1(q,t,k)  \, dt - \intzi q(t)\, (m_1(q,t,k) -1)\, dt \\
 &\quad   + \intiz e^{-2ikt}\, q(t)\, m_2(q,t,k) - \intiz q(t)\, (m_2(q,t,k) -1)\, dt   .
\end{aligned}
\end{equation}
We study just the first line displayed above, the second being treated analogously. By equation \eqref{derxm.equation} one has that $\intzi e^{2ikt}\, q(t)\, m_1(q,t,k)  \, dt = \derx m(q,0,k)$, which by Corollary \ref{m(q,0,k)} is an element of $\Hzc$ and analytic as a function $L^2_{M} \to \Hzc $. 
Furthermore, by Proposition \ref{prop_derminLit} and Proposition \ref{prop_dermiNLit} it follows that $k \mapsto \intzi q(t)\, (m_1(q,t,k) -1)\, dt$ is an element of $\Hzc$ and it is analytic as a function $L^2_{M} \to \Hzc $.
This proves item $(i)$. By Corollary \ref{m(q,0,k)}, it follows that $ \norm{I(q, \cdot)}_{\Hzc} \leq K_{I} \norm{q}_{L^2_{M}}$, where $K_{I}>0$ can be chosen uniformly on bounded subsets of $L^2_{M}$. \\
We prove now item $(ii)$.  By the  definition of $\class_\C$, for  $q \in \class^{0,4}_\C$ the function $W(q,k) \neq 0$ for any $k$ with $ \Im k \geq 0$.
By Proposition  \ref{prop:scatt} $(ii)$ and the condition  $M \geq 4$,  it follows that $W(q,k) = 2ik + L^\infty$; therefore  the map
$\class^{0,M}_\C \ni q \mapsto 1/W(q) \in  L^2 $ is analytic. \\
Item $(iii)$ follows immediately from item $(i)$ and $(ii)$.
\end{proof}

\begin{lemma}
\label{W0<0}
For any $q \in \class^{0,4}$,   $W(q,0) < 0$.
\end{lemma}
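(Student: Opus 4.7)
The plan is to relate $W(q,0)$ to the asymptotic behavior of the real-valued Jost function $f_2(q,\cdot,0)$, and then deduce the sign from the absence of bound states using the Sturm oscillation theorem. Since $q \in \class^{0,4}$ is real, both $f_1(q,x,0)$ and $f_2(q,x,0)$ are real solutions of $-f'' + qf = 0$, with $f_1(x,0) \to 1$ as $x \to +\infty$ and $f_2(x,0) \to 1$ as $x \to -\infty$. Moreover, since $q \in L^2_4 \subset L^1_1$, the results on $m_1$ in Theorem \ref{deift_jost} give $f_1(x,0) = m_1(q,x,0) \to 1$ and $\partial_x f_1(x,0) \to 0$ as $x \to +\infty$. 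Using constancy of the Wronskian \eqref{wronskian_W}, evaluating at $x \to +\infty$ yields
\begin{equation*}
W(q,0) \;=\; \lim_{x \to +\infty}\bigl(f_2(x,0)\,\partial_x f_1(x,0) - \partial_x f_2(x,0)\,f_1(x,0)\bigr) \;=\; -\lim_{x \to +\infty} \partial_x f_2(q,x,0).
\end{equation*}

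Next, the plan is to determine the asymptotic shape of $f_2(q,x,0)$ at $+\infty$. Since $f_1(q,\cdot,0)$ is bounded near $+\infty$ while $f_2(q,\cdot,0)$ is linearly independent from $f_1(q,\cdot,0)$ (because $W(q,0) \neq 0$ by definition of $\class$), the standard reduction-of-order/variation-of-parameters argument for $-f'' + qf = 0$ with $q \in L^1_1$ shows that $f_2(q,x,0) = c\,x + d + o(1)$ as $x \to +\infty$ with $\partial_x f_2(q,x,0) \to c$. The identity above then reads $W(q,0) = -c$, so the goal reduces to proving $c > 0$.

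The crucial input is the no-bound-state assumption. By a standard Sturm oscillation argument, if $-\partial_x^2 + q$ has no negative eigenvalues, then the zero-energy Jost solution $f_2(q,\cdot,0)$ has no zeros on $\R$: indeed, a zero of $f_2(q,\cdot,0)$ at some $x_0$ would, by comparison with the shifted problem at small negative energy $-\epsilon^2$, force the existence of an $L^2$ solution, i.e.\ a bound state, contradicting $q \in \class$. Combined with $f_2(q,x,0) \to 1 > 0$ as $x \to -\infty$ and continuity, this forces $f_2(q,x,0) > 0$ for all $x \in \R$. In particular the asymptotic slope $c$ cannot be negative; and $c \neq 0$ since $c = 0$ would give $f_2(q,x,0)$ bounded and $\partial_x f_2(q,x,0) \to 0$ at $+\infty$, leading to $W(q,0) = 0$, contradicting $q \in \class$. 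Hence $c > 0$ and therefore $W(q,0) = -c < 0$.

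The only delicate point is justifying the linear asymptotic expansion $f_2(q,x,0) = cx + d + o(1)$ at $+\infty$ rigorously; this will follow from writing $f_2(q,x,0) = \alpha f_1(q,x,0) + \beta\, f_1(q,x,0)\int_x^{\infty} f_1(q,s,0)^{-2}\,ds$ (Abel's formula applied to the two independent solutions near $+\infty$, where $f_1(q,\cdot,0)$ is bounded away from $0$) and using the decay of $q$ together with the estimates of Section 2 on $m_1$. The Sturm-oscillation step is standard but is the substantive analytic input; everything else is bookkeeping of asymptotics.
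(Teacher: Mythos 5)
Your argument is correct in substance, but it follows a genuinely different route from the paper. The paper never touches the zero-energy Jost solution: it observes that $W(q,i\kappa)$ is real valued for all $\kappa \geq 0$ (by \eqref{duhamelformula}--\eqref{duhamelformula2} with $k=i\kappa$), that it has no zeros on the ray $\kappa \geq 0$ because zeros of $W$ in $\Im k>0$ are exactly the bound states and $W(q,0)\neq 0$ by genericity, and that $W(q,i\kappa)\sim -2\kappa$ for large $\kappa$; continuity and the intermediate value theorem then give $W(q,0)<0$. That proof needs only the large-$\kappa$ asymptotics of $W$ and the standard bound-state/zero correspondence. Your proof instead works entirely at $k=0$: $W(q,0)=-\lim_{x\to+\infty}\derx f_2(q,x,0)=-c$, positivity of $f_2(q,\cdot,0)$ (no bound states implies the zero-energy Jost solution is zero free) forces $c\geq 0$, and $W(q,0)\neq 0$ excludes $c=0$. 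What your route buys is a purely zero-energy, "oscillation-theoretic" picture; what it costs is two pieces of ODE analysis the paper avoids. First, the asymptotics: to conclude $f_2\,\derx f_1\to 0$ at $+\infty$ against the linear growth of $f_2$ you need the quantitative decay $\int_x^{\infty}(1+t)|q(t)|\,dt=o(1/x)$, which does hold here because $L^2_4\subset L^1_2$, but not merely from $q\in L^1_1$ as written; also your reduction-of-order formula should be $f_1(q,x,0)\int_a^x f_1(q,s,0)^{-2}\,ds$ for a large base point $a$ --- the integral $\int_x^{\infty} f_1(q,s,0)^{-2}\,ds$ you wrote diverges since $f_1(q,s,0)\to 1$. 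Second, the key fact that absence of bound states makes $f_2(q,\cdot,0)$ zero free is classical (the number of zeros of the zero-energy Jost solution equals the number of bound states, see \cite{deift}), but it is the substantive input of your argument and should be cited or proved in full; the two-line comparison at energy $-\epsilon^2$ as you state it is only a sketch.
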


\begin{proof}
Let $q\in \class^{0,4}$ and $\kappa \geq 0$.  By formulas \eqref{duhamelformula} and \eqref{duhamelformula2} with $k=i\kappa$,  it follows that   $f_j(q,x,i\kappa)$ ($j=1,2$) is real valued (recall that $q$ is real valued). By the definition $W(q,i\kappa) = \left[ f_2, f_1\right](q,i\kappa)$ it follows that for $\kappa \geq 0$,  $W(q,i\kappa)$ is real valued. As $q$ is generic,  $W(q,i\kappa)$ has no zeroes for $\kappa \geq 0$. Furthermore for large $\kappa$ we have $W(q,i\kappa) \sim 2i(i\kappa) = -2\kappa$. Thus $W(q, i\kappa) < 0 $ for $\kappa \geq 0$.
\end{proof}
\vspace{1em}

We are now able to prove the direct scattering part of Theorem \ref{reflthm}. 
\vspace{1em}\\
\noindent{\em Proof of Theorem \ref{reflthm}: direct scattering part.}
Let $N \geq 0$,  $M \geq 4$ be fixed integers. 
First we remark that $S(q, \cdot)$ is an element of $\S^{M,N}$ if $q \in \class^{N,M}$. By \eqref{S.conj},  $S(q,\cdot)$ satisfies (S1). To see that  $S(q,0) >0$ recall that $S(q,0) = -W(q,0)$, and by Lemma \ref{W0<0}  $W(q,0)<0$. Thus $S(q,\cdot)$ satisfies (S2). Finally by Corollary \ref{S.decay} and Proposition \ref{prop:scatt} it follows that $S(q,\cdot) \in \S^{M,N}$. The analyticity properties of the map $q \mapsto S(q,\cdot)$ and $q \mapsto A(q,\cdot)$ follow by Corollary \ref{S.decay}, Proposition \ref{prop:scatt} and  Theorem \ref{A.prop}.
\qed 
\\\\
We conclude this section with a lemma  about the openness of $\class^{N,M}$ and $\S^{M,N}$.
\begin{lemma} 
\label{class_open}
For any integers $N \geq 0, \, M \geq 4$, $\class^{N,M}$ $[ \class^{N,M}_\C]$ is open in $H^N \cap L^2_M$ $[ H^N_\C \cap L^2_{M} ]$.
\end{lemma}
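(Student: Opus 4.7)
The plan is to prove the complex case first: that $\class^{N,M}_\C$ is open in $H^N_\C \cap L^2_M$. The real case then follows because, for real-valued $q \in L^2_4$, standard scattering theory gives $W(q,k)\neq 0$ for all $k \in \R \setminus \{0\}$, while zeros of $W(q,\cdot)$ in the open upper half-plane are forced to lie on the positive imaginary axis and are in bijection with the bound states of $L(q)$. Combined with Lemma \ref{W0<0}, this means that for real $q \in H^N \cap L^2_M$ one has $q \in \class^{N,M}$ if and only if $W(q,k)\neq 0$ for every $k$ with $\Im k \geq 0$, i.e.\ $q \in \class^{N,M}_\C$. Since $H^N \cap L^2_M$ is a closed real subspace of $H^N_\C \cap L^2_M$ and $\class^{N,M} = \class^{N,M}_\C \cap (H^N \cap L^2_M)$, openness of $\class^{N,M}_\C$ transfers to $\class^{N,M}$.

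\textbf{Control at infinity.} Let $q_0 \in \class^{N,M}_\C$. I would start from the identity
\begin{equation*}
W(q,k) \;=\; 2ik\,m_1(q,0,k)\,m_2(q,0,k) + [m_2,m_1](q,0,k),
\end{equation*}
already used in \eqref{Wwronskian}. Theorem \ref{deift_jost}(i) gives $|m_j(q,0,k)-1| \leq \eta(0)|k|^{-1}\exp(\eta(0)/|k|)$, with $\eta(0) \leq \norm{q}_{L^1_1}$, valid for all $k$ with $\Im k \geq 0$ and with constants uniform on bounded subsets of $L^1_1$; since $L^2_M \hookrightarrow L^1_1$, these bounds hold uniformly in $q$ on some $H^N_\C \cap L^2_M$-ball around $q_0$. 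Using analogous bounds on $\derx m_j(q,0,k)$, one finds $R>0$ and a neighborhood $U_1$ of $q_0$ with
\begin{equation*}
|W(q,k)| \;\geq\; |k|, \qquad \forall\, q\in U_1,\ \forall\, k\ \text{with}\ \Im k \geq 0,\ |k|\geq R.
\end{equation*}

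\textbf{Control on a compact set.} On the compact set $K := \{k\in\C:\Im k\geq 0,\ |k|\leq R\}$, $k \mapsto W(q_0,k)$ is continuous by the continuity assertion of Theorem \ref{deift_jost} and nonvanishing by hypothesis, so $\delta := \min_{k\in K}|W(q_0,k)|>0$. I would then prove that $(q,k) \mapsto W(q,k)$ is continuous from $(H^N_\C \cap L^2_M)\times K$ to $\C$ by a Volterra/Neumann-series argument directly on the integral equations \eqref{defm}, \eqref{defm2}: for $k\in K$ fixed, the operator $\K(q)$ defined in \eqref{operK} has norm $\lesssim \norm{q}_{L^1_1}$ on a suitable $L^\infty$-in-$x$ space, with estimates for iterated kernels that are uniform in $k \in K$; subtracting the integral equations for $q_1$ and $q_2$ and inverting $(Id-\K(q_1))$ yields
\begin{equation*}
\sup_{k\in K}|m_j(q_1,0,k)-m_j(q_2,0,k)| \;\leq\; C(\norm{q_1}_{L^1_1},\norm{q_2}_{L^1_1})\,\norm{q_1-q_2}_{L^1_1},
\end{equation*}
and likewise for $\derx m_j(q,0,k)$. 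Hence there is a neighborhood $U_2 \subset U_1$ of $q_0$ with $\sup_{k\in K}|W(q,k)-W(q_0,k)| < \delta/2$ for every $q\in U_2$, so $W(q,k)\neq 0$ on $K$ as well. Combining the two regimes gives $U_2 \subset \class^{N,M}_\C$.

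\textbf{Main obstacle.} The delicate point is the joint continuity $(q,k) \mapsto W(q,k)$ with $k$ ranging over the \emph{closed} upper half-plane: Proposition \ref{Wlem}(i) only provides analyticity of $q \mapsto W(q,\cdot)$ as a map into $\Hzc$, which concerns real $k$ only. Extending continuity to $\Im k \geq 0$ forces one to revisit the integral equations for $m_1,m_2$ and to invoke the uniformity-in-$k$ aspect of Theorem \ref{deift_jost}, rather than the Proposition-\ref{prop_minLit}-type results that were tailored to $k\in\R$. Once this joint continuity is in hand, the rest of the argument is a standard compactness-plus-asymptotics dichotomy.
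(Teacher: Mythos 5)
Your argument is correct, but it takes a genuinely different route from the paper. The paper's proof (deferring details to Kappeler--Trubowitz \cite{kapptrub2}) exploits that $W(q,\cdot)$ extends analytically to $\Im k>0$: it only needs continuity of $(q,k)\mapsto W(q,k)$ on $L^2_4\times\R$ together with the locally uniform $L^\infty$-bound on $W(q,\cdot)-2ik$ from Proposition \ref{Wlem}, and then the absence of zeros in the closed upper half plane for $q$ near $q_0$ is deduced from closeness on the real line via a complex-analytic step (argument principle/winding number, or equivalently a maximum-principle argument for $W(q,\cdot)-W(q_0,\cdot)$, which is analytic and bounded in $\Im k\geq 0$). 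You instead avoid all complex-analytic input and control $W(q,k)$ directly for every $k$ with $\Im k\geq 0$: the uniform Deift--Trubowitz bounds of Theorem \ref{deift_jost}(ii)--(iii) give $|W(q,k)|\gtrsim|k|$ for $|k|\geq R$ locally uniformly in $q$, and on the compact half-disc you rerun the Volterra/Neumann series (which works for $\Im k\geq 0$ since $|D_k(y)|\leq y$ for $y\geq 0$) to get a Lipschitz-in-$q$ estimate uniform in $k$, then conclude by compactness. You correctly identify the one point where the paper's Section 2 machinery does not help: Propositions \ref{prop_minLit}--\ref{prop:derxderkm} and \ref{Wlem} are formulated for real $k$ only, so the half-plane control must come from Theorem \ref{deift_jost} and the integral equations, exactly as you do. Your reduction of the real case to the complex one (for real $q$, zeros of $W$ in $\Im k>0$ lie on $i\R_{>0}$ and correspond to bound states, and $W(q,k)\neq 0$ for real $k\neq 0$ by \eqref{W&S}, so $\class^{N,M}=\class^{N,M}_\C\cap(H^N\cap L^2_M)$) is the same identification the paper uses implicitly; the appeal to Lemma \ref{W0<0} is not actually needed there. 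In short: the paper's approach is shorter given the classical analyticity of $W$ and the already-proved real-line estimates, while yours is more elementary and self-contained, at the cost of re-deriving continuity estimates for complex $k$.
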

\begin{proof}
The proof can be found in \cite{kapptrub2}; we sketch it here for the reader's convenience.
By a classical result in scattering theory \cite{deift}, $W(q,k)$ admits an analytic extension to the upper plane $\Im k \geq 0$. 
By definition  \eqref{class.C.def} one has
$\class_\C = \{ q \in L^2_4: \, W(q,k) \neq 0\;  \quad \forall \, \Im k \geq 0 \}$. Using that $(q,k)\mapsto W(q,k)$ is continuous on $L^2_4\times \R$ and that
by Proposition \ref{Wlem}, $\| W(q, \cdot)-2ik\|_{L^{\infty}}$ is bounded locally uniformly in $q\in L^2_4$ one sees that $\class_\C$ is open in $L^2_4$. The remaining statements follow in a similar fashion.
\end{proof}

Denote by $\Hzc$ the  complexification of the Banach space $\Hz$, in which the reality condition $\overline{f(k)} = f(-k)$ is dropped:
\begin{equation}
 \begin{aligned}
\label{H^N*C}
&\Hzc  := \lbrace  f \in H^{M-1}_\C:  \quad \zeta \derk^M f \in L^2 \rbrace.
\end{aligned}
\end{equation}
On $\Hzc \cap L^2_{N}$ with $M \geq 4$, $N \geq 0$, define the linear functional
$$\Gamma_0 :\Hzc \cap L^2_{N} \to \C, \quad h \mapsto h(0).$$
By the Sobolev embedding theorem  $\Gamma_0$ is  a  linear analytic map on $\Hzc \cap L^2_{N} $. 
 In view of the definition \eqref{reflspaceNM0}, $\S^{M,N} \subseteq \Hz$. 
Furthermore denote by $\S^{M,N}_\C$ the complexification of  $\S^{M,N}$. It consists of functions  $\sigma:\R \to \C$ with $\Re (\sigma(0)) > 0$ and $\sigma \in \Hzc \cap L^2_{N}$.\\
In the following   we denote by $C^{n, \gamma}(\R, \C)$, with  $n \in \Z_{\geq 0}$ and  $0 < \gamma \leq 1$, the space of complex-valued functions with $n$ continuous derivatives such that the $n^{th}$ derivative is H\"older continuous with exponent $\gamma$.

\begin{lemma}
\label{lem:S.open}
For any integers $M\geq 4$, $N\geq 0$ the subset $\S^{M,N}$ $\ [\S^{M,N}_\C]$ is   open in $\Hz \cap L^2_{N}$ $\ [\Hzc \cap L^2_{N}]$.
\end{lemma}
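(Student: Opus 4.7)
The plan is to observe that only one of the two defining conditions of $\S^{M,N}$ is a nontrivial constraint on the ambient space $\Hz \cap L^2_N$. Indeed, the reality condition (S1), i.e.\ $\overline{\sigma(k)} = \sigma(-k)$, is already built into the definition of $\Hz$ in \eqref{H^N*}, so every element of $\Hz \cap L^2_N$ automatically satisfies (S1). Consequently
\[
\S^{M,N} = \{\sigma \in \Hz \cap L^2_N : \sigma(0) > 0\} = \Gamma_0^{-1}\bigl((0,+\infty)\bigr),
\]
and likewise $\S^{M,N}_\C = \Gamma_0^{-1}\bigl(\{z \in \C : \Re z > 0\}\bigr)$ inside $\Hzc \cap L^2_N$.

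The key step is to verify that the evaluation functional $\Gamma_0$ introduced just before the lemma is continuous on $\Hzc \cap L^2_N$ (and hence on the real subspace $\Hz \cap L^2_N$). This is an immediate consequence of the Sobolev embedding theorem: since $M \geq 4$, one has $M-1 \geq 1$, so $H^{M-1}_\C \hookrightarrow C^{0}$ continuously, and therefore
\[
|\sigma(0)| \leq C\,\|\sigma\|_{H^{M-1}_\C} \leq C\,\|\sigma\|_{\Hzc \cap L^2_N}.
\]
In fact, as noted in the paragraph preceding the lemma, $\Gamma_0$ is linear and analytic on $\Hzc \cap L^2_N$.

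With $\Gamma_0$ continuous, both $\S^{M,N}$ and $\S^{M,N}_\C$ are preimages under a continuous linear map of open subsets of $\C$, and hence are open in their respective ambient Banach spaces. There is essentially no obstacle here; the lemma is a direct consequence of the continuity of point evaluation at $k=0$, which in turn relies only on the Sobolev embedding available in our functional setup.
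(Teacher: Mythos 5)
Your proof is correct and follows essentially the same route as the paper: both reduce the claim to the continuity of the evaluation functional $\sigma \mapsto \sigma(0)$, obtained from the Sobolev embedding of $H^{M-1}_\C$ (the paper uses $H^4_{\zeta,\C}\subseteq H^3_\C \hookrightarrow C^{2,\gamma}$), after noting that the reality condition (S1) is already encoded in $\Hz$ so that $\S^{M,N}$ and $\S^{M,N}_\C$ are preimages of open half-planes under $\Gamma_0$.
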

\begin{proof}
Clearly $H^4_{\zeta,\C}  \subseteq H^{3}_\C$, and by the Sobolev embedding theorem $H^3_{\C} \hookrightarrow C^{2, \gamma}(\R, \C)$ for any $0 < \gamma < 1/2$. It follows that   $\sigma \to \sigma(0)$ is a continuous functional on $H^4_{\zeta,\C}$.
In view of the definition of $\S^{M,N}$, the claimed statement follows.
\end{proof}

\section{Inverse scattering map}
\label{sec:inv.scat}

The aim of this section is to prove the inverse scattering part of Theorem \ref{reflthm}. 
More precisely we prove the following theorem.

\begin{theorem}
\label{thm:inv.scat}
Let $N \in \Z_{\geq 0}$ and $M \in \Z_{\geq 4}$ be fixed. Then the scattering map $S:  \class^{N,M} \to \S^{M,N}$ is bijective. Its inverse $S^{-1}:\S^{M,N} \to \class^{N,M}$ is real analytic.
\end{theorem}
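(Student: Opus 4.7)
The strategy is twofold: construct a candidate inverse via the Marchenko (Gel'fand--Levitan) integral equation and show that it is analytic, then combine this with the already established analyticity of the direct map $S$ via the analytic inverse function theorem. I would begin by recovering the Wronskian datum $W$ from $\sigma$. The identity \eqref{W&S} forces $W(k)W(-k) = 4k^2 + \sigma(k)\sigma(-k)$, and together with the reality condition (S1), the analytic continuation of $W$ to the upper half plane established in classical scattering theory, the nonvanishing of $W$ on $\Im k \geq 0$ (forced by $\sigma(0) > 0$ through Lemma \ref{W0<0}), and the asymptotics $W(k) = 2ik + o(1)$, one obtains $W$ uniquely via a Wiener--Hopf / Riemann--Hilbert factorization of $|W|^2$. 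Using the estimates already derived for $\sigma \in H^M_{\zeta,\C} \cap L^2_N$, this factorization can be set up so that $W - 2ik + \F_-(q,0)$ lies in $\Hzc$ and depends analytically on $\sigma$.

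Once $W$ is in hand, I would define the reflection coefficient $r_+(\sigma,k) := \sigma(-k)/W(k)$ via \eqref{r.S.rel}, form its inverse Fourier transform $F(\sigma,x)$, and study the Marchenko integral equation
\begin{equation*}
K(x,y) + F(\sigma,x+y) + \int_x^{+\infty} K(x,z)\, F(\sigma,z+y)\, dz = 0, \qquad y \geq x,
\end{equation*}
in a weighted $L^2_{y \geq x}$-type space chosen so that the operator $T_F:\, g \mapsto \int_x^{+\infty} g(z) F(\sigma,z+y)dz$ is a contraction (or at least invertible) uniformly in $x$ large and bounded in $x \in \R$. The candidate potential is then $q(x) := -2\frac{d}{dx} K(x,x)$. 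Analyticity of $\sigma \mapsto q$ as a map $\S^{M,N}_\C \to H^N_\C \cap L^2_M$ follows because $\sigma \mapsto F$ is linear continuous, $\sigma \mapsto (I-T_F)^{-1}$ is analytic by a Neumann series argument analogous to Lemma \ref{KinLit}, and $q$ is obtained by composing analytic maps.

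To close the argument, I would verify $S(q) = \sigma$ (equivalently $S \circ S^{-1} = \mathrm{Id}$) by reconstructing the Jost solutions $f_1, f_2$ from the kernel $K$ (via $f_1(x,k) = e^{ikx} + \int_x^{+\infty} K(x,y) e^{iky} dy$) and checking that they satisfy $L(q)f_j = k^2 f_j$ with the correct asymptotics and that the wronskians \eqref{wronskian}, \eqref{wronskian_W} reproduce $\sigma$ and $W$. Injectivity of $S$ on $\class^{N,M}$ is classical once bound states are absent, so these two facts establish bijectivity. Real analyticity of $S^{-1}$ then follows either directly from the analyticity of the construction above, or alternatively by the analytic inverse function theorem applied to $S$ together with the openness of $\class^{N,M}$ and $\S^{M,N}$ from Lemmas \ref{class_open} and \ref{lem:S.open}, provided one verifies that the differential $dS(q)$ is a linear isomorphism at every $q \in \class^{N,M}$.

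The main obstacle will be the low-frequency analysis: the $\Hz$-norm permits the genuinely singular behaviour near $k=0$ encoded by the weight $\zeta$, so the factorization of $W$ and the inverse Fourier transform yielding $F$ must be carried out carefully to show that $F$ has enough decay and regularity to make the Marchenko equation solvable in the \emph{correct} weighted space, and to show that the resulting $q$ is free of bound states (this is the place where condition (S2) $\sigma(0) > 0$ is essential, through the sign information it gives on $W(0)$ via Lemma \ref{W0<0}). The high-frequency analysis, by contrast, reduces to estimates of the same flavour as those already performed in Section 2 and in the proof of Theorem \ref{A.prop}, so no new techniques are required there.
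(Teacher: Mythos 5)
Your overall route is the one the paper actually follows: recover the Wronskian/transmission datum from $\sigma$ by an outer-function (Hilbert-transform) factorization of $4k^2+\sigma(k)\sigma(-k)$ (this is exactly the construction of $\omega(\sigma,\cdot)$ and $w(\sigma,\cdot)$ in Propositions \ref{prop:inv.l}--\ref{prop:w.sigma}), define reflection coefficients by \eqref{r.S.rel}, invoke the classical inverse scattering theorem for existence of $q\in\class$ (Theorem \ref{S.onto}), quote injectivity from Kappeler--Trubowitz, and obtain real analyticity of $\sigma\mapsto q$ by solving the Gelfand--Levitan--Marchenko equation with Neumann-series/composition-of-analytic-maps arguments (Theorem \ref{inversemain1}). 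So the plan is sound in outline, and your remark that the delicate point is the behaviour near $k=0$ and the weighted decay of $F$ is exactly where the technical work lies.

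There is, however, one genuine gap: you reconstruct $q$ on all of $\R$ from the single right Marchenko equation built from $r_+=\sigma(-k)/W$ alone. That equation does determine $q(x)$ for every $x$, but it cannot deliver the quantitative statement you need, namely $q\in H^N\cap L^2_M$ \emph{globally}: the kernel entering the equation is $F_{+}(x+y+z)$, and the resulting estimates for $q(x)$ involve $F_{+}$ and $F_{+}'$ evaluated near $2x$, which do not decay (and the available operator bounds do not improve) as $x\to-\infty$; see that Lemma \ref{lem:Fx} only gives uniform invertibility on half-lines $x\geq c$. This is precisely why the paper works with \emph{both} reflection coefficients: $\rho_+$ produces $q_+$ on $[c_+,\infty)$ and $\rho_-$ produces $q_-$ on $(-\infty,c_-]$ via \eqref{ME} and \eqref{ME-}, the two pieces agree on the overlap by classical uniqueness, and the weighted estimates of Propositions \ref{Bsmoothing} and \ref{decayinverse} are each one-sided. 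Without the left equation your argument proves $q\in H^N_{x\geq c}\cap L^2_{M,x\geq c}$ but not membership in $L^2_M$ near $-\infty$. Two smaller points: the map $\sigma\mapsto F$ is \emph{not} linear (it passes through the nonlinear factorization giving $w(\sigma,\cdot)$), so analyticity must be argued as a composition, as in Proposition \ref{rem:dec_rel}; and your fallback via the analytic inverse function theorem requires proving that $d_qS$ is a linear isomorphism on the spaces $H^N\cap L^2_M\to\Hz\cap L^2_N$, which is the Kappeler--Trubowitz route the paper deliberately avoids and is not easier than the direct GLM analyticity argument.
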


The smoothing and analytic properties of $B:= S^{-1} - \F_{-}^{-1}$ claimed in Theorem \ref{reflthm} follow now in a straightforward way from Theorem \ref{thm:inv.scat} and \ref{A.prop}.
 
 \begin{proof}[Proof of Theorem \ref{reflthm}: inverse scattering part.]
 By Theorem  \ref{thm:inv.scat},  $S^{-1}:\S^{M,N} \to \class^{N,M}$ is well defined and real analytic. 
 As by definition $B = S^{-1} - \F_-^{-1}$ and $S = \F_- + A$ one has $B \circ S = Id - \F_-^{-1} \circ S = - \F_-^{-1}\circ  A$ or 
 $$B=-\F_-^{-1}\circ A \circ S^{-1} \ .$$ 
 Hence, by Theorem \ref{A.prop} and Theorem \ref{thm:inv.scat},  for any   $M \in \mathbb{Z}_{\geq 4}$ and $N \in \mathbb{Z}_{\geq 0}$ the restriction
$B:\S^{M,N}\to H^{N+1} \cap L^2_{M-1} $
is a  real analytic map. 
\end{proof}

The rest of the section is devoted to the proof of  Theorem \ref{thm:inv.scat}. 
By the direct scattering part of Theorem  \ref{reflthm} proved in Section \ref{sec:dir.scat},   $S(\class^{N,M}) \subseteq \S^{M,N}$.
Furthermore, the map $S: \class \to \S$ is 1-1, see \cite[Section 4]{kapptrub}. Thus also its restriction  $\left.S\right|_{\class^{N,M}}: \class^{N,M} \to \S^{M,N}$ is 1-1.

Let us denote by $\H: L^2 \to L^2$ the Hilbert transform
\begin{equation}
\label{def.hilbert.trans}
 \HT(v)(k):=-\frac{1}{\pi} \operatorname{p.v.} \int_{-\infty}^{\infty}\frac{v(k')}{k'-k} dk' \ .
\end{equation}
We collect in Appendix \ref{Hilbert.transf} some well known properties of the Hilbert transform which will be exploited in the following.

In order to prove that  $S:\class^{N,M} \to \S^{M,N}$ is onto, we need some preparation. Following \cite{kapptrub} define for  $\sigma \in \S^{M,N}$,
\begin{equation}
\omega(\sigma, k) :=  \exp\left(\frac{1}{2} l(\sigma,k) + \frac{i}{2} \H(l(\sigma, \cdot))(k) \right) \ , \quad l(\sigma, k) := \log\left( \frac{4(k^2+1)}{4k^2 + \sigma(k) \sigma(-k)} \right) \ , \quad k \in \R
\end{equation}
and
\begin{equation}
\label{inv.scatt.elem}
\begin{aligned}
&\frac{1}{w(\sigma,k)} := \frac{\omega(\sigma,k)}{2i(k+i)}  \ , \qquad \tau(\sigma,k) := \frac{2ik}{w(\sigma,k)} \ , \\
&\rho_+(\sigma,k) := \frac{\sigma(-k)}{w(\sigma,k)}\ , \qquad \rho_-(\sigma, k) := \frac{\sigma(k)}{w(\sigma,k)} \ .
\end{aligned}
\end{equation}
The aim is to show that $\rho_+(\sigma, \cdot)$, $\rho_-(\sigma, \cdot)$ and $\tau(\sigma, \cdot)$ are the scattering data $r_+, r_-$ and $t$ of a potential $q \in \class^{N,M}$. 

In the next proposition we discuss the properties of the map $\sigma \to l(\sigma, \cdot)$. To this aim we introduce, for $M \in \Z_{\geq 2}$ and $\zeta$ as in \eqref{zeta}, the auxiliary Banach space 
\begin{equation}
W^M_{\zeta} := \lbrace  f \in L^\infty : \overline{f(k)} = f(-k) ,  \quad \derk^{n} f \in L^2 \mbox{ for } 1 \leq n \leq M-1 \ , \ \ \ \zeta \derk^M f \in L^2 \rbrace \ 
\end{equation}
and its complexification
\begin{equation}
W^M_{\zeta,\C} := \lbrace  f \in L^\infty :  \quad \derk^{n} f \in L^2 \mbox{ for } 1 \leq n \leq M-1 \ , \ \ \ \zeta \derk^M f \in L^2 \rbrace \ ,
\end{equation}
both  endowed with the norm $\norm{f}_{W^M_{\zeta,\C}}^2 := \norm{f}_{L^\infty}^2 + \norm{\derk f}_{H^{M-2}_\C}^2 + \norm{\zeta \derk^M f}_{L^2}^2$.
Note that $W^M_{\zeta}$ differs from $\Hz$ since we require that $f$ lies just in $L^\infty$ (and not in $L^2$ as in $\Hz$). 
\begin{proposition}
\label{prop:inv.l}
Let $N \in \Z_{\geq 0}$ and $M \in \Z_{\geq 4}$ be fixed. The map $\S^{M,N} \to H^M_{\zeta}$, $\sigma \to l(\sigma, \cdot)$ is real analytic.
\end{proposition}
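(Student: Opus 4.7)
The plan is to factor the map through the intermediate function
\[
\phi(\sigma, k) := \frac{4 - \sigma(k)\sigma(-k)}{4(k^2+1)},
\]
so that $l(\sigma, k) = -\log(1 - \phi(\sigma, k))$. First, I would check well-definedness and the reality condition. The symmetry $\overline{\sigma(k)} = \sigma(-k)$ forces $\sigma(k)\sigma(-k) = |\sigma(k)|^2 \geq 0$, and together with $\sigma(0) > 0$ and continuity this yields $1 - \phi(\sigma, k) = (4k^2 + |\sigma(k)|^2)/(4(k^2+1)) \geq c_\sigma > 0$, with $c_\sigma$ depending locally continuously on $\sigma \in \S^{M,N}$. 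Consequently $l(\sigma, \cdot)$ is real-valued and even in $k$, so the reality condition $\overline{l(\sigma, k)} = l(\sigma, -k)$ holds automatically.

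Next I would show that $\sigma \mapsto \phi(\sigma, \cdot)$ is real analytic from an open subset of $\Hz \cap L^2_N$ into $H^M_\zeta$. Three ingredients suffice: (a) the reflection $\sigma(\cdot) \mapsto \sigma(-\cdot)$ preserves $H^M_\zeta$, by a change of variable combined with $|\zeta(-k)| = |\zeta(k)|$; (b) pointwise multiplication is bounded bilinear $H^M_{\zeta,\C} \times H^M_{\zeta,\C} \to H^M_{\zeta,\C}$, which one verifies by expanding $\derk^M(fg)$ via Leibniz, using the Sobolev embedding $H^{M-1}_\C \hookrightarrow L^\infty$ (valid for $M \geq 2$) to put $L^\infty$ factors on the mixed-derivative terms, and absorbing the $\zeta$-weight on the extreme terms via $\zeta \in L^\infty$; (c) multiplication by the Schwartz function $1/(4(k^2+1))$ leaves $H^M_{\zeta,\C}$ invariant. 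Since $\phi$ is an affine-plus-quadratic expression in $\sigma$ built from these three operations, it is real analytic.

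Finally I would compose with $z \mapsto -\log(1 - z)$. For any base point $\sigma_0 \in \S^{M,N}$, the function $1 - \phi(\sigma_0, \cdot) \in H^M_{\zeta,\C}$ is bounded below by a positive constant, and its reciprocal again lies in $H^M_{\zeta,\C}$: by Faà di Bruno, the weighted top derivative $\zeta \derk^M (1 - \phi_0)^{-1}$ equals $(1-\phi_0)^{-2} \zeta \derk^M \phi_0$ plus products of lower-order derivatives of $\phi_0$, all controlled in $L^2$. Writing
\[
-\log(1-\phi) = -\log(1-\phi_0) - \sum_{n \geq 1} \frac{1}{n}\left(\frac{\phi - \phi_0}{1-\phi_0}\right)^n,
\]
the Banach-algebra estimate $\|z^n\|_{H^M_{\zeta,\C}} \leq \|z\|_{H^M_{\zeta,\C}}^n$ shows that the series converges absolutely in $H^M_{\zeta,\C}$ on a sufficiently small neighborhood of $\sigma_0$, thereby exhibiting $l$ as a locally convergent power series in $\sigma - \sigma_0$ and proving real analyticity. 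The main obstacle is establishing the Banach-algebra property of $H^M_{\zeta,\C}$ together with the estimate $\|(1-\phi_0)^{-1}\|_{H^M_{\zeta,\C}} \leq C(\sigma_0)$: this is a Moser-type bound tailored to the nonstandard weight $\zeta$, where the interplay between the low-frequency regime ($\zeta(k) = k$ near $0$) and the high-frequency regime ($\zeta = 1$ at infinity) must be handled carefully in the chain-rule expansion.
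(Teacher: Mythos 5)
Your route is genuinely different from the paper's and is viable in outline. The paper works with $h(\sigma,k)=4(k^2+1)/\bigl(4k^2+\sigma(k)\sigma(-k)\bigr)$ in the multiplier space $W^M_{\zeta}$, obtains uniform bounds $\Re h\geq C_1$, $|h|\leq C_2$ on a small complex neighborhood of a real base point, composes with the logarithm, gets the $L^2$--membership of $l$ from the decay $|l(\sigma,k)|\leq C/|k|^2$, and then handles the derivatives by induction via $\derk l=\derk h/h$. You instead exhibit $l$ locally as a convergent power series in $\sigma-\sigma_0$ inside the Banach algebra $\Hzc$, which makes the analyticity immediate from the series at the price of proving the algebra/multiplier estimates you flag at the end; these are indeed available by the Leibniz--Sobolev argument you sketch (compare \eqref{lemHN*} and \eqref{lemHN*2}), so this part of the plan is sound.

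Two points need repair. First, constants do not belong to $\Hzc$ (its elements lie in $H^{M-1}_\C\subset L^2$), so the claims that $1-\phi(\sigma_0,\cdot)$ and its reciprocal lie in $\Hzc$ are false as stated: both tend to $1$ at infinity. What your argument actually needs is that $(1-\phi_0)^{-1}=1+\phi_0(1-\phi_0)^{-1}$ belongs to $1+\Hzc$ (equivalently to a multiplier space such as the paper's $W^M_{\zeta,\C}$) and that multiplication by such functions preserves $\Hzc$; with that correction $(\phi-\phi_0)/(1-\phi_0)\in\Hzc$ with norm $\leq C(\sigma_0)\norm{\phi-\phi_0}_{\Hzc}$, and the series argument goes through (the algebra inequality holds only up to a constant, which is harmless for the radius of convergence). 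Second, your expansion only shows that $l(\sigma,\cdot)-l(\sigma_0,\cdot)\in\Hzc$; you must still prove that the base term $l(\sigma_0,\cdot)=-\log(1-\phi_0)$ itself lies in $\Hz$. This cannot be obtained by expanding the logarithm globally, since $\norm{\phi_0}$ need not be small (only $1-\phi_0\geq c_{\sigma_0}>0$ is known); it requires a separate argument, e.g. the decay $|l(\sigma_0,k)|=O(1/k^2)$ for the $L^2$ part together with a Fa\`a di Bruno or inductive treatment of $\derk^n l(\sigma_0,\cdot)$, exactly the computation you sketch for $(1-\phi_0)^{-1}$ and the one the paper performs. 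Finally, a small slip: the correct expansion is $-\log(1-\phi)=-\log(1-\phi_0)+\sum_{n\geq1}\tfrac1n\bigl((\phi-\phi_0)/(1-\phi_0)\bigr)^n$, with a plus sign in front of the series.
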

\begin{proof}
Denote by 
$$
h(\sigma, k):= \frac{4(k^2+1)}{4k^2 + \sigma(k) \sigma(-k)} \ .
$$
We show that the map $\S^{M,N} \to W^M_{\zeta}$, $\sigma \to h(\sigma, \cdot)$ is real analytic. 
First note that the map $\S^{M,N}_\C \to L^\infty$, assigning to $\sigma$ the function $\sigma(k)\sigma(-k)$ is analytic by the Sobolev embedding theorem.
 For $\sigma \in  \S^{M,N}_\C$ write $\sigma = \sigma_1 + i \sigma_2$, where $\sigma_1:= \Re \sigma$, $\sigma_2 := \Im \sigma$. Then 
\begin{equation}
\label{re.sigma}  
 \Re (\sigma(k)\sigma(-k)) = \sigma_1(k)\sigma_1(-k) - \sigma_2(k)\sigma_2(-k) \ . 
 \end{equation}
Now fix $\sigma^0 \in \S^{M,N}$ and recall that $\S^{M,N} = \S \cap \Hz \cap L^2_N$.
Remark that  $\sigma_2^0 := \Im \sigma^0 = 0$, while  $\sigma_1^0:= \Re \sigma^0$ satisfies $\sigma_1^0(k)\sigma_1^0(-k) \geq 0$ and  $\sigma_1^0(0)^2 > 0$. Thus,  by formula \eqref{re.sigma} and the Sobolev embedding theorem, there exists  $V_{\sigma^0}\subset \S^{M,N}_\C$  small complex neighborhood of $\sigma^0$ and a constant $C_{\sigma^0}>0$ such that 
$$\Re (4k^2 + \sigma(k)\sigma(-k)) > C_{\sigma^0} \ , \quad \forall \sigma \in V_{\sigma^0} \ .$$
It follows that there exist constants $C_1, C_2 >0$ such that
\begin{equation}
\label{h.bound}
\Re h(\sigma, k) \geq C_1 \ , \qquad |h(\sigma,k)| \leq C_2 \ , \qquad \forall k \in \R, \ \forall \sigma \in V_{\sigma^0} \ ,
\end{equation}
implying that the map $V_{\sigma^0} \to L^\infty$, $\sigma \to h(\sigma,\cdot)$ is analytic. In a similar way one proves that $V_{\sigma^0} \to  W^M_{\zeta,\C}$, $\sigma \mapsto h(\sigma, \cdot)$ is analytic (we omit the details).
If $\overline{\sigma(k)} = \sigma(-k)$, the function $h(\sigma, \cdot)$ is real valued. Thus it follows that $\S^{M,N} \to W^M_{\zeta}$, $\sigma \to h(\sigma, \cdot)$ is real analytic.

We consider now the map $\sigma \to l(\sigma, \cdot)$. By  \eqref{h.bound},  $l(\sigma, k) = \log(h(\sigma,k))$ is well defined for every $k \in \R$. Since the logarithm is a real analytic function on the right half plane,  the map $\S^{M,N} \to L^\infty$, $\sigma \to l(\sigma, \cdot)$ is real analytic as well.
Furthermore for $|k| >1$ one finds a constant $C_3 >0$ such that $|l(\sigma,k)| \leq C_3/|k|^2$, $\forall \sigma \in V_{\sigma^0}$. Thus $\sigma \to l(\sigma, \cdot)$ is real analytic as a map from $\S^{M,N}$ to $L^2$.
One verifies that  $\derk \log(h(\sigma,\cdot)) = \frac{\derk h(\sigma, \cdot) }{h(\sigma, \cdot)}$ is in $L^2$ and one shows by induction that the map
$\S^{M,N} \to \Hz$, $\sigma \mapsto l(\sigma,\cdot)$
is real analytic. 
\end{proof}

In the next proposition we discuss the properties of the map $\sigma \to \omega(\sigma, \cdot)$.
\begin{proposition}
\label{prop:inv.omega}
Let $N \in \Z_{\geq 0}$ and $M \in \Z_{\geq 4}$ be fixed. The map $\S^{M,N} \to W^M_{\zeta}$, $\sigma \to \omega(\sigma, \cdot)$ is real analytic. Furthermore $\omega(\sigma, \cdot)$ has the following properties:
\begin{enumerate}[(i)]
\item $\omega(\sigma, k)$ extends analytically in the upper half plane $\Im k > 0$, and it has no zeroes in $\Im k \geq 0$.
\item $\overline{\omega(\sigma, k)} = \omega(\sigma, -k)$ $\, \forall k \in \R$.
\item For every $k \in \R$  
$$\omega(\sigma, k) \omega(\sigma,-k) = \frac{4(k^2+1)}{4k^2 + \sigma(k) \sigma(-k)} \ .$$
\end{enumerate}
\end{proposition}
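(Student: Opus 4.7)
The plan is to split the four assertions into two groups. Properties (ii)--(iv) are function-theoretic statements about $\omega$ that follow from elementary symmetry considerations together with the Cauchy-integral representation of $\tfrac12(u + i\H u)$ for real $u \in L^2$. Property (i), the real analyticity of $\sigma \mapsto \omega(\sigma, \cdot)$ into $W^M_\zeta$, reduces to Proposition \ref{prop:inv.l} combined with the $L^2$-boundedness of $\H$, the commutation $\H \derk = \derk \H$, and a Fa\`a di Bruno expansion of $\omega = \exp(F)$ with $F := \tfrac12 l + \tfrac{i}{2}\H l$.

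First I would observe that for $\sigma \in \S^{M,N}$, property (S1) gives $\sigma(k)\sigma(-k) = |\sigma(k)|^2$, which together with (S2) keeps the argument of the logarithm in the definition of $l$ strictly positive on $\R$, so $l(\sigma, \cdot)$ is well defined, real-valued, and even in $k$. Standard properties of $\H$ collected in the appendix then show that $\H l$ is real-valued and odd, whence $F(\sigma, -k) = \overline{F(\sigma, k)}$ and $F(\sigma, k) + F(\sigma, -k) = l(\sigma, k)$; exponentiating yields (iii) and (iv). For (ii), I would invoke that for $u \in L^2(\R, \R)$ the Cauchy integral $z \mapsto \frac{1}{2\pi i}\int_\R u(s)/(s-z)\,ds$ is holomorphic on $\Im z > 0$ with non-tangential boundary value $\tfrac12(u + i \H u)$ on $\R$; applied to $u = l(\sigma, \cdot)$ this analytically extends $F$, hence $\omega = e^F$, to $\Im k > 0$, and the extension is continuous up to $\Im k = 0$ because $l \in H^{M-1} \subset C^0$ for $M \geq 4$. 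Non-vanishing of the extension in the closed upper half plane is automatic from $\omega = \exp(\cdot)$.

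For (i), Proposition \ref{prop:inv.l} yields the real analyticity $\sigma \mapsto l(\sigma, \cdot) \in \Hz$. Since $\H$ is bounded on $L^2$ and commutes with $\derk$, the map $\sigma \mapsto \derk^n \H(l(\sigma, \cdot))$ is real analytic into $L^2$ for every $1 \leq n \leq M-1$. Crucially, $\H l$ is real-valued, so $|\omega(\sigma, k)| = e^{l(\sigma, k)/2}$ belongs to $L^\infty$ by Sobolev embedding --- regardless of the $L^\infty$-behaviour of $\H l$ itself. Fa\`a di Bruno then expresses each $\derk^n \omega$ as an $L^\infty \cdot L^2$ combination for $1 \leq n \leq M-1$; the $M$-th derivative is treated analogously, reducing to an analysis of $\zeta \derk^M F = \tfrac12(\zeta \derk^M l + i \zeta \H(\derk^M l))$. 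A power-series expansion of $\exp$ on the Banach algebra $W^M_{\zeta, \C}$ then produces the stated analyticity, and the reality from (iii) lands the image in the real subspace $W^M_\zeta$.

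The main obstacle is the control of $\zeta \H(\derk^M l)$: since $\derk^M l$ belongs only to the weighted space $\zeta^{-1} L^2$, $\H(\derk^M l)$ is not a priori defined as an $L^2$ function. I would resolve this via the commutator identity $\zeta \H \varphi = \H(\zeta \varphi) + [\zeta, \H]\varphi$; because $\zeta$ is smooth with $\operatorname{sign}(k) - \zeta(k)$ compactly supported in $[-1,1]$, $[\zeta, \H]$ is bounded on $L^2$, yielding a bound of the form $\norm{\zeta \H(\derk^M l)}_{L^2} \leq C\bigl(\norm{\zeta \derk^M l}_{L^2} + \norm{l}_{H^{M-1}}\bigr)$. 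This supplies the missing control on the $M$-th derivative and closes the Fa\`a di Bruno expansion.
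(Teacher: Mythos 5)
Your overall architecture coincides with the paper's: Proposition \ref{prop:inv.l} for $l$, boundedness of the Hilbert transform on the weighted space $\Hzc$ obtained through a commutator with $\zeta$, and then real analyticity of the exponential (the paper packages your Fa\`a di Bruno/Banach-algebra step as an induction based on the identity $\derk \omega = \tfrac12 \derk\bigl(l + i\H l\bigr)\,\omega$). Your direct verification of properties (i)--(iii) — evenness and reality of $l$, oddness and reality of $\H l$, hence $\omega(\sigma,-k)=\overline{\omega(\sigma,k)}$ and $\omega(\sigma,k)\omega(\sigma,-k)=e^{l(\sigma,k)}$, plus the Cauchy-integral extension to $\Im k>0$ — is sound; note that $l\in H^{M-1}_\C\subset C^2$ with $l(\sigma,k)=O(1/k^2)$, so the hypotheses of Lemma \ref{hilbertdeltausw} are met. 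The paper simply cites \cite[Section 4]{kapptrub} for these facts, so here you actually supply more detail than the original.

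The genuine gap is in your justification of the crucial estimate $\norm{\zeta \H(\derk^M l)}_{L^2} \leq C\bigl(\norm{\zeta \derk^M l}_{L^2} + \norm{l}_{H^{M-1}_\C}\bigr)$. You write $\zeta \H \varphi = \H(\zeta\varphi) + [\zeta,\H]\varphi$ with $\varphi = \derk^M l$ and argue that $[\zeta,\H]$ is bounded on $L^2$ because $\operatorname{sign}-\zeta$ is compactly supported. But $[\zeta,\H]$ is \emph{trivially} bounded on $L^2$ (both $\zeta\H$ and $\H\zeta$ are, since $\zeta\in L^\infty$), and this is beside the point: $\varphi=\derk^M l$ need not lie in $L^2$ — only $\zeta\derk^M l$ does — so you cannot feed it to an operator known merely to be $L^2$-bounded, and the compact support of $\operatorname{sign}-\zeta$ does not cure the possible non-square-integrability of $\derk^M l$ near $k=0$ without exploiting cancellation in the commutator kernel. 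What is actually required is that $[\H,\zeta]\derk$ is bounded on $L^2$, i.e. Calder\'on's first commutator estimate for $b=\zeta$ with $b'\in L^\infty$, applied to $g=\derk^{M-1}l\in L^2$. This is exactly how the paper closes the step: Lemma \ref{lem:comm.est} (Calder\'on) is used in Lemma \ref{lem:hilb.zeta}, which is precisely the bound you assert. So your claimed inequality is correct, but the reason you give does not prove it; at this point you must invoke (or reprove for this specific $\zeta$) the Calder\'on commutator bound. With that substitution the rest of your argument goes through.
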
 
\begin{proof}
 By Lemma \ref{lem:hilb.zeta}, the Hilbert transform is a bounded linear operator from $\Hzc$ to $\Hzc$. By Proposition \ref{prop:inv.l} it then follows that  the map 
$$\S^{M,N} \to \Hz \ , \quad \sigma \mapsto \H(l(\sigma,\cdot))$$
 is real analytic as well. Since the exponential function is real analytic and    $\derk \omega(\sigma, \cdot) = \frac{1}{2}\derk (l(\sigma,\cdot) + i \H(l(\sigma, \cdot))) \omega(\sigma,\cdot)$,  one proves by induction that  $\S^{M,N} \to W^M_{\zeta}$, $\sigma \to \omega(\sigma, \cdot)$ is real analytic.
Properties $(i)$--$(iii)$ are proved in \cite[Section 4]{kapptrub}.
\end{proof}
Next we consider the map $\sigma \to \frac{1}{w(\sigma, \cdot)}$. The following proposition follows immediately from Proposition \ref{prop:inv.omega} and the definition $\frac{1}{w(\sigma,k)}=\frac{\omega(\sigma,k)}{2i(k+i)}$.
\begin{proposition}
\label{prop:w.sigma}
The map $\S^{M,N} \to H^{M-1}_\C$, $\sigma \to \frac{1}{w(\sigma, \cdot)}$ is real analytic. Furthermore the maps
$$
\S^{M,N} \to L^2 \ , \qquad \sigma \to  \derk^n \frac{2ik}{w(\sigma, \cdot)} \ , \quad 1 \leq n \leq M 
$$
are real analytic. The function $\frac{1}{w(\sigma, \cdot)}$ fulfills
\begin{enumerate}[(i)]
\item $\overline{\left(\frac{1}{w(\sigma, k)}\right)} = \frac{1}{w(\sigma, -k)}$ for every $k \in \R$.
\item $\mmod{ \frac{2ik}{w(\sigma, k)}} \leq 1 $ for every $k \in \R$.
\item For every $k \in \R$
$$
w(\sigma, k) w(\sigma, -k) = 4k^2 + \sigma(k) \sigma(-k) \ .
$$
In particular $\mmod{w(\sigma, k)} > 0$ for every $k \in \R$ and $\sigma \in \S^{M,N}$.
\end{enumerate}

\end{proposition}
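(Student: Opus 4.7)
The plan is to derive all statements of Proposition \ref{prop:w.sigma} directly from the definition $1/w(\sigma,k) = \omega(\sigma,k)/(2i(k+i))$ and the properties of $\omega(\sigma,\cdot)$ established in Proposition \ref{prop:inv.omega}. The key observation is that multiplication by the smooth bounded function $g(k) := 1/(2i(k+i))$ (respectively by $k/(k+i)$) is a continuous linear operation from $W^M_\zeta$ into the target function spaces; analyticity of $\sigma \mapsto 1/w(\sigma,\cdot)$ and of $\sigma \mapsto \derk^n(2ik/w(\sigma,\cdot))$ then follows because $\sigma \mapsto \omega(\sigma,\cdot)$ is real analytic into $W^M_\zeta$ by Proposition \ref{prop:inv.omega}.

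For the regularity of $\sigma \mapsto 1/w(\sigma,\cdot)$ into $H^{M-1}_\C$, I would expand $\derk^n(\omega(\sigma,\cdot)\, g)$ by the Leibniz rule for $0 \leq n \leq M-1$. All derivatives $\derk^j g$ lie in $L^2 \cap L^\infty$, and by definition of $W^M_\zeta$ one has $\omega(\sigma,\cdot) \in L^\infty$ together with $\derk^j \omega(\sigma,\cdot) \in L^2$ for $1 \leq j \leq M-1$. Hence each term in the Leibniz expansion is a product of an $L^2$ function with an $L^\infty$ function, and the sum lies in $L^2$. Combined with analyticity of $\sigma \mapsto \omega(\sigma,\cdot)$, this proves the first regularity claim.

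For the second regularity claim, rewrite $2ik/w(\sigma,k) = k\omega(\sigma,k)/(k+i)$ and apply Leibniz. For $1 \leq n \leq M-1$ the previous argument extends verbatim. The main obstacle is the case $n = M$, where the expansion contains the term $\derk^M \omega(\sigma,k) \cdot k/(k+i)$ and $\derk^M \omega(\sigma,\cdot)$ is only known to satisfy $\zeta \derk^M \omega(\sigma,\cdot) \in L^2$. This is resolved by the pointwise inequality
$$
\left|\frac{k}{k+i}\right| = \frac{|k|}{\sqrt{k^2+1}} \leq C\,|\zeta(k)|, \qquad k \in \R,
$$
which is checked case-by-case on $|k| \leq 1/2$ (where $\zeta(k)=k$ and the ratio equals $1/\sqrt{k^2+1} \leq 1$) and on $|k| \geq 1/2$ (where $|\zeta(k)| \geq 1/2$ by monotonicity while $|k/(k+i)| \leq 1$). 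The critical term is then dominated in absolute value by $C\,|\zeta \derk^M \omega(\sigma,\cdot)| \in L^2$; all other Leibniz terms fall into the pattern of the previous paragraph.

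The algebraic identities (i)--(iii) follow by direct computation. Property (i) is obtained from $\overline{\omega(\sigma,k)} = \omega(\sigma,-k)$ (Proposition \ref{prop:inv.omega}(ii)) and $\overline{2i(k+i)} = 2i(-k+i)$. Property (iii) is verified by
$$
w(\sigma,k)\,w(\sigma,-k) = \frac{2i(k+i)\cdot 2i(-k+i)}{\omega(\sigma,k)\,\omega(\sigma,-k)} = \frac{4(k^2+1)}{\omega(\sigma,k)\,\omega(\sigma,-k)} = 4k^2 + \sigma(k)\sigma(-k),
$$
where the last equality invokes Proposition \ref{prop:inv.omega}(iii). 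Combining (i) with (iii) and the symmetry $\sigma(-k)=\overline{\sigma(k)}$ yields $|w(\sigma,k)|^2 = w(\sigma,k)\,w(\sigma,-k) = 4k^2 + |\sigma(k)|^2$, which is strictly positive because $\sigma(0) > 0$ by (S2). This simultaneously gives $|2ik/w(\sigma,k)|^2 = 4k^2/(4k^2+|\sigma(k)|^2) \leq 1$, proving (ii), and the final assertion $|w(\sigma,k)| > 0$.
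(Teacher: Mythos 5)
Your proof is correct and follows the same route as the paper, which simply asserts that the proposition "follows immediately" from Proposition \ref{prop:inv.omega} and the definition $\frac{1}{w(\sigma,k)}=\frac{\omega(\sigma,k)}{2i(k+i)}$; you have filled in the Leibniz-rule bookkeeping (including the pointwise bound $|k/(k+i)|\leq C|\zeta(k)|$ needed for the $n=M$ term) and the algebraic identities exactly as intended.
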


Now we study the properties of $\rho_+(\sigma, \cdot)$ and $\rho_-(\sigma,\cdot)$ defined in formulas \eqref{inv.scatt.elem}.
\begin{proposition}
\label{prop:inv.r}
Let $N \in \Z_{\geq 0}$ and $M \in \Z_{\geq 4}$ be fixed. Then the maps  $\S^{M,N} \to \Hz \cap L^2_N$, $\sigma \to \rho_\pm(\sigma,\cdot)$ are real analytic. There exists $C >0$ so that $\norm{\rho_{\pm}(\sigma, \cdot)}_{\Hzc\cap L^2_N} \leq C \norm{\sigma}_{\Hz\cap L^2_N} $, where $C$ depends locally uniformly on $\sigma \in \S^{M,N}$.  Furthermore the following holds:
\begin{enumerate}[(i)]
\item unitarity: $\tau(\sigma, k) \tau(\sigma,-k) + \rho_\pm(\sigma, k)\rho_\pm(\sigma,-k) = 1$ and  $\rho_+(\sigma,k) \overline{\tau(\sigma,k)} + \overline{\rho_{-}(\sigma,k)} \tau(\sigma,k) = 0$ for every $ k\in \R$ . 
\item reality: $\tau(\sigma,k)= \overline{\tau(\sigma,-k)}, 
\, \rho_\pm(\sigma,k)= \overline{\rho_\pm(\sigma,-k)}$;
\item analyticity: $\tau(\sigma,k)$ admits an analytic extension to $\{ \Im k > 0 \}$; 
\item asymptotics: $\tau(\sigma, z) = 1 + O(1/|z|)$ as $|z| \to \infty$, $\, \Im z \geq 0$, and $\rho_\pm(\sigma,k) = O(1/k)$, as $|k| \to \infty$,  $k$ real;
\item rate at $k=0$: $|\tau(\sigma,z)|>0$ for $z \neq 0$, $\Im z \geq 0$ and  $|\rho_\pm(\sigma,k)|< 1$ for $k  \neq 0$. Furthermore 
\begin{align*}
 \tau(\sigma,z) =& \alpha z + o(z), \quad \alpha\neq 0, \quad \Im z \geq 0 \\ 1+\rho_\pm(\sigma,k) =& \beta_\pm k + o(k),\quad k\in \R; \end{align*} 
\end{enumerate}
\end{proposition}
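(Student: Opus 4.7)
The plan is to deduce all assertions about $\rho_\pm$ and $\tau$ from the regularity of $\sigma$, the properties of $\omega(\sigma,\cdot)$ and $1/w(\sigma,\cdot)$ established in Propositions \ref{prop:inv.omega} and \ref{prop:w.sigma}, and the algebraic identity $w(\sigma,k)w(\sigma,-k)=4k^2+\sigma(k)\sigma(-k)$.

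First I would prove $\rho_\pm(\sigma,\cdot)\in\Hzc\cap L^2_N$ with a linear bound. Using \eqref{inv.scatt.elem} I write
\[
\rho_\pm(\sigma,k)=\sigma(\mp k)\,\frac{\omega(\sigma,k)}{2i(k+i)} \ .
\]
Since $1/w\in L^\infty$ by Proposition \ref{prop:w.sigma}\,$(iii)$ together with the asymptotic $w(\sigma,k)\sim 2ik$, the bound in $L^2_N$ is immediate from $\sigma\in L^2_N$. For the $\Hzc$-part I would apply Leibniz to $\zeta\derk^M\rho_\pm$ and distinguish four types of contribution: those in which all $M$ derivatives fall on $\sigma(\mp k)$, controlled by $\zeta\derk^M\sigma\in L^2$ (using that $\zeta$ is odd, so the substitution $k\mapsto -k$ preserves the $L^2$-norm) times the bounded factor $\omega/(k+i)$; those in which all fall on $\omega$, controlled by $\zeta\derk^M\omega\in L^2$ from Proposition \ref{prop:inv.omega}; those in which all fall on $(k+i)^{-1}$, controlled by $\derk^M(k+i)^{-1}\in L^2$; and the mixed terms, in which each factor is differentiated at most $M-1$ times. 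In the mixed case at most one of the three indices can equal $M-1$ (since they sum to $M$), so the most differentiated factor is placed in $L^2$ while the remaining two, each differentiated at most $M-2$ times, lie in $H^{M-1-p}\hookrightarrow L^\infty$ for $M\geq 4$. The analogous argument controls $\derk^n\rho_\pm\in L^2$ for $0\leq n\leq M-1$, giving $\norm{\rho_\pm(\sigma,\cdot)}_{\Hzc\cap L^2_N}\leq C\norm{\sigma}_{\Hz\cap L^2_N}$ with $C$ locally uniform in $\sigma$. Real analyticity then follows by writing $\rho_\pm$ as the bilinear image of the real analytic maps $\sigma\mapsto\sigma(\mp\cdot)$ and $\sigma\mapsto 1/w(\sigma,\cdot)$ (Proposition \ref{prop:w.sigma}) and observing that the image lies in the real subspace $\Hz$ by property $(ii)$ below.

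The algebraic items are verified by direct computation. Item $(i)$ follows from $w(\sigma,k)w(\sigma,-k)=4k^2+\sigma(k)\sigma(-k)$ by substituting the definitions of $\tau$ and $\rho_\pm$. Item $(ii)$ follows from $\overline{\omega(\sigma,k)}=\omega(\sigma,-k)$ (Proposition \ref{prop:inv.omega}\,$(ii)$) together with condition $(S1)$. For $(iii)$ I rewrite $\tau(\sigma,k)=k\omega(\sigma,k)/(k+i)$, which extends analytically to $\{\Im k>0\}$ by Proposition \ref{prop:inv.omega}\,$(i)$, the only potential pole at $k=-i$ lying in the lower half plane. Item $(iv)$ follows from $\omega(\sigma,z)\to 1$ at infinity (since $l(\sigma,z)\to 0$) and the Sobolev embedding $H^{M-1}\hookrightarrow C^0$ giving $\sigma(k)\to 0$. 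For $(v)$, I would use that $l(\sigma,\cdot)$ is even, so $\H(l(\sigma,\cdot))(0)=0$; this yields $\omega(\sigma,0)=2/\sigma(0)$ and hence $w(\sigma,0)=-\sigma(0)$, whence $\rho_\pm(\sigma,0)=-1$ and $\tau'(\sigma,0)=-2i/\sigma(0)\neq 0$, giving the expansions with $\alpha=-2i/\sigma(0)$ and $\beta_\pm$ finite.

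The main technical obstacle will be the Leibniz bookkeeping in the $\Hzc$-estimate: one has to verify carefully that $\zeta\derk^M[\sigma(-k)]\in L^2$, that in every mixed term the combination of $H^{M-1}\hookrightarrow L^\infty$ with the $L^2$-derivative bounds yields an estimate linear in $\norm{\sigma}_{\Hz\cap L^2_N}$, and that the resulting constants depend only locally uniformly on $\sigma\in\S^{M,N}$; this last point hinges on the local uniformity of $\norm{1/w(\sigma,\cdot)}_{L^\infty}$, which follows from $\S^{M,N}$ being open and $w$ never vanishing on $\R$.
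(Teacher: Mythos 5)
Your proposal is correct, and for the mapping properties it follows essentially the same route as the paper: the paper also writes $\rho_\pm(\sigma,k)=\sigma(\mp k)/w(\sigma,k)$, gets the $\Hzc\cap L^2_N$ bound from the Leibniz-type bookkeeping carried out (for $R_\pm=2ik\rho_\pm$) in the proof of Proposition \ref{prop:inv.R}, using the packaged facts of Proposition \ref{prop:w.sigma} about $\derk^n\bigl(2ik/w(\sigma,\cdot)\bigr)$, and obtains the locally uniform constant from local boundedness of the real analytic map $\sigma\mapsto 1/w(\sigma,\cdot)$; your version simply unpacks $1/w=\omega/(2i(k+i))$ and distributes derivatives over the three factors, which is equivalent (your observation that the oddness of $\zeta$ handles the $\zeta\derk^M[\sigma(\mp k)]$ term is exactly what makes the top-order term work without the cut-off used for $R_\pm$). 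Where you genuinely diverge is in items $(iii)$--$(v)$: the paper disposes of $(i)$, $(ii)$, $(v)$ as ``simple computations'' and cites \cite[Lemma 4.1]{kapptrub} for $(iii)$--$(iv)$, whereas you prove $(iii)$ directly from $\tau=k\,\omega(\sigma,k)/(k+i)$ and Proposition \ref{prop:inv.omega}$(i)$, and $(v)$ from the evenness of $l(\sigma,\cdot)$, which gives $\H(l)(0)=0$, $\omega(\sigma,0)=2/\sigma(0)$, $w(\sigma,0)=-\sigma(0)$, $\rho_\pm(\sigma,0)=-1$ and $\alpha=-2i/\sigma(0)\neq 0$; this is correct and makes the proposition self-contained, which is a genuine gain. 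Two small points to tighten: in $(iv)$, ``$\omega\to 1$ since $l\to 0$'' only yields $\tau\to 1$; for the stated rate $\tau(\sigma,z)=1+O(1/|z|)$ on $\Im z\geq 0$ you need $\omega-1=O(1/|z|)$, which follows because $l(\sigma,k)=O(1/k^2)$ is integrable and the Cauchy-type extension of $\tfrac12(l+i\H(l))$ obeys the decay bound of Lemma \ref{hilbertdeltausw}$(iii)$ — say this explicitly. In $(v)$ you do not state the claims $|\tau(\sigma,z)|>0$ for $z\neq 0$ and $|\rho_\pm(\sigma,k)|<1$ for $k\neq 0$, but both are immediate from your setup: $\tau=z\omega/(z+i)$ with $\omega$ zero-free on $\Im z\geq 0$, and then $|\tau(k)|^2+|\rho_\pm(k)|^2=1$ from $(i)$--$(ii)$ forces $|\rho_\pm(k)|<1$ for real $k\neq 0$.
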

\begin{proof}
The real analyticity of the maps  $\S^{M,N} \to \Hz \cap L^2_N$, $\sigma \to \rho_\pm(\sigma,\cdot)$ follows from Proposition \ref{prop:w.sigma} and the definition $\rho_\pm(\sigma,k) = \sigma(\mp,k)/w(\sigma,k)$ (see also the proof of Proposition \ref{prop:inv.R}).
Since $\sigma \mapsto \frac{1}{w(\sigma,\cdot)}$ is real analytic, it is  locally bounded, i.e.,   there exists $C >0$ so that $\norm{\rho_{\pm}(\sigma, \cdot)}_{\Hzc\cap L^2_N} \leq C \norm{\sigma}_{\Hz\cap L^2_N} $, where $C$ depends locally uniformly on $\sigma \in \S^{M,N}$.
Properties $(i),(ii), (v)$ follow now by simple computations. 
Property $(iii)-(iv)$ are proved in \cite[Lemma 4.1]{kapptrub}. 
%and 
% Lemma 
%\ref{hilbertdeltausw} applied to  $t(z)=e^{f(z)}\frac{2iz}{1-2iz}$, where $z\in \overline{\C_+}$ and $f$ is  the analytic extension of $g +i \HT(g)$ to $\overline{\C_+}$, where  $g(k) = \tfrac{1}{2}\log\left(l(k)\right)$ (of  Lemma 
%\ref{hilbertdeltausw}). In particular, note that $t(z)=ze^{f(z)}\frac{2i}{1+2iz}$ and $f$ is continuous on the upper half plane, implying  the desired rate at $0$ of $t(z)$ in item (iv). 
\end{proof}

Finally define the functions
\begin{equation}
R_\pm(\sigma, k) := 2ik \rho_\pm(\sigma,k) \ .
\end{equation}
\begin{proposition}
\label{prop:inv.R}
Let $N \in \Z_{\geq 0}$ and $M \in \Z_{\geq 4}$ be fixed. Then the maps  $\S^{M,N} \to H^M_\C \cap L^2_N$, $\sigma \to R_\pm(\sigma,\cdot)$ are real analytic.
 There exists $C >0$ so that $\norm{R_{\pm}(\sigma, \cdot)}_{H^M_\C\cap L^2_N} \leq C \norm{\sigma}_{\Hz\cap L^2_N} $, where $C$ depends locally uniformly on $\sigma \in \S^{M,N}$. Furthermore the following holds:
\begin{enumerate}[(i)]
\item $\overline{R_\pm(\sigma, k)} = R_\pm(\sigma, -k)$ for every $k \in \R$.
\item $|R_\pm(\sigma, k)| < 2|k|$ for any $k \in \R \setminus \{0 \}$.
\end{enumerate}
\end{proposition}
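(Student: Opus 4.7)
Properties $(i)$ and $(ii)$ follow immediately from $R_\pm(\sigma,k)=2ik\,\rho_\pm(\sigma,k)$ and Proposition \ref{prop:inv.r}: reality gives $\overline{R_\pm(\sigma,k)}=-2ik\,\rho_\pm(\sigma,-k)=R_\pm(\sigma,-k)$, while $|\rho_\pm(\sigma,k)|<1$ for $k\neq 0$ yields $|R_\pm(\sigma,k)|<2|k|$.

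The substance of the proposition is the real analyticity of $\sigma\mapsto R_\pm(\sigma,\cdot)$ into $H^M_\C\cap L^2_N$ with the locally uniform bound. The plan is to use the factorization
\[
R_\pm(\sigma,k) \;=\; \sigma(\mp k)\,\tau(\sigma,k), \qquad \tau(\sigma,k):=\frac{2ik}{w(\sigma,k)},
\]
and exploit the properties of $\tau$ furnished by Proposition \ref{prop:w.sigma}: $|\tau|\leq 1$; $\sigma\mapsto\derk^n\tau(\sigma,\cdot)\in L^2$ is real analytic for every $1\leq n\leq M$; and $\tau(\sigma,0)=0$ because $w(\sigma,0)\neq 0$ (by the condition $\sigma(0)>0$ in the definition of $\S$ and Proposition \ref{prop:w.sigma}$(iii)$). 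The $L^2_N$ bound $\|R_\pm(\sigma,\cdot)\|_{L^2_N}\leq\|\sigma\|_{L^2_N}$ is then immediate from $|\tau|\leq 1$, and real analyticity into $L^2_N$ follows from the bilinearity of pointwise multiplication together with the analytic dependence of $\tau$ on $\sigma$.

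For the $H^M_\C$ component, I would apply the Leibniz rule
\[
\derk^n R_\pm(\sigma,k) \;=\; \sum_{j=0}^n\binom{n}{j}\,(\mp 1)^{n-j}(\derk^{n-j}\sigma)(\mp k)\,\derk^j\tau(\sigma,k)
\]
for $0\leq n\leq M$, and show each summand lies in $L^2$ by an $L^\infty\cdot L^2$ argument. When $n\leq M-1$: the $j=0$ term gives $\derk^n\sigma\cdot\tau\in L^2\cdot L^\infty=L^2$; for $j\geq 1$, $\derk^j\tau\in L^2$ while $\derk^{n-j}\sigma\in H^{M-1-(n-j)}\subset L^\infty$ by Sobolev embedding (valid since $M\geq 4$). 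The delicate case is $n=M$, $j=0$, where only $\zeta\,\derk^M\sigma\in L^2$ is controlled. The key observation is that $\tau(\sigma,\cdot)/\zeta\in L^\infty$, since $\tau$ vanishes at $k=0$ like $k$ (matching $\zeta$) and is bounded for $|k|\geq 1/2$, so that
\[
(\derk^M\sigma)(\mp k)\,\tau(\sigma,k) \;=\; \Bigl(\tfrac{\tau(\sigma,k)}{\zeta(k)}\Bigr)\,(\zeta\,\derk^M\sigma)(\mp k) \;\in\; L^\infty\cdot L^2 \;=\; L^2.
\]
All remaining $n=M$, $j\geq 1$ summands are treated as in the $n\leq M-1$ case, using $\derk^j\tau\in H^{M-j}\subset L^\infty$ for $1\leq j\leq M-1$ and $\sigma\in L^\infty$ for the $j=M$ term.

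Real analyticity into $H^M_\C\cap L^2_N$ then follows because each summand is a bilinear combination of real analytic functions of $\sigma$ with all bilinear operations bounded, and the locally uniform bound is inherited from the local boundedness of the analytic maps $\sigma\mapsto\derk^n\tau(\sigma,\cdot)$ supplied by Proposition \ref{prop:w.sigma}. The principal obstacle is the $n=M$, $j=0$ term: its $L^2$-integrability hinges on $\tau$ vanishing at $k=0$ to exactly the degree needed to absorb the weight $\zeta^{-1}$ implicit in the definition of $\Hz$ --- precisely the one-derivative regularity gain encoded in the factor $2ik$ of $R_\pm$ as compared to $\rho_\pm$, which lies only in $\Hz$.
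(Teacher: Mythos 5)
Your proposal is correct and follows essentially the same route as the paper: the factorization $R_\pm(\sigma,k)=2ik\,\sigma(\mp k)/w(\sigma,k)$, the Leibniz expansion of $\derk^n R_\pm$ for $0\leq n\leq M$, and the $L^\infty$--$L^2$ splitting of each summand via Proposition \ref{prop:w.sigma}, with items $(i)$, $(ii)$ and the bound obtained exactly as you indicate. The only (cosmetic) difference is the critical term $\tau\,\derk^M\sigma$: you absorb the weight by writing $\tau/\zeta\in L^\infty$ against $\zeta\derk^M\sigma\in L^2$, whereas the paper splits the same term with a smooth cutoff $\chi$ into a region near $k=0$ (where $2ik\derk^M\sigma\in L^2$) and a region away from $k=0$ (where $\derk^M\sigma\in L^2$) --- the two devices are equivalent.
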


\begin{proof}
In order to prove the statements, we will use that $R_\pm(\sigma, k) = 2ik \frac{\sigma(\mp k)}{w(\sigma,k)}$. We will consider just $R_-$, since the analysis for $R_+$ is identical.
 To simplify  the notation, we will denote $R_-(\sigma, \cdot) \equiv R(\sigma,\cdot)$.

By Proposition \ref{prop:w.sigma}$(ii)$,   
$\mmod{R(\sigma,k)}\leq \mmod{\sigma(k)}\,$, thus $R(\sigma,\cdot) \in L^2_{N}$.
In order to prove that $R(\sigma, \cdot) \in H^M_\C$,   take  $n$ derivatives ($1 \leq n \leq M$) of $R(\sigma,\cdot)$ to get   the identity  \begin{equation}
\derk^n R(\sigma, k) = \frac{2ik}{w(\sigma,k)} \derk^n \sigma(k) + \sum_{j=1}^{n-1} \binom{n}{j} \left(\derk^j\frac{2ik}{w(\sigma,k)}\right) \ \derk^{n-j} \sigma(k) + \left(\derk^n\frac{2ik}{w(\sigma,k)}\right)  \sigma(k) \ .
\label{derRrecursion}
\end{equation}
We show now that each term of the r.h.s. of the identity above is in $L^2$. Consider first the term $I_1:= \frac{2ik}{w(\sigma,k)} \derk^n \sigma(k)$. If $1 \leq n < M$, then $\derk^n \sigma \in L^2$ and $|2ik/w(\sigma,k)| \leq 1$, thus proving that $I_1 \in L^2$. If $n=M$, let  $\chi$ be a smooth cut-off function  with $\chi(k)\equiv 1$ in $[-1,1]$ and $\chi(k)\equiv 0$ in $\R \setminus [-2,2]$. Then one has 
$$
I_1 = \frac{1}{w(\sigma,k)}\chi(k) 2ik \derk^M \sigma(k) + \frac{ 2ik}{w(\sigma,k)}(1-\chi(k)) \derk^M \sigma(k) \ . $$
As  $\sigma \in \S^{M,N}$ it follows that $k \mapsto \chi(k) 2ik \derk^M \sigma(k)$ and $k \mapsto (1-\chi(k))\derk^M\sigma(k)$ are in $L^2$. By Proposition \ref{prop:w.sigma}, $ \frac{1}{w(\sigma,\cdot)}$ and $\frac{2ik}{w(\sigma, \cdot)}$ are in $L^\infty$. Altogether it follows that $I_1 \in L^2$ for any $1 \leq n \leq M$.

Consider now $I_2 := \sum_{j=1}^{n-1} \binom{n}{j} \left(\derk^j\frac{2ik}{w(\sigma,k)}\right) \ \derk^{n-j} \sigma(k)$. By Proposition \ref{prop:w.sigma}, $\left(\derk^{j} \frac{2ik}{w(\sigma,k)}\right) \in H^{1}_\C$ for every $1 \leq j \leq M-1$, thus by the Sobolev embedding theorem $\left(\derk^j\frac{2ik}{w(\sigma,k)}\right) \in L^{\infty}$ for every $1 \leq j \leq M-1$. As $\derk^{n-j} \sigma \in L^2$ for $1 \leq j \leq n-1 < M$, it follows that $I_2 \in L^2$ for any $1 \leq n \leq M$.

Finally consider $I_3:=\left(\derk^n\frac{2ik}{w(\sigma,k)}\right)  \sigma(k)$. By Proposition \ref{prop:w.sigma},  $\left(\derk^n\frac{2ik}{w(\sigma,k)}\right) \in L^2$ for any $1 \leq n \leq M$. Since  $\sigma \in L^\infty$,  $I_3 \in L^2$ for any $1 \leq n \leq M$.

Altogether we proved that $R(\sigma, \cdot) \in H^M_\C \cap L^2_N$. The claimed estimate on $\norm{R(\sigma,\cdot)}_{H^M_\C \cap L^2_N}$ and item  $(i)$ and $(ii)$ follow in a straightforward way. The real analyticity of the map $\S^{M,N} \to H^M_\C \cap L^2_N$, $\sigma \to R(\sigma,\cdot)$ follows by Proposition \ref{prop:w.sigma}.
\end{proof}

For $\sigma \in \S^{M,N}$, define  the Fourier transforms
\begin{equation}
\label{F.four}
F_{\pm}(\sigma, y) := \F_\pm^{-1}(\rho_\pm(\sigma, \cdot))(y) =  \frac{1}{\pi}\int_{\R} \rho_{\pm}(\sigma, k) e^{\pm 2ik y} dk \ . 
\end{equation}
 Then
 \begin{equation}
\label{FR2}
\pm \dery F_{\pm} (\sigma, y) = \frac{1}{\pi} \intii 2ik \rho_\pm(\sigma, k) e^{\pm 2iky} \; dk = \F_\pm^{-1}(R_\pm(\sigma, \cdot ))(y) \ .
\end{equation}
In the next proposition we analyze the properties of the maps $\sigma \mapsto F_{\pm}(\sigma, \cdot)$.
\begin{proposition}
\label{rem:dec_rel}
Let $N \in \Z_{\geq 0}$ and $M \in \Z_{\geq 4}$ be fixed. Then the following holds true:
\begin{enumerate}
\item[(i)] $\sigma \mapsto F_{\pm}(\sigma, \cdot)$ are real analytic as  maps from $\S^{4,0}$ to $H^1 \cap L^2_3$. Moreover there exists $C >0$ so that $\norm{F_{\pm}(\sigma, \cdot)}_{H^1\cap L^2_3} \leq C \norm{\sigma}_{\Hz} $, where $C$ depends locally uniformly on  $\sigma \in \S^{M,N}$. 
\item[(ii)]  $\sigma \mapsto F'_{\pm}(\sigma, \cdot)$ are real analytic as  maps from $\S^{M,N}$ to $H^N \cap L^2_M$. Moreover there exists $C' >0$ so that $\norm{F'_{\pm}(\sigma,\cdot)}_{H^N\cap L^2_M} \leq C' \norm{\sigma}_{\Hz \cap L^2_{N}} $, where $C'$ depends locally uniformly on  $\sigma \in \S^{M,N}$. 
\end{enumerate}
\end{proposition}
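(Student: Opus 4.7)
The plan is to deduce both statements from the already-established analytic properties of $\sigma \mapsto \rho_\pm(\sigma,\cdot)$ (Proposition \ref{prop:inv.r}) and $\sigma \mapsto R_\pm(\sigma,\cdot)$ (Proposition \ref{prop:inv.R}) combined with the standard Parseval-type isomorphisms for the Fourier transform, namely $\F_\pm^{-1}:H^s_\C \to L^2_s$ and $\F_\pm^{-1}:L^2_s \to H^s$ (both continuous with norm depending only on $s$). Since $\F_\pm^{-1}$ is a bounded linear operator between the relevant Banach spaces, composition with real analytic maps preserves real analyticity, so all analyticity statements reduce to continuity of a linear map.

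For part $(i)$, I would argue as follows. Take $\sigma \in \S^{4,0}$. By Proposition \ref{prop:inv.r}, the map $\sigma\mapsto \rho_\pm(\sigma,\cdot)$ is real analytic from $\S^{4,0}$ to $\Hz\cap L^2 = H^4_\zeta$, and since $H^4_\zeta \subseteq H^3_\C$, the isomorphism $\F_\pm^{-1}:H^3_\C \to L^2_3$ gives at once $F_\pm(\sigma,\cdot)\in L^2_3$ together with the estimate $\norm{F_\pm(\sigma,\cdot)}_{L^2_3}\le C\norm{\rho_\pm(\sigma,\cdot)}_{H^3_\C}\le C'\norm{\sigma}_{\Hz}$. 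To reach $H^1$, invoke identity \eqref{FR2}: $\pm\dery F_\pm = \F_\pm^{-1}(R_\pm)$; by Proposition \ref{prop:inv.R} (with $M=4$, $N=0$), $R_\pm(\sigma,\cdot)\in H^4_\C\cap L^2 \subseteq L^2$ with norm bounded by $\norm{\sigma}_{\Hz}$, hence $\dery F_\pm\in L^2$. Combined with $F_\pm\in L^2\subseteq L^2_0$ (coming from $\rho_\pm \in L^2$), this yields $F_\pm \in H^1$ and the required estimate. Real analyticity follows because $\F_\pm^{-1}$ is linear and bounded on each of the spaces involved.

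For part $(ii)$, one applies the same principle to the identity $F'_\pm(\sigma,\cdot) = \pm\F_\pm^{-1}(R_\pm(\sigma,\cdot))$. By Proposition \ref{prop:inv.R}, $\sigma\mapsto R_\pm(\sigma,\cdot)$ is real analytic from $\S^{M,N}$ into $H^M_\C\cap L^2_N$ with $\norm{R_\pm(\sigma,\cdot)}_{H^M_\C\cap L^2_N}\le C\norm{\sigma}_{\Hz\cap L^2_N}$. Applying the two Fourier isomorphisms $\F_\pm^{-1}:H^M_\C\to L^2_M$ and $\F_\pm^{-1}:L^2_N\to H^N$ simultaneously gives $F'_\pm(\sigma,\cdot)\in L^2_M\cap H^N$ with $\norm{F'_\pm(\sigma,\cdot)}_{H^N\cap L^2_M}\le C'\norm{\sigma}_{\Hz\cap L^2_N}$, and real analyticity of the composition with a bounded linear map. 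In short, there is no real obstacle here: all the heavy lifting has been done in Propositions \ref{prop:inv.r} and \ref{prop:inv.R}; the only thing to be careful about is matching the correct Sobolev and weighted indices on the two sides of the Fourier transform, in particular using $M-1\ge 3$ to obtain $L^2_3$ in part $(i)$ and the full $H^M_\C$ regularity of $R_\pm$ to obtain $L^2_M$ in part $(ii)$.
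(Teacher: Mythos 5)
Your proposal is correct and follows essentially the same route as the paper: both parts are reduced to Propositions \ref{prop:inv.r} and \ref{prop:inv.R} together with the weighted/Sobolev mapping properties of $\F_\pm^{-1}$ (the paper gets $F_\pm\in H^1$ from $\rho_\pm\in L^2_1$, which is the same fact as your $R_\pm=2ik\rho_\pm\in L^2$ via \eqref{FR2}). The only detail the paper adds that you omit is the remark that $F_\pm(\sigma,\cdot)$ is real valued (from Proposition \ref{prop:inv.r}~$(ii)$), which is needed since the target spaces are real.
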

\begin{proof} 
By Proposition  \ref{prop:inv.r},   the map $\S^{4,0}\to H^3_\C \cap L^2_1$, $\sigma \to \rho_\pm(\sigma, \cdot)$ is real analytic. Thus item $(i)$  follows by  the properties of the Fourier transform. By Proposition \ref{prop:inv.r} $(ii)$,   $F_{\pm}(\sigma,\cdot) = \F_\pm^{-1}(\rho_\pm)$ is real valued. 
Item $(ii)$ follows from  \eqref{FR2} and the characterizations
\begin{equation}
\label{cor.R.F.equiv}
R_\pm \in H^{M}_\C \ \Longleftrightarrow \ \F_{\pm}^{-1}(R_\pm) \in L^2_M  \qquad \mbox{and} \qquad R_\pm \in L^2_{N} \ \Longleftrightarrow \ \F_{\pm}^{-1}(R_\pm) \in H^N_\C \ .
\end{equation}
The claimed estimates follow from  the properties of the Fourier transform, Proposition \ref{prop:inv.r} and Proposition \ref{prop:inv.R}.
\end{proof}

We are finally able to prove that there exists a potential $q \in \class$ with prescribed scattering coefficient $\sigma \in \S^{M,N}$. More precisely the following theorem holds.
\begin{theorem}
\label{S.onto}
Let $N \in \Z_{\geq 0}$, $M \in \Z_{\geq 4}$ and  $\sigma \in \S^{M,N}$ be fixed. Then there exists a potential $q \in \class$ such that $S(q, \cdot) = \sigma$.
\end{theorem}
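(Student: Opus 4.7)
The plan is to apply the Gelfand-Levitan-Marchenko inverse scattering construction, taking as input the candidate reflection coefficient $\rho_+(\sigma,\cdot)$ and the associated kernel $F_+(\sigma,\cdot)$ built in Proposition \ref{rem:dec_rel}. By Propositions \ref{prop:inv.r}--\ref{prop:inv.R}, the triple $(\tau(\sigma,\cdot),\rho_+(\sigma,\cdot),\rho_-(\sigma,\cdot))$ satisfies every characteristic property of the scattering data of a real, generic potential with no bound states: unitarity, the reality condition, analytic extension of $\tau$ to $\{\Im k >0\}$ with no zeros in $\{\Im k\geq 0\}$, the correct asymptotics $\tau(\sigma,k)=1+O(1/|k|)$ and $\rho_\pm(\sigma,k)=O(1/|k|)$ at infinity, and the linear vanishing of $\tau$ and $1+\rho_\pm$ at $k=0$. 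Moreover Proposition \ref{rem:dec_rel} gives $F_+(\sigma,\cdot)\in H^1\cap L^2_3$ with $F_+'(\sigma,\cdot)\in H^N\cap L^2_M$.

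The first step is to solve, for each $x\in\R$, the Marchenko integral equation
\begin{equation*}
K(x,y)+F_+(\sigma,x+y)+\intx{x} K(x,z)\,F_+(\sigma,z+y)\,dz\;=\;0,\qquad y\geq x,
\end{equation*}
for $K(x,\cdot)\in L^2([x,\infty))$. The integral operator $T_x\colon f\mapsto \intx{x} f(z)F_+(\sigma,z+\cdot)\,dz$ is Hilbert-Schmidt on $L^2([x,\infty))$ by the decay of $F_+$; unique solvability amounts to showing that $-1\notin\operatorname{spec}(T_x)$. This follows from the unitarity relation in Proposition \ref{prop:inv.r}(i), together with the fact that $w(\sigma,\cdot)$ has no zeros in $\{\Im k\geq 0\}$ (Proposition \ref{prop:w.sigma}), which rules out bound states and hence forbids nontrivial homogeneous solutions of the Marchenko equation (standard Fredholm-alternative argument, e.g.\ \cite{deift,kapptrub}).

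Next, set
\begin{equation*}
q(x):=-2\,\frac{d}{dx}K(x,x).
\end{equation*}
Estimating $K(x,\cdot)$ in the appropriate weighted space through the resolvent $(I+T_x)^{-1}$ and using that $F_+'(\sigma,\cdot)\in H^N\cap L^2_M$, one propagates the regularity and decay of $F_+$ to obtain $q\in H^N\cap L^2_M$, with bounds locally uniform in $\sigma$. Then one verifies that the function
\begin{equation*}
f_1(x,k):=e^{ikx}+\intx{x} K(x,y)\,e^{iky}\,dy
\end{equation*}
is a Jost solution: substituting into the Schr\"odinger equation and using the Marchenko equation shows $-f_1''+qf_1=k^2 f_1$, while the integral representation gives the correct asymptotics $f_1(x,k)\sim e^{ikx}$ as $x\to+\infty$. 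A direct computation of the Wronskian identities \eqref{S.1}--\eqref{W.1} for this $f_1$ then identifies $S(q,\cdot)=\sigma$ and $W(q,\cdot)=w(\sigma,\cdot)$. In particular $W(q,k)\neq 0$ for all $\Im k\geq 0$, so $q$ has no bound states, $W(q,0)\neq 0$, and $q\in\class$.

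The main obstacle is the unique solvability of the Marchenko equation in the appropriate weighted space: one has to rule out $-1\in\operatorname{spec}(T_x)$ using only the scattering-theoretic conditions already verified for $\sigma$, and then to control $(I+T_x)^{-1}$ well enough in $x$ to transfer the regularity class $H^N\cap L^2_M$ of $F_+'$ to the same regularity class for $q$. Once these estimates are in place, the remaining verifications (Jost asymptotics, Wronskian identities, absence of bound states) are standard.
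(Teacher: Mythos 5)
Your proposal does not follow the paper's route: the paper's proof of this theorem is deliberately short. It verifies the hypotheses of the classical characterization theorem of inverse scattering --- namely items $(i)$--$(v)$ of Proposition \ref{prop:inv.r} together with $\int_{-\infty}^{\infty}(1+x^2)|F'_\pm(\sigma,x)|\,dx<\infty$, which follows from Proposition \ref{rem:dec_rel} since $M\geq 4$ --- and then cites \cite{faddeev} to obtain $q\in\class$ with $r_\pm(q,\cdot)=\rho_\pm(\sigma,\cdot)$ and $t(q,\cdot)=\tau(\sigma,\cdot)$, hence $S(q,\cdot)=\sigma$; the finer statement $q\in H^N\cap L^2_M$ is intentionally postponed to the Gelfand--Levitan--Marchenko analysis of Section 4.1 (Theorem \ref{inversemain1} and the proof of Theorem \ref{thm:inv.scat}). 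You instead propose to reprove the characterization theorem itself, and your sketch has gaps exactly at its hard points.

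Concretely: (a) you solve only the right Marchenko equation, built from $F_+(\sigma,\cdot)$, and set $q(x)=-2\frac{d}{dx}K(x,x)$ on all of $\R$. The resolvent $(I+T_x)^{-1}$ together with the decay of $F_+$ controls $K$, and hence $q$, only on half-lines $[a,+\infty)$; decay of $q$ as $x\to-\infty$ --- already needed for the bare claim $q\in L^2_4$, i.e.\ $q\in\class$, let alone $q\in H^N\cap L^2_M$ --- cannot be ``propagated'' from $F_+$ alone. The classical proofs, and the paper in the proof of Theorem \ref{thm:inv.scat}, use both kernels $F_\pm$: one builds $q_+$ on $[c_+,\infty)$ from $\rho_+$ and $q_-$ on $(-\infty,c_-]$ from $\rho_-$ and shows they agree on the overlap. (b) The claim that ``a direct computation of the Wronskian identities identifies $S(q,\cdot)=\sigma$'' conceals the substantive content of the theorem: the right equation yields only the right Jost solution $f_1$; identifying the full scattering data of the reconstructed $q$ (its left reflection coefficient, its transmission coefficient, the absence of bound states and $W(q,0)\neq0$) with the prescribed triple $(\tau,\rho_+,\rho_-)$ is exactly where the analyticity, asymptotics and rate conditions $(iii)$--$(v)$ of Proposition \ref{prop:inv.r} are consumed, and it is not a direct computation. (Your Fredholm point is essentially fine --- invertibility of $I+T_x$ is classical, cf.\ Lemma \ref{lem:Fx} and \cite{deift} --- but it rests on $|\rho_+(\sigma,k)|<1$ for $k\neq0$ rather than merely on the nonvanishing of $w(\sigma,\cdot)$.) Either supply these missing arguments or, as the paper does, invoke \cite{faddeev} at this point and reserve the GLM machinery for the refined regularity statement.
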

\begin{proof}
Let $\rho_\pm:=\rho_\pm(\sigma, \cdot)$ and $\tau:= \tau(\sigma, \cdot)$ be given by formula \eqref{inv.scatt.elem}. Let  $F_\pm(\sigma, \cdot)$ be defined as in \eqref{F.four}. By Proposition \ref{rem:dec_rel}  it follows that  $F_\pm(\sigma, \cdot)$  are absolutely continuous and $F'_\pm(\sigma, \cdot) \in H^N \cap L^2_M$. As $M \geq 4$ it follows that 
\begin{equation}
\label{F'.L1}
\int_{-\infty}^\infty (1+x^2) |F'_\pm (\sigma, x)| \, dx  <\infty \ .
\end{equation}
 The main theorem in inverse scattering \cite{faddeev} assures that if \eqref{F'.L1} and item $(i)$--$(v)$ of Proposition \ref{prop:inv.R} hold, then  there exists a potential $q \in \class$ such that $r_\pm(q,\cdot) = \rho_\pm$ and $t(q,\cdot) = \tau$, where $r_\pm$ and $t$ are the reflection  respectively transmission coefficients defined in \eqref{r.S.rel}. From the  formulas \eqref{inv.scatt.elem} it follows that $S(q,\cdot) = \sigma$. 
\end{proof}

It remains to show that $q \in \class^{N,M}$ and that the map $S^{-1}: \S^{M,N} \to \class^{N,M}$ is real analytic. We take here a different approach then \cite{kapptrub}. In \cite{kapptrub} the authors show that the map $S$ is complex differentiable and its differential $d_qS$ is bounded invertible.
 Here instead  we reconstruct $q$ by solving the Gelfand-Levitan-Marchenko equations and we show that the inverse map $\S^{M,N} \to \class^{N,M} $,  $\sigma \mapsto q$ is real analytic. We outline briefly the procedure.  Given two reflection coefficients $\rho_\pm$ satisfying items $(i)$--$(v)$ of Proposition \ref{prop:inv.r} and arbitrary real numbers $c_+ \leq  c_-$, it is possible to construct a  potential $q_+$  on  $[c_+, \infty)$ using $\rho_+$ and a potential $q_-$  on  $(-\infty, c_-]$ using $\rho_-$, such that $q_+$ and $q_-$  coincide on the intersection of their domains, i.e.,  $\left.q_+\right|_{[c_+, c_-]}= \left.q_-\right|_{[c_+, c_-]}$. 
 Hence $q$ defined on $\R$ by  $q\vert_{[c_+, + \infty)} = q_+$ and $q\vert_{(-\infty, c_-]} = q_-$ is well defined,  $q \in \class$ and  $r_\pm (q, \cdot) = \rho_\pm$,  i.e., $\rho_+$ and $\rho_-$ are the reflection coefficients of the  potential $q$ \cite{faddeev,Masturm,deift}. 
We postpone the details of this procedure to the next section.

\subsection{  Gelfand-Levitan-Marchenko equation}
In this section  we  prove how to construct for any $\sigma \in \S^{M,N}$ two potentials $q_+$ and $q_-$  with  $q_+ \in H^N_{x\geq c} \cap L^2_{M, x \geq c}$ respectively $q_- \in H^N_{x \leq c} \cap L^2_{M, x \leq c}$, where for any $c \in \R$ and $1 \leq p \leq \infty$ 
\begin{equation}
L^p_{x \geq c}:= \left\{f : [c, +\infty) \to \C: \, \norm{f}_{L^p_{x\geq c}}< \infty \right\} \ ,
\end{equation}
where $\norm{f}_{L^p_{x\geq c}}:= \left(\int_c^{+\infty} |f(x)|^p \, dx \right)^{1/p} $ for $1 \leq p < \infty$ and  $\norm{f}_{\Lic} := \esssup_{x \geq c} |f(x)|$. For any integer $N \geq 1$ define 
\begin{equation}
\label{H^N_c} 
H^N_{x \geq c }:= \left\{ f: [c, +\infty) \to \R: \,  \norm{f}_{H^N_{x \geq c }} < \infty \right\}, \qquad   \norm{f}_{H^N_{x \geq c }}^2 := \sum_{j=0}^N \norm{\derx^n f}_{L^2_{x \geq c}}^2,
\end{equation}
 and for any real number $M \geq 1$ define
\begin{equation}
\label{L^M_c} 
L^2_{M, x \geq c }:= \left\{ f: [c, +\infty) \to \C: \,   \norm{f}_{L^2_{M,x  \geq c }} < \infty \right\}, \qquad
\norm{f}_{L^2_{M, x \geq c }}= \norm{\la x \ra^M  f}_{L^2_{x \geq c}},
\end{equation}
where $\x := (1 + x^2)^{1/2}$.
We will write $H^N_{\C, x \geq c}$  for the complexification of  $H^N_{x \geq c }$. 
For $1\leq \alpha, \beta \leq \infty$, we define  
$$L^\alpha_{x \geq c}L^\beta_{y \geq 0}:=\left\{ f: [c, +\infty) \times [0, +\infty) \rightarrow \C: \,  \norm{f}_{L^\alpha_{x \geq c} L^\beta_{y \geq 0}} < \infty \right\} \ ,
$$ 
where  $\norm{f}_{L^\alpha_{x \geq c}L^\beta_{y \geq 0}} := \Big(  \int_{c}^{+\infty} \norm{f(x, \cdot)}^\alpha_{L^\beta_{y \geq 0}} \,dx \Big)^{1/\alpha}.$
Analogously one defines the spaces $L^p_{x \leq c}$, $H^N_{x\leq c}$, $L^2_{M, x\leq c}$ and $L^\alpha_{x \leq c} L^\beta_{y \leq 0}$, \textit{mutatis mutandis}. \\
Let us denote  by $\Cyz := C^0([0, \infty), \C)$ and by $\Cxy := C^0([c, \infty)\times [0, \infty), \C)$. Finally we denote by $\CxLy := C^0([c, \infty), \Lyz)$ the set of continuous functions on $[c, \infty)$ taking  value in $\Lyz$.
\vspace{1em}\\
The potentials  $q_+$ and $q_-$ mentioned at  the beginning of this section are constructed by  solving an integral equation, known in literature as the \textit{Gelfand-Levitan-Marchenko equation}, which  we are now going to described in more detail. 

Given $\sigma \in \S$,  define the functions $F_\pm(\sigma, \cdot)$ as in \eqref{F.four}. See Proposition \ref{rem:dec_rel} for the analytical properties of the maps $\sigma \to F_\pm(\sigma, \cdot)$. To have a more compact notation, in the following we will  denote $F_{\pm, \sigma} := F_\pm(\sigma, \cdot)$.  
\begin{remark} From  the decay properties of $F'_{\pm, \sigma}$ one deduces corresponding  decay properties of $F_{\pm, \sigma}$. Indeed one has 
\begin{equation}
\label{dec_rel}
\begin{aligned}
\x^{m} \, F_\pm' \in L^2_{x \geq c} \Rightarrow \x^{m-1}  F_\pm' \in L^1_{ x \geq c} \Rightarrow x^{m-2} F_\pm \in L^1_{x \geq c} \ , \quad \forall \, m \geq 2 \ .
\end{aligned}
\end{equation}
\end{remark}
\vspace{1.5em}
The  Gelfand-Levitan-Marchenko equations  are the integral equations given by
\begin{align}
 \label{ME}
F_{+, \sigma}(x+y)+E_{+, \sigma}(x,y) +  \intzi F_{+, \sigma}(x+y+z)E_{+, \sigma}(x,z) dz &= 0, \qquad y \geq 0 \\
 \label{ME-}F_{-, \sigma}(x+y)+ E_{-, \sigma}(x,y) +  \intiz F_{-, \sigma}(x+y+z) E_{-, \sigma}(x,z) dz &= 0, \qquad  y \leq 0 
 \end{align}
where $ E_{\pm, \sigma}(x,y)$ are the unknown functions and $F_{\pm, \sigma}$ are given and  uniquely determined by $\sigma$ through formula \eqref{F.four}.  If \eqref{ME} and \eqref{ME-} have  solutions with enough regularity, then one defines the potentials $q_+$ and $q_-$ through the well-known formula --  \cite{faddeev}
\begin{equation}
\label{ainversedefinition}
\begin{aligned}
&q_+(x) = - \partial_xE_{+, \sigma}(x,0), \quad \forall \, c_+ \leq x <  \infty \ , \qquad q_-(x) =  \partial_x E_{-, \sigma}(x,0), \quad \forall \, -\infty < x \leq c_- \ .
\end{aligned}
\end{equation}
The main purpose of this section is to study the maps $\scat_{\pm, c} $ defined by
 \begin{align}
 \label{ainversedefinition1}
\sigma \mapsto \scat_{\pm,c}(\sigma), \qquad \scat_{\pm,c}(\sigma)(x):= \mp\partial_x E_{\pm, \sigma}(x,0), \qquad x \in [c, \pm \infty) \ .
\end{align} 
\begin{theorem}
\label{inversemain1}
  Fix $N \in \Z_{\geq 0}$, $M\in \Z_{\geq 4}$ and $c\in \mathbb{R}$. Then the  maps   $\scat_{+,c}$ $\, [\scat_{-,c}]$ are well defined on $  \S^{M, N}$ and take values in  $H^N_{x \geq c} \cap L^2_{M, x \geq c}$ $\, [ H^N_{x \leq c} \cap L^2_{M, x \leq c} ]$. As such they  are real analytic.
\end{theorem}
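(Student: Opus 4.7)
The plan is to treat the Gelfand--Levitan--Marchenko equation \eqref{ME} as a family of operator equations on $\Lyz$ parametrized by $x \geq c$, solve it pointwise in $x$ via Fredholm, and then track both the $x$-regularity and the $\sigma$-analyticity. I discuss only $\scat_{+,c}$; the minus case is identical after reflection. For $\sigma \in \S^{M,N}$ and $x \geq c$ define the source term $\phi_{x,\sigma}(y) := F_{+,\sigma}(x+y)$ and the integral operator $T_x(\sigma) : \Lyz \to \Lyz$ with kernel $(y,z) \mapsto F_{+,\sigma}(x+y+z)$, so that \eqref{ME} reads $(I + T_x(\sigma)) E_{+,\sigma}(x,\cdot) = -\phi_{x,\sigma}$. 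A direct change of variables gives
$$\|T_x(\sigma)\|_{HS}^2 \;=\; \int_x^{+\infty} (v-x)\,|F_{+,\sigma}(v)|^2 \, dv,$$
which by Proposition \ref{rem:dec_rel}(i) is finite and non-increasing in $x$; hence $T_x(\sigma)$ is Hilbert--Schmidt, and $\phi_{x,\sigma} \in \Lyz$ with a locally uniform bound.

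Next I would establish invertibility of $I + T_x(\sigma)$ on $\Lyz$ for every $x \geq c$. For $x$ large the Hilbert--Schmidt estimate above gives $\|T_x(\sigma)\| < 1$ and a Neumann series suffices. For $x$ in a compact segment $[c,R]$ I would invoke the classical inverse-scattering argument (see \cite{faddeev,Masturm,deift}): the property $|\rho_+(\sigma,k)| < 1$ for $k \neq 0$, guaranteed by Proposition \ref{prop:inv.r}(v), rules out any nontrivial null vector in $\Lyz$, so the Fredholm alternative applies. This defines $E_{+,\sigma}(x,\cdot) := -(I + T_x(\sigma))^{-1} \phi_{x,\sigma}$, and $\|(I+T_x(\sigma))^{-1}\|_{\L(\Lyz)}$ is locally uniformly bounded on $[c,+\infty) \times \S^{M,N}$.

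The bridge from pointwise-in-$x$ existence to the claimed spaces is to differentiate the GLM equation in $x$ and evaluate at $y=0$, obtaining $\scat_{+,c}(\sigma)(x) = -\partial_x E_{+,\sigma}(x,0)$ as an explicit composition of $(I+T_x(\sigma))^{-1}$ with expressions built from $F'_{+,\sigma}(x+\cdot)$ and from $E_{+,\sigma}(x,\cdot)$ itself. By Proposition \ref{rem:dec_rel}(ii), $F'_{+,\sigma} \in H^N \cap L^2_M$ real-analytically in $\sigma$. The $H^N_{x\geq c}$ regularity of $\scat_{+,c}(\sigma)$ is then extracted by iterating the $x$-differentiation procedure $N$ times, each step costing one derivative of $F_{+,\sigma}$ and being absorbed via \eqref{cor.R.F.equiv}. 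For the weighted $L^2_{M,x\geq c}$ estimate I would exploit that on the integration domain $\x^M \leq \la x+y+z \ra^M$, so the $x$-weight is traded for the weight against the argument of $F'_{+,\sigma}$, which is controlled by $F'_{+,\sigma} \in L^2_M$.

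Finally, real analyticity is clean: $\sigma \mapsto F_{+,\sigma}$ and $\sigma \mapsto F'_{+,\sigma}$ are real analytic (Proposition \ref{rem:dec_rel}); $\sigma \mapsto T_x(\sigma)$ is linear and bounded into $\L(\Lyz)$, hence analytic; analytic inversion around any fixed $\sigma^0 \in \S^{M,N}$ preserves analyticity; and the explicit formula for $\scat_{+,c}(\sigma)$ is an analytic composition. The main obstacle I anticipate is the intertwining in Step 3: obtaining uniform control of $(I+T_x(\sigma))^{-1}$ for all $x \in [c,+\infty)$ (bridging the large-$x$ Neumann regime and the Fredholm regime) \emph{simultaneously} with the weighted and differentiated estimates, and keeping the dependences on $N$ and $M$ decoupled so that no derivative of $F_{+,\sigma}$ beyond order $N$ nor weight beyond order $M$ is ever required. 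The combinatorial bookkeeping for the mixed norms $H^N_{x\geq c} \cap L^2_{M,x\geq c}$ is where the technical weight of the argument will lie.
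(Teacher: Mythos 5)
Your proposal follows essentially the same route as the paper: solve the Gelfand--Levitan--Marchenko equation through the uniformly invertible family $Id+\K_{x,\sigma}$, differentiate in $x$ and evaluate at $y=0$, transfer regularity and decay from $F'_{\pm,\sigma}\in H^N\cap L^2_M$ (Proposition \ref{rem:dec_rel}), and conclude real analyticity by composition of analytic maps -- exactly the bookkeeping the paper organizes through the splitting $E_{\pm,\sigma}=-F_{\pm,\sigma}+B_{\pm,\sigma}$, the property $(P)$ induction and the weighted multiplication by $\x^{M-3/2}$ (Propositions \ref{Bsmoothing}, \ref{decayinverse}, Lemma \ref{prop:AnBn}). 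Your only deviations are minor: you get invertibility from the Fredholm alternative (injectivity via $|\rho_+|<1$) together with Hilbert--Schmidt smallness for large $x$, where the paper quotes the uniform bound $\sup_{x\geq c}\norm{\K^{+}_{x,\sigma}}_{\L(L^2_{y\geq 0})}<1$ of Lemma \ref{lem:Fx} from \cite{deift,KaCo}, and you work directly with $E_{\pm,\sigma}$ instead of isolating $B_{\pm,\sigma}$, a decomposition the paper needs chiefly for the $1$-smoothing statement rather than for this theorem.
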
 
In order to prove Theorem  \ref{inversemain1}  we look for  solutions of \eqref{ME} and \eqref{ME-} of the form  
\begin{equation}
\label{decomp}
E_{\pm, \sigma}(x,y) \equiv - F_{\pm, \sigma}(x+y) + B_{\pm, \sigma}(x,y)
\end{equation}
where $ B_{\pm, \sigma}(x,y)$ are  to be determined. Inserting the ansatz \eqref{decomp} into the Gelfand-Levitan-Marchenko equations \eqref{ME}, \eqref{ME-}, one  gets 
\begin{align}
\label{bGLM}
& B_{+, \sigma}(x,y) +  \intzi F_{+, \sigma}(x+y+z) B_{+, \sigma}(x,z) dz = \intzi F_{+, \sigma}(x+y+z)F_{+, \sigma}(x+z) \, dz, \qquad y \geq 0 \ ,\\
\label{bGLM2}
&  B_{-, \sigma}(x,y) +  \intiz F_{-, \sigma}(x+y+z) B_{-, \sigma}(x,z) dz = \intiz    F_{-, \sigma}(x+y+z) F_{-, \sigma}(x+z)\, dz, \qquad  y \leq 0 .
 \end{align}
We will prove in Lemma \ref{prop:AnBn} below  that  there exists  a solution $B_{+, \sigma}$ of  \eqref{bGLM} and a solution $B_{-, \sigma}$ of  \eqref{bGLM2} with $\derx B_{+, \sigma}(\cdot, 0) \in H^{1}_{x \geq c}$ respectively $\derx B_{-, \sigma}(\cdot, 0) \in H^{1}_{x \leq c}$. By   \eqref{ainversedefinition} we get therefore 
\begin{equation}
\label{structureS^-1}
q_{+} =  \derx F_{+, \sigma} - \derx B_{+, \sigma}(\cdot,0) \ \ \  \forall \, c \leq x < \infty ,  \qquad   q_{-} =  -\derx F_{-, \sigma} + \derx B_{-, \sigma}(\cdot,0) \ \ \  \forall \, -\infty < x \leq c \ .
\end{equation}
Define the maps  $$\mathcal{B}_{\pm,c}: \sigma \mapsto \B_{\pm,c}(\sigma)$$ as 
\begin{equation}
\B_{+, c}(\sigma)(x) := - \derx B_{+, \sigma}(x, 0) \quad \forall \, x \geq c  \qquad \mbox{and} \qquad  \B_{-, c}(\sigma)(x) :=  \derx B_{-, \sigma}(x, 0) \quad \forall \, x \leq c \ ,
\label{def_Binv}
\end{equation}
with $B_{\pm, \sigma}(x,y):= E_{\pm, \sigma}(x,y) + F_{\pm,\sigma}(x,y)$ as in \eqref{decomp}.
Now we study  analytic properties of the maps $\B_{\pm , c}$ in case the scattering coefficient $\sigma$ belongs to  $\S^{4,N}$ with arbitrary $N \in \Z_{\geq 0}$. Later we will treat the case where $\sigma \in \S^{M,0}$, $M \in \Z_{\geq 4}$.
\begin{proposition}
\label{Bsmoothing} Fix $N \in \Z_{\geq 0}$ and $c \in \R$. Then  $\B_{+, c}$ $\, [\B_{-, c}]$ is real analytic as a map from 
$ \S^{4, N}$ to $H^{N}_{ x \geq c}$ $\, [H^{N}_{ x \leq c}]$. Moreover
$$\norm{\B_{+, c}(\sigma)}_{H^{N}_{ x \geq c}} \ , \ \norm{\B_{-, c}(\sigma)}_{H^{N}_{ x \leq c}} \ \leq K \norm{\sigma}_{H^4_{\zeta,\C} \cap L^2_{N}}^2  $$ 
where $K>0$ is a constant which can be chosen locally uniformly in $\sigma \in \S^{4,N}$.
\end{proposition}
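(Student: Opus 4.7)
The plan is to analyze the Gelfand--Levitan--Marchenko equation \eqref{bGLM} as a parametrized Fredholm equation of the second kind, and then to control $x$--derivatives of its solution by exploiting the symmetry $\partial_x F_{+,\sigma}(x+y+z) = \partial_y F_{+,\sigma}(x+y+z) = \partial_z F_{+,\sigma}(x+y+z)$. For $(x,\sigma)$ fixed, introduce on $\Lyz$ the integral operator
\[
T_{+,\sigma,x}[f](y) := \intzi F_{+,\sigma}(x+y+z)\, f(z)\, dz,
\]
and the forcing $h_\sigma(x,y) := \intzi F_{+,\sigma}(x+y+z)\, F_{+,\sigma}(x+z)\, dz$, so that \eqref{bGLM} becomes $(I + T_{+,\sigma,x}) B_{+,\sigma}(x,\cdot) = h_\sigma(x,\cdot)$.

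First I would show that $T_{+,\sigma,x}$ is Hilbert--Schmidt on $\Lyz$ with norm controlled by the tail $\|F_{+,\sigma}\|_{L^1([2x,\infty))}$; combined with Proposition \ref{rem:dec_rel}(i) and the inclusion $L^2_3 \hookrightarrow L^1$, this yields boundedness depending locally uniformly on $\sigma \in \S^{4,N}$. Invertibility of $I + T_{+,\sigma,x}$ on $\Lyz$ (and on $\Cyz$) is classical in inverse scattering (see \cite{faddeev,deift}) and follows from the Fredholm alternative together with the strict inequality $|\rho_+(\sigma,k)| < 1$ on $\R\setminus\{0\}$ established in Proposition \ref{prop:inv.r}(v). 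Since $\sigma \mapsto T_{+,\sigma,x}$ and $\sigma \mapsto h_\sigma(x,\cdot)$ are real analytic (inherited from the analyticity of $\sigma \mapsto F_{+,\sigma}$), inverting gives real analytic dependence of $B_{+,\sigma}(x,\cdot) = (I+T_{+,\sigma,x})^{-1} h_\sigma(x,\cdot)$ on $\sigma$, with a quadratic estimate in $\|F_{+,\sigma}\|_{H^1 \cap L^2_3}$, already giving $\B_{+,c}(\sigma)\in L^2_{x\geq c}$ (i.e.\ the case $N=0$) with $\|\B_{+,c}(\sigma)\|_{L^2_{x\geq c}}\leq K\|\sigma\|^2_{H^4_{\zeta,\C}}$.

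For higher $N$, I would differentiate \eqref{bGLM} $n$ times in $x$, for $1 \leq n \leq N$. Using $\partial_x F_{+,\sigma}(x+y+z) = \partial_z F_{+,\sigma}(x+y+z)$ and integrating by parts in $z$ at each derivative converts every $x$-derivative of the kernel into a combination of a boundary term at $z=0$ (involving $F_{+,\sigma}^{(j)}(x+y)$) and an interior term where one $z$-derivative now falls on $B_{+,\sigma}(x,\cdot)$; the resulting identity has the form
\[
(I + T_{+,\sigma,x})\bigl[\partial_x^n B_{+,\sigma}(x,\cdot)\bigr](y) = R_{n,\sigma}(x,y),
\]
where $R_{n,\sigma}(x,y)$ is a finite sum of terms, each a product of $F_{+,\sigma}^{(j)}$ or $\partial_x^{j'} B_{+,\sigma}(x,\cdot)$ with $j'<n$. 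A recursive argument, using at each step the uniform invertibility of $I+T_{+,\sigma,x}$ and the regularity $F'_{+,\sigma} \in H^N \cap L^2_4$ from Proposition \ref{rem:dec_rel}(ii), yields $\partial_x^n B_{+,\sigma}(\cdot, 0) \in L^2_{x \geq c}$ for $0 \leq n \leq N$, together with the estimate $\|\B_{+,c}(\sigma)\|_{H^N_{x\geq c}} \leq K \|\sigma\|^2_{\Hzc \cap L^2_N}$ (with $M=4$). Analyticity is preserved throughout because all the maps involved (multiplication, integration in $z$, evaluation at $y=0$, and the inversion $(I+T_{+,\sigma,x})^{-1}$) depend analytically on $\sigma$.

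The main obstacle will be keeping the quadratic dependence on $\sigma$ (and the locally uniform control of the constants) through the recursive differentiation. Each $\partial_x$ applied to the identity threatens to produce a linear-in-$\sigma$ rather than bilinear term, and careful bookkeeping of where $F_{+,\sigma}$ versus $F'_{+,\sigma}$ appears — in particular, pairing the ``inhomogeneous'' factor $F_{+,\sigma}(x+z)$ in $h_\sigma$ with the high-regularity factor $F_{+,\sigma}^{(j)}(x+y)$ coming from boundary integration by parts — is what ultimately produces the bilinear estimate. The case of generic $M\geq 4$ with $N=0$ will be treated separately afterwards (requiring estimates in weighted spaces rather than $H^N$), and is where the space $\S^{M,0}$ alluded to in the text enters.
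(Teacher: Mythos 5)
Your overall plan coincides with the paper's own proof: solve \eqref{bGLM} for each fixed $x$ as a second--kind integral equation with the classical uniform invertibility of $Id+\K_{x,\sigma}$ (the paper's Lemma \ref{lem:Fx}), then differentiate the equation in $x$ and integrate by parts in $z$ so that only $F_{+,\sigma}'$ (which lies in $L^1\cap L^2_4\cap H^N$) ever appears in the kernels, and run a recursion with quadratic estimates and analyticity by composition. However, the recursion does not close as you have written it. After the integration by parts the interior terms involve $\derz$--derivatives of $B_{+,\sigma}(x,\cdot)$ and the boundary terms at $z=0$ involve traces of \emph{mixed} derivatives $\derx^{j'}\derz^{j''}B_{+,\sigma}(x,0)$; your assertion that $R_{n,\sigma}$ contains only $\derx^{j'}B_{+,\sigma}$ with $j'<n$ is therefore not correct, and an induction on pure $x$--derivatives alone gives you no control over these new terms. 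The way out -- and this is exactly how the paper organizes Lemma \ref{prop:AnBn} -- is to induct on the total order $j_1+j_2\le N+1$ of the mixed derivatives $\derx^{j_1}\dery^{j_2}B_\sigma$, exploiting that pure $y$--derivatives need no inversion: differentiating \eqref{bGLM} in $y$ expresses $\derx^{j_1}\dery^{j_2}B_\sigma$ for $j_2>0$ explicitly through lower--order data (see \eqref{int.eq.1}--\eqref{f_n0}), so that $(Id+\K_{x,\sigma})^{-1}$ is only ever applied to the pure $x$--derivatives.

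Two further points need attention. First, an off--by--one: since $\B_{+,c}(\sigma)=-\derx B_{+,\sigma}(\cdot,0)$, landing in $H^N_{x\ge c}$ requires $\derx^{n}B_\sigma(\cdot,0)\in\Lc$ for all $1\le n\le N+1$, one order more than your recursion claims to produce (the regularity $F_{+,\sigma}\in H^{N+1}$ from Proposition \ref{rem:dec_rel} does allow this, but it must be built into the induction). Second, evaluation at $y=0$ is not a bounded operation on $\Lxyc$, so the $\Lc$--bounds and the analyticity of the traces $\sigma\mapsto\derx^{n}B_\sigma(\cdot,0)$ have to be extracted from the equation itself (continuity of $B_\sigma(x,\cdot)$ plus estimates of the type of Lemma \ref{lem:techLemma} $(A1)$--$(A2)$, which is what the paper packages in property $(P)$); likewise, to pass from the fixed--$x$ analyticity you invoke to analyticity of $\sigma\mapsto\B_{+,c}(\sigma)$ with values in $H^N_{x\ge c}$, you need the bound on $(Id+\K_{x,\sigma})^{-1}$ to be uniform in $x\ge c$ and locally uniform in $\sigma$, and the map $\sigma\mapsto(Id+\K_\sigma)^{-1}$ to be real analytic into $\L(\Lxyc)$, as in Lemma \ref{lem:Fx} $(ii)$--$(iii)$.
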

 The main ingredient of the proof of Proposition \ref{Bsmoothing} is a detailed analysis of the solutions of the integral equations  \eqref{bGLM}-\eqref{bGLM2}, which we rewrite  as 
\begin{align}
\label{opma1} 
& \left(Id + \K_{x,\sigma}^{\pm} \right)[B_{\pm, \sigma}(x,\cdot)](y) = f_{\pm, \sigma}(x,y)
\end{align}
where for every $x \in \R$ fixed, the  two operators $\K_{x, \sigma}^+ : L^2_{y \geq 0} \to L^2_{y \geq 0}$ and $\K_{x, \sigma}^- : L^2_{y \leq 0} \to L^2_{y \leq 0}$ are defined by
\begin{align}
\label{kxdef}
  \K_{x, \sigma}^{+} \, [f](y):=& \intzi F_{+, \sigma}(x+y+z) f(z) \,dz \ , &f\in L^2_{y \geq 0} \ ,\\
  \K_{x,\sigma}^{-} \, [f](y) := &\int\limits_{-\infty}^0 F_{-, \sigma}(x+y+z) f(z) \,dz \ , & f\in L^2_{y \leq 0} \ ,
 \end{align}
 and the functions $f_{\pm, \sigma}$ are defined by 
\begin{equation}
\label{f.def}
f_{\pm, \sigma}(x,y):= \pm \int\limits_0^{\pm \infty} F_{\pm, \sigma}(x+y+z) F_{\pm, \sigma}(x+z) \, dz \ .
\end{equation}

 As the claimed statements for  $\B_{+, c}$ and  $\B_{-,c}$ can be proved in a similar way we consider $\B_{+, c}$ only.  To simplify notation, in the following we  omit  the subscript  $"+"$. In particular we write  $B_{\sigma} \equiv B_{+, \sigma}$, $F_{\sigma} \equiv F_{+, \sigma}$, $f_{\sigma} \equiv f_{+, \sigma}$ and $\K_{x,\sigma}\equiv \K_{x,\sigma}^+$.
\vspace{1em}\\
We give the following definition: a function $h_\sigma:[c, \infty) \times [0, \infty) \to \R$, which depends on $\sigma  \in \S^{4,N}$,  will be said to satisfy $(P)$ if the following holds true:
\begin{enumerate}
\item[$(P1)$] $ h_\sigma \in \CxLy \cap \Lxyc \cap  \Cxy$. Finally $h_\sigma (\cdot,0) \in \Lc$.
 \item[$(P2)$] There exists a  constant $K_c>0$, which  depends locally uniformly on $\sigma\in  H^4_{\zeta,\C} \cap L^2_{N} $, such that 
 \begin{align}
 \label{prop:AnBn:est}
\norm{ h_\sigma}_{\Lxyc} + \norm{ h_\sigma(\cdot, 0)}_{\Lc} \leq K_c \norm{\sigma}_{H^4_{\zeta,\C} \cap L^2_{N}}^2 . 
\end{align}
 \item[$(P3)$] $\sigma \mapsto  h_\sigma$ $\, [\sigma \mapsto  h_\sigma(\cdot,0)]$ is real analytic as a map from $\S^{4,N}$ to $\Lxyc$  $[\Lc]$. 
\end{enumerate}

We have the following lemma:
\begin{lemma}
\label{prop:AnBn}
 Fix $N \geq 0$ and $c \in \R$. For every $ \sigma \in \S^{4,N}$ 
equation \eqref{bGLM} has a unique solution  $B_\sigma \in \CxLy \cap \Lxyc$. Moreover for all integers $n_1,  n_2\geq 0$ with   $  n_1 + n_2 \leq N+1$ , the function $ \derx^{n_1}\dery^{n_2}B_\sigma$ satisfies $(P)$. 
 \end{lemma}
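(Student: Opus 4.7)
The plan is to rewrite \eqref{bGLM} as the operator equation $(Id + \K_{x,\sigma})[B_\sigma(x,\cdot)] = f_\sigma(x,\cdot)$ in $\Lyz$, to invert $Id+\K_{x,\sigma}$ uniformly for $x\geq c$ and locally uniformly in $\sigma\in\S^{4,N}$, and then to obtain $\derx^{n_1}\dery^{n_2}B_\sigma$ recursively by differentiating the equation.

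First I would prove that $\K_{x,\sigma}\colon\Lyz\to\Lyz$ is Hilbert--Schmidt, hence compact, with norm bounded uniformly in $x\geq c$: changing variables $u=y+z$,
\begin{equation*}
\|\K_{x,\sigma}\|_{HS}^2 \;=\; \int_0^\infty\!\!\int_0^\infty |F_{+,\sigma}(x+y+z)|^2\,dy\,dz \;=\; \int_0^\infty u\,|F_{+,\sigma}(x+u)|^2\,du \;\leq\; C_c\,\|F_{+,\sigma}\|_{L^2_1}^2,
\end{equation*}
which is finite and depends locally uniformly on $\sigma$ by Proposition \ref{rem:dec_rel}$(i)$. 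Next I would check that $-1$ is not an eigenvalue: extending any $g\in\Lyz$ solving $(Id+\K_{x,\sigma})g=0$ by zero on $(-\infty,0)$ and using Plancherel with $F_{+,\sigma}=\F_+^{-1}(\rho_+(\sigma,\cdot))$, one obtains an identity that, in view of $|\rho_+(\sigma,k)|<1$ for $k\neq 0$ (Proposition \ref{prop:inv.r}$(v)$), forces $g\equiv 0$. The Fredholm alternative then gives $(Id+\K_{x,\sigma})^{-1}\in\L(\Lyz)$ with operator norm bounded uniformly in $x\geq c$ and locally uniformly in $\sigma$; continuity of $x\mapsto\K_{x,\sigma}$ and of its inverse in $\L(\Lyz)$ follows from $L^2$-continuity of translation.

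For the base case $n_1=n_2=0$, note $f_\sigma(x,\cdot)=\K_{x,\sigma}[F_{+,\sigma}(x+\cdot)]$, so $\sigma\mapsto f_\sigma$ is real analytic into $\CxLy\cap\Lxyc$ by Proposition \ref{rem:dec_rel} and the above bound, with a quadratic estimate in $\|\sigma\|_{H^4_{\zeta,\C}\cap L^2_N}$. Then $B_\sigma(x,\cdot):=(Id+\K_{x,\sigma})^{-1}f_\sigma(x,\cdot)$ is uniquely defined and, as a composition of real analytic maps, satisfies $(P2)$--$(P3)$; joint continuity in $(x,y)$ in $(P1)$ is recovered by plugging back into \eqref{bGLM} and using $F_{+,\sigma}\in H^1\subset C^0$ together with dominated convergence. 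The boundary trace $B_\sigma(x,0)=f_\sigma(x,0)-\K_{x,\sigma}[B_\sigma(x,\cdot)](0)$ lies in $\Lc$ by a Cauchy--Schwarz estimate against $\|F_{+,\sigma}(x+\cdot)\|_{L^2_{y\geq 0}}$, which is itself in $L^2_{x\geq c}$ since $\int_c^\infty\!\int_x^\infty |F_{+,\sigma}(s)|^2\,ds\,dx\leq\|F_{+,\sigma}\|_{L^2_1}^2$.

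For $n:=n_1+n_2\geq 1$ I would argue by induction on $n$. Using $\derx F_{+,\sigma}(x+y+z)=\dery F_{+,\sigma}(x+y+z)=\partial_z F_{+,\sigma}(x+y+z)$, differentiating \eqref{bGLM} in $x$ yields
\begin{equation*}
(Id+\K_{x,\sigma})[\derx B_\sigma(x,\cdot)] \;=\; \derx f_\sigma(x,\cdot) - (\derx\K_{x,\sigma})[B_\sigma(x,\cdot)],
\end{equation*}
where $\derx\K_{x,\sigma}$ has kernel $F'_{+,\sigma}(x+y+z)$; for $\dery$ one integrates by parts in $z$, producing a boundary term at $z=0$ and a term in $\partial_z B_\sigma$ already controlled by the inductive hypothesis. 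At each step the right-hand side is a finite sum of terms bilinear in derivatives of $F_{+,\sigma}$ of order $\leq n$ and derivatives of $B_\sigma$ of order $\leq n-1$; since Proposition \ref{rem:dec_rel}$(ii)$ gives $F'_{+,\sigma}\in H^N$, i.e.\ $F_{+,\sigma}\in H^{N+1}$, all these terms lie in $\CxLy\cap\Lxyc$ with real analytic, quadratically bounded dependence on $\sigma$. Inverting $Id+\K_{x,\sigma}$ once more yields $(P)$ for $\derx^{n_1}\dery^{n_2}B_\sigma$, and the trace at $y=0$ is controlled exactly as in the base case.

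The main obstacle will be the \emph{locally uniform} invertibility of $Id+\K_{x,\sigma}$: proving that $\|(Id+\K_{x,\sigma})^{-1}\|_{\L(\Lyz)}$ is bounded uniformly in $x\geq c$ and on bounded subsets of $\S^{4,N}$. This relies delicately both on the strict inequality $|\rho_+(\sigma,k)|<1$ away from $k=0$ and on the precise vanishing $1+\rho_+(\sigma,k)=\beta_+ k+o(k)$ at $k=0$ from Proposition \ref{prop:inv.r}$(v)$. A secondary subtlety is the bookkeeping of boundary contributions generated by integration by parts in $z$ when transferring $\dery$-derivatives, and the careful splitting of the right-hand side so as to stay within the regularity $F_{+,\sigma}\in H^{N+1}$ while preserving the bilinear quadratic bound \eqref{prop:AnBn:est}.
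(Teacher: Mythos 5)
Your proposal follows the same skeleton as the paper's proof: recast \eqref{bGLM} as $(Id+\K_{x,\sigma})[B_\sigma(x,\cdot)]=f_\sigma(x,\cdot)$ in $\Lyz$, invert uniformly in $x\geq c$ and locally uniformly in $\sigma$, and induct on the total order of derivatives, the right-hand side at each step being bilinear in derivatives of $F_{+,\sigma}$ of order $\leq n$ and derivatives of $B_\sigma$ of order $\leq n-1$ --- exactly the structure of \eqref{f_n0} together with Lemmas \ref{lem:ab.eq2} and \ref{f_sigma.P}. The genuine difference is the inversion ingredient: you rebuild it from scratch (Hilbert--Schmidt bound uniform in $x$, Fredholm alternative, injectivity via Plancherel and $|\rho_+(\sigma,k)|<1$), whereas the paper invokes Lemma \ref{lem:Fx}, quoted from \cite{deift,KaCo}, which already contains the uniform bound on $(Id\pm\K^+_{x,\sigma})^{-1}$ and the real analyticity of $\sigma\mapsto(Id\pm\K_\sigma^+)^{-1}$ in $\L(\Lxyc)$ --- that is, precisely the ``main obstacle'' you flag. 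Your route does close by standard means: the Hilbert--Schmidt norm tends to $0$ as $x\to+\infty$ (Neumann series for large $x$), pointwise invertibility plus continuity in $(x,\sigma)$ covers compact $x$-ranges and small perturbations of $\sigma$; the vanishing rate $1+\rho_\pm(\sigma,k)=\beta_\pm k+o(k)$ is not actually needed here (it matters only for the sharper statement $\sup_{x\geq c}\|\K^+_{x,\sigma}\|<1$ asserted in Lemma \ref{lem:Fx}). One caution on the induction: pure $y$-derivatives need no inversion, since the unknown enters the kernel only through $F_{+,\sigma}(x+y+z)$; the paper's \eqref{y.eq} keeps the derivative on $F$, giving $\dery B_\sigma=\dery f_\sigma-\int_0^\infty F'_{+,\sigma}(x+y+z)B_\sigma(x,z)\,dz$. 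If instead you integrate by parts at this first stage, the resulting term $\int_0^\infty F_{+,\sigma}(x+y+z)\,\derz B_\sigma(x,z)\,dz$ involves a derivative of $B_\sigma$ of the \emph{same} total order, so it is not ``already controlled by the inductive hypothesis'' (one would instead have to solve an equation for $Id-\K_{x,\sigma}$). Integration by parts is the right tool only later, as in \eqref{bF.R}, to reduce $\derx^{k_1}F_{+,\sigma}$ with $k_1\geq 2$ to $F'_{+,\sigma}$ plus boundary traces, and there the derivatives landing on $B_\sigma$ do have total order $\leq n-1$; with that bookkeeping your quadratic bound \eqref{prop:AnBn:est} and the trace estimates at $y=0$ go through as in the paper.
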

{\em Proof.}
 Let $N \in \Z_{\geq 0}$ and $c \in \R$ be fixed. The proof is by induction on $j_1 + j_2 = n$, $0 \leq n \leq N$. 
For each $n$ we prove that $\derx^{j_1}\dery^{j_2}B_\sigma$ and its derivatives $\derx^{j_1+1}\dery^{j_2}B_\sigma$, $\derx^{j_1}\dery^{j_2+1}B_\sigma$ satisfy $(P)$. Thus the claim follows.

{\em Case $n=0$. } Then $j_1 = j_2 = 0$. We need to prove existence and uniqueness of the solution of  equation \eqref{opma1}. 
By Lemma \ref{f.prop} [Proposition \ref{rem:dec_rel} and Lemma \ref{lem:Fx}] the function $f_\sigma$ and its derivatives $\derx f_\sigma,$ $\, \dery f_\sigma$ $\, [F_\sigma]$ satisfy assumption $(P)$  [$(H)$-- cf  Appendix \ref{app:ab.eq}]. 
 Thus  by  Lemma \ref{lem:ab.eq2} $(i)$ it follows that $B_\sigma =\left(Id + \K_\sigma \right)^{-1}f_\sigma$ and its derivatives $\derx B_\sigma$, $\dery B_\sigma$ satisfy $(P)$.
%  and
%  $\norm{B_\sigma}_1 \leq K_c \norm{f_\sigma}_1 \leq K_c' \norm{R}_{H^4_\C\cap L^2_N}^2$, where $K_c, K_c'>0$  can be chosen locally uniformly for $\sigma \in H^4_{\zeta,\C} \cap L^2_{N}$. Thus $B_\sigma, \derx B_\sigma, \dery B_\sigma$ satisfy  $(P1)$ and $(P2)$.
%
%Property $(P3)$ follows by Lemma \ref{lem:ab.eq2} and the real analyticity of the map $\refl^{4,0}\to\L(\Lxyc)$, $\, R \mapsto \left(Id + \K_\sigma \right)^{-1}$, of the maps $\refl^{4,0} \to \Lxyc$,  $\, R \mapsto \derx^{\delta_1}\dery^{\delta_2}f_\sigma$  and of the maps $\refl^{4,0} \to \Lc, $  $\, R \mapsto \derx^{\delta_1}\dery^{\delta_2}f_\sigma(\cdot, 0)$ ($0 \leq \delta_1 + \delta_2 \leq 1$).  \\

Note that if $N=0$ the lemma is proved.  Thus in the following we assume  $N \geq 1$.

{\em Case $n-1 \leadsto n $.} Let $j_1 + j_2 = n$.  By the induction assumption we already know that $\derx^{j_1}\dery^{j_2} B_\sigma $ satisfies $(P)$.
By  Lemma \ref{lem:ab.eq2} it follows that  $\derx^{j_1}\dery^{j_2} B_\sigma $ satisfies
\begin{equation}
\begin{cases}
\label{int.eq.1}
(Id + \K_{x, \sigma})[ \derx^n B_\sigma(x,\cdot)](y) = f_\sigma^{n,0} (x,y)  & \mbox{ if } j_2 = 0 \ , \\
\derx^{j_1} \dery^{j_2} B_\sigma(x,y) = f_\sigma^{j_1, j_2} (x,y)  & \mbox{ if } j_2 > 0 \ ,
\end{cases}
\end{equation}
where 
\begin{equation}
\begin{aligned}
\label{f_n0}
& f_\sigma^{n,0} (x,y) := \derx^{n}f_\sigma(x,y) -\sum_{l=1}^{n}\binom{n}{l} \intzi \derx^l F_\sigma(x+y+z) \, \derx^{n-l}B_\sigma(x,z) \, dz \ , \\
& f_\sigma^{j_1, j_2} (x,y) := \derx^{j_1}\dery^{j_2} f_\sigma(x,y) -\sum_{l=0}^{j_1}\binom{j_1}{l} \intzi \derz^{j_2 + l} F_\sigma(x+y+z) \, \derx^{j_1-l}B_\sigma(x,z) \, dz \ .
\end{aligned}
\end{equation}
In order to prove the induction step, we show in Lemma \ref{f_sigma.P} that  for any $j_1+j_2 = n$, $0 \leq n \leq N$, $f_\sigma^{j_1, j_2}$ and its derivatives $\dery f_\sigma^{j_1, j_2}$, $\derx f_\sigma^{j_1, j_2}$ satisfy $(P)$. In view of identities  \eqref{int.eq.1} and Lemma  \ref{lem:ab.eq2} $(i)$, it follows  that $\derx^{j_1} \dery^{j_2} B_\sigma$ and its  derivatives $\derx^{j_1+1} \dery^{j_2} B_\sigma$ and $\derx^{j_1} \dery^{j_2+1} B_\sigma$ satisfy $(P)$, thus proving the induction step.
\qed

\vspace{.5em}
Lemma \ref{prop:AnBn} implies in a straightforward way Proposition \ref{Bsmoothing}.
\begin{proof}[Proof of Proposition \ref{Bsmoothing}]
By Lemma \ref{prop:AnBn}, $\derx^{n} B_\sigma$ satisfies $(P)$ for every $1 \leq n \leq N+1$. In particular for every $1 \leq n \leq N+1$, $\sigma \mapsto \derx^{n} B_\sigma(\cdot,0)$ is real analytic as a map from $\S^{4,N}$ to $\Lc$ and $\norm{ \derx^{n} B_\sigma(\cdot, 0)}_{\Lc} \leq K_c \norm{\sigma}_{H^4_{\zeta,\C} \cap L^2_{N}}^2$. Thus 
the map $\sigma \to -\derx B_\sigma(\cdot,0)$ is real analytic as a map from $\S^{4,N}$ to $H^N_{x \geq c}$. The claimed estimate follows in a straightforward way.
\end{proof}

\vspace{2em}

In the next result we study the case $\sigma \in \S^{M, 0}$ for arbitrary $M \geq 4$.
\begin{proposition}
\label{decayinverse}
Fix $M \in \Z_{\geq 4}$ and    $c \in \R$. For any  $\sigma \in  \S^{M, 0}$ the equations \eqref{ME} and \eqref{ME-} admit solutions $E_{\pm, \sigma}$. The maps $\scat_{+, c}$ $\, [\scat_{-, c}]$, defined by \eqref{ainversedefinition1},  are real analytic as maps from $\S^{M,0}$ to $L^2_{M, x \geq c}$ $\,[L^2_{M, x \leq c}]$. Moreover
$\norm{\scat_{+, c}(\sigma)}_{ L^2_{M, x \geq c}}\ , \ \norm{\scat_{-, c}(\sigma)}_{ L^2_{M, x \leq c}} \leq K_c \norm{\sigma}_{H^M_{\zeta,\C}},$ where $K_c>0$ can be chosen locally uniformly in $\sigma \in \S^{M,0}$.
\end{proposition}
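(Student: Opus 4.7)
The plan is to extend the scheme of Proposition \ref{Bsmoothing} to the weighted space $L^2_{M, x \geq c}$, using only the hypothesis $\sigma \in \S^{M, 0}$. Since $H^M_{\zeta} \subseteq H^4_{\zeta}$ for $M \geq 4$, one has $\S^{M, 0} \subseteq \S^{4, 0}$, so existence of $E_{\pm, \sigma}$ already follows from Lemma \ref{prop:AnBn}. By the structure formula \eqref{structureS^-1},
\begin{equation*}
\scat_{+, c}(\sigma)(x) = \partial_x F_{+, \sigma}(x) - \partial_x B_{+, \sigma}(x, 0), \qquad x \geq c.
\end{equation*}
Proposition \ref{rem:dec_rel}$(ii)$ applied with $N = 0$ shows that $\sigma \mapsto \partial_x F_{+, \sigma}$ is real analytic from $\S^{M, 0}$ to $L^2_M$, which embeds into $L^2_{M, x \geq c}$ by restriction with the required bound. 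The remaining task is therefore to prove that $\sigma \mapsto \partial_x B_{+, \sigma}(\cdot, 0)$ is real analytic from $\S^{M, 0}$ into $L^2_{M, x \geq c}$ and satisfies the same type of estimate; the statement for $\scat_{-, c}$ follows by symmetry.

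To achieve this I would introduce a weighted analog $(P_M)$ of the property $(P)$ of Lemma \ref{prop:AnBn}: a function $h_\sigma: [c, \infty) \times [0, \infty) \to \R$ satisfies $(P_M)$ if $\langle x \rangle^M h_\sigma \in \Lxyc$, $\langle x \rangle^M h_\sigma(\cdot, 0) \in \Lc$, these norms are bounded by $K_c \norm{\sigma}_{\Hzc}$ locally uniformly in $\sigma$, and $\sigma \mapsto h_\sigma$ is real analytic in the corresponding weighted spaces. The key observation is that, for each fixed $x \geq c$, the operator $\K_{x, \sigma}$ defined in \eqref{kxdef} acts only in the $y$-variable, and by Lemma \ref{lem:ab.eq2} the inverse $(Id + \K_{x, \sigma})^{-1}$ is bounded uniformly in $x \geq c$. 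Consequently, multiplication by the pure $x$-weight $\langle x \rangle^M$ commutes with solving \eqref{opma1}, and the property $(P_M)$ is preserved by the map $f_\sigma \mapsto B_\sigma$.

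The concrete input is the decay of $F_{+,\sigma}$ and $\partial_x F_{+,\sigma}$: for $\sigma \in \S^{M, 0}$, Proposition \ref{rem:dec_rel}$(ii)$ gives $\langle x \rangle^M F'_{+,\sigma} \in L^2$, and the implication chain \eqref{dec_rel} provides the corresponding weighted $L^1$ control on $F_{+,\sigma}$ itself. Applying Cauchy-Schwarz to
\begin{equation*}
f_\sigma(x, y) = \int_0^{\infty} F_{+,\sigma}(x + y + z)\, F_{+,\sigma}(x + z)\, dz,
\end{equation*}
and using $\langle x \rangle \leq C \langle x + y + z \rangle$ for $x \geq c$, $y, z \geq 0$, one bounds $\langle x \rangle^M f_\sigma(x, y)$ pointwise by a tail of the weighted $L^2$-norm of $F_{+,\sigma}$; integrating in $x$ and $y$ then gives $f_\sigma \in (P_M)$. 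An analogous argument handles $\partial_x f_\sigma$ using $\langle x \rangle^M \partial_x F_{+,\sigma} \in L^2$. Real analyticity in $\sigma$ propagates through every step from the linearity of integration and the composition of analytic maps, exactly as in the proof of Proposition \ref{Bsmoothing}.

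The main obstacle I expect is that $\S^{M, 0}$ provides only first-order decay information on $F_{+,\sigma}$; in particular, one cannot differentiate \eqref{opma1} in $x$ more than once. This forces the argument to rely on the once-differentiated equation
\begin{equation*}
(Id + \K_{x, \sigma})[\partial_x B_\sigma(x, \cdot)](y) = \partial_x f_\sigma(x, y) - \int_0^{\infty} \partial_x F_{+,\sigma}(x + y + z)\, B_\sigma(x, z)\, dz,
\end{equation*}
whose right-hand side involves only $F_{+,\sigma}$, $\partial_x F_{+,\sigma}$, and the already-controlled $B_\sigma$. Checking that this source term lies in $(P_M)$ is again a direct application of Cauchy-Schwarz together with the weighted decay of $F_{+,\sigma}$ and $\partial_x F_{+,\sigma}$; evaluating the resulting $\partial_x B_\sigma(x, \cdot)$ at $y = 0$ then yields $\partial_x B_\sigma(\cdot, 0) \in L^2_{M, x \geq c}$ with the required bound and analytic dependence on $\sigma$.
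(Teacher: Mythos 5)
Your overall architecture is viable and genuinely different from the paper's: you separate off the linear part via \eqref{structureS^-1} and carry the full weight $\langle x\rangle^M$ on the quadratic object $B_{+,\sigma}$, whereas the paper never uses the decomposition $E_\sigma=-F_\sigma+B_\sigma$ here; it works directly with $E_\sigma$, carries only the reduced weight $\langle x\rangle^{M-3/2}$ through all two-variable estimates (equation \eqref{lineD.1} and the differentiated equation \eqref{derxB1}), and recovers the full weight $M$ only at the evaluation $y=0$, by writing $\langle x\rangle^M=\langle x\rangle^{3/2}\langle x\rangle^{M-3/2}$ and shifting the factor $\langle x\rangle^{3/2}$ onto $F_\sigma(x+z)$ resp.\ $F_\sigma'(x+z)$, which lie in $L^2_{3/2}$ resp.\ $L^2_M$. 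That reduced weight is forced on the paper because $E_\sigma$ contains the linear term $-F_\sigma(x+y)$, whose weighted decay is limited; your quadratic $B_\sigma$ does admit the full weight, which is a real structural advantage of your route.

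However, there is a concrete gap in the way you justify the central estimate. For $\sigma\in\S^{M,0}$ the available information is $\langle x\rangle^M F'_{+,\sigma}\in L^2$ (Proposition \ref{rem:dec_rel}$(ii)$ with $N=0$) and $F_{+,\sigma}\in H^1\cap L^2_3$; by Hardy's inequality (Lemma \ref{lem:techLemma} $(A3)$) this yields only $\langle x\rangle^{M-1}F_{+,\sigma}\in L^2_{x\geq c}$, and the generic pointwise decay is $|F_{+,\sigma}(x)|\lesssim \langle x\rangle^{1/2-M}$, so $\langle x\rangle^{2M}|F_{+,\sigma}(x)|^2$ behaves like $\langle x\rangle$ and need not be integrable. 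Hence your step "use $\langle x\rangle\leq C\langle x+y+z\rangle$ and Cauchy--Schwarz to bound $\langle x\rangle^M f_\sigma(x,y)$ by a tail of the weighted $L^2$-norm of $F_{+,\sigma}$" places all $M$ powers on a single factor and silently requires $\langle x\rangle^M F_{+,\sigma}\in L^2$, which does not follow from the hypotheses; the tail you invoke can be infinite. The same over-weighting issue reappears in the cross term $\intzi \derx F_{+,\sigma}(x+y+z)B_\sigma(x,z)\,dz$, since only $\langle x\rangle^{M-1}F'_{+,\sigma}\in L^1_{x\geq c}$ is available. The gap is reparable within your scheme: split the weight between the two factors, e.g.\ $\langle x\rangle^M\leq C_c\langle x+y+z\rangle^{M-1}\langle x+z\rangle$, and apply Lemma \ref{lem:techLemma} $(A5)$ (resp.\ $(A1)$, $(A4)$, $(A2)$ for $\derx f_\sigma$, the cross term, and the $y=0$ evaluation), using $F_{+,\sigma}\in L^2_{M-1,x\geq c}\cap L^2_3\cap L^\infty_{M-1,x\geq c}$ and $F'_{+,\sigma}\in L^2_M$; with that correction, and with the analyticity propagated through Lemma \ref{lem:Fx} and the composition arguments as you indicate, your argument closes and gives the stated bound.
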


\begin{proof} We prove the result just for  $\scat_{+,c}$, since for  $\scat_{-,c}$ the proof is analogous. As before, we suppress the subscript ''+'' from the various objects. \\
Consider the Gelfand-Levitan-Marchenko equation \eqref{ME}.  Multiply it  by $\x^{M-3/2}$ to obtain
\begin{equation}
\left(Id + \K_{x,\sigma} \right) \left[ \x^{M-3/2} E_\sigma(x,y)\right] = - \x^{M-3/2} F_\sigma(x+y).
\label{lineD.1}
\end{equation}
The function  
$$
h_\sigma(x,y) := - \x^{M-3/2} F_\sigma(x+y) \ ,
$$
satisfies $h_\sigma(x, \cdot) \in \Lyz$ and one checks that  $h_\sigma \in \CxLy \cap \Cxy$. We show now that $h_\sigma \in \Lxyc$.  By Lemma \ref{lem:techLemma} $(A3)$ and Proposition  \ref{rem:dec_rel} for $N=0$ it follows that 
 $$\norm{\x^{M-3/2}h_\sigma}_{\Lxyc}^2\leq K_c \intx{c} \x^{2M-2} |F_\sigma(x)|^2 \, dx \leq K_c  \norm{\x^M F_\sigma'}_{\Lc}^2 \leq K_c \norm{\sigma}_{H^M_{\zeta,\C}}^2 \ .$$ 
Consider now $h_\sigma(x, 0) =  - \x^{M-3/2} F_\sigma(x)$. By \eqref{dec_rel} it follows that $h_\sigma(\cdot, 0) \in \Lc$.
Finally the map $\sigma \mapsto h_\sigma$ $\,[\sigma \mapsto h_\sigma(\cdot, 0)]$ is real analytic as a map from $\S^{M,0}$ to $\Lxyc$ $[L^2_{M-3/2, x\geq c}]$. 
Proceeding as in the proof of Lemma \ref{lem:ab.eq}, one shows that
  there exists a solution $E_\sigma$ of equation \eqref{ME} which satisfies 
$(i)$ $\x^{M-3/2} E_\sigma \in \CxLy \cap \Lxyc$, $\x^{M-3/2} E_\sigma(x, \cdot) \in \Cyz$, $\langle \cdot \rangle^{M-3/2}E_\sigma(\cdot, 0) \in \Lc$, $(ii)$ $ \norm{\x^{M-3/2} E_\sigma}_{\Lxyc} \leq K_c\norm{\sigma}_{H^M_{\zeta,\C}}$, $(iii)$ $ \sigma \mapsto \x^{M-3/2} E_\sigma$ $\,[ \sigma \mapsto   E_\sigma(\cdot,0)]$ is real analytic as a map from $ \S^{M,0}$ to  $\Lxyc$ $[L^2_{M-3/2, x\geq c}]$.
Furthermore its derivative  $\derx E_\sigma$  satisfies the integral equation
\begin{equation}
\label{derxB1}
(Id + \K_{x,\sigma})\left(\derx E_\sigma(x,y)\right) = -   F_\sigma'(x+y) -  \intzi  F_\sigma'(x+y+z)\; E_\sigma(x,z)\; dz .
\end{equation}
Multiply the equation above by $\x^{M-3/2}$, to obtain $(Id + \K_\sigma)\left(\x^{M-3/2}\derx E_\sigma \right) = \tilde h_\sigma$, where
\begin{equation}
\label{tilde.h.R} 
\tilde h_\sigma(x,y) := -\x^{M-3/2}h_\sigma'(x,y) - \intzi F_\sigma'(x+y+z)\, \x^{M-3/2} E_\sigma(x,z) \, dz  \ .
\end{equation}
where $h_\sigma'(x,y):= F_\sigma'(x+y)$.
We claim that $\tilde h_\sigma \in \Lxyc$ and $\sigma \mapsto \tilde h_\sigma$ is real analytic as a map $\S^{M,0} \to \Lxyc$. By Lemma \ref{lem:techLemma} $(A0)$  the first term of \eqref{tilde.h.R} satisfies 
$$\norm{\x^{M-3/2} h_\sigma'}_{\Lxyc} \leq K_c  \norm{\x^{M-1} F_\sigma'}_{\Lc} \leq K_c \norm{\sigma}_{H^M_{\zeta,\C}} \ , $$
and  by Lemma \ref{lem:techLemma} $(A1)$ the second term of \eqref{tilde.h.R} satisfies
$$
\norm{\intzi F_\sigma'(x+y+z)\, \x^{M-3/2} E_\sigma(x,z) \, dz}_{\Lxyc} \leq \norm{F_\sigma'}_{L^1} \norm{\x^{M-3/2} E_\sigma}_{\Lxyc} \leq K_c \norm{\sigma}_{H^M_{\zeta,\C}}^2 \ .
$$
Moreover $\sigma \mapsto \tilde h_\sigma$ is real analytic as a map from $\S^{M,0}$ to $\Lxyc$,  being composition of real analytic maps.

Thus, by  Lemma \ref{lem:ab.eq},  it follows that $\x^{M-3/2}\derx E_\sigma \in \Lxyc$,  $\norm{\x^{M-3/2}\derx E_\sigma}_{\Lxyc} \leq K_c \norm{\sigma}_{H^M_{\zeta,\C}}$ and $\sigma \mapsto \langle \cdot \rangle^{M-3/2}\derx E_\sigma$ is real analytic as a map from $\S^{M,0}$ to $\Lxyc$ .\\
Consider now equation \eqref{ME}. Evaluate it at $y=0$ to get 
$$
E_\sigma(x,0) = -F_\sigma(x) - \intzi F_\sigma(x+z) E_\sigma(x,z) \, dz \ .
$$
Take the $x$-derivative of the equation above and multiply it by $\x^{M}$ to obtain 
\begin{align*}
\x^M \derx E_\sigma(x,0)  = & - \x^M F_\sigma'(x) - \intzi \x^{3/2} F_\sigma'(x+z)\x^{M-3/2} E_\sigma(x,z) \, dz \\
& \quad - \intzi \x^{3/2} F_\sigma(x+z) \x^{M-3/2}\derx E_\sigma(x,z) \, dz \ .
\end{align*}
We prove now that  $\derx E_\sigma(\cdot,0) \in L^2_{M, x \geq c}$ and $\sigma \mapsto \derx E_\sigma(\cdot,0)$ is real analytic as a map from $\S^{M,0}$ to $L^2_{M, x \geq c}$. 
The result follows by Proposition \ref{rem:dec_rel} and Lemma \ref{lem:techLemma} $(A2)$. Indeed 
one has that
$\sigma \mapsto F_\sigma'$ $\, [\sigma \mapsto F_\sigma]$ is real analytic as a map from $\S^{M,0}$ to $L^2_M$ $\,[L^2_{3/2} ]$, and we proved above that  
 $\sigma \mapsto \langle \cdot \rangle^{M-3/2} E_\sigma$ and $\sigma \mapsto  \langle \cdot \rangle^{M-3/2}\derx E_\sigma $   are real analytic as maps from $\S^{M,0}$ to $\Lxyc$.
\end{proof}

Combining the results of Proposition \ref{Bsmoothing} and Proposition \ref{decayinverse},  we can prove  Theorem \ref{inversemain1}.
\vspace{.5em}\\
\noindent{\em Proof of Theorem \ref{inversemain1}.} 
It follows from Proposition \ref{rem:dec_rel}, Proposition \ref{Bsmoothing} and Proposition \ref{decayinverse} by restricting the scattering maps $\scat_{\pm,c}$ to the spaces $\S^{M,N} = \S^{4,N }\cap \S^{M,0}$.
\qed

\vspace{.5em}
Using the results of Theorem \ref{inversemain1} and Theorem \ref{S.onto} we can prove Theorem \ref{thm:inv.scat}, showing that $S^{-1}: \S^{N,M} \to \class^{N,M}$ is real analytic.

\begin{proof}[Proof of Theorem \ref{thm:inv.scat}]
Let $\sigma \in \S^{M,N}$. By Theorem \ref{S.onto} there exists $q \in \class$ with $S(q,\cdot) = \sigma$.  Now let  $c_{+}\leq c \leq c_{-}$ be arbitrary real numbers and consider $\scat_{+,c_+}(\sigma)$ and $\scat_{-,c_-}(\sigma)$, where $\scat_{\pm, c_\pm}$ are defined in \eqref{ainversedefinition1}. By classical inverse scattering theory \cite{faddeev}, \cite{Masturm} the following holds:
\begin{enumerate}[(i)]
\item $\left.\scat_{+,c_+}(\sigma)\right|_{x \in [c_+, c]} = \left.\scat_{-,c_-}(\sigma)\right|_{x \in [c, c_-]} \ ,$
\item the potential $q_c$ defined by 
\begin{equation}
\label{q_a}
q_c:= \scat_{+,c_+}(\sigma) \mathbbm{1}_{[c,\infty)}+\scat_{-,c_-}(\sigma) \mathbbm{1}_{(-\infty, c]}
\end{equation}
is in  $\class $ and  satisfies  
$r_+(q_c,\cdot)=\rho_{+}(\sigma,\cdot) $, $r_-(q_c,\cdot)=\rho_-(\sigma,\cdot)$ and $t(q_c,\cdot) = \tau(\sigma,\cdot)$. Thus by formulas \eqref{r.S.rel} and \eqref{inv.scatt.elem} it follows that $S(q_c,\cdot) = \sigma$.
\end{enumerate}
Since $S$ is 1-1  it follows that $q_c \equiv q$.
Finally, by  Theorem \ref{inversemain1}, $\S^{M,N} \to H^{N}_{x \geq c_+} \cap L^2_{M, x \geq c_+},$ $\sigma \mapsto \scat_{+,c_+}(\sigma)$ and $\S^{M,N} \to  H^{N}_{x \leq c_-}\cap L^2_{M, x \leq c_-},$ $\sigma \mapsto \scat_{-,c_-}(\sigma)$ are real analytic. It follows that $q \in H^N \cap L^2_M $ and the map $S^{-1}: \sigma \to q$ is real analytic. 
\end{proof}

 \section{Proof of Corollary \ref{thm:actions} and Theorem \ref{firstapprox}} 
 This section is devoted to the proof of  Corollary \ref{thm:actions} and Theorem \ref{firstapprox}. Both results are  easy applications of Theorem \ref{reflthm}.

\vspace{1em}
\noindent {\em Proof of Corollary \ref{thm:actions}}. Let $N \geq 0$, $M \geq 4$ be fixed integers.  Fix $q \in \class^{N,M}$. 
By Theorem \ref{reflthm} the scattering map $S(q,\cdot)$ is in $\S^{M,N}$. Furthermore by the definition \eqref{action_angle} of $I(q,k)$ there exists a constant $C >0$ such that for any $|k| \geq 1$
$$
\mmod{I(q,k)} \leq \frac{C |S(q,k)|^2}{|k|} \ .
$$
In particular $I(q,\cdot) \in L^1_{2N+1}([1,\infty), \R)$. By the real analyticity of the map $q \mapsto S(q,\cdot)$, it follows that $\class^{N,M} \to L^1_{2N+1}([1,\infty),\R) $, $q \mapsto \left.I(q,\cdot)\right|_{[1,\infty)}$ is real analytic. 

Now let us analyze $I(q,k)$ for $0 \leq k \leq 1$. By the definition \eqref{action_angle} of $I(q,k)$ one has 
$$
I(q,k) +  \frac{k}{\pi} \log \left( \frac{4k^2}{4(k^2+1)}\right) = - \frac{k}{\pi} \log \left( \frac{4(k^2 + 1)}{4k^2 + S(q,k) S(q,-k)} \right)  \ .
$$
By  Proposition \ref{prop:inv.l}, the map
$\S^{M,N} \to \Hz ([0,1],\R)$, $\sigma \to l(\sigma,k):= \log \left( \frac{4(k^2 + 1)}{4k^2 + \sigma(k) \sigma(-k)} \right)$ is real analytic.\\
Thus also the map $\class^{N,M} \to \Hz([0,1],\R)$, $q \to l(S(q),\cdot)$ is real analytic, being composition of real analytic maps. Since the interval $[0,1]$ is bounded, the   map $f \mapsto kf $, which multiplies a function by $k$, is analytic as  a map $\Hz([0,1],\R) \to \Hz([0,1],\R)$.
It follows that the map $q \mapsto - \frac{k}{\pi} l(S(q), k)$ is real analytic as a map from $\class^{N,M}$ to $H^M([0,1],\R)$.
\qed
\vspace{1em}\\
For $t \in \R$ and $\sigma \in H^1_\C$, let us denote by 
\begin{equation}
\label{def.rot} 
\rot^t(\sigma)(k):= e^{- i8 k^3 t} \sigma(k) \ . 
\end{equation}
 We prove the following lemma.
\begin{lemma}
\label{lem:airy.flow.invariant}
Let $N, M$ be integers with $N \geq 2M \geq 2$. Let $\sigma \in  \S^{M,N}$. Then $\rot^t(\sigma) \in  \S^{M,N},$ $\, \forall t \geq 0$.
\end{lemma}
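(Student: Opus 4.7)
The plan is to verify the three defining properties of $\S^{M,N}$ for $\rot^t(\sigma)$ one at a time: first the pointwise conditions (S1) and (S2), then the $L^2_N$ decay, and finally the Sobolev regularity encoded in $\Hz$. The pointwise properties are immediate: since $e^{-i8k^3 t}$ is odd under the substitution $k\mapsto -k$ composed with complex conjugation (as $k^3$ is odd and real), one computes $\rot^t(\sigma)(-k)=e^{i8k^3 t}\sigma(-k)=\overline{e^{-i8k^3 t}}\,\overline{\sigma(k)}=\overline{\rot^t(\sigma)(k)}$, giving (S1), and $\rot^t(\sigma)(0)=\sigma(0)>0$, giving (S2). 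The $L^2_N$ membership is also trivial because $|e^{-i8k^3 t}|=1$, so $\norm{\rot^t(\sigma)}_{L^2_N}=\norm{\sigma}_{L^2_N}$.

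The substance of the proof is the Sobolev regularity. First I would expand via Leibniz:
\begin{equation*}
\derk^j \rot^t(\sigma)(k) = \sum_{l=0}^{j} \binom{j}{l}\, p_{l}(k,t)\, e^{-i8k^3 t}\, \derk^{j-l}\sigma(k),
\end{equation*}
where $p_0 \equiv 1$ and, for $l\geq 1$, $p_l(k,t)$ is a polynomial in $k$ of degree exactly $2l$ with coefficients depending on $t$. Hence $|p_l(k,t)|\leq C_l(t)\la k\ra^{2l}$. I would then split the estimates into the two regions $|k|\le 1$ and $|k|\ge 1$, exploiting the defining property of $\zeta$.

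For $j\leq M-1$ and any $0\leq l\leq j$, on $|k|\leq 1$ the factor $\la k\ra^{2l}$ is uniformly bounded and $\derk^{j-l}\sigma\in L^2(|k|\le 1)$ since $\sigma\in H^{M-1}_\C$. On $|k|\ge 1$, $\zeta\equiv 1$, so the hypothesis $\zeta\derk^M\sigma\in L^2$ gives $\derk^M\sigma\in L^2(|k|\ge 1)$; combined with $\la k\ra^N\sigma\in L^2$, the Gagliardo--Nirenberg--type weighted interpolation
\begin{equation*}
\norm{\la k\ra^{\alpha}\derk^{\beta}\sigma}_{L^2(|k|\ge 1)}\leq C\, \norm{\sigma}_{L^2_N}^{\alpha/N}\norm{\sigma}_{H^M(|k|\ge 1)}^{\beta/M}\norm{\sigma}_{L^2}^{1-\alpha/N-\beta/M}
\end{equation*}
applies whenever $\alpha/N+\beta/M\leq 1$. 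With $\alpha=2l$, $\beta=j-l\leq M-1-l$, the condition reduces to $2l/N\leq l/M$, i.e.\ $N\geq 2M$, which is our standing hypothesis. This yields $\rot^t(\sigma)\in H^{M-1}_\C$.

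For the top-order term $\zeta\derk^M\rot^t(\sigma)$ I would argue identically: the $l=0$ contribution is $\zeta\, e^{-i8k^3 t}\derk^M\sigma$, which is in $L^2$ by hypothesis; for $1\le l\le M$ one estimates $|\zeta(k) p_l(k,t)|\leq C_l(t)\la k\ra^{2l}$ and applies the same two-region splitting, the worst case being $l=M$ which requires exactly $\la k\ra^{2M}\sigma\in L^2$, i.e.\ $N\geq 2M$. The main obstacle is precisely this sharp interpolation: a naive attempt using only $\sigma\in H^{M-1}_\C\cap L^2_N$ fails for the cross-terms with small $l$, and one must exploit that $\zeta\equiv 1$ away from the origin to upgrade $\sigma$ to $H^M$ on $|k|\ge 1$ before interpolating. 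Once this is in place, all the constants depend only on $t$ (polynomially) and on $\norm{\sigma}_{\Hz\cap L^2_N}$, so $\rot^t(\sigma)\in\S^{M,N}$ for every $t\geq 0$.
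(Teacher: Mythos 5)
Your proof is correct and follows essentially the same route as the paper: Leibniz expansion of $\derk^j\big(e^{-i8k^3t}\sigma\big)$, trivial verification of (S1), (S2) and the $L^2_N$ bound, and a weighted interpolation between the decay $\sigma\in L^2_N$ and the top regularity $\zeta\derk^M\sigma\in L^2$, with $N\geq 2M$ entering at exactly the same point. The one genuine difference is how the cutoff $\zeta$ is handled: the paper applies the Nahas--Ponce inequality to the globally defined function $\zeta\sigma\in H^M_\C\cap L^2_N$ and then removes $\zeta$ by Leibniz, using that $\derk^l\zeta$ has compact support for $l\geq 1$; you instead restrict to $\{|k|\geq 1\}$, where $\zeta\equiv 1$, and invoke a weighted Gagliardo--Nirenberg estimate on that exterior region. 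That estimate (and its three-factor form, which is not literally the quoted two-factor lemma) needs a word of justification: either use a Sobolev extension from $\{|k|\geq 1\}$ supported in, say, $\{|k|\geq 1/2\}$ so that the weight $\la k \ra^N$ is comparable on the support, or simply run the interpolation for $\zeta\sigma$ on all of $\R$ as the paper does; for sub-critical exponents $\alpha/N+\beta/M<1$ one just enlarges $\alpha$, so only the two-factor inequality is ever needed. Finally, your side remark that the cross terms with $j\leq M-1$ cannot be handled from $\sigma\in H^{M-1}_\C\cap L^2_N$ alone is not quite accurate: choosing $\theta=2l/N\leq l/M$ one checks $(1-\theta)(M-1)\geq j-l$ for all $j\leq M-1$, and it is only the top-order case $j=M$ that forces the use of the $M$-th derivative away from the origin. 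None of these points affects the validity of your argument.
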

\begin{proof}
As a first step we show that $\rot^t(\sigma) \in \S$ for every $t \geq 0$. Since  $\rot^t(\sigma)(0) = \sigma(0) >0$ and $\overline{\rot^t(\sigma)(k)} = \rot^t(\sigma)(-k)$,  $\rot^t(\sigma) $ satisfies (S1) and (S2) for every $t \geq 0$. Thus $\rot^t(\sigma) \in \S$, $\forall t \geq 0$. Next we show that $\rot^t(\sigma) \in \Hzc \cap L^2_N$.  Clearly $|\rot^t(\sigma)(k)| \leq |\sigma(k)|$, thus   $\rot^t (\sigma) \in   L^2_N,$ $\, \forall t \geq 0$. Now we show  that $\rot^t (\sigma) \in  \Hzc, $ $\, \forall t \geq 0$.
In particular we prove  that $\zeta \derk^M \rot^t (\sigma) \in L^2$, the other cases being analogous. Using the expression \eqref{def.rot} one gets that $\zeta(k) \derk^M \rot^t (\sigma)(k)$ equals 
$$
e^{- i8 k^3 t}\left( \zeta (k) \derk^{M}\sigma(k) + \sum_{j=1}^{M-1} \binom{M}{j} \left(-i 24 t k^2  \right)^j \zeta (k) \derk^{M-j}\sigma(k) + \left(-i 24 t k^2  \right)^M \zeta (k) \sigma(k) \right) \ .
$$
As $\sigma \in \S^{M,N}$,   the first and  last term  above are in $L^2$. Now we show that for $1 \leq j \leq M-1$, $|k|^{2j} \zeta  \derk^{M-j}\sigma  \in L^2$. We will  use the following   interpolating estimate, proved in  \cite[Lemma 4]{nahas_ponce}. Assume that $J^a f := (1-\derk^2)^{a/2} f \in L^2$ and $\langle k \rangle^b f := (1+|k|^2)^{b/2} f \in L^2$. Then for any $\theta \in (0,1)$
\begin{equation}
\label{inter.est}
\norm{\langle k \rangle^{\theta b} J^{(1-\theta)a} f}_{L^2} \leq c \norm{f}_{L^2_b}^{\theta} \norm{f}_{H^a_\C}^{1-\theta} \ .
\end{equation}
Note that $\zeta \sigma \in H^M_\C \cap L^2_N$, thus we can apply estimate \eqref{inter.est} with $f=\zeta \sigma$, $b=N$, $a=M$,  $\theta = \frac{j}{M}$, to obtain that $\langle k \rangle^{\frac{Nj}{M}} \derk^{M-j} (\zeta\sigma) \in L^2 $. Since $N \geq 2M$, we have  $\langle k \rangle^{2j} \derk^{M-j} (\zeta \sigma) \in L^2 $. 
By integration by parts 
$$
\langle k \rangle^{2j} \zeta(k) \derk^{M-j}  \sigma(k) = \langle k \rangle^{2j} \derk^{M-j} (\zeta \sigma) - \sum_{l=1}^{M-j}\binom{M-j}{l} \langle k \rangle^{2j} \derk^l \zeta(k) \, \derk^{M-j-l}\sigma(k) \ .
$$
Since for any $l \geq 1$ the function $\derk^l \zeta$ has compact support, it follows that the r.h.s. above is in $L^2$. 
Thus for every $1 \leq j \leq M-1$ we have $\langle k \rangle^{2j} \zeta(k) \derk^{M-j}  \sigma \in L^2$ and  it follows that $\zeta \derk^M \rot^t (\sigma) \in L^2$ for every $t \geq 0$.
\end{proof}
\begin{remark}
\label{rem.airy.flow}
One can adapt the proof above, putting $\zeta(k)\equiv 1$, to  shows that  the spaces $H^N \cap L^2_M$, with integers $N \geq 2M \geq 2$, are invariant by the Airy flow. Indeed the Fourier transform $\F_-$ conjugates the Airy flow with the linear flow $\rot^t$, i.e., $\airy^t =\mathcal{F}^{-1}_{-}\circ \rot^t \circ \mathcal{F}_-$.
\end{remark}

\vspace{1em}
\noindent {\em Proof of Theorem \ref{firstapprox}}.
Recall that by \cite{gardner6} the scattering map $S $ conjugate the KdV 
flow with the linear flow $\rot^t(\sigma)(k):= e^{- i8 k^3 t} \sigma(k)$, i.e., 
\begin{equation}
\kdv^t =S^{-1}\circ \rot^t \circ S \ ,
\label{kdv_scat}
\end{equation}
whereas $\airy^t =\mathcal{F}^{-1}_{-}\circ \rot^t \circ \mathcal{F}_-$.
Take now $q \in \class^{N, M}$, where $N, M$ are integers  with $N \geq 2M \geq 8$. By Theorem \ref{reflthm}, $S(q)\equiv S(q,\cdot) \in \S^{M,N}$. 
By Lemma \ref{lem:airy.flow.invariant}  the flow $\rot^t$ preserves the space $\S^{M,N}$ for every $t \geq 0$. Thus $\rot^t \circ S(q) \in \S^{M,N}$, $\ \forall t \geq 0$. 
By the bijectivity of $S$ it follows that $S^{-1}\circ \rot^t \circ S(q) \in \class^{N,M}$ $\, \forall t \geq 0$. Thus item $(i)$ is proved.

 We prove now item $(ii)$. Remark that by item $(i)$, $\kdv^t(q) \in L^2_M$ for any $t \geq 0$. 
Since  $\airy^t$ preserves the space $H^{N} \cap L^2_M$ ($N \geq 2M \geq 8$), it follows that for $q \in \class^{N, M}$  the difference $\kdv^t(q) - \airy^t(q) \in H^{N} \cap L^2_M$, $\forall t \geq 0$. We prove now  the smoothing property of the difference $\kdv^t(q) - \airy^t(q)$. 
Since $S^{-1} = \F_-^{-1} + B$,  
\begin{align}
\kdv^t(q)= &\mathcal{F}_{-}^{-1}\circ \rot^t \circ S (q)+ B \circ \rot^t \circ S (q)\label{secondairy}
\end{align}
and since  $S = \F_- + A$, 
$$\mathcal{F}_{-}^{-1}\circ \rot^t \circ S (q) =  \mathcal{F}_{-}^{-1}\circ \rot^t \circ \mathcal{F}_- (q) + \mathcal{F}_{-}^{-1}\circ \rot^t \circ A (q) \ . $$
Hence
\begin{align}
\kdv^t(q) = & \airy^t(q) + \mathcal{F}_{-}^{-1}\circ \rot^t \circ A (q) + B \circ \rot^t \circ S (q) \label{firstairy}.
\end{align}
The $1$-smoothing property of the difference
$\kdv^t(q) - \airy^t(q)$ follows now from the smoothing properties of $A$ and $B$ described in item (ii) 
of Theorem \ref{reflthm}. The real analyticity of the map $q \mapsto \kdv^t(q) - \airy^t(q)$ follows from formula \eqref{firstairy} and the real analyticity of the maps $A$, $B$ and $S$.
\qed
%%%%%%%%%%%%%%%%%%%%%%%%%%%%%%%%%%%%%%%%%%%%%%%%%%%%%
%%%%%%%%%%%%%%%%%%%%%%%%%%%%%%%%%%%%%%%%%%%%%%%%%%%%%
%%%%%%%%%%%%%%%%%%%%%%%%%%%%%%%%%%%%%%%%%%%%%%%%%%%%%
\appendix
\section{Auxiliary results. }
\label{techLemma}
For the convenience of the reader in this appendix we collect various known estimates used throughout the paper.
\begin{lemma} 
\label{lem:techLemma}
Fix an arbitrary real number $c$.
Then the following holds:
\begin{enumerate}
\item[(A0)]  The linear map $T_0: L^2_{1/2, x \geq c}\to \Lxyc$ defined by
\begin{equation}
\label{T4.def}
g \mapsto T_0(g)(x,y) := g(x+y)
\end{equation}
is continuous, and there exists a constant $K_c>0$, depending on $c$, such that
\begin{equation}
\label{T4.norm}
\norm{T_0(g)}_{\Lxyc} \leq K_c \norm{g}_{L^2_{1/2, x \geq c}}.
\end{equation} 
\item[(A1)] The bilinear map $T_1:\Lc \times \Lc \to \Lxyc$ defined by
\begin{equation}
\label{T2.def}
 (g,h) \mapsto 
T_1(g,h)(x,y):= g(x+y) h(x)
\end{equation}
is continuous,  and 
 \begin{equation}
\label{T2.norm}
\norm{T_1(g,h)}_{\Lxyc} \leq \norm{g}_{\Lc} \norm{h}_{\Lc}.
\end{equation}
\item[(A2)]The bilinear map $T_2: \Lc \times \Lxyc \to \Lc$ defined by
\begin{equation}
\label{T3.def}
(g,h) \mapsto 
T_2(g,h)(x):= \intzi g(x+z) h(x, z) \, dz
\end{equation}
is continuous, and  there exists a constant $K_c>0$, depending on $c$, such that
\begin{equation}
\label{T3.norm}
\norm{T_2(g,h)}_{\Lc} \leq K_c \norm{g}_{\Lc} \norm{h}_{\Lxyc}.
\end{equation}
\item[(A3)](Hardy inequality) The linear map $T_3: L^2_{m+1, x \geq c} \to L^2_{m, x \geq c}$ defined by 
$$
g \mapsto T_3(g) (x):= \intx{x} g(z) dz
$$
is continuous, and there exists a constant $K_c >0$, depending on $c$, such that 
$$\norm{T_3(g)}_{L^2_{m, x \geq c}}\leq K_c \norm{ g}_{L^2_{m+1, x \geq c}} \ . $$
\item[(A4)] The bilinear map $T_4: \Loc \times \Lxyc \to \Lxyc$ defined by
\begin{equation}
 (g, h) \mapsto T_4(g,h)(x,y) := \intzi g(x+y+z) h(x,z) dz
\end{equation}
 is  continuous,  and  there exists a constant $K_c>0$, depending on $c$, such that
 \begin{equation}
 \label{T1.norm}
 \norm{T_4(g,h)}_{\Lxyc} \leq K_c \norm{g}_{\Loc} \norm{h}_{\Lxyc}.
 \end{equation}
\item[(A5)]The bilinear map $T_5: \Lc \times L^2_{1, x \geq c} \to \Lxyc$
defined by
\begin{equation}
(g, h) \mapsto T_5(g, h)(x,y):= \intzi  g(x+y+z)  h(x+z) dz
\end{equation}
is bounded and  satisfies
\begin{equation}
\label{T1.norm.2}
\norm{T_5(g,h)}_{\Lxyc} \leq K_c \norm{g}_{\Lc} \norm{h}_{L^2_{1, x \geq c}}.
\end{equation}
\end{enumerate}
\end{lemma}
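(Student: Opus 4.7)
The plan is to prove the six estimates (A0)--(A5) one by one; each reduces to a direct application of Fubini's theorem, the Cauchy--Schwarz inequality, Minkowski's integral inequality, and (for (A3) and (A5)) the classical Hardy inequality. Throughout the argument I would use the elementary bound $u-c \leq K_c \langle u \rangle$ valid for $u \geq c$, which converts the powers of $u-c$ that naturally appear after Fubini swaps into the weighted $L^2$-norms of the statements and automatically gives constants $K_c$ depending only on $c$.

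For (A0), a change of variable $u = x+y$ in the $y$-integral followed by interchanging the order of integration yields $\norm{T_0(g)}_{\Lxyc}^2 = \int_c^\infty (u-c)\,|g(u)|^2\,du$; the weight bound above gives (A0). Estimate (A1) is immediate from $\norm{g(x+\cdot)}_{L^2_{y\geq 0}}\leq \norm{g}_{\Lc}$ and Fubini. For (A2) one applies Cauchy--Schwarz in $z$ pointwise in $x$ using $\int_0^\infty |g(x+z)|^2\,dz \leq \norm{g}_{\Lc}^2$ and then integrates in $x$. (A3) is the classical Hardy inequality on the half-line $[c,\infty)$, obtained from the standard one on $\R^+$ together with the same weight comparison.

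The two estimates with a bit of content are (A4) and (A5). For (A4), I would apply Cauchy--Schwarz in the $z$-integral in the form
\[ |T_4(g,h)(x,y)|^2 \leq \Bigl(\int_0^\infty |g(x+y+z)|\,dz\Bigr)\Bigl(\int_0^\infty |g(x+y+z)|\,|h(x,z)|^2\,dz\Bigr), \]
bound the first bracket by $\norm{g}_{\Loc}$, and then integrate in $(x,y)$ performing the $y$-integration first so that $\int_0^\infty |g(x+y+z)|\,dy \leq \norm{g}_{\Loc}$ produces a second factor of $\norm{g}_{\Loc}$. For (A5), I would apply Minkowski's integral inequality in $y$ to move the $L^2_y$-norm inside the $z$-integral, obtaining
\[ \norm{T_5(g,h)(x,\cdot)}_{L^2_{y\geq 0}} \leq \norm{g}_{\Lc}\int_x^\infty |h(u)|\,du, \]
after which taking the $L^2_x$-norm and invoking (A3) with $m=0$ applied to $|h|$ yields the bound $K_c\,\norm{g}_{\Lc}\norm{h}_{L^2_{1, x\geq c}}$.

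The main (mild) obstacle is (A5), where the symmetry between the two factors breaks down: a naive Cauchy--Schwarz in $z$ does not give sufficient $y$-decay to integrate in $y$, so one must use Minkowski to exchange the order of $L^2_y$ and $L^1_z$, and then genuinely invoke the Hardy inequality (A3) to absorb the $\int_x^\infty |h(u)|\,du$ factor into $\norm{h}_{L^2_{1,x\geq c}}$. Everything else is bookkeeping.
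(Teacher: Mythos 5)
Your proposal is correct and follows essentially the same route as the paper: change of variables/Fubini for (A0), direct Cauchy--Schwarz for (A1)--(A2), the splitting $|g|=|g|^{1/2}|g|^{1/2}$ with Cauchy--Schwarz and Fubini for (A4), and Minkowski's integral inequality followed by (A3) with $m=0$ for (A5). The only cosmetic difference is that the paper establishes (A3) and (A4) by pairing against a test function (using the Hardy--Littlewood averaging inequality for (A3)), whereas you argue directly via the classical weighted Hardy inequality and a pointwise Cauchy--Schwarz, which amounts to the same computation.
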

\begin{proof}
Inequality $(A1),  (A4)$ can be verified in a straightforward way.  
To prove  $(A0)$ make  the change of variable $\xi=x+y$ and remark that
$$
\intx{c} \intzi |g(x+y)|^2 \,dx \,dy \leq K_c \intzi |\xi-c| \, |g(\xi)|^2 d\xi \ .
$$
 We prove now $(A2)$: using Cauchy-Schwartz, one gets 
\begin{align*}
\norm{\intx{0} g(x+z)h(x,z) \, dz}_{\Lc}^2 & \leq \int\limits_c^{+\infty} \left( \intx{x}|g(z)|^2 \, dz \right)
\, \left( \intzi |h(x,z)|^2 \, dz \right) \, dx  \leq \norm{g}_{\Lc}^2 \norm{h}_{\Lxyc}^2 \ .
\end{align*}
In order to prove $(A3)$ take a function $h \in \Lc$ and remark that
\begin{align*}
\mmod{\intx{c} dx\, h(x)\, \x^m \intx{x} g(z)\, dz} & = \mmod{\intx{c} dz\, g(z) \int_{c}^z \x^m h(x) dx} \leq \tilde K_c  \intx{c} dz\, \langle z \rangle^m |g(z)| \int_{c}^z  |h(x)| \,dx \\
& \leq K_c \intx{c} dz\,\langle z \rangle^{m+1} |g(z)| \, \frac{\int_{c}^z  |h(x)| dx}{|z-c|} \leq K_c
\norm{\langle z \rangle^{m+1} g}_{\Lc} \norm{h}_{\Lc}
\end{align*}
where for the last inequality we used the Hardy-Littlewood inequality. \\
To prove $(A4)$ take a function $f \in \Lxyc$, define $\Omega_c= [c, \infty)\times \R^+ \times \R^+$ and remark that
\begin{align*}
&\int\limits_{\Omega_c } |g(x+y+z)| \, |h(x,z)|  \, |f(x,y)| \, dx \, dy \, dz \leq \\
& \qquad \qquad \leq \Big(\int\limits_{\Omega_c} |g(x+y+z)| \, |h(x,z)|^2 \, dx \, dy \, dz  \Big)^{1/2} \Big(\int\limits_{\Omega_c} |g(x+y+z)| \, |f(x,y)|^2 \, dx \, dy \, dz  \Big)^{1/2}  \\
& \qquad \qquad \leq
\norm{g}_{\Loc} \norm{h}_{L^2_{x\geq c, z \geq 0}} \norm{f}_{\Lxyc},
 \end{align*}
 where the first inequality follows by writing $ |g|= |g|^{1/2}\cdot |g|^{1/2}$ and applying Cauchy-Schwartz.\\
To prove $(A5)$ note that 
$$\norm{\intx{0} g(x+y+z) h(x+z) \, dz}_{L^2_{y \geq 0}} \leq \norm{g}_{\Lc} \intx{x}\mmod{ h(z)} \, dz \ .$$
By $(A3)$ one has that $\norm{\intx{x}\mmod{ h(z)} \, dz}_{\Lc} \leq K_c \norm{\x h }_{\Lc}$, then  $(A5)$ follows.
\end{proof}

\section{Analytic maps in complex Banach spaces}
\label{analytic_map}

In this appendix we recall the definition of an analytic map from \cite{mujica}. 

Let $E$ and $F$ be complex Banach spaces. 
A map $\tilde P^k:E^k\to F$ is said to be $k$-multilinear if $\tilde P^k(u^1,\ldots,u^k)$ is linear in
each variable $u^j$; a multilinear map is said to be bounded if there
exist a constant $C$ such that
$$\norm{\tilde P^k(u^1,\cdots,u^k)}\leq C \norm{u^1}\cdots \norm{u^k} \quad
\forall u^1,\ldots, u^k \in E.
$$ 
Its norm is defined by
$$\norm{\tilde P^k}:=\sup_{u^j \in E,\;\norm{u^j}\leq 1}{\norm{\tilde P^k(u^1,\ldots,u^k)}}.$$
A map $P^k: E \to F $ is said to be a polynomial of order $k$ if there exists
a $k$-multilinear map $\tilde{P}^k: E \to F$ such that
$$P^k(u)=\tilde{P}^k(u,\ldots,u)\quad \forall u\in E.$$
The polynomial is bounded if it has  finite norm
$$\norm{P^k}:=\sup_{\norm{u}\leq 1}\norm{P^k(u)}.$$
We  denote with $\mathcal{P}^k(E, F)$ the vector space of all bounded polynomials of order $k$ from $E$ into $F$.
\begin{definition}
\label{def.analytic_map}
Let $E$ and $F$ be complex Banach spaces. Let $U$ be a open subset of $E$. A mapping $f: U \to F$ is said to be \textit{analytic} if for each $a \in U$ there exists a ball $B_r(a)\subset U$ with center $a$ and radius $r>0$ and a sequence of polynomials $P^k \in \mathcal{P}^k(E, F)$, $k \geq 0$,  such that 
$$ f(u) = \sum_{k=0}^{\infty} P^k(u-a)$$
is convergent uniformly for $u \in B_r(a)$; i.e., for any $\epsilon >0$ there exists $K >0$ so that $$\norm{f(u) - \sum_{k=0}^K P^k(u-a)} \leq \epsilon$$ for any $u \in B_r(a)$.
\end{definition}

Finally let us recall the notion of real analytic map. 
\begin{definition}Let  $E,\; F$ be real Banach spaces and denote by $ E_{\C}$ and $F_{\C}$ their complexifications. Let $U \subset E$ be open.
A map $f: U \to F$ is called \textit{real analytic} on $U$ if for each point $u \in U$ there exists a neighborhood $V$ of $u$ in $E_{\C}$ and an analytic map $g: V \to F_{\C}$ such that $f = g$ on $U \cap V$.
\end{definition}
\begin{remark}
The notion of an analytic map in Definition \ref{def.analytic_map} is equivalent to the notion of a $\C$-differentiable map. Recall that a map $f: U \to F$, where $U$, $E$ and $F$ are given  as in Definition \ref{def.analytic_map}, is said to be \textit{$\C$-differentiable} if for each point $a \in U$ there exists a linear, bounded operator $A: E \to F$  such that
$$
\lim_{u \to a}\frac{\norm{f(u) - f(a) - A(u-a)}_F}{\norm{u-a}_E} = 0.
$$
Therefore analytic maps inherit the properties of $\C$-differentiable maps; in particular the composition of analytic maps is analytic.
For a proof of the equivalence of the two notions see \cite{mujica}, Theorem 14.7.
\end{remark}
\begin{remark}
Any $P^k \in \mathcal{P}^k(E, F)$ is an analytic map.
Let $f(u)=\sum_{m=0}^{\infty} P^m(u)$ be a power series from $E$ into $F$ with infinite radius of convergence with $P^m \in \mathcal{P}^m(E,F)$.  Then $f$ is analytic  (\cite{mujica}, example 5.3, 5.4).
\label{entire.func}
\end{remark}

%%%%%%%%%%%%%%%%%%%%%%%%%%%%%%%%%%%%%%%%%%%%%%%%%%%%%%%%%%%%%%%%%%%%%%%%%%%%%
%%%%%%%%%%%%%%%%%%%%%%%%%%%%%%%%%%%%%%%%%%%%%%%%%%%%%%%%%%%%%%%%%%%%%%%%%%%%

\section{Properties of the solutions of  integral equation \eqref{opma1}}
\label{app:ab.eq}
 In this section we discuss some properties of the solution of equation  \eqref{opma1} which we rewrite as
\begin{equation}
\label{int.eq}
g(x,y) + \intx{0} F_\sigma(x+y+z) \, g(x,z) \, dz = h_\sigma(x,y) \ .
\end{equation}
Here  $\sigma \in \S^{4,N}$, $N \geq 0$, $h_\sigma$ is a function $h_\sigma: [c, +\infty) \times [0, +\infty) \to \R$, with $c$ arbitrary,  which satisfies $(P)$.
%%\begin{enumerate}
%%\item[$(H1)$] $h \in \CxLy \cap \Lxyc$. For every $x \geq c$, $h(x,\cdot) \in \Cyz$ and  $ h(\cdot, 0) \in \Lc$.
%%\end{enumerate}
%Here we denoted by  $\CxLy := C^{0}([c, \infty), \Lyz)$. 
We denote by
\begin{equation}
\label{norm_0}
\norm{h}_0 := \norm{h}_{\Lxyc} + \norm{h(\cdot, 0)}_{\Lc} \ .
\end{equation}

Furthermore  $F_\sigma:\R \to \R$ is a function that satisfies
\begin{enumerate}
\item[$(H)$] The map $\sigma \mapsto F_\sigma $ $\,[ \sigma \mapsto F_\sigma']$ is real analytic as a map from $\S^{4,N}$ to $H^1 \cap L^2_3$ $\,[L^2_4]$. Moreover the operators $Id \pm \K_{x,\sigma}: L^2_{y \geq 0} \to L^2_{y \geq 0}$ with $\K_{x,\sigma}$ defined as
\begin{equation}
\label{oper.K.app}
\K_{x,\sigma}[f](y) := \intx{0} F_\sigma(x+y+z) \, f(z) \, dz \ 
\end{equation}
are invertible for any $x \geq c$, and there exists a constant $C_\sigma>0$, depending  locally uniformly on $\sigma \in H^4_{\zeta,\C}\cap L^2_N$, such that
\begin{equation}
\label{est.inv}
\sup_{x \geq c} \norm{(Id \pm \K_{x,\sigma})^{-1}}_{\L(L^2_{y \geq 0})} \leq C_\sigma \ .
\end{equation}
Finally  $\sigma \mapsto (Id \pm \K_{x,\sigma})^{-1}$ are real analytic as maps from $\S^{4,N}$ to $\L(\Lxyc)$.
\end{enumerate}
\begin{remark}
 \label{pairing}
The pairing
  \begin{align*} 
  \L(\Lxyc) \times  \Lxyc  \to  \Lxyc, \qquad   (H, f) \mapsto  H[f]
 \end{align*} 
 is a bounded bilinear map and hence analytic. Let now $\sigma \mapsto h_\sigma$ be a real analytic  map from $\S^{4,0}$ to $\Lxyc$ and let $\K_\sigma$ as in $(H)$. Then by Lemma \ref{lem:Fx} $(iii)$ it follows that $\sigma \mapsto \left( Id + \K_\sigma\right)^{-1}[ h_\sigma]$ is real analytic as a map from $\S^{4,0}$ to $\Lxyc$ as well.
 \end{remark}
\begin{remark}
By the Sobolev embedding theorem,    assumption $(H)$ implies that  $F_\sigma \in C^{0, \gamma}(\R, \C)$, $\gamma < \frac{1}{2}$. 
\end{remark}

By assumption $(H)$ the map  
$(c, \infty) \to \L(L^2_{y \geq 0}), $  $\, x \mapsto \K_{x,\sigma}$ is differentiable and its derivative  is the operator 
\begin{equation}
\label{K'.def} 
 \K_{x,\sigma}'[f](y) = \intx{0} F_\sigma'(x+y+z)\, f(z) \, dz \ , 
 \end{equation}
 as one  verifies using that for $x > c$ and  $\epsilon \neq 0$ sufficiently small
\begin{equation}
\label{op.K.diff} 
\begin{aligned}
\norm{\frac{\K_{x+\epsilon,\sigma} - \K_{x,\sigma}}{\epsilon}  - \K_{x,\sigma}' }_{\L(L^2_{y \geq 0})}  \leq 
\intx{x} \mmod{\frac{F_\sigma(z+\epsilon) - F_\sigma(z)}{\epsilon} - F_\sigma'(z)} dz \\
\leq \frac{1}{|\epsilon|} \mmod{ \int_0^{\epsilon} \intx{x}\mmod{F_\sigma'(z+s) - F_\sigma'(z)} dz\, ds } \leq \sup_{   |s| \leq |\epsilon | } \intx{x}\mmod{F_\sigma'(z+s) - F_\sigma'(z)} dz 
\end{aligned}
\end{equation}
and the fact that the  translations are continuous in $L^1$.
Therefore  the following lemma holds
\begin{lemma}
\label{lem:der.K}
$\K_{x,\sigma}$ and thus $(Id + \K_{x,\sigma})^{-1}$ is a family of operators from $\Lyz$ to $\Lyz$ which depends continuously on the parameter $x$. Moreover the map $(c, \infty) \to \L(\Lyz)$,  $x \mapsto \K_{x,\sigma}$ is differentiable  and its derivative is the operator $\K'_{x,\sigma}$ defined in \eqref{K'.def}.
\end{lemma}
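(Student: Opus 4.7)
The plan is to reduce everything to a single estimate on the operator norm of integral operators with kernel of the form $(y,z)\mapsto g(x+y+z)$ acting on $\Lyz$, and then to invoke continuity of translations in $L^1$. The operator $\K_{x,\sigma}$ has kernel $G_x(y,z):=F_\sigma(x+y+z)$, and by Schur's test
$$\norm{\K_{x,\sigma}}_{\L(\Lyz)}\leq \Big(\sup_{y\geq 0}\intzi |G_x(y,z)|\,dz\Big)^{1/2}\Big(\sup_{z\geq 0}\intzi |G_x(y,z)|\,dy\Big)^{1/2}=\intx{x}|F_\sigma(u)|\,du,$$
the last equality following from a substitution $u=x+y+z$, the supremum being attained at $y=0$ (resp.\ $z=0$). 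Note that $F_\sigma\in L^1$ because hypothesis $(H)$ gives $F_\sigma\in L^2_3$, so $\intx{x}|F_\sigma(u)|\,du<\infty$ and the bound above is meaningful.

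For the continuity statement, I apply the same Schur bound to the difference $\K_{x+\epsilon,\sigma}-\K_{x,\sigma}$, whose kernel is $F_\sigma(x+\epsilon+y+z)-F_\sigma(x+y+z)$, obtaining
$$\norm{\K_{x+\epsilon,\sigma}-\K_{x,\sigma}}_{\L(\Lyz)}\leq \intx{x}|F_\sigma(u+\epsilon)-F_\sigma(u)|\,du,$$
which tends to $0$ as $\epsilon\to 0$ by continuity of translations in $L^1$. The continuity of $x\mapsto(Id+\K_{x,\sigma})^{-1}$ then follows because the inversion map $T\mapsto T^{-1}$ is continuous on the open subset of invertible operators, and $(Id+\K_{x,\sigma})$ is uniformly invertible by hypothesis $(H)$.

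For the differentiability statement, I follow the hint in display \eqref{op.K.diff}. Since $F_\sigma\in H^1$, it is absolutely continuous, and the fundamental theorem of calculus gives
$$\frac{F_\sigma(u+\epsilon)-F_\sigma(u)}{\epsilon}-F'_\sigma(u)=\frac{1}{\epsilon}\int_0^\epsilon\bigl(F'_\sigma(u+s)-F'_\sigma(u)\bigr)\,ds.$$
Applying Schur's test to the operator $\tfrac{\K_{x+\epsilon,\sigma}-\K_{x,\sigma}}{\epsilon}-\K'_{x,\sigma}$, whose kernel is the above quantity evaluated at $u=x+y+z$, together with Fubini yields
$$\Big\|\frac{\K_{x+\epsilon,\sigma}-\K_{x,\sigma}}{\epsilon}-\K'_{x,\sigma}\Big\|_{\L(\Lyz)}\leq \sup_{|s|\leq|\epsilon|}\intx{x}|F'_\sigma(u+s)-F'_\sigma(u)|\,du.$$
This tends to $0$ as $\epsilon\to 0$ by continuity of translations in $L^1$, where now I use that $F'_\sigma\in L^1$: hypothesis $(H)$ gives $F'_\sigma\in L^2_4\subset L^2_1$, which embeds into $L^1$ by Cauchy--Schwarz.

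The only potentially subtle point is checking the $L^1$ integrability of $F_\sigma$ and $F'_\sigma$ near infinity so that translation continuity applies, but both are immediate from the polynomial weights built into hypothesis $(H)$. The rest is a straightforward application of Schur's test; no estimate beyond the level of detail in \eqref{op.K.diff} is required.
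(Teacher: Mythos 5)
Your proof is correct and follows essentially the same route as the paper: the whole argument rests on the operator-norm bound by the $L^1$ norm of the translated kernel (the estimate displayed in \eqref{op.K.diff}) together with continuity of translations in $L^1$, and the uniform invertibility from hypothesis $(H)$ for the inverse. You merely make explicit the Schur-test bound, the fundamental theorem of calculus step, and the embeddings $L^2_3, L^2_4 \subset L^1$ that the paper leaves implicit, so no further comment is needed.
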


\begin{lemma}
\label{lem:g.p}
Let $F_\sigma$ satisfy assumption $(H)$, and $g_\sigma \in \CxLy \cap \Lxyc$ be such that  $\norm{g_\sigma}_{\Lxyc} \leq K_c \norm{\sigma}_{H^4_{\zeta,\C} \cap L^2_N}^2$ and $\S^{4,N} \to \Lxyc,$ $\, \sigma \mapsto g_\sigma$ be real analytic. Then
$$
\bF (x,y) := \intzi F_\sigma(x+y+z) \, g_\sigma(x,z) \, dz
$$
satisfies $(P)$.
\end{lemma}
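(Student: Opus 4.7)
The statement is essentially a direct corollary of the bilinear estimates of Lemma \ref{lem:techLemma} combined with hypothesis $(H)$ on $F_\sigma$ and the hypotheses on $g_\sigma$; I would verify $(P1), (P2), (P3)$ separately.

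First, I would establish the quantitative bounds of $(P2)$. By $(H)$ one has $F_\sigma \in H^1 \cap L^2_3$, and since $\langle x \rangle^{-3} \in L^2$, Cauchy--Schwarz gives $F_\sigma \in L^1 \cap L^2$ with $\norm{F_\sigma}_{\Loc}$ and $\norm{F_\sigma}_{\Lc}$ depending locally uniformly on $\sigma$. Applying item $(A4)$ of Lemma \ref{lem:techLemma} with $g = F_\sigma$ and $h = g_\sigma$ yields
$$\norm{\bF}_{\Lxyc} \leq K_c \norm{F_\sigma}_{\Loc} \norm{g_\sigma}_{\Lxyc} \ , $$
while item $(A2)$ with the same choices, evaluated at $y=0$, gives
$$\norm{\bF(\cdot,0)}_{\Lc} \leq K_c \norm{F_\sigma}_{\Lc} \norm{g_\sigma}_{\Lxyc} \ . $$
Combining these with the assumed bound $\norm{g_\sigma}_{\Lxyc} \leq K_c \norm{\sigma}^2_{H^4_{\zeta,\C}\cap L^2_N}$ delivers $(P2)$ with a constant depending locally uniformly on $\sigma$.

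Second, I would establish the regularity in $(P1)$. That $\bF \in \Lxyc$ and $\bF(\cdot,0) \in \Lc$ is already contained in the previous step. For $\bF \in \CxLy$, I would write $\bF(x,\cdot) = \K_{x,\sigma}[g_\sigma(x,\cdot)]$ and use that $x \mapsto \K_{x,\sigma}$ is continuous in $\L(\Lyz)$ by Lemma \ref{lem:der.K} while $x \mapsto g_\sigma(x,\cdot)$ is continuous in $\Lyz$ by assumption. For the joint continuity $\bF \in \Cxy$, given $(x,y)$ and a nearby $(x',y')$, I would split
\begin{equation*}
\bF(x',y') - \bF(x,y) = \int_0^\infty \bigl[F_\sigma(x'+y'+z) - F_\sigma(x+y+z)\bigr] g_\sigma(x,z)\,dz + \int_0^\infty F_\sigma(x'+y'+z)\bigl[g_\sigma(x',z) - g_\sigma(x,z)\bigr]\,dz \ .
\end{equation*}
Cauchy--Schwarz bounds the first integral by $\norm{F_\sigma(\cdot+x'+y') - F_\sigma(\cdot+x+y)}_{L^2(0,\infty)} \norm{g_\sigma(x,\cdot)}_{\Lyz}$, which tends to $0$ as $(x',y') \to (x,y)$ by $L^2$--continuity of translations; the second integral is bounded by $\norm{F_\sigma}_{L^2} \norm{g_\sigma(x',\cdot) - g_\sigma(x,\cdot)}_{\Lyz}$, which tends to $0$ by the $\CxLy$--continuity of $g_\sigma$.

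Third, for $(P3)$, I would note that $(A4)$ exhibits a bounded, hence analytic, bilinear map $\Loc \times \Lxyc \to \Lxyc$; pre-composing it with the real analytic maps $\sigma \mapsto F_\sigma \in \Loc$ (from $(H)$ and $L^2_3 \hookrightarrow L^1$) and $\sigma \mapsto g_\sigma \in \Lxyc$ (by assumption), and using that compositions of real analytic maps are real analytic, yields real analyticity of $\sigma \mapsto \bF \in \Lxyc$. The analogous argument using the bilinear map in $(A2)$ gives real analyticity of $\sigma \mapsto \bF(\cdot,0) \in \Lc$.

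The only mildly non-mechanical step is the joint continuity $\bF \in \Cxy$, which hinges on the $L^2$--continuity of translations; every other part of the argument is routine bookkeeping once Lemma \ref{lem:techLemma} and hypothesis $(H)$ are in hand.
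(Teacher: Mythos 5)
Your proposal is correct and follows essentially the same route as the paper: $(A4)$ and $(A2)$ of Lemma \ref{lem:techLemma} for the $\Lxyc$ and $\Lc$ bounds in $(P2)$, continuity of translations in $L^1$/$L^2$ together with the $\CxLy$-continuity of $g_\sigma$ for both continuity statements in $(P1)$ (your use of Lemma \ref{lem:der.K} is just the paper's direct kernel splitting phrased in operator form), and composition of the bounded bilinear maps with the real analytic maps $\sigma\mapsto F_\sigma$, $\sigma\mapsto g_\sigma$ for $(P3)$. No gaps.
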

\begin{proof} $(P1)$ For $\epsilon \neq 0$ sufficiently small
\begin{align*}
\norm{\bF(x+\epsilon, \cdot) - \bF(x,\cdot)}_{\Lyz} \leq & \norm{F_\sigma(x+\epsilon, \cdot) - F_\sigma(x,\cdot)}_{L^1} \norm{g_\sigma(x+\epsilon,\cdot)}_{\Lyz} \\
& \quad  + \norm{F_\sigma}_{L^1}\norm{g_\sigma(x+\epsilon,\cdot) - g_\sigma(x,\cdot)}_{\Lyz}
\end{align*}
which goes to $0$ as $\epsilon \to 0$, proving that $\bF \in \CxLy$.
Furthermore, by Lemma \ref{lem:techLemma} $(A4)$, $\bF \in \Lxyc$ and fulfills
\begin{equation}
\label{lem:g.p1}
\norm{\bF}_{\Lxyc} \leq \norm{F_\sigma}_{L^1} \norm{g_\sigma}_{\Lxyc} \leq K_c \norm{\sigma}_{H^4_{\zeta,\C} \cap L^2_N}^2 \ .
\end{equation}
Now we show that $\bF \in \Cxy$. Let $(x_n)_{n \geq 1} \subseteq [c,\infty)$ and $(y_n)_{n \geq 1}\subseteq [0,\infty)$ be two sequences such that 
$x_n \to x_0$, $y_n \to y_0$. Then $F_\sigma(x_n + y_n +\cdot) g_\sigma(x_n, \cdot) \to F_\sigma(x_0 + y_0 +\cdot) g_\sigma(x_0, \cdot) $ in $L^1_{z \geq 0}$ as $n \to \infty$. Indeed
\begin{align*}
& \norm{F_\sigma(x_n + y_n +\cdot) g_\sigma(x_n, \cdot) - F_\sigma(x_0 + y_0 +\cdot) g_\sigma(x_0, \cdot)}_{L^1_{z \geq 0}}  \leq \\
& \qquad \qquad \qquad \leq  \norm{F_\sigma(x_n+y_n+ \cdot) - F_\sigma(x_0 + y_0 +\cdot)}_{L^2_{z\geq 0}} \norm{g_\sigma(x_n,\cdot)}_{\Lyz} \\
&\qquad \qquad \qquad \quad + \norm{F_\sigma(x_0 + y_0 +\cdot)}_{L^2_{z\geq 0}}\norm{g_\sigma(x_n,\cdot) - g_\sigma(x_0,\cdot)}_{\Lyz} , 
\end{align*}
and the r.h.s. of the inequality above goes to $0$ as $(x_n, y_n) \to (x_0,y_0)$, by the continuity of the translations in $L^2$ and the fact that $g_\sigma \in \CxLy$. Thus it follows that $\bF(x_n, y_n) \to \bF(x_0, y_0)$ as $n \to \infty$, i.e., $\bF \in \Cxy$.\\
We evaluate $\bF$ at $y=0$, getting
$$
\bF(x,0) = \intzi F_\sigma(x+z) g_\sigma(x,z) \, dz \ .
$$
By Lemma \ref{lem:techLemma} $(A2)$, $\bF(\cdot,0) \in \Lc$ and fulfills
\begin{equation}
\label{lem:g.p2} 
\norm{\bF (\cdot,0)}_{\Lc} \leq \norm{F_\sigma}_{L^2} \norm{g_\sigma}_{\Lxyc} \leq K_c \norm{\sigma}_{H^4_{\zeta,\C} \cap L^2_N}^2 \ .
\end{equation}

$(P2)$ It follows from \eqref{lem:g.p1} and \eqref{lem:g.p2}.

$(P3)$ It follows by Lemma \ref{lem:techLemma} $(A2)$ and the fact that  $\bF$ and $\bF(\cdot,0)$ are composition of real analytic maps.
\end{proof}

We study now the solution of equation \eqref{int.eq}.

\begin{lemma}
\label{lem:ab.eq}
Assume that $h_\sigma$ satisfies $(P)$ and $F_\sigma$ satisfies $(H)$. Then equation \eqref{int.eq} has a unique  solution $g_\sigma$ in $\CxLy\cap \Lxyc$ which satisfies $(P)$.
\end{lemma}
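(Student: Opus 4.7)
The plan is to use the inverse operator supplied by hypothesis $(H)$ pointwise in $x$ and then transfer the four properties in $(P)$ from $h_\sigma$ to $g_\sigma$ one at a time.

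\textbf{Step 1 (existence, uniqueness, and the $\Lxyc$ bound).} For every fixed $x\geq c$ the integral equation \eqref{int.eq} reads $(Id+\K_{x,\sigma})[g_\sigma(x,\cdot)]=h_\sigma(x,\cdot)$ on $\Lyz$. Since by $(H)$ the operator $Id+\K_{x,\sigma}$ is boundedly invertible on $\Lyz$, I set
\[
g_\sigma(x,\cdot):=(Id+\K_{x,\sigma})^{-1}[h_\sigma(x,\cdot)],
\]
which gives both existence and uniqueness. The uniform bound $\sup_{x\geq c}\|(Id+\K_{x,\sigma})^{-1}\|_{\L(\Lyz)}\leq C_\sigma$ from $(H)$ together with $h_\sigma\in\Lxyc$ immediately yields $g_\sigma\in\Lxyc$ with $\|g_\sigma\|_{\Lxyc}\leq C_\sigma\|h_\sigma\|_{\Lxyc}$.

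\textbf{Step 2 (regularity in $x$ and in $(x,y)$).} To prove $g_\sigma\in\CxLy$, I write
\[
g_\sigma(x+\epsilon,\cdot)-g_\sigma(x,\cdot)=(Id+\K_{x+\epsilon,\sigma})^{-1}\bigl[h_\sigma(x+\epsilon,\cdot)-h_\sigma(x,\cdot)\bigr]+\bigl[(Id+\K_{x+\epsilon,\sigma})^{-1}-(Id+\K_{x,\sigma})^{-1}\bigr]h_\sigma(x,\cdot).
\]
The first summand tends to $0$ in $\Lyz$ as $\epsilon\to 0$, thanks to the uniform operator bound and the fact that $h_\sigma\in\CxLy$. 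The second summand tends to $0$ because, by Lemma \ref{lem:der.K}, $x\mapsto \K_{x,\sigma}$ is continuous in $\L(\Lyz)$, so the same holds for its inverse (the uniform bound makes the standard resolvent identity work). Joint continuity $g_\sigma\in\Cxy$ then follows from the identity
\[
g_\sigma(x,y)=h_\sigma(x,y)-\intzi F_\sigma(x+y+z)\,g_\sigma(x,z)\,dz,
\]
since $h_\sigma\in\Cxy$ by hypothesis and Lemma \ref{lem:g.p} (applied to the auxiliary function $g_\sigma$ we just built) shows that the convolution term lies in $\Cxy$.

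\textbf{Step 3 (the trace at $y=0$ and the $(P2)$ estimate).} Evaluating the same identity at $y=0$ gives
\[
g_\sigma(x,0)=h_\sigma(x,0)-\intzi F_\sigma(x+z)\,g_\sigma(x,z)\,dz.
\]
By $(H)$ we have $F_\sigma\in L^2$, so Lemma \ref{lem:techLemma}(A2) yields $g_\sigma(\cdot,0)\in\Lc$ together with
\[
\|g_\sigma(\cdot,0)\|_{\Lc}\leq \|h_\sigma(\cdot,0)\|_{\Lc}+K_c\|F_\sigma\|_{L^2}\|g_\sigma\|_{\Lxyc}.
\]
Combining this with the $\Lxyc$ bound from Step 1 and the estimate $\|h_\sigma\|_0\leq K_c\|\sigma\|^2_{H^4_{\zeta,\C}\cap L^2_N}$ from $(P2)$ for $h_\sigma$, I obtain $(P2)$ for $g_\sigma$, with constant depending locally uniformly on $\sigma\in\S^{4,N}$.

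\textbf{Step 4 (analyticity in $\sigma$).} By $(H)$, $\sigma\mapsto(Id+\K_\sigma)^{-1}\in\L(\Lxyc)$ is real analytic, and by $(P3)$ the map $\sigma\mapsto h_\sigma\in\Lxyc$ is real analytic; Remark \ref{pairing} then gives analyticity of $\sigma\mapsto g_\sigma=(Id+\K_\sigma)^{-1}[h_\sigma]$ from $\S^{4,N}$ to $\Lxyc$. Real analyticity of $\sigma\mapsto g_\sigma(\cdot,0)$ into $\Lc$ follows from the formula for $g_\sigma(\cdot,0)$ above, the analyticity of $\sigma\mapsto F_\sigma\in L^2$ and of $\sigma\mapsto h_\sigma(\cdot,0)\in\Lc$, together with Lemma \ref{lem:techLemma}(A2) viewed as a bounded (hence analytic) bilinear map. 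The main nuisance is bookkeeping in Step 2: one has to turn the pointwise-in-$x$ inverse into a genuinely continuous family of operators, which is why the continuity of $x\mapsto \K_{x,\sigma}$ supplied by Lemma \ref{lem:der.K} plus the uniform invertibility in $(H)$ is doing the real work.
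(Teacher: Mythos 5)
Your proposal is correct and follows essentially the same route as the paper: solve pointwise in $x$ via $(Id+\K_{x,\sigma})^{-1}$ using the uniform bound in $(H)$, get $\CxLy$ from the continuity of $x\mapsto\K_{x,\sigma}$ (Lemma \ref{lem:der.K}), and then read off the remaining properties in $(P)$ from the rewritten identity $g_\sigma=h_\sigma-\K_\sigma[g_\sigma]$ together with Lemma \ref{lem:g.p}, Lemma \ref{lem:techLemma}(A2) and Remark \ref{pairing}. The only difference is presentational: the paper invokes Lemma \ref{lem:g.p} wholesale for the convolution term, while you re-derive part of its content (trace estimate and analyticity) directly, which is the same set of ingredients.
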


\noindent {\sl Proof.} We start to show that $g_\sigma$ exists and satisfies $(P1)$. Since $h_\sigma$ satisfies $(P)$ and $F_\sigma$ satisfies $(H)$,  it follows that for any $x \geq c$, $g_\sigma(x, \cdot):= (Id + \K_{x,\sigma})^{-1}[h_\sigma(x, \cdot)]$  
is the unique solution in   $\Lyz $ of the integral equation \eqref{int.eq}. Furthermore, by \eqref{est.inv}, $\norm{g_\sigma(x, \cdot)}_{\Lyz} \leq C_\sigma \norm{h_\sigma(x, \cdot)}_{\Lyz}$, which implies
\begin{equation}
\label{g.eq.10}
\norm{g_\sigma}_{\Lxyc} \leq C_\sigma \norm{h_\sigma}_{\Lxyc} \ . 
\end{equation}
Since $h_\sigma \in \CxLy$,  Lemma \ref{lem:der.K} implies  that $g_\sigma \in \CxLy$ as well. Thus we have proved that $g_\sigma \in \CxLy \cap \Lxyc$.
Now write 
\begin{equation}
\label{g.eq.1}
g_\sigma(x, y) =  h_\sigma(x,  y) - \intx{0}  F_\sigma(x+y+z) g_\sigma(x,z) \, dz \ .
\end{equation}
By Lemma \ref{lem:g.p} and  the assumption that $h_\sigma$ satisfies $(P)$, it follows that  the r.h.s. of formula \eqref{g.eq.1}  satisfies $(P)$.
\qed
\vspace{1em}\\

The following lemma will be useful in the following:
\begin{lemma}
\label{lem:Phi}
 \begin{enumerate}
 \item[$(i)$]
 Let $F_\sigma$ satisfy $(H)$, and  $g_\sigma \in \CxLy \cap \Lxyc$ be such that  $\norm{g_\sigma}_{\Lxyc} \leq K_c \norm{\sigma}_{H^4_{\zeta,\C} \cap L^2_N}^2$ and $\S^{4,N} \to \Lxyc,$ $\, \sigma \mapsto g_\sigma$ be real analytic. Denote 
\begin{equation}
\label{Phi.def}
\Phi_\sigma(x,y) := \intzi F_\sigma'(x+y+z) g_\sigma(x,z) \, dz \ .
\end{equation}
Then $\Phi_\sigma \in \CxLy \cap \Lxyc$, the map $\S^{4,N} \to \Lxyc,$ $\, \sigma \mapsto \Phi_\sigma$ is real analytic and 
\begin{align}
\label{eq:Phi.est1}
\norm{\Phi_\sigma}_{\Lxyc}\leq K_c \norm{\sigma}_{H^4_{\zeta,\C} \cap L^2_N}^2 \ ,
\end{align}
where $K_c >0$ depends locally uniformly on $ \sigma \in H^4_{\zeta,\C}\cap L^2_N$.
\item[$(ii)$] Let $g_\sigma$ as in item $(i)$, and furthermore let $g_\sigma \in \Cxy$ and $g_\sigma(\cdot, 0) \in \Lc$. Assume furthermore that  $\dery g_\sigma$ satisfies the same assumptions as $g_\sigma$ in item $(i)$. Then  $\Phi_\sigma$, defined in \eqref{Phi.def}, satisfies $(P)$.
\item[$(iii)$] Assume that $F_\sigma$ satisfies $(H)$ and that the map $\S^{4,N} \to \Hoc$, $\, \sigma \mapsto b_\sigma$ is real analytic with $\norm{b_\sigma}_{\Hoc} \leq K_c \norm{\sigma}_{H^4_{\zeta,\C} \cap L^2_N}^2 $. Then the function
$$
\phi_\sigma(x,y) : = F_\sigma(x+y) b_\sigma(x)
$$
satisfies $(P)$.
\end{enumerate}
\end{lemma}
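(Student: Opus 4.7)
The plan is to treat the three items in order, reducing each of them to the bilinear estimates of Lemma \ref{lem:techLemma} together with Sobolev embedding and, for item $(ii)$, one integration by parts in $z$.

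For item $(i)$, assumption $(H)$ gives $F_\sigma' \in L^2_4$, hence $F_\sigma' \in L^1$ with a norm bounded locally uniformly in $\sigma$. The function $\Phi_\sigma$ is then exactly $T_4(F_\sigma', g_\sigma)$ in the notation of Lemma \ref{lem:techLemma} $(A4)$, which immediately gives $\Phi_\sigma \in \Lxyc$ and the quadratic bound \eqref{eq:Phi.est1}. Continuity in $x$ with values in $\Lyz$ is obtained by the standard splitting
\[
\norm{\Phi_\sigma(x+\epsilon,\cdot)-\Phi_\sigma(x,\cdot)}_{\Lyz}
\leq \norm{F_\sigma'(\cdot+\epsilon)-F_\sigma'}_{L^1}\norm{g_\sigma(x+\epsilon,\cdot)}_{\Lyz}
+ \norm{F_\sigma'}_{L^1}\norm{g_\sigma(x+\epsilon,\cdot)-g_\sigma(x,\cdot)}_{\Lyz},
\]
already used in the proof of Lemma \ref{lem:g.p}, exploiting continuity of translations in $L^1$ and the hypothesis $g_\sigma\in\CxLy$. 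Real analyticity of $\sigma\mapsto\Phi_\sigma$ is then inherited from the composition of the real analytic maps $\sigma\mapsto F_\sigma'$, $\sigma\mapsto g_\sigma$ with the bounded bilinear map $T_4$.

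For item $(ii)$ it remains to upgrade the conclusion of item $(i)$ to property $(P)$, i.e.\ to add the joint continuity $\Phi_\sigma\in\Cxy$ and the bound $\Phi_\sigma(\cdot,0)\in\Lc$ (with real analyticity of $\sigma\mapsto\Phi_\sigma(\cdot,0)$ into $\Lc$). Joint continuity is obtained as in the proof of Lemma \ref{lem:g.p} $(P1)$, using that $F_\sigma'\in L^2$, that $g_\sigma\in\CxLy$, and the continuity of translations in $L^2$. The key step, and the only one requiring some care, is to control $\Phi_\sigma(\cdot,0)$: I integrate by parts in $z$,
\[
\Phi_\sigma(x,0)=\intzi F_\sigma'(x+z)\, g_\sigma(x,z)\, dz
= -F_\sigma(x)g_\sigma(x,0) - \intzi F_\sigma(x+z)\,\dery g_\sigma(x,z)\, dz,
\]
the boundary term at $+\infty$ vanishing because $F_\sigma,\,g_\sigma(x,\cdot)\in L^2$ (an approximation by compactly supported data in $y$ makes this rigorous). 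The first term is in $\Lc$ by Sobolev embedding ($F_\sigma\in L^\infty$) together with the hypothesis $g_\sigma(\cdot,0)\in\Lc$; the second term is in $\Lc$ by Lemma \ref{lem:techLemma} $(A2)$ applied to $(F_\sigma,\dery g_\sigma)$, where the hypothesis on $\dery g_\sigma$ is used precisely to ensure $\dery g_\sigma\in\Lxyc$ with the correct quadratic bound and real analytic dependence on $\sigma$. Putting the two contributions together gives $(P2)$ and $(P3)$ for $\Phi_\sigma(\cdot,0)$.

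For item $(iii)$, $\phi_\sigma=T_1(F_\sigma,b_\sigma)$ in the notation of Lemma \ref{lem:techLemma} $(A1)$; since $F_\sigma\in L^2\subset\Lc$ and $b_\sigma\in\Hoc\subset\Lc$ with the required quadratic bound and real analytic dependence, $(A1)$ yields $\phi_\sigma\in\Lxyc$ with $\norm{\phi_\sigma}_{\Lxyc}\le\norm{F_\sigma}_{\Lc}\norm{b_\sigma}_{\Lc}$. Joint continuity on $[c,\infty)\times[0,\infty)$ follows from $F_\sigma\in C^{0,\gamma}$ (Sobolev embedding via $(H)$) and $b_\sigma\in\Hoc\hookrightarrow C^0$. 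Finally $\phi_\sigma(x,0)=F_\sigma(x)b_\sigma(x)$ is in $\Lc$ because $F_\sigma\in L^\infty$ and $b_\sigma\in L^2_{x\ge c}$, with the quadratic estimate, and real analyticity in $\sigma$ follows from the bilinearity and boundedness of the pointwise product in the relevant spaces.

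The only genuinely non-routine step in this plan is the integration by parts in item $(ii)$ and, more precisely, the justification of the vanishing boundary term at $z=+\infty$; every other statement follows from Lemma \ref{lem:techLemma}, Sobolev embedding, and the standard fact that the composition of real analytic maps with bounded bilinear maps is real analytic.
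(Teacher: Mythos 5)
Your proposal is correct and follows essentially the same route as the paper: item $(i)$ via Lemma \ref{lem:techLemma} $(A4)$ and continuity of translations, item $(ii)$ via integration by parts in $z$ together with $(A2)$, the hypothesis $g_\sigma(\cdot,0)\in\Lc$ and the assumptions on $\dery g_\sigma$, and item $(iii)$ via $(A1)$ and Sobolev embedding (a proof the paper omits as easy). The only cosmetic point is your justification of the vanishing boundary term in item $(ii)$: membership of $F_\sigma$ and $g_\sigma(x,\cdot)$ in $L^2$ alone does not give decay, but since $F_\sigma\in H^1$ and, by the assumption on $\dery g_\sigma$, $g_\sigma(x,\cdot)\in H^1_{y\geq 0}$, both factors tend to zero at infinity, which is exactly how the paper disposes of the boundary term.
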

\begin{proof}
$(i)$ Clearly $\norm{\Phi_\sigma(x, \cdot)}_{\Lyz} \leq \norm{F_\sigma'}_{L^1}\norm{g_\sigma(x, \cdot)}_{\Lyz}$, and since $g_\sigma \in \Lxyc$ it follows that $\Phi_\sigma \in \Lxyc$ with $\norm{\Phi_\sigma}_{\Lxyc} \leq \norm{F_\sigma'}_{L^2_4} \norm{g_\sigma}_{\Lxyc}$, which implies \eqref{Phi.def}.
We show now that $\Phi_\sigma \in \CxLy$. For $\epsilon \neq 0$ one has
\begin{align*}
\norm{\Phi_\sigma(x+\epsilon, \cdot) - \Phi_\sigma(x, \cdot)}_{\Lyz} \leq \norm{ F_\sigma'(\cdot + \epsilon) - F_\sigma'}_{L^1} \norm{g_\sigma(x, \cdot)}_{\Lyz} + \norm{F_\sigma'}_{L^1}\norm{g_\sigma(x+\epsilon, \cdot) - g_\sigma(x, \cdot)}_{\Lyz}  \ .
\end{align*}
The continuity of the translation in $L^1$ and  the assumption  $g_\sigma \in \CxLy$ imply that $\norm{\Phi_\sigma(x+\epsilon, \cdot) - \Phi_\sigma(x, \cdot)}_{\Lyz} \to 0$ as $\epsilon \to 0$, thus $\Phi_\sigma \in \CxLy$. The real analyticity of  $\sigma \mapsto \Phi_\sigma$  follows from Lemma \ref{lem:techLemma} $(A4)$ and the fact that $\Phi_\sigma$ is composition of real analytic maps.

$(ii)$ Fix $x \geq c$ and use integration by parts to   write
\begin{equation}
\label{K'.parts} 
\Phi_\sigma(x,y)  = - F_\sigma(x+y)g_\sigma(x,0) - \intzi F_\sigma(x+y+z) \derz g_\sigma(x,z) \, dz \ ,
\end{equation}
where we used   that since $F_\sigma \in H^1$ $\, [g(x, \cdot) \in H^1_{y \geq 0}]$,  $\lim_{x \to \infty} F_\sigma(x) = 0$  $\, [\lim_{y \to \infty} g_\sigma(x,y) = 0]$. 
By the assumption and the proof of Lemma \ref{lem:g.p} $(P1)$, $\Phi_\sigma \in \Cxy$. We evaluate \eqref{K'.parts} at $y=0$ to get the formula
$$
\Phi_\sigma(x,0) = - F_\sigma(x)g_\sigma(x,0) - \intzi F_\sigma(x+z) \derz g_\sigma(x,z) \, dz \ .
$$
Together with Lemma \ref{lem:techLemma} $(A2)$ we have the estimate
\begin{equation}
\label{eq:Phi.est2}
\norm{\Phi_\sigma(\cdot, 0)}_{\Lc} \leq \norm{F_\sigma}_{H^1} \left( \norm{g_\sigma(\cdot, 0)}_{\Lc} + \norm{\dery g_\sigma}_{\Lxyc} \right)\leq K_c \norm{\sigma}_{H^4_{\zeta,\C} \cap L^2_N}^2 \ .
\end{equation}
Estimate  \eqref{eq:Phi.est2} together with estimate \eqref{eq:Phi.est1} imply that $\Phi_\sigma$ satisfies $(P2)$. Finally $\sigma \mapsto \Phi_\sigma(\cdot,0)$ is real analytic, being a composition of real analytic maps.

$(iii)$ We skip an easy proof.
\end{proof}

If the function  $h_\sigma$ is  more regular one  deduces better regularity properties of the corresponding solution of \eqref{int.eq}. 
%In the following we denote by
%\begin{equation}
%\label{norm1}
%\norm{h}_1 : = \sum_{0 \leq j_1 + j_2 \leq 1} \left( \norm{\derx^{j_1} \dery^{j_2}h}_{\Lxyc} +  \norm{\derx^{j_1} \dery^{j_2}h(\cdot, 0)}_{\Lc} \right) \ ,
%\end{equation}
%and by 
%\begin{equation}
%\norm{h}_2 : = \norm{h}_1 + \sum_{0 \leq j_1 + j_2 \leq 1} \left( \norm{\derx (\derx^{j_1} \dery^{j_2}h)}_{\Lxyc} +  \norm{\derx^{j_1} \dery^{j_2}h(\cdot, 0)}_{H^1_{x \geq c}} \right) \ . 
%\end{equation}

\begin{lemma}
\label{lem:ab.eq2} Consider the integral equation \eqref{int.eq}  and assume  that $F_\sigma$ satisfies $ (H)$.
Assume that $h_\sigma$, $\derx h_\sigma$, $\dery h_\sigma$ satisfy $(P)$. Then  $g_\sigma$ solution of \eqref{int.eq} satisfies $(P)$. Its derivatives $\derx g_\sigma$ and $\dery g_\sigma$  satisfy $(P)$ and solve the equations 
\begin{align}
\label{x.eq}
&\left( Id + \K_{x,\sigma} \right) [\derx g_\sigma] = \derx h_\sigma - \K_{x,\sigma}'[g_\sigma] \ ,\\
\label{y.eq}
& \dery g_\sigma = \dery h_\sigma - \K_{x,\sigma}'[g_\sigma] \ .
\end{align}
% Furthermore $g_\sigma$ fulfills 
%\begin{equation}
%\label{g.1.est} 
%\norm{g_\sigma}_1 \leq K_R \norm{h_\sigma}_1 \ , 
%\end{equation}
%where $K_R$ depends locally uniformly on $\sigma \in H^4_{\zeta,\C}\cap L^2_N$.
%\item Assume that $h_\sigma$ satisfies $(Q)$. Then the solution  $g_\sigma$ of \eqref{int.eq} satisfies $(Q)$.
% and fulfills 
%\begin{equation}
%\label{C_g}
%\norm{g_\sigma}_2 \leq K_R' \norm{h_\sigma}_2
%\end{equation}
%and $K_R'$ depends locally uniformly  on $\sigma \in H^4_{\zeta,\C}\cap L^2_N$.
%\end{enumerate} 
\end{lemma}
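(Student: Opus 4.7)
The plan is to start from Lemma \ref{lem:ab.eq}, which, since $h_\sigma$ satisfies $(P)$ and $F_\sigma$ satisfies $(H)$, directly produces the unique solution $g_\sigma \in \CxLy \cap \Lxyc$ of \eqref{int.eq} and shows that it satisfies $(P)$. The two identities \eqref{x.eq} and \eqref{y.eq} are the formal $\derx$- and $\dery$-derivatives of \eqref{int.eq}, so what remains is to justify the existence of these derivatives and to upgrade them from weak regularity to full $(P)$. Throughout I abbreviate $\Phi_\sigma := \K'_{x,\sigma}[g_\sigma]$.

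For the $y$-derivative I would use the fundamental theorem of calculus for $h_\sigma$ (available by $(P1)$ for $\dery h_\sigma$) and for $F_\sigma$ (available because $F_\sigma \in H^1$ by $(H)$), together with Fubini (justified by Cauchy--Schwarz on the $z$-integral and the boundedness of $[y_1,y_2]$), to obtain, for all $x \geq c$ and $0 \leq y_1 \leq y_2$,
\begin{equation*}
g_\sigma(x,y_2) - g_\sigma(x,y_1) = \int_{y_1}^{y_2}\bigl( \dery h_\sigma(x,y) - \Phi_\sigma(x,y) \bigr)\, dy.
\end{equation*}
The integrand is continuous in $y$ (for $\dery h_\sigma$ by $(P1)$; for $\Phi_\sigma$ by continuity of translations in $L^2$ applied to $F_\sigma'$), so $\dery g_\sigma$ exists pointwise and equation \eqref{y.eq} holds. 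Lemma \ref{lem:Phi}$(i)$ applied to $g_\sigma$ gives $\Phi_\sigma \in \CxLy \cap \Lxyc$ with the required norm bound and real analyticity into $\Lxyc$; together with $\dery h_\sigma$ satisfying $(P)$, equation \eqref{y.eq} then shows that $\dery g_\sigma$ already fulfils the item $(i)$ hypotheses of Lemma \ref{lem:Phi}. Since $g_\sigma$ itself satisfies $(P)$ (so in particular $g_\sigma \in \Cxy$ and $g_\sigma(\cdot,0) \in \Lc$), I can now invoke Lemma \ref{lem:Phi}$(ii)$ to conclude that $\Phi_\sigma$ satisfies $(P)$ in full. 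Hence \eqref{y.eq} writes $\dery g_\sigma$ as a difference of two $(P)$-functions, proving that $\dery g_\sigma$ satisfies $(P)$.

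For the $x$-derivative I would differentiate \eqref{int.eq} via difference quotients: subtracting \eqref{int.eq} at $x$ from \eqref{int.eq} at $x+\epsilon$ gives
\begin{equation*}
(Id + \K_{x,\sigma})\Bigl[\tfrac{g_\sigma(x+\epsilon,\cdot) - g_\sigma(x,\cdot)}{\epsilon}\Bigr](y) = \tfrac{h_\sigma(x+\epsilon,y) - h_\sigma(x,y)}{\epsilon} - \int_0^\infty \tfrac{F_\sigma(x+\epsilon+y+z) - F_\sigma(x+y+z)}{\epsilon}\, g_\sigma(x+\epsilon,z)\, dz.
\end{equation*}
As $\epsilon \to 0$ the right-hand side converges in $\Lyz$ to $\derx h_\sigma(x,\cdot) - \Phi_\sigma(x,\cdot)$, using Lemma \ref{lem:der.K} on the kernel difference quotient and the continuity of $g_\sigma(x,\cdot)$ in $x$ from $g_\sigma \in \CxLy$; the uniform bound \eqref{est.inv} then lets me pass to the limit on the left, producing $\derx g_\sigma$ and establishing \eqref{x.eq}. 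Since $\derx h_\sigma$ satisfies $(P)$ by hypothesis and $\Phi_\sigma$ satisfies $(P)$ by the previous step, the right-hand side of \eqref{x.eq} satisfies $(P)$; applying Lemma \ref{lem:ab.eq} to the integral equation with data $\derx h_\sigma - \Phi_\sigma$ in place of $h_\sigma$ yields a solution satisfying $(P)$, which by the uniqueness statement in Lemma \ref{lem:ab.eq} coincides with $\derx g_\sigma$.

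The main obstacle is the bootstrap in the $y$-derivative step: one cannot directly invoke Lemma \ref{lem:Phi}$(ii)$ to obtain full $(P)$ for $\Phi_\sigma$ from just $(P)$ for $g_\sigma$, since that lemma also requires the weak item-$(i)$ control of $\dery g_\sigma$. The FTC-plus-Fubini identity above is what manufactures this weak control out of the mere continuity of $F_\sigma$ and $g_\sigma$, and only after this intermediate step does the bootstrap close.
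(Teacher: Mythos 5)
Your proposal is correct and follows essentially the same route as the paper: Lemma \ref{lem:ab.eq} for $g_\sigma$, equation \eqref{y.eq} first with the weak control of Lemma \ref{lem:Phi}$(i)$, then the Lemma \ref{lem:Phi}$(ii)$ bootstrap to get $(P)$ for $\K'_{x,\sigma}[g_\sigma]$ and hence for $\dery g_\sigma$, and finally $x$-difference quotients plus Lemma \ref{lem:ab.eq} applied to \eqref{x.eq} for $\derx g_\sigma$. Your only deviations are cosmetic: you obtain \eqref{y.eq} by the fundamental theorem of calculus and Fubini instead of $\Lyz$-difference quotients, and in the $x$-step you group the terms so that the fixed operator $Id+\K_{x,\sigma}$ acts on the quotient (rather than $Id+\K_{x+\epsilon,\sigma}$ as in the paper), which are both legitimate.
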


\noindent {\em Proof.} 
By Lemma \ref{lem:ab.eq}, $g_\sigma$ satisfies $(P)$.\\

{\em $\dery g_\sigma$ satisfies $(P)$.}
 For $\epsilon \neq 0$ sufficiently small, we have in $\Lyz$  
 $$
 \frac{g_\sigma(x, y+\epsilon) - g_\sigma(x, y)}{\epsilon} = \Psi^\epsilon_\sigma(x,y)
 $$
 where 
 \begin{align}
\label{tras.y}
& \Psi^\epsilon_\sigma(x,y) :=  \frac{h_\sigma(x, y+\epsilon) - h_\sigma(x,y) }{\epsilon}-  \intzi \frac{F_\sigma(x+y+\epsilon+z) - F_\sigma(x+y+z)}{\epsilon} g_\sigma(x,z) \, dz \ . 
\end{align}
Define
 $$
 \Psi^0_\sigma(x,y):= \dery h_\sigma(x,y) - \intzi F_\sigma'(x+y+z) g_\sigma(x,z) \, dz \ .
 $$
Since  $\dery h_\sigma$ and $g_\sigma$ satisfy  $(P)$,  by Lemma \ref{lem:Phi} $(i)$ it follows that   $\Psi^0_\sigma \in \CxLy \cap \Lxyc$, the map $\S^{4,N} \to \Lxyc,$ $\, \sigma \mapsto \Psi^0_\sigma$ is real analytic and 
$\norm{\Psi^0_\sigma}_{\Lxyc}\leq K_c \norm{\sigma}_{H^4_{\zeta,\C} \cap L^2_N}^2$. Furthermore one verifies   that 
 $$
\dery g_\sigma(x,\cdot)= \lim_{\epsilon \to 0} \frac{g_\sigma(x, \cdot+\epsilon) - g_\sigma(x, \cdot)}{\epsilon} =  \lim_{\epsilon \to 0} \Psi^\epsilon_\sigma(x, \cdot) = \Psi^0_\sigma(x, \cdot) \qquad \mbox{in } \Lyz \ .
 $$
Thus $\dery g_\sigma$ fulfills
\begin{equation}
\label{dery.g.eq} 
\dery g_\sigma(x,y) = \dery h_\sigma(x,y) - \intzi F_\sigma'(x+y+z) g_\sigma(x,z) dz \ , 
\end{equation}
i.e.,  $\dery g_\sigma$ satisfies equation \eqref{y.eq}.
Since $\dery g_\sigma = \Psi^0_\sigma$, $g_\sigma$ satisfies the assumptions of Lemma \ref{lem:Phi} $(ii)$. Since $\dery h_\sigma$ satisfies $(P)$ as well, it follows that $\dery g_\sigma$ satisfies $(P)$.\\

{\em $\derx g_\sigma$ satisfies $(P)$}.  For $\epsilon \neq 0$ small enough we have in $\Lyz$
 $$
\left( Id + \K_{x+\epsilon,\sigma}\right) \left[\frac{g_\sigma(x+\epsilon, \cdot) - g_\sigma(x, \cdot)}{\epsilon}\right] = \Phi^\epsilon_\sigma(x, \cdot)
 $$
 where
 $$
 \Phi^\epsilon_\sigma(x,y):= \frac{h_\sigma(x+\epsilon, y) - h_\sigma(x,y)}{\epsilon} - \intzi \frac{F_\sigma(x+y+\epsilon+z) - F_\sigma(x+y+z)}{\epsilon} g_\sigma(x,z) \, dz \ .
 $$
 Define
 $$
 \Phi^0_\sigma(x,y):= \derx h_\sigma(x,y) - \intzi F_\sigma'(x+y+z) g_\sigma(x,z) \, dz \ .
 $$
Proceeding as above, one proves that  $\Phi^0_\sigma$ satisfies $(P)$, and
 $$
 \lim_{\epsilon \to 0} \Phi^\epsilon_\sigma(x, \cdot) = \Phi^0_\sigma(x, \cdot) \qquad \mbox{in } \Lyz \ .
 $$
 Together with Lemma \ref{lem:der.K} we get for $x >c$ in $\Lyz$
 \begin{equation}
 \label{derx.g.limit} 
 \derx g_\sigma(x, \cdot) = \lim_{\epsilon \to 0} \frac{g_\sigma(x+\epsilon, \cdot) - g_\sigma(x,\cdot)}{\epsilon} = \lim_{\epsilon \to 0} \left( Id + \K_{x+\epsilon,\sigma}\right)^{-1} \Phi^\epsilon_\sigma(x, \cdot) = \left( Id + \K_{x,\sigma}\right)^{-1} \Phi^0_\sigma(x, \cdot) \ .
 \end{equation}
In particular $(Id + \K_\sigma) (\derx g_\sigma(x,\cdot)) = \Phi^0_\sigma(x,\cdot)$. Since $\Phi^0_\sigma$ satisfies $(P)$, by Lemma \ref{lem:ab.eq},   $\derx g_\sigma$ satisfies $(P)$. 
%and , as in \eqref{dery.g}
%\begin{equation}
%\label{derx.g}
% \norm{\derx g_\sigma}_{\Lxyc} + \norm{\derx g_\sigma(\cdot,0)}_{\Lc} \leq K_R \norm{h_\sigma}_1 \ , 
%  \end{equation}
%  where $K_R$ depends locally uniformly on $\sigma \in H^4_{\zeta,\C}\cap L^2_N$.
  Formula \eqref{derx.g.limit} implies that 
  \begin{equation}
  \label{derx.g.eq} 
  \derx g_\sigma(x,y) + \intzi F_\sigma(x+y+z) \derx g_\sigma(x,z) \, dz = \derx h_\sigma(x,y) - \intzi F_\sigma'(x+y+z) g_\sigma(x,z) \, dz \ ,
  \end{equation}
  namely $\derx g_\sigma$ satisfies equation \eqref{x.eq}.\\

\qed

%%%%%%%%%%%%%%%%%%%%%%%%%%%%%%%%%%%%%%%%%%%%%%%%%%%%%%%%%%%%%%%%%%%%%%%%%%%%%
%%%%%%%%%%%%%%%%%%%%%%%%%%%%%%%%%%%%%%%%%%%%%%%%%%%%%%%%%%%%%%%%%%%%%%%%%%%%

\section{Proof from Section \ref{sec:inv.scat}}
\label{proof.prop.B}

\subsection{Properties of $\K_{x,\sigma}^\pm$  and $f_{\pm, \sigma}$.}
We begin with proving  some properties of  $\K_{x,\sigma}^\pm$  and $f_{\pm, \sigma}$, defined in  \eqref{kxdef} and \eqref{f.def},  which will be needed later. 
\vspace{1em}\\
\vspace{1em}
{\em \underline{Properties of $Id + \K^{\pm}_{x,\sigma}$}}.  In order to solve the integral equations  \eqref{opma1} we need  the operator  $Id + \K^{+}_{x,\sigma}$ to be invertible on $L^2_{y \geq 0}$ (respectively   $Id + \K^{-}_{x,\sigma}$ to be invertible on $L^2_{y \leq 0}$). The following result is well known: 
\begin{lemma}[\cite{deift,KaCo}] Let $\sigma \in \S^{4, 0}$ and fix $c \in \R$. Then the following holds:
\label{lem:Fx}
\begin{enumerate}[(i)]
 \item For every $x\geq c$,   $\K_{x,\sigma}^{+} : L^2_{y_\geq 0} \rightarrow L^2_{y_\geq 0}$ is a bounded linear operator; moreover 
\begin{equation}
\sup_{x \geq c} \norm{\K_{x,\sigma}^{+}}_{\L(L^2_{y\geq 0})} < 1, \quad \mbox{ and } \quad 
\norm{\K_{x, \sigma}^+}_{\L(L^2_{y\geq 0})} \leq   \int\limits_x^{+ \infty} |F_{+, \sigma}(\xi)| \; d \xi \rightarrow 0 \quad \mbox{ if } \quad x \rightarrow + \infty.
\label{Kx_norm1}
\end{equation}
 \item The map $\K_\sigma^{+}: \Lxyc \to \Lxyc$,  $f \mapsto \K_\sigma^+[f]$, where $\K_\sigma^+[f](x,y) :=  K_{x, \sigma}^+[ f](y) $, is linear and bounded. Moreover the operators $Id\pm \K_\sigma^{+}$ are invertible on $\Lxyc$  and
 there exists a constant $K_c>0$, which depends locally uniformly on $\sigma \in \S^{4,0}$,  such that
\begin{equation} 
  \norm{\left( Id \pm \K_\sigma^{+}\right)^{-1}}_{\L(\Lxyc)} \leq K_c.
  \label{Kx_norm}
  \end{equation}
   \item $\sigma \mapsto \left( Id \pm \K_\sigma^{+}\right)^{-1}$ are real analytic as maps from $\S^{4, 0}$ to $\L(\Lxyc)$.
\end{enumerate}
Analogous results hold also for $\K_{x,\sigma}^-$ replacing $\Lxyc$ by $L^2_{x \leq c}L^2_{y \leq 0}$.
\end{lemma}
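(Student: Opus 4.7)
For part $(i)$, I would begin with a Schur-type estimate on the kernel. For fixed $x\geq c$ and $f\in L^2_{y\geq 0}$, Cauchy–Schwarz applied to $\int_0^\infty |F_{+,\sigma}(x+y+z)|\cdot|f(z)|\,dz$ followed by Fubini and the substitution $\xi=x+y+z$ gives
\begin{equation*}
\norm{\K_{x,\sigma}^+[f]}_{L^2_{y\geq 0}}^2
\leq \left(\intx{x} |F_{+,\sigma}(\xi)|\,d\xi\right)^{2}\norm{f}_{L^2_{y\geq 0}}^2 \ .
\end{equation*}
Since by Proposition \ref{rem:dec_rel} we have $F_{+,\sigma}\in H^1\cap L^2_3\subset L^1$, this already yields the second half of \eqref{Kx_norm1}. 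To obtain $\sup_{x\geq c}\norm{\K_{x,\sigma}^+}_{\L(L^2_{y\geq 0})}<1$ is the main obstacle. One first notes that $\K_{x,\sigma}^+$ is self-adjoint (the kernel $F_{+,\sigma}(x+y+z)$ is real and symmetric in $y,z$ since $F_{+,\sigma}=\F_+^{-1}(\rho_+)$ is real-valued, cf. Proposition \ref{prop:inv.r}$(ii)$) and compact (its kernel is Hilbert–Schmidt because $F_{+,\sigma}\in L^2$). By the classical inverse scattering results cited in \cite{deift,KaCo}, the assumption that $\rho_\pm$ comes from $\sigma\in\S^{4,0}$ (in particular $|\rho_\pm(k)|<1$ for $k\neq 0$ and the asymptotics at $k=0$ of Proposition \ref{prop:inv.r}$(v)$, which together rule out bound states) imply that $Id\pm\K_{x,\sigma}^+$ is injective on $L^2_{y\geq 0}$ for every $x\in\R$, hence $\pm 1\notin\mathrm{spec}(\K_{x,\sigma}^+)$ and, by self-adjointness, $\norm{\K_{x,\sigma}^+}_{\L(L^2_{y\geq 0})}<1$ pointwise in $x$.

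To upgrade this to uniformity in $x\geq c$, I would combine two ingredients. First, choose $x_0>c$ large enough so that $\intx{x_0}|F_{+,\sigma}(\xi)|\,d\xi\leq 1/2$; then $\sup_{x\geq x_0}\norm{\K_{x,\sigma}^+}\leq 1/2$. Second, the map $x\mapsto \K_{x,\sigma}^+$ is continuous in the operator norm (by an argument analogous to \eqref{op.K.diff}, using continuity of translations in $L^1$), so $x\mapsto \norm{\K_{x,\sigma}^+}$ is continuous on the compact set $[c,x_0]$ where it is strictly less than $1$; hence its supremum there is $<1$. Combining the two regimes gives a constant $\lambda_\sigma<1$ such that $\sup_{x\geq c}\norm{\K_{x,\sigma}^+}_{\L(L^2_{y\geq 0})}\leq \lambda_\sigma$. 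The local uniformity of $\lambda_\sigma$ in $\sigma$ follows from the real analyticity (and hence continuity) of $\sigma\mapsto F_{+,\sigma}$ from $\S^{4,0}$ to $H^1\cap L^2_3$ (Proposition \ref{rem:dec_rel}$(i)$), which ensures that the thresholds above can be chosen in a neighborhood of any given $\sigma$.

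For part $(ii)$, integrating the fiberwise bound over $x\geq c$ yields
\begin{equation*}
\norm{\K_\sigma^+[f]}_{\Lxyc}^2=\int_c^{+\infty}\norm{\K_{x,\sigma}^+[f(x,\cdot)]}_{L^2_{y\geq 0}}^2\,dx\leq \lambda_\sigma^2\norm{f}_{\Lxyc}^2\ ,
\end{equation*}
so $\norm{\K_\sigma^+}_{\L(\Lxyc)}\leq \lambda_\sigma<1$. The Neumann series $(Id\pm\K_\sigma^+)^{-1}=\sum_{n\geq 0}(\mp \K_\sigma^+)^n$ then converges absolutely in $\L(\Lxyc)$ with the bound $\norm{(Id\pm\K_\sigma^+)^{-1}}_{\L(\Lxyc)}\leq 1/(1-\lambda_\sigma)=:K_c$, which depends locally uniformly on $\sigma$ by the previous paragraph.

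For part $(iii)$, the map $\sigma\mapsto \K_\sigma^+\in\L(\Lxyc)$ is real analytic: it is linear in $F_{+,\sigma}$, and $\sigma\mapsto F_{+,\sigma}$ is real analytic from $\S^{4,0}$ to $L^1$ by Proposition \ref{rem:dec_rel}$(i)$ combined with the Sobolev embedding $H^1\cap L^2_3\hookrightarrow L^1$. Real analyticity of $\sigma\mapsto (Id\pm\K_\sigma^+)^{-1}$ then follows from the convergence of the Neumann series, uniformly on a neighborhood of any fixed $\sigma_0\in\S^{4,0}$ on which $\lambda_\sigma$ stays below some $\lambda_0<1$, using general properties of analytic maps into Banach spaces (see Remark \ref{entire.func}). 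The analogous statements for $\K_{x,\sigma}^-$ on $L^2_{x\leq c}L^2_{y\leq 0}$ follow by the symmetric argument, exchanging the roles of $+\infty$ and $-\infty$.
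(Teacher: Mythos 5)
The paper itself offers no proof of Lemma \ref{lem:Fx} -- it is quoted from \cite{deift,KaCo} -- so your reconstruction has to stand on its own, and its overall architecture (Schur-type bound for the second half of \eqref{Kx_norm1}, pointwise strict bound in $x$ upgraded to uniformity on $[c,\infty)$ via operator-norm continuity in $x$ plus decay of the $L^1$ tail at $+\infty$, fiberwise integration and a Neumann series for $(ii)$, analyticity of $\sigma\mapsto F_{+,\sigma}$ plus the Neumann series for $(iii)$) is indeed the standard one. However, the crucial pointwise step in $(i)$ has a genuine gap: from the injectivity of $Id\pm\K_{x,\sigma}^{+}$ you conclude ``$\pm1\notin\operatorname{spec}(\K_{x,\sigma}^{+})$ and, by self-adjointness, $\norm{\K_{x,\sigma}^{+}}<1$''. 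For a compact self-adjoint operator, excluding the two points $\pm1$ from the spectrum does not exclude eigenvalues of modulus larger than $1$ (an operator with an eigenvalue $2$ has both $Id+\K$ and $Id-\K$ injective), so the inference fails unless you first know $\norm{\K_{x,\sigma}^{+}}\leq 1$. The only norm bound you establish is the $L^1$-tail bound, which is useless for moderate $x$. The missing ingredient is the classical symbol (Plancherel/Hankel) estimate: writing the quadratic form of the Hankel-type kernel $F_{+,\sigma}(x+y+z)$ through the Fourier representation \eqref{F.four} gives $|\langle \K_{x,\sigma}^{+}f,f\rangle|\leq \sup_k|\rho_+(\sigma,k)|\,\norm{f}_{\Lyz}^2\leq\norm{f}_{\Lyz}^2$, with strict inequality for $f\neq0$ because $|\rho_+(\sigma,k)|<1$ for $k\neq0$. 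Note that you cannot simply say $\sup_k|\rho_+|<1$: in the generic case $\rho_\pm(\sigma,0)=-1$ by Proposition \ref{prop:inv.r}$(v)$, so the supremum equals $1$ and the strict pointwise bound $\norm{\K_{x,\sigma}^{+}}<1$ really needs the combination ``quadratic form $\leq1$ with strict inequality off $f=0$'' together with compactness and self-adjointness (the norm is attained at an eigenfunction), or equivalently ``norm $\leq1$'' plus the Fredholm-alternative exclusion of the eigenvalues $\pm1$ that you invoke from \cite{deift,KaCo}. With that estimate inserted, your continuity-plus-decay argument for uniformity in $x$, and its locally uniform dependence on $\sigma$, go through.

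Two smaller points. Compactness of $\K_{x,\sigma}^{+}$ does not follow from $F_{+,\sigma}\in L^2$ alone: the Hilbert--Schmidt norm is $\bigl(\int_x^\infty(\xi-x)|F_{+,\sigma}(\xi)|^2d\xi\bigr)^{1/2}$, so you should invoke $F_{+,\sigma}\in L^2_3$ (Proposition \ref{rem:dec_rel}), which you already use elsewhere. Finally, since the lemma you are proving is exactly the statement the paper attributes to \cite{deift,KaCo}, importing injectivity of $Id\pm\K_{x,\sigma}^{+}$ from those references is acceptable but does most of the work; if you want a self-contained argument, the same Plancherel computation that gives the quadratic-form bound also yields injectivity, since $(Id\pm\K_{x,\sigma}^{+})f=0$ forces $\norm{f}^2\leq|\langle\K_{x,\sigma}^{+}f,f\rangle|<\norm{f}^2$ unless $f=0$.
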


\vspace{1em}
{\em \underline{Properties of $f_{\pm, \sigma}$}}. First   note that $f_{\pm, \sigma}$, defined by \eqref{f.def}, are well defined. Indeed for any $\sigma \in \S^{4, 0}$,  Proposition \ref{rem:dec_rel} implies that $F_{\pm, \sigma} \in H^1 \cap L^2_3 \subset L^2$. Hence for any $x \geq c$, $\, y \geq 0$ the map given by $z \mapsto F_{+, \sigma}(x+y+z)F_{+, \sigma}(x+z) $ is in $L^1_{z \geq 0}$. Similarly, for any $x \geq c$, $\, y \geq 0$, the map given by $z \mapsto F_{-, \sigma}(x+y+z) F_{-, \sigma}(x+z) $ is in $L^1_{z \leq 0}$.

In the following we will use repeatedly  the Hardy inequality \cite{hardy}
\begin{equation}
\label{hardy.ineq}
\norm{\x^m \intx{x} g(z) dz}_{\Lc} \leq K_c \norm{\langle x \rangle^{m+1} g}_{\Lc}, \qquad \forall \, m \geq 0 \ .
\end{equation}
The inequality is well known, but for sake of completeness we give a proof of it in  Lemma \ref{lem:techLemma} $(A3)$. \\
We analyze now the maps $\sigma \mapsto f_{\pm, \sigma}$. Since the analysis of $f_{+, \sigma}$ and the one of $f_{-, \sigma}$ are similar, we will consider $f_{+, \sigma}$ only. To shorten the notation we will suppress the subscript '' + '' in what follows. 
\begin{lemma}
\label{f.prop}
Fix $N \in \Z_{\geq 0}$ and let $\sigma \in \S^{4,N}$. Let $f_{\sigma}\equiv f_{+, \sigma}$ be given as in \eqref{f.def}. Then for every $j_1, j_2\in \Z_{\geq 0}$ with $0 \leq j_1 + j_2 \leq N+1$, the function $\derx^{j_1}\dery^{j_2}f_\sigma$ satisfies $(P)$.
% \begin{enumerate}[(i)]
%\item $\derx^{j_1} \dery^{j_2} f_\sigma$ satisfies $(H1)$.
%\item   $\norm{ \derx^{j_1} \dery^{j_2} f_{\sigma} }_{\Lxyc} + \norm{ \derx^{j_1} \dery^{j_2} f_{\sigma}(\cdot,0) }_{\Lc} \leq K_c \norm{\sigma}_{H^4_{\zeta,\C} \cap L^2_{N}}^2$, where $K_c >0$ can be chosen locally uniformly for $\sigma \in H^4_{\zeta,\C} \cap L^2_{N}$.
%\item The map $R \mapsto \derx^{j_1} \dery^{j_2} f_{\sigma}$ $\, [R \mapsto (\derx^{j_1} \dery^{j_2} f_{\sigma})(\cdot, 0)]$ is real analytic as a map from $ \S^{4,N}$ to $\Lxyc$  $[\Lc]$.
%\end{enumerate}
\end{lemma}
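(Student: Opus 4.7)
\vspace{1em}
\noindent\emph{Proof plan.}
Since $F_\sigma$ enters $f_\sigma$ only through the sums $x+y+z$ and $x+z$, each $\derx$ or $\dery$ brings down a sum-derivative of $F_\sigma$. Expanding by Leibniz,
$$
\derx^{j_1}\dery^{j_2} f_\sigma(x,y) \;=\; \sum_{l=0}^{j_1}\binom{j_1}{l}\, I_{j_2+l,\,j_1-l}(x,y), \qquad I_{a,b}(x,y)\;:=\;\int_0^{\infty} F_\sigma^{(a)}(x+y+z)\, F_\sigma^{(b)}(x+z)\,dz,
$$
with $a+b = j_1+j_2 \leq N+1$, and it suffices to prove every $I_{a,b}$ satisfies $(P)$. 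The key structural difficulty is that Proposition \ref{rem:dec_rel} supplies polynomial-weight decay only for $F_\sigma\in H^1\cap L^2_3$ and $F'_\sigma \in H^N\cap L^2_4$, whereas $F_\sigma^{(a)}$ for $a\geq 2$ has Sobolev regularity but no a priori weighted decay. I will absorb this via repeated integration by parts in $z$.

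\noindent\emph{Step 1 (reduction by integration by parts).} For $b\geq 1$, writing $F_\sigma^{(a)}(x+y+z)=\partial_z F_\sigma^{(a-1)}(x+y+z)$, integration by parts yields
$$
I_{a,b}(x,y) \;=\; -\,F_\sigma^{(a)}(x+y)\,F_\sigma^{(b-1)}(x) \;-\; I_{a+1,\,b-1}(x,y),
$$
the boundary at $z=\infty$ vanishing because $F_\sigma^{(c)} \in H^{N+1-c}\hookrightarrow H^1 \hookrightarrow C_0(\R)$ for every $c\leq N$, a condition maintained at each intermediate step. Iterating $b$ times gives
$$
I_{a,b}(x,y)\;=\; \sum_{k=0}^{b-1}(-1)^{k+1}\,F_\sigma^{(a+k)}(x+y)\,F_\sigma^{(b-1-k)}(x)\;+\; (-1)^{b}\,I_{a+b,\,0}(x,y),
$$
concentrating all derivatives on a single factor of the bulk integral.

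\noindent\emph{Step 2 (the bulk term).} In $I_{a+b,0}$ the first factor is $F_\sigma^{(a+b)}=(F'_\sigma)^{(a+b-1)}\in L^2$ since $a+b-1\leq N$, and the second factor is $F_\sigma \in L^2_3 \subset L^2_{1, x\geq c}$. Lemma \ref{lem:techLemma} $(A5)$ gives $I_{a+b,0}\in \Lxyc$ with $\|I_{a+b,0}\|_{\Lxyc}\leq K_c\,\|\sigma\|_{H^4_{\zeta,\C}\cap L^2_N}^{2}$. Each boundary product $F_\sigma^{(a+k)}(x+y)F_\sigma^{(b-1-k)}(x)$ has both indices $\leq N$, so both factors lie in $H^1\cap L^2 \hookrightarrow L^\infty\cap L^2$ and Lemma \ref{lem:techLemma} $(A1)$ yields the $\Lxyc$-bound.

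\noindent\emph{Step 3 (the $y=0$ slice).} From $I_{a,b}(x,0)=\int_x^{\infty} F_\sigma^{(a)}(u)F_\sigma^{(b)}(u)\,du$, the Hardy inequality $(A3)$ reduces the question to $\langle x\rangle F_\sigma^{(a)}F_\sigma^{(b)}\in L^2$. Applying the same IP reduction at $y=0$ brings every term into a form with one factor equal to $F_\sigma$; since $\langle x\rangle F_\sigma \in L^\infty$ (because $(\langle x\rangle F_\sigma)' = \tfrac{x}{\langle x\rangle}F_\sigma + \langle x\rangle F'_\sigma \in L^1$ by Cauchy--Schwarz, and $W^{1,1}\hookrightarrow L^\infty$ in one dimension), the product lies in $L^2$. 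For the remaining boundary products $F_\sigma^{(c)}(x)F_\sigma^{(d)}(x)$ with $c+d\leq N$, the $L^\infty$-bound $\|F_\sigma^{(c)}\|_{L^\infty}\|F_\sigma^{(d)}\|_{L^2}$ gives the $\Lc$ estimate.

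\noindent\emph{Step 4 (continuity and analyticity).} The continuity properties of $(P1)$ follow from continuity of translations in $L^2$ and dominated convergence applied to each representation; the estimates of $(P2)$ are exactly those produced in Steps 2--3. Finally $(P3)$ follows because $\sigma \mapsto F_\sigma \in H^1\cap L^2_3$ and $\sigma \mapsto F'_\sigma \in H^N\cap L^2_4$ are real analytic by Proposition \ref{rem:dec_rel}, while all multilinear operations appearing above (products, and the maps $T_1, T_5$) are bounded, hence analytic.

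\noindent The main obstacle is precisely what Step 1 resolves: the naive Leibniz expansion distributes derivatives across both factors, which would require weighted-$L^2$ control of high-order derivatives of $F_\sigma$ that we do not have. The integration-by-parts trick concentrates all derivatives on the factor that only needs $L^2$-regularity (via $(A5)$), while leaving the polynomial-decay requirement on the single $F_\sigma$-factor where it is granted by Proposition \ref{rem:dec_rel}.
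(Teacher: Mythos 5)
Your proof is correct, but it takes a genuinely different route from the paper's. The paper first observes that, after the substitution $u=x+z$, the $x$-derivative collapses the integral entirely, $\derx f_\sigma(x,y)=-F_\sigma(x+y)F_\sigma(x)$, so for $j_1\geq 1$ the derivative $\derx^{j_1}\dery^{j_2}f_\sigma$ is a pure product formula \eqref{derf(x,y).1} with all orders $\leq N$, and only the case $j_1=0$ requires a single integration by parts \eqref{derf(x,y).2}, producing an integral of $F_\sigma^{(j_2-1)}$ (order $\leq N$) against $F'_\sigma$, which carries the weighted decay; these pieces are then fed into Lemma \ref{lem:P} $(i)$--$(ii)$. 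You instead keep the full Leibniz expansion under the integral and iterate integration by parts so as to push \emph{all} derivatives onto the $x+y+z$ slot, ending with boundary products plus a single bulk term $\int_0^\infty F_\sigma^{(a+b)}(x+y+z)F_\sigma(x+z)\,dz$ in which the differentiated factor (up to order $N+1$) needs only $L^2$ via Lemma \ref{lem:techLemma} $(A5)$, while the undifferentiated $F_\sigma$ carries the weight; the $y=0$ slice is then handled by Hardy $(A3)$ together with your observation that $\x F_\sigma\in L^\infty_{x\geq c}$ (the paper uses the same trick, via $F_\sigma(x)=-\int_x^\infty F_\sigma'$, in its $n=0$ case). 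The trade-off: the paper's first observation makes the mixed-derivative case purely algebraic and lets it factor the remaining work through Lemma \ref{lem:P}, while your scheme is more uniform and self-contained (everything reduced directly to $(A1)$, $(A3)$, $(A5)$), at the cost of carrying an integral term with $N+1$ derivatives, which is harmless precisely because $(A5)$ asks no decay of that factor. Both correctly avoid the real obstruction, namely the absence of weighted bounds on higher derivatives of $F_\sigma$.

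One small slip to fix: in Step 1 you say you write $F_\sigma^{(a)}(x+y+z)=\partial_z F_\sigma^{(a-1)}(x+y+z)$, but that manipulation would move derivatives onto the second factor and give $I_{a,b}=-F_\sigma^{(a-1)}(x+y)F_\sigma^{(b)}(x)-I_{a-1,b+1}$; the formula you display (and then iterate, correctly) corresponds to writing $F_\sigma^{(b)}(x+z)=\partial_z F_\sigma^{(b-1)}(x+z)$ instead. The displayed recursion, the vanishing of the boundary terms at $z=\infty$ (all orders appearing there are $\leq N$, hence in $H^1\hookrightarrow C_0$), and the subsequent estimates are all consistent, so this is only a wording error, not a gap.
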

\begin{proof}
We prove at the same time $(P1), (P2)$ and $(P3)$ for any $j_1, j_2 \geq 0$ with $j_1 + j_2 = n$ for any $0 \leq n \leq N+1$.

\noindent{\em Case $n=0$.} Then $j_1 = j_2 = 0$. By Proposition \ref{rem:dec_rel}, for any $N \in \Z_{\geq 0}$ one has   $F_\sigma \equiv  F_{+, \sigma} \in H^1 \cap L^2_3$. 
\begin{enumerate}
\item[$(P1)$]  
We show  that $f_\sigma \in \CxLy$. For any $x \geq c$ fixed one has  $\norm{f_\sigma(x, \cdot)}_{\Lyz} \leq \norm{F_\sigma}_{L^1} \norm{F_\sigma(x+\cdot)}_{\Lyz}$, which shows that $f_\sigma(x, \cdot) \in \Lyz$. For $\epsilon \neq 0$ sufficiently small one has
\begin{align*}
\norm{f_\sigma(x+\epsilon, \cdot) - f_\sigma(x,\cdot)}_{\Lc} \leq & \norm{F_\sigma}_{L^1} \norm{F_\sigma(x+\epsilon+\cdot) -F_\sigma(x+\cdot) }_{\Lyz} \\
& +  \norm{F_\sigma(\epsilon + \cdot) - F_\sigma}_{L^1}\norm{F_\sigma(x+\cdot)}_{\Lyz}
\end{align*}
which goes to $0$ as $\epsilon \to 0$, due to the continuity of the translations in $L^p$-space, $1 \leq p < \infty$. Thus $f_\sigma \in \CxLy$.

We show now that $f_\sigma \in \Lxyc$.  Introduce $h_\sigma(x,y):= F_{\sigma}(x+y)$. Then $h_\sigma \in \Lxyc$, since for some $C, C' >0$
 \begin{equation}
 \label{F(x+y).est}
 \norm{h_\sigma}_{\Lxyc} \leq C \norm{ F_\sigma}_{L^2_{1/2, x \geq c}} \leq C' \norm{\sigma}_{H^4_{\zeta,\C}} 
 \end{equation}
where for the first [second] inequality we used Lemma \ref{lem:techLemma} $(A0)$ [Proposition \ref{rem:dec_rel} $(i)$].
By  Lemma \ref{lem:techLemma}$(A4)$ and using once more Proposition \ref{rem:dec_rel} $(i)$, one  gets
\begin{equation}
\label{f_sigma.norm}
\norm{f_{\sigma}}_{\Lxyc} \leq C'' \norm{F_{\sigma}}_{\Loc} \norm{h_\sigma}_{\Lxyc}  \leq C''' \norm{F_{\sigma}}_{L^2_1} \norm{h_\sigma}_{\Lxyc} \leq C'''' \norm{\sigma}_{H^4_{\zeta,\C}}^2 \ ,
\end{equation}
for some $ C'', C''', C'''' >0 $.  Thus $f_\sigma \in \Lxyc$.

To show that  $f_\sigma \in \Cxy$ proceed as in Lemma \ref{lem:g.p}. 
%We show now that for every $x \geq c$ fixed, $f_\sigma(x, \cdot) \in \Cyz$.
%Indeed $(x,y,z) \mapsto F_\sigma(x+y+z) F_\sigma(x+z)$ is continuous and furthermore $z \mapsto F_\sigma(x+y+z) F_\sigma(x+z)$ is in $L^1_{z \geq 0}$. By Lebesgue's dominated convergence theorem it follows that $f_\sigma(x,\cdot) \in \Cyz$.

Finally we show that $f_\sigma(\cdot,0) \in \Lc$.  Evaluate \eqref{f.def} at $y=0$ to get $f_{\sigma}(x,0)=\intx{x} F_{\sigma}^2(z)\, dz$. Using the Hardy inequality \eqref{hardy.ineq},   $F_{\sigma}(x) = -\int_{x}^{+ \infty} F_{\sigma}'(s) \, ds$ and Proposition \ref{rem:dec_rel} one obtains 
\begin{align}
\nonumber
\norm{f_{\sigma}(\cdot, 0)}_{\Lc} &\leq \norm{\x F_{\sigma}^2}_{\Lc} \leq \norm{\x F_{\sigma}}_{\Lic} \norm{F_{\sigma}}_{\Lc} \leq K_c \norm{\x F_{\sigma}'}_{\Loc} \norm{F_{\sigma}}_{\Lc}\\ 
\label{f.0.T}
&\leq K_c' \norm{\la x \ra^2 F_{\sigma}'}_{\Lc} \norm{F_{\sigma}}_{\Lc} \leq K_c'' \norm{\sigma}_{H^4_{\zeta,\C} \cap L^2_{N}}^2 \ ,
\end{align}
for some constants $K_c, K_c', K_c'' >0$. Thus  $f_\sigma(\cdot,0) \in \Lc$.
 
\item[$(P2)$] It follows from \eqref{f_sigma.norm} and \eqref{f.0.T}.

 \item[$(P3)$] By Proposition \ref{rem:dec_rel} $(i)$, $\S^{4,0} \to H^1_\C \cap L^2_3, $ $\, \sigma \mapsto F_\sigma$ is real analytic and by Lemma \ref{lem:techLemma} $(A0)$ so is $\S^{4,0} \to \Lxyc, $ $\, \sigma \mapsto h_\sigma$. 
 By Lemma \ref{lem:techLemma} $(A4)$ it  follows that $\S^{4,0} \to \Lxyc, $ $\, \sigma \mapsto f_\sigma$ is real analytic.
 Since the map $\sigma \mapsto f_{\sigma}(\cdot,0)$ is a composition of real analytic maps, it is real analytic as a map from $\S^{4,N}$ to $\Lc$.
\end{enumerate}

\noindent{\em Case $n \geq 1$.}   By Proposition \ref{rem:dec_rel},   $F_{\sigma} \in H^{N+1}$ and $\norm{F_{\sigma}}_{H^{N+1}} \leq C' \norm{\sigma}_{H^4_{\zeta,\C} \cap L^2_{N}}$. By Sobolev embedding theorem, it follows that $F_\sigma \in C^{N, \gamma}(\R, \R)$, $\gamma < \tfrac{1}{2}$.   Moreover since $\lim_{x \to +\infty} F_{\sigma}(x) = 0$, one has
\begin{equation}
\label{derxfR}
 \derx f_{\sigma}(x,y) = \derx \intx{x} F_{\sigma}(y+z) F_{\sigma}(z) \, dz = - F_{\sigma}(x+y) F_{\sigma}(x) \ . \end{equation}
Consider first the case  $j_1 \geq 1$. Then $j_2 \leq N$. By \eqref{derxfR} it follows that 
\begin{align}
\label{derf(x,y).1}
\derx^{j_1} \dery^{j_2} f_{\sigma}(x,y) &= -\sum_{l=0}^{j_1-1}\binom{j_1-1}{l} F_{\sigma}^{(j_2 + l)}(x+y) F_{\sigma}^{(j_1-1-l)}(x)  \ ,
\end{align}
where  $F^{(l)}_{\sigma}\equiv \derx^l F_{\sigma}$. Thus $\derx^{j_1} \dery^{j_2} f_{\sigma}$ is a linear combination of terms of the form \eqref{bH.def0}, with $b_\sigma = F_\sigma^{(j_1-1-l)}$ satisfying the assumption of Lemma \ref{lem:P} $(i)$,  thus $\derx^{j_1} \dery^{j_2} f_{\sigma}$, with $j_1 \geq 1$, satisfies $(P)$. 
\vspace{1em}\\
Consider now the case $j_1 =0$. Then  $1 \leq j_2 \leq n \leq  N+1$. Since  $ \dery F_{\sigma}(x+y+z) = \derz F_{\sigma}(x+y+z) = F_{\sigma}'(x+y+z) $, by integration by parts one obtains
\begin{align}
\label{derf(x,y).2}
\dery^{j_2} f_{\sigma}(x,y)  &= - F_{\sigma}^{(j_2-1)}(x+y) F_{\sigma}(x) - \intzi  F_{\sigma}^{(j_2-1)}(x+y+z)  F_{\sigma}'(x+z) dz \ .
\end{align}
Then, by Lemma \ref{lem:P} $(i)$ and $(ii)$, $\dery^{j_2} f_{\sigma}$ is the sum of two terms which satisfy $(P)$, thus it satisfies $(P)$ as well.
\end{proof}

\begin{lemma}
\label{lem:P}
 Fix $ c \in \R$,  $N \in \Z_{\geq 0}$ and let  $\sigma \in \S^{4, N}$. Let $F_\sigma$ be given as in \eqref{F.four}. 
 Then the following holds true:
\begin{enumerate}[(i)]
\item Let $\sigma \mapsto b_\sigma$ be  real analytic as a map from $\S^{4,N}$ to $H^1_{x \geq c}$, satisfying $\norm{b_\sigma}_{H^1_{x \geq c}} \leq K_c \norm{\sigma}_{H^4_{\zeta,\C} \cap L^2_N}$, where $K_c >0$ depends locally uniformly on $\sigma \in H^4_{\zeta,\C}\cap L^2_N$. Then for every integer $k$  with  $0 \leq k \leq N$,  the function
\begin{equation}
\label{bH.def0}
\bH(x,y) := F_\sigma^{(k)}(x+y)\, b_\sigma(x) 
\end{equation}
satisfies $(P)$.
\item For every integer $0 \leq k \leq N$, the function
\begin{equation}
\label{bG.def0} 
\bG(x,y) = \intzi  F_{\sigma}^{(k)}(x+y+z)  F_{\sigma}'(x+z) dz
\end{equation}
satisfies $(P)$.
\item Let $N \geq 1$ and let $G_\sigma$ be a function satisfying $(P)$. Then the function
\begin{equation}
\label{bF.def0}
\bF(x,y) := \intzi F_\sigma'(x+y+z) G_\sigma(x,z) \, dz
\end{equation}
satisfies $(P)$.
\end{enumerate}
\end{lemma}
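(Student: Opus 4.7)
My plan is to prove the three items in sequence, using throughout the regularity of $F_\sigma$ given by Proposition~\ref{rem:dec_rel}: for $\sigma \in \S^{4,N}$ one has $F_\sigma \in H^1 \cap L^2_3$ and $F_\sigma' \in H^N \cap L^2_4$, with norms locally uniformly controlled by $\norm{\sigma}_{H^4_{\zeta,\C} \cap L^2_N}$. Consequently, for every $0 \le k \le N$, $F_\sigma^{(k)} \in L^2$ and, by the one-dimensional Sobolev embedding $H^{N+1-k} \hookrightarrow L^\infty$ (valid since $N+1-k \ge 1$), also $F_\sigma^{(k)} \in L^\infty$, and the corresponding maps are real analytic into these spaces. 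Continuity and analyticity conclusions in each part will follow routinely from continuity of translation in $L^p$ and from composing real analytic maps with bounded bilinear operations, so below I focus on the decisive quantitative bounds for $(P1)$--$(P2)$.

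For $(i)$, the factored form $\bH(x,y) = F_\sigma^{(k)}(x+y)\, b_\sigma(x)$ makes the estimates immediate. Translation invariance of the $L^2$-norm in $y$ gives $\norm{\bH(x,\cdot)}_{L^2_{y \ge 0}} \le \norm{F_\sigma^{(k)}}_{L^2} |b_\sigma(x)|$, hence $\norm{\bH}_{\Lxyc} \le \norm{F_\sigma^{(k)}}_{L^2}\norm{b_\sigma}_{L^2_{x \ge c}}$, while $|\bH(x,0)| \le \norm{F_\sigma^{(k)}}_{L^\infty} |b_\sigma(x)|$ yields $\norm{\bH(\cdot,0)}_{\Lc} \le \norm{F_\sigma^{(k)}}_{L^\infty} \norm{b_\sigma}_{L^2_{x \ge c}}$. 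Both right-hand sides are quadratic in $\norm{\sigma}_{H^4_{\zeta,\C} \cap L^2_N}$, giving $(P2)$; the continuity part of $(P1)$ follows from continuity of $F_\sigma^{(k)}$ together with $b_\sigma \in \Hoc \hookrightarrow C^0$.

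For $(ii)$ I will rewrite, by the change of variable $\zeta = x+z$, $\bG(x,y) = \int_x^\infty F_\sigma^{(k)}(\zeta+y) F_\sigma'(\zeta)\, d\zeta$. Minkowski's inequality in $\zeta$ gives $\norm{\bG(x,\cdot)}_{L^2_{y \ge 0}} \le \norm{F_\sigma^{(k)}}_{L^2} \int_x^\infty |F_\sigma'(\zeta)|\, d\zeta$, and the Hardy inequality of Lemma~\ref{lem:techLemma}~$(A3)$ controls the $L^2_{x \ge c}$-norm of $x \mapsto \int_x^\infty |F_\sigma'|\, d\zeta$ by $K_c \norm{F_\sigma'}_{L^2_1}$, which is finite since $F_\sigma' \in L^2_4$. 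For the boundary value $\bG(x,0) = \int_x^\infty F_\sigma^{(k)}(\zeta) F_\sigma'(\zeta)\, d\zeta$, Cauchy--Schwarz yields $|\bG(x,0)| \le \norm{F_\sigma^{(k)}}_{L^2}\norm{F_\sigma'}_{L^2_{\zeta \ge x}}$, and Fubini gives $\int_c^\infty \norm{F_\sigma'}_{L^2_{\zeta \ge x}}^2 \, dx = \int_c^\infty (\zeta - c)|F_\sigma'(\zeta)|^2 \, d\zeta \le \norm{F_\sigma'}_{L^2_1}^2$; hence $\bG(\cdot,0) \in \Lc$ with the desired quadratic estimate.

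For $(iii)$ the obvious route through Lemma~\ref{lem:Phi}~$(ii)$ is blocked, since that would require control on $\dery G_\sigma$ for which we have no information; this is the main obstacle. My plan is to bypass the integration by parts by exploiting that $F_\sigma' \in L^1 \cap L^2$. Fibrewise in $x$, Young's inequality applied to the convolution-type operation $y \mapsto \int_0^\infty F_\sigma'(x+y+z) G_\sigma(x,z)\, dz$ (an $L^1$-kernel against an $L^2_{z \ge 0}$-function) gives $\norm{\bF(x,\cdot)}_{L^2_{y \ge 0}} \le \norm{F_\sigma'}_{L^1}\norm{G_\sigma(x,\cdot)}_{L^2_{z \ge 0}}$, whence $\norm{\bF}_{\Lxyc} \le \norm{F_\sigma'}_{L^1}\norm{G_\sigma}_{\Lxyc}$. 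For the trace, Cauchy--Schwarz in $z$ gives $|\bF(x,0)| \le \norm{F_\sigma'}_{L^2}\norm{G_\sigma(x,\cdot)}_{L^2_{z \ge 0}}$, hence $\norm{\bF(\cdot,0)}_{\Lc} \le \norm{F_\sigma'}_{L^2}\norm{G_\sigma}_{\Lxyc}$. Both estimates inherit the quadratic bound from $\norm{G_\sigma}_{\Lxyc} \le K_c \norm{\sigma}_{H^4_{\zeta,\C}\cap L^2_N}^2$, and the remaining continuity and analyticity statements follow by the same generic arguments sketched above.
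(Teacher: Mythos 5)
Your proposal is correct and follows essentially the same route as the paper: the product estimate for $\bH$, the kernel bound via Hardy's inequality for $\bG$, and for $\bF$ the Minkowski/Young-type bound in $y$ together with a Cauchy--Schwarz trace estimate at $y=0$ (thereby avoiding, exactly as the paper does, any appeal to control on $\dery G_\sigma$), are precisely the content of Lemma \ref{lem:techLemma} $(A1)$, $(A5)$/$(A3)$, $(A2)$ and Lemma \ref{lem:Phi} $(i)$ used in the paper's proof. The only difference is that you reprove these auxiliary bilinear estimates inline rather than citing them, which is immaterial.
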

\begin{proof}
$(i)$ {\em $\bH$ satisfies $(P1)$.} Clearly  $\bH(x, \cdot) \in \Lyz$ and by the continuity of the translations in $L^2$ one verifies that $\norm{\bH(x+\epsilon, \cdot) - \bH(x,\cdot)}_{\Lyz} \to 0 $ as $\epsilon \to 0$, thus proving that $ \bH \in \CxLy$. \\
We show now that  $\bH \in \Lxyc$. By Lemma \ref{lem:techLemma} $(A1)$,  Proposition \ref{rem:dec_rel} and the assumption on $b_\sigma$, one has that
\begin{equation}
\label{derf.norm1}
 \norm{\bH }_{\Lxyc} \leq C \norm{F_\sigma}_{H^{N+1}} \norm{b_\sigma}_{\Lc} \leq K_c \norm{\sigma}_{H^4_{\zeta,\C} \cap L^2_{N}}^2 \ , 
\end{equation}
 where $K_c >0$ can be chosen locally uniformly for $\sigma \in H^4_{\zeta,\C} \cap L^2_{N}$.
\\
For $0 \leq k \leq N$, $F_\sigma^{(k)} \in C^0(\R, \R)$ by the Sobolev embedding theorem. Thus $\bH \in \Cxy$. 
%In order to prove that  for every $x \geq c$ fixed $\bH(x,\cdot) \in \Cyz$, it is enough to note that the r.h.s. of formula \eqref{bH.def0} is continuous as a function of $y$.
\\
Finally we show that $\bH(\cdot,0) \in \Lc$. We evaluate the r.h.s. of formula  \eqref{bH.def0} at $y=0$, getting
$$
\bH(x,0)  =  F_{\sigma}^{(k)}(x) b_\sigma(x) \ .$$
It follows that there exists $C >0$ and $K_c >0$, depending  locally uniformly on $\sigma \in H^4_{\zeta,\C} \cap L^2_{N}$, such that
\begin{equation}
\label{norm_derf(x,0)31}
 \norm{\bH(\cdot,0)}_{\Lc}  \leq C \norm{F_\sigma}_{H^{N+1}}\norm{b_\sigma}_{H^1_{x\geq c}} \leq K_c \norm{\sigma}_{H^4_{\zeta,\C} \cap L^2_{N}}^2 \ ,
\end{equation}
where we used that both $F_{\sigma}^{(k)}$ and $b_\sigma$ are in $H^1_{x \geq c}$. 

{\em $\bH$ satisfies $(P2)$.} It follows from \eqref{derf.norm1} and \eqref{norm_derf(x,0)31}. 

{\em $\bH$ satisfies $(P3)$.} The real analyticity property follows from Lemma \ref{lem:techLemma} and Proposition \ref{rem:dec_rel}, since for every $0 \leq k \leq N$, $\bH$ is product of real analytic maps. \\

$(ii)$ {\em $\bG$ satisfies $(P1)$.}
We show  that $\bG \in \Lxyc$. By Lemma \ref{lem:techLemma} $(A5)$ and Proposition \ref{rem:dec_rel} it follows that
\begin{equation}
\label{deryf.norm2}
\norm{\bG}_{\Lxyc} \leq  \norm{F_\sigma}_{H^{N+1}} \norm{F'_\sigma}_{L^1} \leq K_c \norm{\sigma}_{H^4_{\zeta,\C} \cap L^2_{N}}^2 \ , 
\end{equation}
 where $K_c >0$ depends  locally uniformly on $\sigma \in H^4_{\zeta,\C} \cap L^2_{N}$.  One verifies easily that $\bG \in \CxLy$.
 \\
In order to prove that $\bG \in \Cxy, $ proceed as in Lemma \ref{lem:g.p}. 
% for every $x \geq c$ fixed, $\bG(x, \cdot) \in \Cyz$, just remark  that $F_\sigma \in C^N(\R, \R)$  and use Lebesgue's dominated convergence theorem. 
 \\
Now  we show that $\bG(\cdot, 0) \in \Lc$. We  evaluate formula \eqref{bG.def0} at $y=0$ getting that
$$
\bG(x,0) =  \int_0^{\infty} F_{\sigma}^{(k)}(x+z) F_{\sigma}'(x+z) dz  \ .
$$
Let $h_\sigma'(x,z) := F_{\sigma}'(x+z)$. By Lemma \ref{lem:techLemma} $(A0)$ and Proposition \ref{rem:dec_rel} one has 
$$
\norm{h_\sigma'}_{\Lxyc} \leq \norm{\langle x \rangle^{1/2} F_{\sigma}'}_{\Lc} \leq K_c \norm{\sigma}_{H^4_{\zeta,\C} \cap L^2_{N}} \ ,
$$
where $K_c >0$ can be chosen locally uniformly for $\sigma \in H^4_{\zeta,\C} \cap L^2_{N}$. Thus by Lemma \ref{lem:techLemma} $(A2)$ one gets   
\begin{equation}
\label{dery.f.norm3}
\norm{\bG(\cdot,0)}_{\Lc} \leq  K_c \norm{F_{\sigma}^{(k)}}_{\Lc}\norm{h_\sigma'}_{\Lxyc} \leq K_c \norm{\sigma}_{H^4_{\zeta,\C} \cap L^2_{N}}^2 \ ,
\end{equation}
where $K_c >0$ can be chosen locally uniformly for $\sigma \in H^4_{\zeta,\C} \cap L^2_{N}$.

{\em $\bG$ satisfies $(P2)$.} It follows from \eqref{deryf.norm2} and \eqref{dery.f.norm3}. 

{\em $\bG$ satisfies $(P3)$.} The real analyticity property follows from Lemma \ref{lem:techLemma} and Proposition \ref{rem:dec_rel}, since for every $0 \leq k\leq N$, $\bG$ is composition of real analytic maps.\\

$(iii)$ {\em $\bF$ satisfies $(P1)$.} By Lemma \ref{lem:Phi} $(i)$, $\bF \in \CxLy \cap \Lxyc$ and 
$$\norm{\bF}_{\Lxyc} \leq \norm{F_\sigma'}_{L^2_4} \norm{G_\sigma}_{\Lxyc}
 \leq K_c \norm{\sigma}_{H^4_{\zeta,\C} \cap L^2_{N}}^2 \ . $$
Proceeding as in the proof of Lemma \ref{f.prop} $(P1)$ one shows that $\bF \in \Cxy$.
%  Since $F_\sigma \in H^{N+1}$, $N \geq 1$, we have $F_\sigma' \in C^0(\R, \R) \cap L^2_4$. By Lebesgue's dominated convergence theorem, it follows that for every $x \geq c$ fixed, $\bF(x, \cdot) \in \Cyz$. 
Since $F_\sigma' \in H^N$, $N \geq 1$, $F_\sigma'$ is a continuous function. Thus we can  evaluate $\bF$ at $y=0$, obtaining $\bF(x,0) = \intzi F_\sigma'(x+z) G_\sigma(x,z) \, dz$. By Lemma \ref{lem:techLemma} $(A2)$ we have that 
 \begin{equation*}
\norm{\bF(\cdot,0)}_{\Lc} \leq \norm{F_\sigma'}_{\Lc} \norm{G_\sigma}_{\Lxyc} \leq   K_c \norm{\sigma}_{H^4_{\zeta,\C} \cap L^2_{N}}^2 \ .
\end{equation*}
The proof that $\bF$ satisfies $(P2)$ and $(P3)$ follows as in the previous items. We omit the details.
\end{proof}

\begin{lemma}
\label{f_sigma.P}
Let $N \geq 1$ be fixed. For every $j_1, j_2 \geq 0$ with $1 \leq j_1 + j_2 \leq N$, the function $f_\sigma^{j_1, j_2}$ defined in \eqref{f_n0} and its  derivatives $\dery f_\sigma^{j_1, j_2}$, $\derx f_\sigma^{j_1, j_2}$ satisfy $(P)$.
\end{lemma}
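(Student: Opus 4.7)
\emph{Plan.} The approach is to split $f_\sigma^{j_1, j_2}$ (together with its $x$- and $y$-derivatives) into two kinds of pieces: on the one hand the pure derivatives $\derx^{j_1'} \dery^{j_2'} f_\sigma$ with $j_1' + j_2' \leq n+1$ (where $n := j_1+j_2$), and on the other hand a finite sum of integrals of the form
\begin{equation*}
I_{m, k}(x, y) := \intzi F_\sigma^{(m)}(x+y+z) \, \derx^k B_\sigma(x, z) \, dz,
\end{equation*}
where $m \geq 1$, $m + k \leq n+1$, and $k \leq n$. The pure-derivative pieces are disposed of directly by Lemma \ref{f.prop}, whose range of validity ($j_1' + j_2' \leq N+1$) exactly covers the relevant orders. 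For the integrals, at step $n$ of the induction in the proof of Lemma \ref{prop:AnBn} we have at our disposal $(P)$ for every $B_\sigma$-derivative of order $\leq n$, which is the complete budget the induction supplies.

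For $m=1$, the integral $I_{1, k}$ is precisely the object treated by Lemma \ref{lem:P}$(iii)$ with $G_\sigma = \derx^k B_\sigma$ (of order $k \leq n$, hence satisfying $(P)$ by induction), so $(P)$ follows. For $m \geq 2$, I integrate by parts $m-1$ times in $z$, using $\derz F_\sigma^{(j)}(x+y+z) = F_\sigma^{(j+1)}(x+y+z)$ together with the vanishing of $F_\sigma^{(j)}$ at $+\infty$ (from the Sobolev regularity in Proposition \ref{rem:dec_rel}), to rewrite
\begin{equation*}
I_{m, k}(x, y) = \sum_{j=0}^{m-2} (-1)^{j+1} F_\sigma^{(m-1-j)}(x+y) \, \derz^j \derx^k B_\sigma(x, 0) + (-1)^{m-1} \intzi F_\sigma'(x+y+z) \, \derz^{m-1} \derx^k B_\sigma(x, z) \, dz.
\end{equation*}
Each boundary term is a product $F_\sigma^{(r)}(x+y) \cdot b_\sigma(x)$ to which Lemma \ref{lem:P}$(i)$ applies: the trace $b_\sigma = \derz^j \derx^k B_\sigma(\cdot, 0)$ has order $j+k \leq (m-2)+k \leq n-1$, while its $x$-derivative $\derx b_\sigma = \derz^j \derx^{k+1} B_\sigma(\cdot, 0)$ has order $j+k+1 \leq n$; both orders lie inside the induction budget, so by $(P1)$ both belong to $\Lc$, giving $b_\sigma \in H^1_{x \geq c}$ as required. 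The surviving integral is an instance of Lemma \ref{lem:P}$(iii)$ with $G_\sigma = \derz^{m-1} \derx^k B_\sigma$ of order $m-1+k \leq n$, again known to satisfy $(P)$ by induction.

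\emph{Main obstacle.} The whole difficulty is combinatorial book-keeping. The tightest case is $\derx f_\sigma^{j_1, j_2}$, where $\derx$ can land on the $B_\sigma$ factor and push its order from $k$ to $k+1 = n$; one has to check that even in this case every $B_\sigma$-derivative that appears, both in the boundary traces $\derz^j \derx^{k} B_\sigma$, $\derz^j \derx^{k+1} B_\sigma$ after the integration by parts, and in the surviving integrand $\derz^{m-1}\derx^k B_\sigma$, has total order at most $n$—so that the induction hypothesis of Lemma \ref{prop:AnBn} supplies $(P)$ and the $H^1_{x \geq c}$-regularity demanded by Lemma \ref{lem:P}$(i)$. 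Granted this, no new analytic estimates are required: the norm bound $(P2)$ follows term by term from the corresponding bounds in Lemma \ref{f.prop} and Lemma \ref{lem:P}, and the real analyticity $(P3)$ follows from that of each constituent, since compositions and products of real analytic maps with values in Banach spaces are again real analytic.
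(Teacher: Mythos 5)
Your proposal is correct and follows essentially the same route as the paper: split $f_\sigma^{j_1,j_2}$ and its derivatives into pure derivatives of $f_\sigma$ (handled by Lemma \ref{f.prop}) plus integrals $\intzi F_\sigma^{(m)}(x+y+z)\,\derx^{k}B_\sigma(x,z)\,dz$, reduce the latter by integration by parts to boundary products $F_\sigma^{(r)}(x+y)\,b_\sigma(x)$ and an integral against $F_\sigma'$, and invoke Lemma \ref{lem:P} $(i)$ and $(iii)$ together with the induction budget (all $B_\sigma$-derivatives of total order $\leq n$ satisfy $(P)$), exactly as the paper does with its terms $\bF^{k_1,k_2}$. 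The only difference is presentational: the paper justifies the $\dery$- and $\derx$-differentiation via difference quotients applied to $\bF^{k_1,k_2}$ before passing to the limit, whereas you differentiate formally and treat the resulting family $I_{m,k}$, $m+k\leq n+1$, $k\leq n$, uniformly, which amounts to the same bookkeeping.
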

\begin{proof}
First note that by Lemma \ref{f.prop} the terms  $\derx^{j_1}\dery^{j_2} f_\sigma$ and its  derivatives $\derx^{j_1+1}\dery^{j_2} f_\sigma $, $\derx^{j_1}\dery^{j_2+1} f_\sigma$  satisfy $(P)$. It thus remains to show that 
\begin{equation}
\label{mix.term}
\bF^{k_1, k_2}(x,y) :=   \intzi \derx^{k_1} F_\sigma(x+y+z) \, \derx^{k_2}B_\sigma(x,z) \, dz \ ,  \qquad k_1 \geq 1, \ k_2  \geq 0, \quad k_1 + k_2 = n \leq N 
\end{equation}
and its derivatives $\dery \bF^{k_1,k_2}, \, \derx \bF^{k_1,k_2} $ satisfy $(P)$. 
Remark that, by the induction assumption in the proof of Lemma \ref{prop:AnBn}, for every integers $k_1, k_2 \geq 0$ with $k_1+k_2 \leq n$, $\derx^{k_1} \dery^{k_2} B_\sigma$ satisfies $(P)$.\\

{\em $\bF^{k_1, k_2}$ satisfies $(P)$}. If $k_1=1$, it follows by Lemma \ref{lem:P} $(iii)$. Let $k_1 >1$. By integration by parts $k_1 -1 $ times we obtain
\begin{equation}
\label{bF.R}
\begin{aligned}
\bF^{k_1, k_2}(x,y) = & \sum_{l=1}^{k_1-1} (-1)^l \derx^{k_1 - l} F_\sigma(x+y) (\derx^{k_2}\derz^{l-1} B_\sigma)(x,0) \\
& + (-1)^{k_1-1} \intzi  F_\sigma'(x+y+z)  \, \derx^{k_2} \derz^{k_1-1} B_\sigma(x,z) \, dz \ ,
\end{aligned}
\end{equation}
where we used that for $1 \leq l \leq k_1 -1$ one has $F_\sigma^{(k_1 - l)} \in H^1$ $\, [(\derx^{k_2}\dery^{l-1} B_\sigma)(x,\cdot) \in H^1_{y \geq 0}]$, thus $\lim_{x \to \infty} F_\sigma^{(k_1 - l)}(x) = 0$ $\, [\lim_{y \to \infty} \derx^{k_2}\dery^{l-1} B_\sigma)(x,y) = 0]$.
Consider the  r.h.s. of \eqref{bF.R}. It is a linear combinations of terms of the form \eqref{bH.def0} and \eqref{bF.def0}. By the induction assumption, these terms satisfy the hypothesis of Lemma \ref{lem:P} $(i)$ and $(iii)$. It follows that
$\bF^{k_1, k_2}$ satisfies $(P)$, and in particular there exists a constant $K_c>0$, depending locally uniformly on $\sigma \in H^4_{\zeta,\C} \cap L^2_{N}$, such that
\begin{equation}
\label{F.norm}
\norm{ \bF^{k_1, k_2}}_{\Lxyc} + \norm{ \bF^{k_1, k_2}(\cdot,0)}_{\Lc} \leq K_c \norm{\sigma}_{H^4_{\zeta,\C} \cap L^2_{N}}^2  \ .
\end{equation}

{\em  $\dery \bF^{k_1, k_2}$ satisfies $(P)$.}
For $\epsilon \neq 0$ sufficiently small, by integration by parts $k_1$-times we obtain
\begin{align*}
%\label{line1.02}
\frac{\bF^{k_1, k_2}(x,y + \epsilon)  - \bF^{k_1, k_2}(x,y)}{\epsilon}  = & \sum_{l=1}^{k_1}(-1)^{l}  \frac{\derx^{k_1-l}F_\sigma(x+y + \epsilon) - \derx^{k_1-l} F_\sigma(x+y) }{\epsilon} (\derx^{k_2}\derz^{l-1} B_\sigma)(x,0) \\
%\label{line1.03} 
 & + (-1)^{k_1} \intzi  \frac{ F_\sigma(x+y+\epsilon +z) - F_\sigma(x+y+z) }{\epsilon} \, \derx^{k_2} \derz^{k_1} B_\sigma(x,z) \, dz \ ,
\end{align*}
where once again we used that for $1 \leq l \leq k_1$ one has $F_\sigma^{(k_1 - l)} \in H^1$ $\, [(\derx^{k_2}\dery^{l-1} B_\sigma)(x,\cdot) \in H^1_{y \geq 0}]$, thus $\lim_{x \to \infty} F_\sigma^{(k_1 - l)}(x) = 0$ $\, [\lim_{y \to \infty} \derx^{k_2}\dery^{l-1} B_\sigma)(x,y) = 0]$.
Define also 
\begin{equation}
\begin{aligned}
\label{dery.bF}
\dery \bF^{k_1, k_2}(x,y)  :=  & \sum_{l=1}^{k_1}(-1)^{l} \, \derx^{k_1-l + 1} F_\sigma(x+y) \,  (\derx^{k_2}\derz^{l-1} B_\sigma)(x,0) \\
& +  (-1)^{k_1} \intzi  F_\sigma'(x+y+z) \, \derx^{k_2} \derz^{k_1} B_\sigma(x,z) \, dz \ .
\end{aligned}
\end{equation}
Consider the r.h.s. of equation \eqref{dery.bF}. It is a linear combinations of terms of the form \eqref{bH.def0} and \eqref{bF.def0}. By the induction assumption, these terms satisfy the hypothesis of Lemma \ref{lem:P} $(i)$ and $(iii)$. It follows that
$\dery \bF^{k_1, k_2}$ satisfies $(P)$ and one has
\begin{equation}
 \label{deryF.norm}
 \norm{\dery \bF^{k_1, k_2}}_{\Lxyc} + \norm{\dery \bF^{k_1, k_2}(\cdot, 0)}_{\Lc} \leq K_c' \norm{\sigma}_{H^4_{\zeta,\C} \cap L^2_{N}}^2
 \end{equation}
  for some constant $K_c'>0$,  depending  locally uniformly on $\sigma \in H^4_{\zeta,\C} \cap L^2_{N}$. 
 Furthermore one verifies that
$$
\lim_{\epsilon \to 0} \frac{\bF^{k_1, k_2}(x,\cdot + \epsilon)  - \bF^{k_1, k_2}(x,\cdot)}{\epsilon} = \dery \bF^{k_1, k_2}(x,\cdot) \quad \mbox{ in } \Lyz \ .
$$

{\em $\derx\bF^{k_1, k_2}$ satisfies $(P)$.} The proof is similar to the previous case, and the details are omitted.

This conclude the proof of the inductive step.
\end{proof}

\section{Hilbert transform}
\label{Hilbert.transf}
Define $\HT : L^2(\R, \C) \to L^2(\R, \C)$ as the Fourier multiplier operator
\[\widehat{(\HT (v))}(\xi)= -i\operatorname{sign}(\xi)\; \hat{v}(\xi) \ .\]
Thus $\HT$ is an isometry on $L^2(\R, \C)$. It is easy to see that $\left.\HT\right|_{H^N_\C}: H^N_\C \to H^N_\C$ is an isometry for any $N\geq 1$ -- cf. \cite{duo}.
In case $v\in C^1(\R, \C)$ with $\|v'\|_{L^\infty},\|xv(x)\|_{L^\infty}<\infty$, one has   \[\HT(v)(k)=-\frac{1}{\pi} \lim_{\epsilon \to 0^+} \int_{|k'-k|\geq \epsilon}\frac{v(k')}{k'-k} dk' \]
and obtains the estimate $\mmod{\HT(v)(k)} \leq C (\|v'\|_{\infty}+\|xv(x)\|_{\infty})$, where  $C>0$ is a constant independent of $v$ and $k$.

Let $g \in  C^1(\R, \R)$ with $\|g'\|_{L^\infty},\|xg(x)\|_{L^\infty}<\infty$. 
%\item    $\lim_{\epsilon\to 0} f(a+i\epsilon) = f(a) ,\quad \forall a \in \R$.
%\item $|g(k)| = O(\frac{1}{|z|})$ as $|k|\to \infty$.
Then define for $z\in \C^+:= \{z\in \C: \Im(z)> 0\} $  
the function
\begin{align}\nonumber
f(z):=  \frac{1}{\pi i}\int_{-\infty}^\infty \frac{g(s)}{s-z}ds \ .
\end{align}
%For $z= a+ib\in \C^+$ we have by Cauchy integral formula
%\[f(z)= \frac{1}{2\pi i}\int_{-\infty}^\infty \frac{f(s)}{s-z}ds\] and
%\[0= \frac{1}{2\pi i}\int_{-\infty}^\infty \frac{f(s)}{s-\overline{z}}ds=\frac{1}{2\pi i}\int_{-\infty}^\infty \frac{f(s)}{s-\overline{z}}ds.\]
%Thus \begin{align}\nonumber
%f(z)= & \frac{1}{2\pi i}\int_{-\infty}^\infty \frac{f(s)}{s-z}ds + \frac{1}{2\pi i}\int_{-\infty}^\infty \frac{\overline{f(s)}}{s-z}ds \\ \label{realpartfformula}
%=& \frac{1}{\pi i}\int_{-\infty}^\infty \frac{\operatorname{Re}f(s)}{s-z}ds
%\end{align}
Decompose $\frac{1}{s-z}$ into real and imaginary part
\[\frac{1}{s-z}= \frac{1}{s-a- ib}= \frac{s-a}{(s-a)^2+b^2}+i \frac{b}{(s-a)^2+b^2}\]
%\begin{align*} f(z)=& \frac{-i}{2\pi }\int_{-\infty}^\infty \frac{s-a}{(s-a)^2+b^2 }\operatorname{Re} f(s) - \frac{b}{(s-a)^2+b^2}\operatorname{Im} f(s)\\ & + i \left( \frac{s-a}{(s-a)^2+b^2} \operatorname{Im} f(s) +\frac{b}{(s-a)^2+b^2} \operatorname{Re} f(s)
%\right) ds \\
% f(z)=&  \frac{-i}{\pi }\int_{-\infty}^\infty \frac{s-a}{(s-a)^2+b^2 }\operatorname{Re} f(s) + i\frac{b}{(s-a)^2+b^2} \operatorname{Re} f(s) ds
%\end{align*}
to get the formulas for the real and imaginary part of $f(z)$
%\begin{align}\nonumber\operatorname{Re} f(z) =&\frac{1}{2\pi }\int_{-\infty}^\infty 
% \frac{s-a}{(s-a)^2+b^2} \operatorname{Im} f(s) +\frac{b}{(s-a)^2+b^2} \operatorname{Re} f(s) ds \\
% \nonumber
% \operatorname{Im} f(z)=& \frac{-1}{2\pi }\int_{-\infty}^\infty \frac{s-a}{(s-a)^2+b^2 }\operatorname{Re} f(s) - \frac{b}{(s-a)^2+b^2}\operatorname{Im} f(s)ds \\
 \begin{align}\label{realpartformula}
 \operatorname{Re} f(z) =&\frac{1}{\pi }\int_{-\infty}^\infty \ 
 \frac{b}{(s-a)^2+b^2} g(s) ds \ , \\
 \label{imaginarypartformula}
 \operatorname{Im} f(z)=& \frac{-1}{\pi }\int_{-\infty}^\infty \frac{s-a}{(s-a)^2+b^2 }g(s)ds \ .
\end{align}

The following Lemma is well known and can be found in \cite{duo}.
\begin{lemma}\label{hilbertdeltausw}
The function $f$ is analytic and admits a continuous extension  to the real line. Furthermore it has the following properties for any $a \in \R$:
\begin{enumerate}[(i)]
\item  $ \lim_{b\to 0^+}\operatorname{Im} f(a+bi)=\mathcal{H}(g)(a)$.
 \item $ \lim_{b\to 0^+}\operatorname{Re} f(a+bi) = g(a)$.
 \item There exists $C>0$ such that $|f(z)| \leq \frac{C}{1+|z|}, $ $\, \forall \, z \in \{z: \Im z \geq 0 \}$.
\item Let $\tilde f(z)$ be a continuous function on $\Im z \geq 0$ which is analytic on $\Im z > 0$ and satisfies  $\operatorname{Re} \tilde f_{|\R}= g$ and $|\tilde f(z)| = O(\frac{1}{|z|})$ as $|z|\to \infty$, then 
$\tilde f = f$.
 \end{enumerate}
\end{lemma}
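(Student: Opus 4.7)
The plan is to treat the four items in order, exploiting that $f$ is the Cauchy integral of $g$, whose real and imaginary parts on lines $\Im z = b > 0$ are the Poisson integral $P_b \ast g$ and the conjugate Poisson integral $Q_b \ast g$, respectively, with
\[
P_b(t) = \frac{1}{\pi}\frac{b}{t^2+b^2}, \qquad Q_b(t) = -\frac{1}{\pi}\frac{t}{t^2+b^2}.
\]
Analyticity of $f$ on $\C^+$ follows by differentiating under the integral sign: the integrand $(s-z)^{-1} g(s)$ is holomorphic in $z$ and dominated locally uniformly in $z$ by $C \, |g(s)|/(1+|s|)$, which is integrable by the assumption $\|s g(s)\|_{L^\infty} < \infty$ together with continuity of $g$.

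For item (ii), I would invoke the standard approximate-identity property: since $\int_\R P_b(t)\,dt = 1$ for every $b>0$, $P_b$ concentrates at $0$ as $b\downarrow 0$, and $g$ is continuous, the convolution $P_b \ast g$ converges to $g$ pointwise (in fact uniformly on compacts) as $b \to 0^+$, giving $\lim_{b\to 0^+}\Re f(a+ib) = g(a)$. For item (i), write
\[
\Im f(a+ib) = -\frac{1}{\pi}\int_\R \frac{s-a}{(s-a)^2+b^2}\, g(s)\, ds
\]
and, after the change of variables $t = s-a$, split the integral into $|t|<\varepsilon$, $\varepsilon \leq |t| \leq 1/\varepsilon$, and $|t|>1/\varepsilon$. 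The inner piece is handled by writing $g(a+t) = g(a) + (g(a+t)-g(a))$: the $g(a)$ part integrates to zero by oddness of the kernel, and the remainder is $O(\varepsilon \|g'\|_{L^\infty})$. The middle piece converges, as $b\to 0^+$, to $-\frac{1}{\pi}\int_{\varepsilon\leq |t|\leq 1/\varepsilon} g(a+t)/t\, dt$ by dominated convergence. The tail is controlled by $\|sg(s)\|_{L^\infty}$, giving $O(\varepsilon)$. Letting $\varepsilon\to 0$ recovers the principal value definition \eqref{def.hilbert.trans}, hence equals $\HT(g)(a)$. This also proves continuity of $f$ up to $\R$.

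For item (iii), the decay estimate, split $|f(z)|$ into contributions from $|s| \leq |z|/2$ and $|s|>|z|/2$. On the first set, $|s-z|\geq |z|/2$ and $\int_{|s|\leq |z|/2}|g(s)|\,ds \leq C\|sg(s)\|_{L^\infty}\log(1+|z|)$, so one refines the estimate via the bound $|g(s)|\leq C/(1+|s|)$ and the elementary identity $\int_\R \frac{ds}{(1+|s|)|s-z|} = O((1+|z|)^{-1}\log(2+|z|))$; the logarithm can be absorbed by using $|g(s)|\leq C\min(\|g\|_\infty, \|sg\|_\infty/|s|)$ and estimating more carefully so that one actually gets the bound $O(1/(1+|z|))$ uniformly for $\Im z\geq 0$. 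On the second set, $|g(s)| \leq \|sg\|_\infty/|s| \leq C/|z|$, and $\int |s-z|^{-1}\chi_{|s-z|\leq 1}\,ds$ is uniformly bounded while the tail is handled by a direct computation. The main technical obstacle is getting the sharp $1/(1+|z|)$ rate rather than a weaker one; the clean way is to write $\frac{1}{s-z} = -\frac{1}{z}\bigl(1 + \frac{s}{s-z}\bigr)$, use $\int g(s)\,ds < \infty$ for the leading term and $\int \frac{sg(s)}{s-z}\,ds = O(1)$ for the remainder, yielding $f(z) = -\frac{1}{\pi i z}\int g + O(1/|z|^2)$.

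Finally, for item (iv) I would argue by uniqueness via Schwarz reflection. The difference $h := \tilde f - f$ is continuous on $\{\Im z\geq 0\}$, holomorphic on $\{\Im z>0\}$, satisfies $\Re h \equiv 0$ on $\R$, and $|h(z)| = O(1/|z|)$ at infinity. Define $H(z) := h(z)$ for $\Im z\geq 0$ and $H(z) := -\overline{h(\bar z)}$ for $\Im z <0$; then $H$ is continuous on $\C$ with matching values on $\R$ (since $\Re h|_\R = 0$ forces $h|_\R = i\Im h|_\R$ and $-\overline{h|_\R} = i\Im h|_\R$), and by Morera's theorem $H$ is entire. The decay bound gives $|H(z)|\to 0$ as $|z|\to\infty$, so by Liouville $H\equiv 0$, hence $\tilde f = f$. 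This last step is the cleanest part of the argument; the main work of the lemma is the delicate boundary-value analysis of items (i) and (ii), where the regularity and decay hypotheses on $g$ are used to justify passing to the limit.
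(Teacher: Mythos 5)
The paper offers no proof of this lemma at all --- it is stated as ``well known'' with a reference to [Duo01] --- so there is no argument in the paper to compare routes with; your proof has to stand on its own. Most of it does: the justification of analyticity by domination, items (i) and (ii) (Poisson approximate identity; the three-zone splitting for the conjugate kernel, with the oddness cancellation plus $\|g'\|_{L^\infty}$ near $t=0$ and $\|xg(x)\|_{L^\infty}$ in the tail, recovering the symmetric principal value), the continuity of the extension (your estimates are indeed locally uniform in $a$, which is exactly what is needed and is worth saying explicitly), and item (iv) (reflection $H(z):=-\overline{h(\bar z)}$, Morera across $\R$, Liouville) are correct and standard.

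The genuine gap is item (iii). Your ``clean way'' writes $\frac{1}{s-z}=-\frac1z+\frac{s}{z(s-z)}$ and invokes $\int_\R g(s)\,ds<\infty$; but the hypotheses only give $|g(s)|\le C\min(1,1/|s|)$, so $g$ need not be in $L^1$, and the remainder $\int\frac{sg(s)}{s-z}\,ds$ is not even absolutely convergent when all you know is $\|sg\|_{L^\infty}<\infty$ (moreover, near the real axis absolute-value bounds on such integrals degenerate like $\log(1/\Im z)$, so there one must reuse the smoothness/p.v.\ cancellation of item (i) rather than crude bounds). Your first attempt honestly produces a $\log(2+|z|)$ factor and then asserts it ``can be absorbed''; it cannot: under the literal hypotheses the bound in (iii) actually fails. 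For example $g(s)=(1+s^2)^{-1/2}$ satisfies $\|g'\|_{L^\infty},\|sg\|_{L^\infty}<\infty$, yet $\Re f(iy)=\frac1\pi\int_\R\frac{y}{s^2+y^2}\frac{ds}{\sqrt{1+s^2}}\sim\frac{2}{\pi}\frac{\log y}{y}$ as $y\to+\infty$, which is not $O(1/y)$. So no argument can close this step without importing extra decay --- e.g.\ $g\in L^1$, which does hold in the paper's application since $g=l(\sigma,\cdot)=O(1/k^2)$ --- and even then one should prove the bound for, say, $\Im z\ge1$ by your expansion $f(z)=-\frac{1}{\pi i z}\int g+\dots$ and handle $0<\Im z<1$, $|\Re z|$ large, by an item-(i)-type splitting around $s=\Re z$; alternatively one must weaken (iii) to $O\bigl(\log(2+|z|)/(1+|z|)\bigr)$. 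As written, (iii) is not established, and through it the decay input feeding your Liouville argument in (iv) is also left open.
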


The next lemma follows from the  commutator estimates due to  Calder\'on \cite{calderon}:
\begin{lemma}[\cite{calderon}]
\label{lem:comm.est}
Let $b:\R \to \R$ have first-order derivative in $L^\infty$. 
For any $p \in (1, \infty)$ there exists $C>0$, such that
$$
\norm{\left[ \H, b \right] \derx g}_{L^p} \leq C \norm{g}_{L^p} \ . 
$$
\end{lemma}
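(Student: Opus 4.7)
The plan is to reduce the estimate to Calderón's classical theorem on the first commutator by a single integration by parts. The key observation is that although $[\H,b]\derx$ looks like a differential operator applied after multiplication, writing it out as a principal value integral reveals that the derivative can be absorbed into the kernel, producing a genuine singular integral operator of Calderón--Zygmund type.

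First I would write, using the principal value representation \eqref{def.hilbert.trans},
\begin{equation*}
[\H,b]\derx g(x) \;=\; -\frac{1}{\pi}\,\mathrm{p.v.}\!\int_{\R} \frac{b(y)-b(x)}{y-x}\,g'(y)\,dy.
\end{equation*}
Because $b'\in L^\infty$, the difference quotient $(b(y)-b(x))/(y-x)$ is uniformly bounded, so the integrand is non-singular on the diagonal once the factor $g'$ is kept; justification of the principal value manipulations can be carried out on a dense class (say $g\in C_c^\infty$) and then extended by density. Integrating by parts in $y$ and combining terms yields the identity
\begin{equation*}
[\H,b]\derx g(x) \;=\; \frac{1}{\pi}\,\mathrm{p.v.}\!\int_{\R} \left(\frac{b(y)-b(x)}{(y-x)^2} \;-\; \frac{b'(y)}{y-x}\right)\! g(y)\,dy.
\end{equation*}
The boundary terms vanish for $g\in C_c^\infty$, and the two resulting singular integrals are precisely the operators appearing in the classical Calderón commutator.

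Next I would identify the second term as (a constant multiple of) $\H(b'\,g)$, which is bounded on $L^p$ for every $p\in(1,\infty)$ with norm $\leq C\|b'\|_{L^\infty}\|g\|_{L^p}$ by the standard $L^p$-boundedness of the Hilbert transform \cite{duo}. The first term is the \emph{first Calderón commutator}
\begin{equation*}
C_1(b',g)(x) \;:=\; \mathrm{p.v.}\!\int_{\R} \frac{b(y)-b(x)}{(y-x)^2}\,g(y)\,dy,
\end{equation*}
which is the singular integral with kernel $K(x,y)=(b(y)-b(x))/(y-x)^2$. Calderón's theorem \cite{calderon} states precisely that $C_1(b',\cdot)$ extends to a bounded operator on $L^p(\R)$ for every $p\in(1,\infty)$, with operator norm bounded by a constant times $\|b'\|_{L^\infty}$; this is the hard analytic input and the step I expect to be the main obstacle if one tried to prove it from scratch, since it requires the full Calderón--Zygmund machinery (a good lambda inequality or the $T(1)$ theorem).

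Finally, combining the two $L^p$ estimates yields
\begin{equation*}
\|[\H,b]\derx g\|_{L^p} \;\leq\; \tfrac{1}{\pi}\|C_1(b',g)\|_{L^p} + \tfrac{1}{\pi}\|\H(b'g)\|_{L^p} \;\leq\; C\,\|b'\|_{L^\infty}\|g\|_{L^p},
\end{equation*}
which is the desired bound (the constant $C$ depends on $p$ and on $\|b'\|_{L^\infty}$, which by hypothesis is finite). Since $C_c^\infty$ is dense in $L^p$, the inequality extends to all $g\in L^p$, concluding the proof.
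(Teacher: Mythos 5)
Your proposal is correct and takes essentially the same route as the paper: the lemma is stated there as a direct consequence of Calder\'on's commutator theorem \cite{calderon}, and your argument is exactly the standard reduction behind that citation — integrate by parts to split $[\H,b]\derx g$ into the first Calder\'on commutator $C_1(b',g)$ plus a Hilbert transform term $\H(b'g)$, and invoke Calder\'on's $L^p$ bound for the former and the classical $L^p$ boundedness of $\H$ for the latter. One harmless slip: with the paper's convention \eqref{def.hilbert.trans}, the kernel produced by the integration by parts is $\frac{1}{\pi}\bigl(\frac{b'(y)}{y-x}-\frac{b(y)-b(x)}{(y-x)^2}\bigr)$, i.e.\ the negative of what you wrote (and the vanishing of the $\epsilon$-boundary terms at $y=x\pm\epsilon$ uses a.e.\ differentiability of the Lipschitz function $b$), but neither point affects the final estimate.
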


We apply this lemma to prove the following result:
\begin{lemma}
\label{lem:hilb.zeta}
Let $M \in \Z_{\geq 1}$ be fixed. Then $\H: \Hzc \to \Hzc$ is a bounded linear operator.
\end{lemma}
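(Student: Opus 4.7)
The plan is as follows. The norm on $\Hzc$ has two pieces, $\norm{f}_{H^{M-1}_\C}$ and $\norm{\zeta \derk^M f}_{L^2}$, so we must control the image of $\H$ in both. The first is immediate: as recalled at the beginning of Appendix \ref{Hilbert.transf}, $\H$ is an isometry on $L^2$ and bounded on $H^{M-1}_\C$, so
$$\norm{\H f}_{H^{M-1}_\C} \leq \norm{f}_{H^{M-1}_\C}.$$
The real work is therefore to bound $\norm{\zeta \derk^M \H(f)}_{L^2}$ in terms of $\norm{f}_{\Hzc}$.

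Because $\H$ is a Fourier multiplier it commutes with $\derk$, so $\derk^M \H(f) = \H(\derk^M f)$; hence
$$\zeta \derk^M \H(f) \;=\; \zeta \,\H(\derk^M f) \;=\; \H(\zeta \derk^M f) \;+\; [\zeta, \H](\derk^M f).$$
The first summand is in $L^2$ with $\norm{\H(\zeta \derk^M f)}_{L^2} = \norm{\zeta \derk^M f}_{L^2} \leq \norm{f}_{\Hzc}$, since $\H$ is an $L^2$-isometry.

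The second summand is the commutator term, and is the only real obstacle. Writing $\derk^M f = \derk (\derk^{M-1} f)$, we have
$$[\zeta, \H](\derk^M f) \;=\; -[\H, \zeta]\,\derk(\derk^{M-1} f),$$
and since $\zeta \in C^\infty(\R)$ with $\zeta'$ smooth and compactly supported (in particular $\zeta' \in L^\infty$), Lemma \ref{lem:comm.est} with $p = 2$ yields
$$\norm{[\H, \zeta]\,\derk(\derk^{M-1} f)}_{L^2} \;\leq\; C \,\norm{\derk^{M-1} f}_{L^2} \;\leq\; C\, \norm{f}_{H^{M-1}_\C}.$$
Combining the two estimates gives $\norm{\zeta \derk^M \H(f)}_{L^2} \leq \norm{f}_{\Hzc} + C \norm{f}_{H^{M-1}_\C}$, and together with the Sobolev bound on $\H$ this produces a constant $C' > 0$ with $\norm{\H f}_{\Hzc} \leq C' \norm{f}_{\Hzc}$, proving boundedness. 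The key point I want to emphasize is that without the Calderón commutator estimate (Lemma \ref{lem:comm.est}), the loss of one derivative incurred by swapping $\zeta$ past $\H$ could not be absorbed; it is precisely this estimate that lets us trade $[\zeta,\H]\derk$ for an $L^2$-bounded operator.
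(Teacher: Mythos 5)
Your proof is correct and follows essentially the same route as the paper: commute $\H$ with $\derk^M$, split $\zeta\H(\derk^M f)$ into $\H(\zeta\derk^M f)$ plus the commutator $[\zeta,\H]\derk^M f$, bound the first term by the $L^2$-isometry of $\H$, and absorb the commutator via Calder\'on's estimate (Lemma \ref{lem:comm.est}) with $p=2$, $b=\zeta$, $g=\derk^{M-1}f$. This is precisely the paper's argument, with the observation that $\zeta'$ is bounded (indeed compactly supported) justifying the application of the commutator lemma.
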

\begin{proof}
Let $f \in \Hzc$. As the Hilbert transform commutes with the derivatives, we have that $\H(f) \in H^{M-1}_\C$. Next we show that if $\zeta \derk^M f \in L^2$, then $\zeta \derk^M \H(f) = \zeta \H(\derk^M f) \in L^2$. By Lemma \ref{lem:comm.est} with $p=2$,   $g = \derk^{M-1}f$ and $b = \zeta$, we have that
$$
\norm{\zeta \H(\derk^M f)}_{L^2} \leq \norm{ \H(\zeta \derk^M f)}_{L^2} + \norm{\left[ \H, \zeta \right] \derk^M f }_{L^2} \leq \norm{f}_{\Hzc} + C \norm{\derk^{M-1}f}_{L^2} < \infty \ .
$$
\end{proof}

{\bf Acknowledgments} We are particularly grateful to Thomas Kappeler for his continued support and the numerous discussions about the paper. The first author is supported by the Swiss National Science Foundation. This paper is part of the first author's PhD thesis.

\end{document}